\documentclass[12pt,a4paper]{amsart}
\setlength{\textwidth}{\paperwidth}
\addtolength{\textwidth}{-2in}
\calclayout
\numberwithin{equation}{section}
\allowdisplaybreaks
\theoremstyle{plain}

\newtheorem{theorem}{Theorem}[section]
\newtheorem{lemma}[theorem]{Lemma}

\newtheorem{corollary}[theorem]{Corollary}
\newtheorem{remark}{Remark}

\numberwithin{equation}{section}
\usepackage{enumerate}

\usepackage{amssymb}
\usepackage{mathtools}
\mathtoolsset{showonlyrefs}
\usepackage{mathrsfs}
\usepackage{comment}
\usepackage{hyperref}
\usepackage{xcolor}
\usepackage{footnote}

\def\P{\mathbb{P} }
\def\R{\mathbb{R} }

\def\E{\mathbb{E} }

\begin{document}
	\title[	Subcritical branching killed L\'{e}vy process]{
		Asymptotic behaviors of subcritical branching killed L\'{e}vy processes
		}
	\thanks{The research of Yan-Xia Ren is supported by NSFC
(Grant No 12231002) and
the Fundamental Research Funds for the Central Universities, Peking University LMEQF}
\thanks{ Research supported in part by a grant from the Simons	Foundation	(\#960480, Renming Song).}
\thanks{The research of Yaping Zhu is supported by the Fundamental Research Funds for the Central Universities.}
\thanks{*Yaping Zhu is the corresponding author}
\author[Y.-X. Ren]{Yan-Xia Ren}
\author[R. Song]{Renming Song}
\author[Y. Zhu]{Yaping Zhu$^\ast$}
	\address{Yan-Xia Ren\\ LMAM School of Mathematical Sciences \& Center for
		Statistical Science  \\ Peking University\\ Beijing 100871\\ P. R. China}
	\email{yxren@math.pku.edu.cn}
	
	\address{Renming Song\\ Department of Mathematics \\ University of Illinois\\ Urbana, IL 61801 \\ U.S.A.}
	\email{rsong@illinois.edu}
	
	\address{Yaping Zhu\\ Department of Mathematics \\ Shanghai University of Finance and Economics\\ Shanghai\\ 200433\\ P. R. China}
	\email{zhuyaping@mail.sufe.edu.cn}
	
	\begin{abstract}
		In this paper, we investigate the asymptotic behaviors of the survival probability and maximal displacement of a subcritical branching killed L\'{e}vy process $X$ in $\mathbb{R}$.
		Let $\zeta$ denote the extinction
		time, $M_t$ be  the  maximal position of all the particles alive
		at time $t$, and $M:=\sup_{t\ge 0}M_t$ be the all-time maximum.
		Under the assumption that the offspring distribution satisfies the $L\log L$ condition and some conditions on the spatial motion, we find the
		decay rate of the survival probability $\mathbb{P}_x(\zeta>t)$ and the tail behavior of $M_t$ as $t\to\infty$. As a consequence, we
		establish a Yaglom-type theorem. We also find the asymptotic behavior of $\mathbb{P}_x(M>y)$ as $y\to\infty$.
	\end{abstract}
	\subjclass[2020]{Primary: 60J80; Secondary: 60J65}
	\keywords{Branching killed L\'{e}vy process, subcritical branching process, survival probability, maximal displacement, Yaglom limit, spectrally negative L\'{e}vy process, Feynman-Kac representation
	}
	\maketitle

	\section{Introduction}\label{Sec1}
	
	\subsection{Background and motivation}

	A branching L\'{e}vy process on $\mathbb{R}$ is defined as follows:
	at time $0$, there is a particle at $x\in \mathbb{R}$ and it moves according to a L\'{e}vy process $(\xi_t,\mathbf{P}_x)$ on $\mathbb{R}$. After an exponential time with parameter $\beta>0$, independent of the spatial motion, this particle dies and is replaced by $k$ offspring with probability $p_k$, $k\ge 0$.
	The offspring move independently according to the same L\'{e}vy process
		starting from the death position of their parent.
	This procedure goes on. Let $N_t$ be the set of particles alive at time $t$ and for each $u\in N_t$, we denote by $X_u(t)$ the position of $u$ at time $t$. Also, for any $u\in N_t$ and $s\le t$, we use $X_u(s)$ to denote the position of $u$ or its ancestor at time $s$. Then the point process $Z=(Z_t)_{t\ge 0}$ defined by
	\begin{align}
		Z_t:=\sum_{u\in N_t}\delta_{X_u(t)}
	\end{align}
	is called a branching L\'{e}vy process.
	We shall denote by $\P_x$ the law of this process when the initial particle
	starts from $x$ and use $\E_x$ to denote the corresponding expectation.
	Let
	\begin{align}
		\widetilde{\zeta}:=\inf\{t>0:Z_t(\mathbb{R})=0\}
	\end{align}
	be the extinction time of $Z$. Note that
	$\widetilde{\zeta}$ is equal in law to that of the extinction time of
	a continuous-time Galton-Watson
	process with the same branching mechanism as the branching L\'{e}vy process.
	Let $m:=\sum_{k=0}^{\infty}kp_k$ be the mean number of offspring. It is well-known that $Z$ will become extinct in finite time with probability 1 if and only if $m<1$ (subcritical) or $m=1$ and $p_1\neq 1$ (critical). Moreover,
	the process $Z$ survives with positive probability when $m>1$ (supercritical).

	The focus of this paper is on the  asymptotic behaviors of a branching killed L\'{e}vy process,
	in which particles are killed upon entering the negative
	half-line. 	The point process
	 $Z^0=(Z^0_t)_{t\ge 0}$ defined by
	\begin{align}
		Z^0_t
		:=\sum_{u\in N_t}1_{\{\inf_{s\le t}X_u(s)>0\}}\delta_{X_u(t)}
	\end{align}
	is called a branching killed  L\'{e}vy process.
	For any $t\ge 0$, let
	\begin{align}
		M_t:=\sup_{u\in N_t, \inf_{s\le t }X_u(s)>0}X_u(t)
	\end{align}
	be the maximal position of all the particles alive
	at time $t$ in the process $Z^0$.
	We define the all-time maximum position and the extinction time of $Z^0$ by
	\begin{align}
		M:=\sup_{t\ge 0}M_t, \qquad
		\zeta:=\inf\{t>0:Z^0_t((0,\infty))=0\}.
	\end{align}

	In the critical case, i.e., when $m=1$ and $p_1\neq 1$,
	the asymptotic behaviors of the tails of the extinction time and the maximal displacement of $Z^0$
	were established in \cite{Hou24} under the assumption
	that the offspring distribution belongs to the domain of attraction of an $\alpha$-stable distribution, $\alpha\in (1,2]$,
      and some moment assumptions on the spatial motion.
		 It was also shown in  \cite{Hou24}
	 that the scaling limit under $\P_{\sqrt{t}y}(\cdot|\zeta>t)$ can be represented in terms of a super killed Brownian motion.
	In the subcritical case, i.e., $m\in(0,1)$, under the assumption $\sum^\infty_{k=1}k(\log k)p_k<\infty$, the asymptotic behaviors of the survival
	probability and the all time maximal position of branching killed Brownian motion with drift
	were established in \cite{Hou2024} recently.

	The asymptotic behavior of branching L\'{e}vy processes have been studied earlier.
	In the critical case, i.e. $m=1$ and $p_1\neq 1$, Sawyer and Fleischman \cite{Sawyer79} investigated the tail behavior of the all time maximal position of branching Brownian motion under the assumption that the offspring
	distribution
	has finite third moment. For a critical branching random walk with spatial motion having finite $(4+\varepsilon)$th moment, the tail behavior of the all time maximum was obtained by Lalley and Shao \cite{Lalley15}.
	Hou et al. \cite{HJRS23} studied the asymptotic behavior of  the all time maximum of critical branching L\'{e}vy processes with offspring distribution belonging to the domain of attraction of an $\alpha$-stable distribution with $\alpha \in(1,2]$, under some assumptions on the spatial motion. In the subcritical case, Profeta \cite{Profeta24} gave the asymptotic behavior of the all time maximal position under the assumption that the offspring distribution has finite third moment.  For related results about subcritical branching random walks, we refer the reader to \cite{Neuman17}.

	The purpose of this paper is to extend the results of \cite{Hou2024} to subcritical branching killed L\'{e}vy processes. This extension is quite challenging since properties of Brownian motion were used crucially in
	\cite{Hou2024}.  Fluctuation theory of L\'{e}vy processes will play
	an important role in this paper.
	Another important tool is the conditioned limit theorem in Theorem \ref{lemma-tau0>t-rho<0} below.

	\subsection{Main results}
	Before we state our main results, we introduce some notation and some basic results on L\'{e}vy processes.  We always assume that the offspring distribution is subcritical, i.e., $m\in (0, 1)$. Let $\alpha:=\beta(1-m)$ and let $f$ be the generating function of the offspring distribution, i.e. $f(s)=\sum_{k=0}^{\infty}p_k s^k$, $s\in[0,1]$. Define
	\begin{align}\label{def_Phi}
		\Phi(u):= \beta\left(f(1-u)-(1-u)\right)=: \left(\alpha +\varphi(u)\right)u, \quad u\in [0, 1],
	\end{align}
	where $\varphi(u)=\frac{\Phi(u)-\alpha u}{u}$ for $u\in (0, 1]$ and $\varphi(0)= \Phi'(0+)-\alpha=0$.
	According to  \cite[Lemma 2.7]{Hou2024}, $\varphi(\cdot)$ is increasing on $[0,1]$ and under the condition
	\begin{align}\label{LLogL-moment-condition}
		\sum_{k=1}^\infty   k (\log k )  p_k <\infty,
	\end{align}
	it holds that
	 \begin{align}\label{property-varphi}
		 \int_0^\infty \varphi\left(e^{-ct }\right)\mathrm{d}t<\infty, \quad \text{for any}~ c>0.
	 \end{align}
	 Moreover, it is well-known (see Theorem 2.4 in \cite[p.121]{AH1983}) that
	\begin{align}\label{Decay-Survival-Probability}
		\lim_{t\to\infty} e^{\alpha t}
		\P_0(\widetilde{\zeta}>t)
		= C_{sub}\in (0,\infty)
	\end{align}
	holds if and only if \eqref{LLogL-moment-condition} holds.
	For any $t>0$, define
	\begin{align}\label{def-g(t)}
		g(t):=\P_0(\widetilde{\zeta}>t).
	\end{align}
	It is well-known that $g(t)$ satisfies the equation
	\begin{align}
		\frac{\mathrm{d}}{\mathrm{d} t} g(t)= -\Phi(g(t))= -\left(\alpha + \varphi (g(t))\right)g(t),
	\end{align}
	thus
	\begin{align}\label{eq-eg(t)}
		e^{\alpha t}g(t) = \exp\left\{-\int_0^t \varphi(g(s))\mathrm{d}s \right\}.
	\end{align}
	It follows from  \eqref{Decay-Survival-Probability} that
	\begin{align}\label{Constant-C-sub}
		C_{sub}= \exp\left\{-\int_0^\infty \varphi(g(s))\mathrm{d}s \right\}.
	\end{align}
	Therefore, \eqref{LLogL-moment-condition} is equivalent to
	\begin{align}\label{Integral-of-phi-is-finite}
		\int_0^\infty \varphi(g(s))\mathrm{d} s<\infty.
	\end{align}
	In this paper, we always assume that
	$\xi=((\xi_t)_{t\ge 0},(\mathbf{P}_x)_{x\in \R})$
	is a L\'{e}vy process on $\mathbb{R}$ with
$$-\log \mathbf{E}_x\left(e^{i\theta(\xi_1-\xi_0)}\right)=i a\theta+\frac12\eta^2\theta^2+\int^\infty_{-\infty}(1-e^{i\theta x}
		+ i\theta x1_{\{|x|<1\}})\Pi(dx), \quad \theta\in \R, $$
where $\mathbf{E}_x$ stands for the expectation with respect to $\mathbf{P}_x$, $a\in \mathbb{R}$,
	$\eta\ge 0$ and  the L\'{e}vy measure $\Pi$ satisfies
	$\int_{\mathbb{R}}\left(1\land x^2\right)\Pi (\mathrm{d} x)<+\infty$.
	For any $z\in \mathbb{R}$, define
	\begin{align}
		\tau_z^{+}:=\inf\{t>0:\xi_t\ge z\}
		\quad \text{and} \quad
		\tau_z^{-}:=\inf\{t>0:\xi_t< z\}.
	\end{align}
	 Define the function
	\begin{align}\label{def-R}
		R(x):=x-\mathbf{E}_x\left(\xi_{\tau_0^-}\right)=-\mathbf{E}_0\left(\xi_{\tau_{-x}^-}\right),\quad x\ge 0.
	\end{align}
	It follows from \cite[Lemma 2.8]{Hou24} that if $\mathbf{E}_0(\xi_1)=0$ and $\mathbf{E}_0(\xi_1^2)\in(0,\infty)$, then $\mathbf{E}_x|\xi_{\tau_0^-}|<\infty$ and
	 $R(x)$ satisfies the following:
	\begin{enumerate}
			\item $R(x)\ge x$
			and $R(x)$ is non-decreasing in $x$;
			\item
			there exists a constant $c>0$ such that $R(x)\le c(1+x)$ and
			\begin{align}\label{limit-property-R}
				\lim_{x\to\infty}\frac{R(x)}{x}=1-\lim_{x\to\infty}\frac{\mathbf{E}_x(\xi_{\tau_0^-})}{x}=1;
			\end{align}
			\item
			$\big(R(\xi_s)1_{\{\tau_0^->s\}}\big)_{s\ge 0}$ is a $\mathbf{P}_x$-martingale for any $x>0$.
		\end{enumerate}
			In the case $\xi$ is a Brownian motion with drift, it is obvious that
			\begin{equation}\label{e:R}
			R(x)=x, \quad x>0.
			\end{equation}

	In some results, we will assume that $\xi$
	satisfies one or both of the following conditions:
	
	{\bf(H1)} There exists $\delta\in(0,1)$ such that $\mathbf{E}_x\left(|\xi_1|^{2+\delta}\right)<\infty.$

	{\bf(H2)} The law of $\xi_1$ is non-lattice,
	i.e., $\mathbf{P}_x\left(\xi_1\in h\mathbb{Z}+a\right) \neq 1,\forall h>0, a\in [0,h)$.
	\begin{remark}
			Condition {\bf (H2)} will be assumed in the case $\mathbf{E}_0(\xi_1) < 0$. In this case, we rely on the conditioned limit theorem for random walks established by \cite{GX-AIHP}, which requires the non-lattice condition.
	\end{remark}
		In the case  $\mathbf{E}_0\left(\xi_1\right)<0$, we will perform an Esscher transform on the L\'evy process. For this, we assume that

{\bf(H3)}
The Laplace exponent $\Psi(\lambda):=\log \mathbf{E}_0\left(e^{\lambda \xi_1}\right)$ is finite
for all $\lambda \in (\Lambda_1,\Lambda_2)$ with $\Lambda_1 \in [-\infty, 0]$ and $\Lambda_2 \in (0, \infty]$.
Moreover, there exists a unique $\lambda_*\in (0, \Lambda_2)$ such that  $\Psi'(\lambda_*)=0$.

Note that
$\Psi(\lambda)$ is finite if and only if
$\int_{\{|x|\ge 1\}}e^{\lambda x}\Pi(dx)<\infty$ and that for any $\lambda\in (\Lambda_1,\Lambda_2)$,
	\begin{align}\label{def-Psi}
		\Psi(\lambda)
		=a  \lambda
		+\frac{\eta^2}{2}\lambda^2+\int_{\mathbb{R}}\left(e^{\lambda x}-1-\lambda x 1_{\{|x|<1\}}\right) \Pi (\mathrm{d} x).
	\end{align}
Note also that 
$\Psi$ is convex in $(\Lambda_1,\Lambda_2)$.

\begin{remark}
If $\xi=((\xi_t)_{t\ge 0},(\mathbf{P}_x)_{x\in \R})$
is a spectrally negative L\'{e}vy process, then 
$\xi$ has finite Laplace exponent in $(0,\infty)$.
If $\xi$ is a spectrally negative L\'{e}vy process with $\mathbf{E}_0\left(\xi_1\right)<0$, then $\Psi$ admits a unique minimum at a $\lambda_*>0$ and $\Psi(\lambda_*)<0$, $\Psi'(\lambda_*)=0$ and $\Psi''(\lambda_*)>0$. So in this case {\bf(H3)}  is automatically satisfied.
\end{remark}

For any $c\in(\Lambda_1,\Lambda_2)$ and $x\in \R$,
	since $\{e^{c(\xi_t-x)-\Psi(c)t}:t\ge 0\}$ is a  $\mathbf{P}_x$-martingale, we can define the change of measure
	\begin{align}\label{change-measure}
		\frac{\mathrm{d} \mathbf{P}_x^c}{\mathrm{d} \mathbf{P}_x}\Big|_{\mathcal{F}_t}=e^{c(\xi_t-x)-\Psi(c)t},
	\end{align}
	where $\mathcal{F}_t:={\sigma\{\xi_s:s\le t\}}$, $t\ge 0$.
	According to  \cite[Theorem 3.9]{Kyprianou14},
	$\xi^{(c)}=((\xi_t)_{t\ge 0}, ( \mathbf{P}_x^c)_{x\in \R})$
	is also a  L\'{e}vy process and its Laplace exponent $\Psi_c(\lambda)$ is given by
		$\Psi_c(\lambda):=\Psi(\lambda+c)-\Psi(c)$.
	We will use $\mathbf{E}^c_x$ to denote expectation with respect to $\mathbf{P}_x^c$.

Recall that $\int_{\{|x|\ge 1\}}e^{\lambda x}\Pi(dx)<\infty$ for any $\lambda \in (\Lambda_1,\Lambda_2)$.
	According to \cite[Theorem 3.9]{Kyprianou14}, the L\'{e}vy measure of
	$\xi^{(c)}$ 
	is given by
		$e^{cx}\Pi(\mathrm{d}x)$.
	Combining the two facts above, we get that,  if  $\xi$ 
 has finite $p$-th moment with $p\ge 1$, then
for any $c\in(\Lambda_1,\Lambda_2)$, 
$\xi^{(c)}$ 
also has finite $p$-th moment 
and  so 
$\xi^{(c)}$ 
satisfies {\bf(H1)}.
        It is also easy to see that $\xi^{(c)}$ 
	is non-lattice if and only if $\xi$ 
	is non-lattice.
		We note  that,  by  \cite[Theorem 3.9]{Kyprianou14}, (i) if 
		$\xi$ 	is a  spectrally negative L\'evy process with Laplace exponent $\Psi$, then $\xi^{(c)}$
	is  a spectrally negative L\'{e}vy process with Laplace exponent $\Psi_c(\lambda)$ 
given by $\Psi_c(\lambda):=\Psi(\lambda+c)-\Psi(c)$; and 	
	(ii) if $\xi$ 
	 is a Brownian motion with drift, 
	 $\xi^{(c)}$ is also a Brownian motion with drift.

		When $\mathbf{E}_0\left(\xi_1\right)<0$ and \textbf{(H3)} holds, we take $c=\lambda_*$ and define the change of measure
	\begin{align}\label{change-measure-levy}
		\frac{\mathrm{d}\mathbf{P}_x^{\lambda_*}}{\mathrm{d}\mathbf{P}_x}\Big|_{\mathcal{F}_t}=e^{\lambda_* (\xi_t-x)-\Psi(\lambda_*)t}.
	\end{align}
	Then $\xi^{(\lambda_*)}$ is a
	L\'{e}vy process and its Laplace exponent is given
	 by $\Psi_{\lambda_*}(\lambda):=\Psi(\lambda+\lambda_*)-\Psi(\lambda_*)$.
	It is easy to see that 
	$\Psi_{\lambda_*}'(0+)=\Psi'(\lambda_*)=0$.
	Let $\mathbf{E}_x^{\lambda_*}$ be the expectation with respect to $\mathbf{P}_x^{\lambda_*}$.
	If $\xi$
	satisfies {\bf(H1)}, then since $\mathbf{E}_0^{\lambda_*}(\xi_1)=\Psi_{\lambda_*}'(0+)=0$, by
	\cite[Lemma 2.8]{Hou24},
	we have $\mathbf{E}_x^{\lambda_*}|\xi_{\tau_0^-}|<\infty$.
	Define
	\begin{align}\label{def-widetilde-R}
		R^*(x)=x-\mathbf{E}_x^{\lambda_*}\left(\xi_{\tau_0^-}\right),\quad x\ge 0.
	\end{align}
 Define the dual process of
 $\xi$ by:
	\begin{align}\label{def-eta*}
		\widehat{\xi}_s:=-\xi_s,\quad s\ge 0.
	\end{align}
	For any $z\in \mathbb{R}$, we define
	$\widehat{\tau}_z^-:=\inf\{s>0:\widehat{\xi}_s<z\}$ and
	\begin{align}\label{def-R*}
		\widehat{R}^*(x):=x-\mathbf{E}_x^{\lambda_*}\left(\widehat{\xi}_
		{\widehat{\tau}_0^-}
		\right),\quad x\ge 0.
	\end{align}

Denote $\R_+=[0, \infty)$.
Let $\sigma^2:=\mathbf{E}_0(\xi_1^2)$.
	Our first main result is on the large-time 
	asymptotic behavior
	of the survival probability.
	
	\begin{theorem}\label{thm-survival-probability}
		Assume \eqref{LLogL-moment-condition} holds and
		$\xi$ 	is a L\'{e}vy process satisfying {\bf (H1)}.
		Let $x>0$.
		\begin{enumerate}
			\item
			If
						$\mathbf{E}_0\left(\xi_1\right)=0$,
			then
			\begin{align}
				\lim_{t\to\infty} \sqrt{t} e^{\alpha t} \P_x(\zeta>t)= \frac{2C_{sub}R(x)}{\sqrt{2\pi \sigma^2}},
			\end{align}
			where $C_{sub}$ is defined in \eqref{Constant-C-sub} and $R(x)$ in \eqref{def-R}.
			
			\item
			If
						$\mathbf{E}_0\left(\xi_1\right)>0$,
			then
			\begin{align}
				\lim_{t\to\infty}e^{\alpha t}\P_x(\zeta>t)
				=q_x C_{sub},
			\end{align}
			where $q_x:=\mathbf{P}_x\left(\tau_0^-=\infty\right)>0.$

			\item
						If $\mathbf{E}_0\left(\xi_1\right)<0$ and 
						$\xi$ satisfies {\bf (H2)} and {\bf (H3)},
			then
			\begin{align}
				\lim_{t\to\infty}t^{3/2}e^{(\alpha-\Psi(\lambda_*))t}\P_x(\zeta>t)
			 =\frac{2C_0R^*(x)e^{\lambda_*x}}{\sqrt{2\pi \Psi''(\lambda_*)^3}},
			\end{align}
			where
			$C_0:=\lim_{N\to\infty}e^{(\alpha-\Psi(\lambda_*))N}\int_{\R_+}\P_z(\zeta>N)e^{-\lambda_* z}\widehat{R}^*(z)\mathrm{d}z \in (0,\infty)$,
		$R^*$ is defined in \eqref{def-widetilde-R} and $\widehat{R}^*$ in \eqref{def-R*}.
		\end{enumerate}
	\end{theorem}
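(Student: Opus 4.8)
The plan is to derive and exploit a Feynman--Kac representation of $v(t,x):=\P_x(\zeta>t)$. Conditioning the initial particle on its killing time $\tau_0^-$ and its first branching event (an independent $\mathrm{Exp}(\beta)$ clock), the branching property gives the mild equation
\[
v(t,x)=e^{-\beta t}\mathbf{P}_x(\tau_0^->t)+\beta\,\mathbf{E}_x\!\left[\int_0^{t\wedge\tau_0^-}e^{-\beta s}\bigl(1-f(1-v(t-s,\xi_s))\bigr)\,\d s\right].
\]
By \eqref{def_Phi} and $\alpha=\beta(1-m)$ one has $1-f(1-u)=mu-u\varphi(u)/\beta$, and the linearized equation (dropping the $\varphi$-term) has unique bounded solution $e^{-\alpha t}\mathbf{P}_x(\tau_0^->t)$, which is also the many-to-one first moment $\mathbf{E}_x[Z^0_t((0,\infty))]$. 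A Feynman--Kac (Duhamel) computation then converts the mild equation into
\[
v(t,x)=e^{-\alpha t}\,\mathbf{E}_x\!\left[\mathbf{1}_{\{\tau_0^->t\}}\exp\!\Bigl(-\int_0^t\varphi\bigl(v(t-r,\xi_r)\bigr)\,\d r\Bigr)\right].
\]
(With no killing this collapses to $e^{\alpha t}g(t)=\exp(-\int_0^t\varphi(g(s))\,\d s)$, i.e.\ \eqref{eq-eg(t)}.) Thus $e^{\alpha t}v(t,x)$ equals $\mathbf{P}_x(\tau_0^->t)$ times the $\mathbf{P}_x(\,\cdot\mid\tau_0^->t)$-expectation of the ``$L\log L$-correction'' $\exp(-\int_0^t\varphi(v(t-r,\xi_r))\,\d r)\in(0,1]$; the facts that matter are that $\varphi\ge0$ is nondecreasing and bounded by $\varphi(1)$, that $\int_0^\infty\varphi(g(s))\,\d s<\infty$ (this is \eqref{Integral-of-phi-is-finite}, equivalent to \eqref{LLogL-moment-condition}), and the elementary comparison $v(u,\cdot)\le g(u)$.

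I would next record, from fluctuation theory (all under {\bf(H1)}, which supplies the $(2+\delta)$-moment needed for the remainder estimates), the asymptotics of the survival term $\mathbf{P}_x(\tau_0^->t)$ for $\xi$ killed on entering $(-\infty,0)$. If $\mathbf{E}_0[\xi_1]=0$, then $\sqrt t\,\mathbf{P}_x(\tau_0^->t)\to 2R(x)/\sqrt{2\pi\sigma^2}$ with $\sigma^2:=\mathbf{E}_0[\xi_1^2]$, obtained via the martingale property of $(R(\xi_s)\mathbf{1}_{\{\tau_0^->s\}})_{s\ge0}$ together with the invariance principle for the process conditioned to stay positive. If $\mathbf{E}_0[\xi_1]>0$, then $\mathbf{P}_x(\tau_0^->t)\uparrow\mathbf{P}_x(\tau_0^-=\infty)=\Psi'(0+)W^{(0)}(x)=\mathbf{E}_0[\xi_1]W^{(0)}(x)$ by \eqref{Laplace-W}. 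If $\mathbf{E}_0[\xi_1]<0$, the Esscher change of measure \eqref{change-measure-levy} gives $\mathbf{P}_x(\tau_0^->t)=e^{\lambda_*x+\Psi(\lambda_*)t}\mathbf{E}_x^{\lambda_*}[e^{-\lambda_*\xi_t}\mathbf{1}_{\{\tau_0^->t\}}]$ with the tilted process of mean $0$ and variance $\Psi''(\lambda_*)$, so that the $t^{-3/2}$ rate and the constants built from $R^*,\widehat{R}^*$ are supplied by the conditioned limit theorem (Theorem~\ref{lemma-tau0>t-rho<0}), with {\bf(H2)} feeding the underlying local limit theorem.

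For parts (1) and (2) the correction factor trivializes in the limit. Under $\mathbf{P}_x(\,\cdot\mid\tau_0^->t)$ the path is forced away from the boundary, so for each fixed $u\ge0$ one has $\xi_{t-u}\to\infty$ in probability as $t\to\infty$ (the $\sqrt t$-scaled meander when $\mathbf{E}_0[\xi_1]=0$; transience when $\mathbf{E}_0[\xi_1]>0$). Since $v(u,y)\to g(u)$ as $y\to\infty$, with the tail in $u$ dominated by $v(u,\cdot)\le g(u)$ and small $u$ by $\varphi\le\varphi(1)$, one gets $\int_0^t\varphi(v(t-r,\xi_r))\,\d r=\int_0^t\varphi(v(u,\xi_{t-u}))\,\d u\to\int_0^\infty\varphi(g(u))\,\d u$ in probability, whence by bounded convergence $\mathbf{E}_x[\exp(-\int_0^t\varphi(v(t-r,\xi_r))\,\d r)\mid\tau_0^->t]\to C_{sub}$. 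Multiplying by the two first-passage asymptotics above gives parts (1) and (2).

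Part (3) is the main obstacle. After the $\lambda_*$-tilt, the killed process conditioned to survive to time $t$ no longer escapes to $+\infty$ but stays at order one, so the correction does not collapse to $C_{sub}$, and the weight $e^{-\lambda_*\xi_t}$ must be carried along. The plan is: (a) using $\int_N^\infty\varphi(g(u))\,\d u\to0$ together with $v\le g$, replace $\exp(-\int_0^t\varphi(v(t-r,\xi_r))\,\d r)$ by the near-the-end functional $\exp(-\int_0^N\varphi(v(u,\xi_{t-u}))\,\d u)$ at the cost of a factor $1+o_N(1)$ uniform in $t$; (b) apply Theorem~\ref{lemma-tau0>t-rho<0} to $t^{3/2}e^{(\alpha-\Psi(\lambda_*))t}\mathbf{E}_x[\mathbf{1}_{\{\tau_0^->t\}}e^{-\lambda_*\xi_t}e^{-\int_0^N\varphi(v(u,\xi_{t-u}))\,\d u}]$, converting the $t\to\infty$ limit into $\frac{2R^*(x)e^{\lambda_*x}}{\sqrt{2\pi\Psi''(\lambda_*)^3}}$ times an integral over the starting point weighted by $\widehat{R}^*$; (c) recognize that integral, via path-reversal and the identity $e^{(\alpha-\Psi(\lambda_*))N}v(N,z)=e^{\lambda_*z}\mathbf{E}_z^{\lambda_*}[e^{-\lambda_*\xi_N}\mathbf{1}_{\{\tau_0^->N\}}e^{-\int_0^N\varphi(v(u,\xi_{N-u}))\,\d u}]$, as $C_0(N):=e^{(\alpha-\Psi(\lambda_*))N}\int_{\R_+}\P_z(\zeta>N)e^{-\lambda_*z}\widehat{R}^*(z)\,\d z$; (d) let $N\to\infty$ and prove $C_0(N)\to C_0\in(0,\infty)$, again using $\int_0^\infty\varphi(g(s))\,\d s<\infty$ and $v\le g$ for the uniform control. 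The genuinely hard part is organizing this two-scale limit — in particular the uniform-in-$t$ reduction (a), the reversal identification (c), and above all the convergence and strict positivity of $C_0=\lim_N C_0(N)$ in step (d).
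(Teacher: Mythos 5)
Your proposal is correct in outline and, for part (3), follows essentially the same route as the paper: the Feynman--Kac identity of Lemma \ref{lemma-expression-Q}, the uniform-in-$t$ reduction of the exponent to $\int_0^N$ using $v\le g$ and \eqref{Integral-of-phi-is-finite}, the Markov property at time $t-N$ together with the identification $e^{\alpha N}v(N,\cdot)$ of the inner functional (your reversal identity in (c) is exactly the paper's $f_N^y(z)=e^{\alpha N}Q_y(z,N)$ in Lemma \ref{lemma-upper-bound-limsup-rho<0}), an application of Theorem \ref{lemma-tau0>t-rho<0}, and then $N\to\infty$. Where you genuinely differ is in parts (1)--(2): the paper's upper bounds (Lemmas \ref{lemma-upper-bound-limsup-rho=0} and \ref{lemma-upper-bound-limsup-rho>0}) bound the exponent through the running infimum $\inf_{r\in[t-N,t]}\xi_r$ and then need the coupling of Lemma \ref{lemma-relation-with-BM} and the reflection principle to show this infimum is large, whereas you exploit that the Feynman--Kac weight depends only on the positions $\xi_{t-u}$, $u\in[0,N]$, and close the upper bound by a Fubini/bounded-convergence argument from the conditional escape $\xi_{t-u}\to\infty$ given $\{\tau_0^->t\}$; for the survival probability (no indicator $\{\xi_t>\sqrt t y\}$) this is a valid and leaner argument, the conditional escape following from $\{\tau_0^->t\}\subset\{\tau_0^->t-u\}$, Lemma \ref{lemma-tau0>t-rho=0} with $a\downarrow 0$ (resp.\ the law of large numbers and Theorem \ref{thm-exit-problems} when $\mathbf{E}_0[\xi_1]>0$), though this machinery is still needed if you want Theorem \ref{thm-tail probability-Mt}. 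Three points are asserted but should be written out: (i) $v(u,y)\to g(u)$ as $y\to\infty$ for fixed $u$ (a first-moment/union bound over the tree up to time $u$); (ii) the hypotheses of Theorem \ref{lemma-tau0>t-rho<0} for $z\mapsto e^{\alpha N}v(N,z)$, i.e.\ direct Riemann integrability of $e^{\alpha N}v(N,z)e^{-\lambda_* z}(1+z)$, which follows since $v(N,\cdot)$ is bounded and nondecreasing, hence a.e.\ continuous; (iii) the existence of $C_0=\lim_N C_0(N)$, which you flag but do not settle --- with your two-sided uniform reduction it follows by the paper's squeeze: for all $N,N'$ one gets $e^{-\int_{N'}^\infty\varphi(g)}C_0(N')\le\liminf_t\le\limsup_t\le C_0(N)$ (after normalizing by the common constant), so $\limsup_{N'}C_0(N')\le\liminf_N C_0(N)$, and positivity/finiteness come from the $C_{sub}$ lower bound and from $v(t,x)\le e^{-\alpha t}\mathbf{P}_x(\tau_0^->t)$ with Theorem \ref{lemma-tau0>t-rho<0} applied to $f=1_{(0,\infty)}$. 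None of these is a fatal gap, but they are the places where the sketch must be completed.
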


	\begin{remark}
		\cite[Theorem 1.1]{Hou2024} investigates the asymptotic behavior of the survival probability of a branching killed Brownian motion with drift $-\rho$. 
The first two statements of \cite[Theorem 1.1]{Hou2024} are as follows.

		(1) if $\rho=0$, then $\lim_{t\to\infty}\sqrt{t}e^{\alpha t}\mathbf{P}_x(\zeta>t)=\sqrt{\frac{2}{\pi}}C_{sub}x.$

		(2) If $\rho<0$, then $\lim_{t\to \infty}e^{\alpha t}\mathbf{P}_x(\zeta>t)
		=\left(1-e^{2\rho x}\right).$

\noindent  Combining Theorem \ref{thm-survival-probability} (1) and (2)
with \eqref{e:R}, we immediately recover the first two conclusions of 
 \cite[Theorem 1.1]{Hou2024}. 
	Furthermore, when 	$\xi$
	is a standard Brownian motion with drift $-\rho$, we have $\Psi(\lambda)=-\rho \lambda+\frac{1}{2}\lambda^2$ and $\lambda_*=\rho$. When $\rho>0$, a straightforward calculation yields that
	\begin{align}
		\lim_{t\to\infty}t^{3/2}e^{(\alpha+\frac{\rho^2}{2})t}\P_x(\zeta>t)
		=\frac{2C_0xe^{\rho x}}{\sqrt{2\pi }}.
	\end{align}
	This result is consistent with \cite[Theorem 1.1, (iii)]{Hou2024}, where
$C_0(\rho)=C_0$.
	\end{remark}

	Our second main result is on the asymptotic  behavior of the tail probability of $M_t$.
	
	\begin{theorem}\label{thm-tail probability-Mt}
		Assume \eqref{LLogL-moment-condition} holds and $\xi$ 
		is a L\'evy process satisfying {\bf (H1)}. Let $x>0$.
		\begin{enumerate}
			\item If
					$\mathbf{E}_0\left(\xi_1\right)=0$,
			then for any $y\ge 0$,
			we have
			\begin{align}
				\lim_{t\to\infty}\sqrt{t}e^{\alpha t} \P_x\left(M_t>\sqrt{t}y\right)
				= \frac{2C_{sub}R(x)}{\sqrt{2\pi \sigma^2}}
				e^{-\frac{y^2}{2\sigma^2}}.
			\end{align}
			
			\item If
						$\mathbf{E}_0\left(\xi_1\right)>0$,
			then for any $y\in\mathbb{R}$,
			we have
			\begin{align}
				\lim_{t\to\infty}e^{\alpha t}\mathbb{P}_x\left(M_t>\sqrt{t}y+\mathbf{E}_0\left(\xi_1\right) t\right)
			 =\frac{q_x C_{sub}}{\sqrt{2\pi}}\int_{\frac{y}{\sigma}}^{\infty}e^{-\frac{z^2}{2}} \mathrm{d}z.
			\end{align}
			
			\item
			If $\mathbf{E}_0\left(\xi_1\right)<0$ and $\xi$ satisfies {\bf (H2)} and {\bf (H3)}, then for any $y\ge 0$,
			we have
			\begin{align}
				\lim_{t\to\infty}t^{3/2}e^{\left(\alpha-\Psi(\lambda_*)\right)t} \mathbb{P}_x\left(M_t>y\right)
			= \frac{2
			C_1(y)
			R^*(x)e^{\lambda_*x}}{\sqrt{2\pi \Psi''(\lambda_*)^3}},
			\end{align}
			where $C_1(y):=\lim_{N\to\infty}e^{(\alpha-\Psi(\lambda_*))N}\int_{\R_+}\P_z(M_N>y)e^{-\lambda_* z}\widehat{R}^*(z)\mathrm{d}z\in (0,\infty)$.
		\end{enumerate}
	\end{theorem}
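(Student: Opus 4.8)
The plan is to repeat the analysis behind Theorem \ref{thm-survival-probability} with the absorbing level at the \emph{final} time moved from $0$ to $y$. The key structural fact is that $v_M(t,x):=\P_x(M_t>y)$ and $v_\zeta(t,x):=\P_x(\zeta>t)$ satisfy the same semilinear equation $\partial_t v=\mathcal{A}v-\Phi(v)$, where $\mathcal{A}$ is the generator of $\xi$ killed on $(-\infty,0]$, and differ only through their initial data, $1_{(y,\infty)}$ versus $1_{(0,\infty)}$; in Feynman--Kac form,
\[
e^{\alpha t}\,\P_x(M_t>y)=\mathbf{E}_x\!\left[\exp\left\{-\int_0^t\varphi\big(\P_{\xi_s}(M_{t-s}>y)\big)\,\d s\right\}1_{\{\xi_t>y\}}1_{\{\tau_0^->t\}}\right],
\]
which, since $\{\tau_0^->t\}\subset\{\xi_t>0\}$, reduces for $y=0$ exactly to the representation used for $\P_x(\zeta>t)$. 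Equivalently, the branching property at time $t-N$ gives
\[
\P_x(M_t>y)=\E_x\Big[1-\prod_{u}\big(1-\P_{X_u(t-N)}(M_N>y)\big)\Big],
\]
the product over particles $u$ alive at $t-N$ with $\inf_{s\le t-N}X_u(s)>0$, so that $\P_x(M_t>y)$ is precisely the object handled in Theorem \ref{thm-survival-probability}, but with late-time test function $z\mapsto\P_z(M_N>y)$ in place of $z\mapsto\P_z(\zeta>N)$; note $0\le\P_z(M_N>y)\le\P_z(\zeta>N)\le g(N)$.

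First I would extract the spatial asymptotics via the first moment. By the many-to-one lemma, $\E_x[Z^0_t((a,\infty))]=e^{-\alpha t}\,\mathbf{P}_x(\xi_t>a,\tau_0^->t)$, and the large-time behaviour of the killed spectrally negative L\'{e}vy process is supplied by fluctuation theory and the conditioned limit theorem of Theorem \ref{lemma-tau0>t-rho<0}: if $\mathbf{E}_0[\xi_1]=0$ one uses $\mathbf{P}_x(\tau_0^->t)\sim 2R(x)/\sqrt{2\pi\sigma^2 t}$ and the meander limit $\mathbf{P}_x(\xi_t>\sqrt{t}\,y\mid\tau_0^->t)\to e^{-y^2/2\sigma^2}$; if $\mathbf{E}_0[\xi_1]>0$ one uses $\mathbf{P}_x(\tau_0^-=\infty)=\mathbf{E}_0[\xi_1]\,W^{(0)}(x)$ together with a central limit theorem for the process conditioned to stay positive, giving the factor $\tfrac{1}{\sqrt{2\pi}}\int_{y/\sigma}^\infty e^{-z^2/2}\,\d z$; and if $\mathbf{E}_0[\xi_1]<0$ one tilts by $\lambda_*$, writing $\mathbf{P}_x(\xi_t>y,\tau_0^->t)=e^{\lambda_*x+\Psi(\lambda_*)t}\,\mathbf{E}_x^{\lambda_*}[e^{-\lambda_*\xi_t}1_{\{\xi_t>y,\tau_0^->t\}}]$ and applying the conditioned limit theorem to the mean-zero process $\mathbf{P}^{\lambda_*}$, whose variance parameter is $\Psi''(\lambda_*)$ and whose relevant harmonic functions are $R^{*}$ and (by time reversal at the terminal time) $\widehat{R}^{*}$; this produces the $t^{-1/2}e^{-\alpha t}$, $e^{-\alpha t}$ and $t^{-3/2}e^{(\alpha-\Psi(\lambda_*))t}$ rates.

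The substantive step is then the nonlinear correction, i.e. passing from the first moment to the probability. From $1-\prod_u(1-a_u)\le\sum_u a_u$ and the Feynman--Kac exponent above, monotonicity of $\varphi$, the bound $\P_z(M_u>y)\le g(u)$, and \eqref{Integral-of-phi-is-finite} one gets $\exp\{-\int_0^t\varphi(\P_{\xi_s}(M_{t-s}>y))\,\d s\}\ge\exp\{-\int_0^\infty\varphi(g(u))\,\d u\}=C_{sub}$; matching this with a corresponding upper bound along the conditioned paths shows that in parts (1) and (2) --- where, conditionally on survival, the surviving particles lie at the macroscopic scales $\sqrt t$ and $\mathbf{E}_0[\xi_1]t+O(\sqrt t)$ --- the correction is exactly the universal constant $C_{sub}$, so $\P_x(M_t>a_t)\sim C_{sub}\,\E_x[Z^0_t((a_t,\infty))]$ and the stated formulas follow from the first-moment asymptotics. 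In part (3) the surviving particles do not escape and the population relaxes to a quasi-stationary configuration, so one restarts at time $t-N$ and feeds $z\mapsto\P_z(M_N>y)$ into the part-(3) argument of Theorem \ref{thm-survival-probability}; letting $t\to\infty$ and then $N\to\infty$ yields the claimed formula with $C_1(y)=\lim_{N\to\infty}e^{(\alpha-\Psi(\lambda_*))N}\int_{\R_+}\P_z(M_N>y)e^{-\lambda_*z}\widehat{R}^{*}(z)\,\d z$, the bookkeeping being identical to the $\zeta$ case because flowing $\P_\cdot(M_N>y)$ forward by time $N'$ produces $\P_\cdot(M_{N+N'}>y)$. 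Finiteness of this limit follows from $\P_z(M_N>y)\le\P_z(\zeta>N)$, whence $C_1(y)\le C_0<\infty$, and strict positivity from $\P_z(M_N>y)>0$ for $z$ large.

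I expect the main obstacle to be the L\'{e}vy-theoretic input: obtaining the asymptotics of the killed (and, when $\mathbf{E}_0[\xi_1]<0$, $\lambda_*$-tilted) process with the constants $R(x)$, $R^{*}(x)$, $\widehat{R}^{*}(z)$, $\sigma^2$ and $\Psi''(\lambda_*)$ in exactly the right places, since the explicit Brownian computations of \cite{Hou2024} must be replaced by fluctuation theory, scale functions $W^{(q)}$, and the conditioned limit theorem, and this is where {\bf (H1)} and {\bf (H2)} enter. A closely related difficulty, specific to part (3), is showing that the branching correction stabilizes --- that the limit defining $C_1(y)$ exists and is strictly positive and finite; this rests on \eqref{LLogL-moment-condition} through the finiteness of $\int_0^\infty\varphi(g(s))\,\d s$ and on the non-lattice assumption.
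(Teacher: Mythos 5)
Your plan follows essentially the same route as the paper: the Feynman--Kac identity for $Q_y(x,t)=\P_x(M_t>y)$ (Lemma \ref{lemma-expression-Q}), the lower bound $Q_y(x,t)\ge C_{sub}e^{-\alpha t}\mathbf{P}_x(\tau_0^->t,\xi_t>y)$ fed into Lemmas \ref{lemma-tau0>t-rho=0}, \ref{lemma-tau0>t-rho>0} and Theorem \ref{lemma-tau0>t-rho<0}, a matching upper bound in which the nonlinear correction is localized to the last $N$ time units and converges to $C_{sub}$ along typical paths (the paper implements this by splitting on $\inf_{r\in[t-N,t]}\xi_r$ and killing the bad event via the Brownian coupling of Lemma \ref{lemma-relation-with-BM} and the reflection principle), and in case (3) restarting at time $t-N$ and applying Theorem \ref{lemma-tau0>t-rho<0} to $z\mapsto e^{\alpha N}\P_z(M_N>y)$ before letting $N\to\infty$. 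The one place your justification as stated is insufficient is the existence and strict positivity of $C_1(y)$: pointwise positivity of $\P_z(M_N>y)$ does not by itself control the exponentially weighted integral, and the paper instead gets existence from the sandwich $\limsup_{t}\le\liminf_{N}$ and $\liminf_{t}\ge\limsup_{N}$ of the restarted expressions, positivity from the $C_{sub}$-lower bound of Lemma \ref{lemma-lower-bound-liminf}(3), and finiteness from the pure first-moment bound via Theorem \ref{lemma-tau0>t-rho<0} with $f=1_{(y,\infty)}$ --- ingredients your proposal already contains but does not assemble for this purpose.
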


Note that
$\P_z(M_N>0)=\P_z(\zeta>N)$ for $z>0$, thus $C_0$ in Theorem \ref{thm-survival-probability}  and $C_1(0)$ in Theorem \ref{thm-tail probability-Mt} are the same.
	Combining the result above with \eqref{e:R} and \eqref{e:scaleW}, we immediately recover
	 \cite[Theorem 1.3]{Hou2024} as a corollary.

	Combining Theorems \ref{thm-survival-probability} and \ref{thm-tail probability-Mt},
	we immediately get the following Yaglom-type conditional limit theorem.
	
	\begin{corollary}\label{thm-yaglom-limit-theorem}
		Assume \eqref{LLogL-moment-condition} holds and
		$\xi$ is a L\'{e}vy process satisfying {\bf (H1)}.
		Let $x>0$.
		\begin{enumerate}
			\item
						If $\mathbf{E}_0\left(\xi_1\right)=0$,
			then we have
			\begin{align}
				\P_x\left(\frac{M_t}{\sqrt{t}}\in \cdot \Big|\zeta>t\right)
				\quad \stackrel{\mathrm{d}}{\Longrightarrow} \quad
							\mathcal{R}(\cdot),
			\end{align}
					where $\mathcal{R}$ is the
			Rayleigh distribution with density $\rho(z)=ze^{-z^2/2}1_{\{z>0\}}$.
			
			\item
			If
					$\mathbf{E}_0\left(\xi_1\right)>0$,
			then we have
			\begin{align}
				\P_x\left(\frac{M_t-\mathbf{E}_0\left(\xi_1\right) t}{\sqrt{t}}\in \cdot \Big|\zeta>t\right)
				\quad \stackrel{\mathrm{d}}{\Longrightarrow} \quad
N(0, \sigma^2),
			\end{align}
where 	$N(0, \sigma^2)$ is normal distribution with  mean $0$  and variance $\sigma^2$.
			\item
			If $\mathbf{E}_0\left(\xi_1\right)<0$ and $\xi$ satisfies {\bf (H2)} and {\bf (H3)},
			then there exists a random variable
			$(X,\mathbb{P})$
whose law is independent of $x$ such that
			\begin{align}
				\P_x\left(M_t \in \cdot \Big|\zeta>t\right)
				\quad \stackrel{\mathrm{d}}{\Longrightarrow} \quad
 \mathbb{P}(X\in \cdot).
			\end{align}
		\end{enumerate}
	\end{corollary}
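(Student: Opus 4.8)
The plan is to deduce all three parts of Corollary \ref{thm-yaglom-limit-theorem} by writing the relevant conditional tail probability as a quotient of the large-time asymptotics supplied by Theorems \ref{thm-survival-probability} and \ref{thm-tail probability-Mt}. Since $\{\zeta>t\}=\{M_t>0\}$, for every $y\ge 0$ we have $\{M_t>y\}\subseteq\{\zeta>t\}$, so $\P_x(M_t>y\mid\zeta>t)=\P_x(M_t>y)/\P_x(\zeta>t)$; and in part (2), since $\mathbf{E}_0[\xi_1]>0$ the level $\sqrt t\,y+\mathbf{E}_0[\xi_1]t$ is positive for all large $t$, so the same identity holds with $y$ replaced by that level once $t$ is large (which is all that matters). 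For part (1), dividing the limit of Theorem \ref{thm-tail probability-Mt}(1) by that of Theorem \ref{thm-survival-probability}(1) cancels the scaling $\sqrt t\,e^{\alpha t}$ and the common prefactor $2C_{sub}R(x)/\sqrt{2\pi\sigma^2}$ (in particular the dependence on $x$ disappears), leaving $\lim_{t\to\infty}\P_x(M_t/\sqrt t>y\mid\zeta>t)=e^{-y^2/(2\sigma^2)}$ for $y\ge 0$, which is the (continuous) survival function of $\mathcal R$; convergence of survival functions at every point then gives the claimed weak convergence. Part (2) is identical: dividing Theorem \ref{thm-tail probability-Mt}(2) by Theorem \ref{thm-survival-probability}(2) cancels $e^{\alpha t}$ and $C_{sub}\mathbf{E}_0[\xi_1]W^{(0)}(x)$ and leaves, for every $y\in\R$, the limit $\frac1{\sqrt{2\pi}}\int_{y/\sigma}^{\infty}e^{-z^2/2}\,\mathrm{d}z=N(0,\sigma^2)((y,\infty))$, the continuous survival function of $N(0,\sigma^2)$, whence the weak convergence.

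For part (3), dividing Theorem \ref{thm-tail probability-Mt}(3) by Theorem \ref{thm-survival-probability}(3) cancels the scaling $t^{3/2}e^{(\alpha-\Psi(\lambda_*))t}$ and the prefactor $2R^*(x)e^{\lambda_*x}/\sqrt{2\pi\Psi''(\lambda_*)^3}$, so that for every $y\ge 0$
\[
\lim_{t\to\infty}\P_x\big(M_t>y\mid\zeta>t\big)=\frac{C_1(y)}{C_0}=:G(y),
\]
where $C_0=C_1(0)$ (noted just before the corollary), so $G(0)=1$ and $G$ is independent of $x$. Since $y\mapsto\P_z(M_N>y)$ is non-increasing and $C_1(y)$ is a limit in $N$ of integrals of such functions, $G$ is non-increasing. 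Granting the tightness statement
\begin{equation}\label{eq:tight-yaglom}
\lim_{y\to\infty}G(y)=0,
\end{equation}
the right-continuous modification of $y\mapsto 1-G(y)$ (extended by $0$ on $(-\infty,0)$) is the distribution function of a probability measure $\mathbb{P}(X\in\cdot)$ on $\R_+$, independent of $x$; as $\P_x(M_t\le y\mid\zeta>t)\to 1-G(y)$ for every $y\ge 0$, hence at every continuity point of that distribution function, $\P_x(M_t\in\cdot\mid\zeta>t)$ converges weakly to $\mathbb{P}(X\in\cdot)$, which proves (3).

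The one step requiring genuine work — and the one I expect to be the main obstacle — is the tightness estimate \eqref{eq:tight-yaglom}, equivalently $\lim_{y\to\infty}C_1(y)=0$. The plan for it is a first-moment (many-to-one) bound followed by the exponential change of measure \eqref{change-measure-levy}. By the many-to-one formula, for $y\ge 0$,
\[
\P_x(M_t>y)\le\E_x\big[\#\{u\in N_t:\,X_u(t)>y,\ \inf_{s\le t}X_u(s)>0\}\big]=e^{-\alpha t}\,\mathbf{P}_x\big(\xi_t>y,\ \tau_0^->t\big),
\]
and rewriting the right-hand side under $\mathbf{P}_x^{\lambda_*}$ by \eqref{change-measure-levy} gives
\[
t^{3/2}e^{(\alpha-\Psi(\lambda_*))t}\,\P_x(M_t>y)\le e^{\lambda_*x}\,t^{3/2}\,\mathbf{E}_x^{\lambda_*}\big[e^{-\lambda_*\xi_t}\,1_{\{\xi_t>y\}}\,1_{\{\tau_0^->t\}}\big].
\]
Under $\mathbf{P}_x^{\lambda_*}$ the process $\xi$ is a mean-zero spectrally negative L\'evy process still satisfying {\bf (H1)} and {\bf (H2)}, so the conditioned limit theorem (cf.\ Theorem \ref{lemma-tau0>t-rho<0}) together with the accompanying local-limit upper bound on the sub-probability density of $\xi_t$ on $\{\tau_0^->t\}$ supplies a constant $C(x)<\infty$ with
\[
t^{3/2}\,\mathbf{E}_x^{\lambda_*}\big[e^{-\lambda_*\xi_t}\,1_{\{\xi_t>y\}}\,1_{\{\tau_0^->t\}}\big]\le C(x)\int_y^{\infty}e^{-\lambda_*z}\,\widehat{R}^*(z)\,\mathrm{d}z\qquad(t\ge 1);
\]
since $\widehat{R}^*(z)\le c(1+z)$, the right-hand side is finite and tends to $0$ as $y\to\infty$, uniformly in $t$. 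Letting $t\to\infty$ and invoking Theorem \ref{thm-tail probability-Mt}(3) then yields $C_1(y)\le C'\int_y^{\infty}e^{-\lambda_*z}\widehat{R}^*(z)\,\mathrm{d}z\to 0$, which is \eqref{eq:tight-yaglom}. The remaining points — the quotient identities, the cancellations, and the elementary monotonicity/measurability properties of $G$ — are routine bookkeeping.
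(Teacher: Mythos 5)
Your proposal is correct and follows essentially the same route as the paper: parts (1)--(2) are exactly the quotient of Theorems \ref{thm-tail probability-Mt}(1,2) and \ref{thm-survival-probability}(1,2), and for part (3) the paper likewise combines the quotient limit with the tail bound $Q_y(x,t)\le e^{-\alpha t}\mathbf{P}_x(\tau_0^->t,\xi_t>y)$ (obtained there from the Feynman--Kac representation of Lemma \ref{lemma-expression-Q} rather than your many-to-one bound, and used together with the lower bound $u(x,t)\ge C_{sub}e^{-\alpha t}\mathbf{P}_x(\tau_0^->t)$ to get tightness from Theorem \ref{lemma-tau0>t-rho<0}). The only blemish is your intermediate claim of a uniform-in-$t$ ($t\ge 1$) bound via an unstated ``local-limit upper bound'' -- Theorem \ref{lemma-tau0>t-rho<0} is only a limit statement -- but this is harmless since your final step uses only the $t\to\infty$ limit with $f=1_{(y,\infty)}$, which yields $C_1(y)\le\int_y^\infty e^{-\lambda_* z}\widehat{R}^*(z)\,\mathrm{d}z\to 0$, a bound the paper itself records at the end of Section \ref{section-proof-thm1-thm2}.
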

	
	In the following theorem we assume that $\xi$
	is a spectrally negative L\'{e}vy process with Laplace exponent $\Psi$. For $q\ge 0$, let
	\begin{align}
		\psi(q):=\sup\{\lambda\ge 0:\Psi(\lambda)=q\}
	\end{align}
	be the right inverse of $\Psi$.
	By Kyprianou \cite[Theorem 8.1]{Kyprianou14},
	for any $q\ge 0$, there exists a  scale function $W^{(q)}:\R \to [0,\infty)$ such that $W^{(q)}(x)=0$ for $x<0$ and $W^{(q)}$ is a strictly increasing and continuous function on $[0,\infty)$ with Laplace transform
	\begin{align}\label{Laplace-W}
		\int_{0}^{\infty}e^{-r x}W^{(q)}(x) \mathrm{d} x
		=\frac{1}{\Psi(r)-q},
		\quad \text{for}~r>\psi(q).
	\end{align}
	In the case when $\xi$
	is a standard Brownian motion with drift $-b$, by using tables of Laplace transforms, one can easily get that
	\begin{equation}\label{e:scaleW}
	W^{(q)}(x)=\frac{2e^{bx}}{\sqrt{b^2+2q}}
\sinh(\sqrt{b^2+2q}x), \quad x\ge 0,~q\ge 0.
	\end{equation}
	 Our third main result is on the asymptotic behavior of the all-time maximum $M$
	of branching killed spectrally negative L\'{e}vy process.

	\begin{theorem}\label{thm-tail-M}
		Assume that \eqref{LLogL-moment-condition}
		holds and that $\xi$
		is a spectrally negative L\'{e}vy process.
There exists a constant $C_2(\alpha)\in(0, 1]$ such that
for any $x>0$,
		\begin{align}
			\lim_{y\to\infty}e^{\psi(\alpha)y}\P_x(M>y)=
			C_2(\alpha)
			W^{(\alpha)}(x)\Psi'(\psi(\alpha)),
		\end{align}
where $W^{(\alpha)}$ is the scale function of $((\xi_t)_{t\ge 0}, (\mathbf{P}_x)_{x\in \R})$.
	\end{theorem}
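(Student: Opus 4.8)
Here is the plan. For fixed $y>0$ I would study $h_y(x):=\P_x(M\ge y)$ on $(0,y)$. Conditioning the initial particle on its first branching time (an independent exponential of rate $\beta$), using the strong Markov property of $\xi$, and using that $\xi$ has no positive jumps (so it leaves $(0,y)$ through the top only by creeping to $y$), one gets the renewal equation
\begin{align*}
h_y(x)=\mathbf{E}_x\big[e^{-\beta\tau_y^{+}};\,\tau_y^{+}<\tau_0^{-}\big]+\beta\,\mathbf{E}_x\!\left[\int_0^{\tau_0^{-}\wedge\tau_y^{+}}e^{-\beta s}\big(1-f(1-h_y(\xi_s))\big)\,\d s\right],\qquad 0<x<y,
\end{align*}
with $h_y\equiv 0$ on $(-\infty,0]$ and $h_y\equiv 1$ on $[y,\infty)$. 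Since $\beta\big(f(1-u)-(1-u)\big)=\Phi(u)=(\alpha+\varphi(u))u$, applying the infinitesimal generator $\mathcal{A}$ of $\xi$ shows $\mathcal{A}h_y=(\alpha+\varphi(h_y))h_y$ on $(0,y)$, whence, by a Feynman--Kac representation (the potential $\alpha+\varphi(h_y)$ being bounded),
\begin{align*}
h_y(x)=\mathbf{E}_x\!\left[\exp\!\Big(-\alpha\tau_y^{+}-\int_0^{\tau_y^{+}}\varphi(h_y(\xi_s))\,\d s\Big);\,\tau_y^{+}<\tau_0^{-}\right].
\end{align*}
Because $\varphi\ge 0$ this gives at once $h_y(x)\le \mathbf{E}_x[e^{-\alpha\tau_y^{+}};\tau_y^{+}<\tau_0^{-}]=W^{(\alpha)}(x)/W^{(\alpha)}(y)$, by the two-sided exit identity for spectrally negative L\'evy processes \cite[Theorem 8.1]{Kyprianou14}. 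Combined with the classical asymptotics $W^{(\alpha)}(y)\sim e^{\psi(\alpha)y}/\Psi'(\psi(\alpha))$ as $y\to\infty$ (from \eqref{Laplace-W}, since $r\mapsto(\Psi(r)-\alpha)^{-1}$ has a simple pole at $r=\psi(\alpha)$ and $\Psi'(\psi(\alpha))>0$), this yields $\limsup_{y\to\infty}e^{\psi(\alpha)y}\P_x(M\ge y)\le \Psi'(\psi(\alpha))W^{(\alpha)}(x)$, and forces the constant $C_2(\alpha)$ below to be $\le 1$.

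Write $v_y(x):=W^{(\alpha)}(x)/W^{(\alpha)}(y)=\mathbf{E}_x[e^{-\alpha\tau_y^{+}};\tau_y^{+}<\tau_0^{-}]$ and let $\widetilde{\mathbf{P}}^{(y)}_x$ be the law of $\xi$ tilted by $v_y(x)^{-1}e^{-\alpha\tau_y^{+}}1_{\{\tau_y^{+}<\tau_0^{-}\}}$, i.e. $\xi$ killed at $0$ and conditioned, with an $\alpha$-discount, to reach $y$ before $0$. Then the representation above reads
\begin{align*}
\frac{h_y(x)}{v_y(x)}=\widetilde{\mathbf{E}}^{(y)}_x\!\left[\exp\!\Big(-\int_0^{\tau_y^{+}}\varphi(h_y(\xi_s))\,\d s\Big)\right]\in(0,1].
\end{align*}
The whole problem then reduces to proving that this ratio converges, as $y\to\infty$, to a constant $C_2(\alpha)\in(0,1]$ not depending on $x$: granting this, $e^{\psi(\alpha)y}\P_x(M>y)=\big(e^{\psi(\alpha)y}/W^{(\alpha)}(y)\big)W^{(\alpha)}(x)\big(h_y(x)/v_y(x)\big)\longrightarrow\Psi'(\psi(\alpha))W^{(\alpha)}(x)C_2(\alpha)$, which is the assertion (note $\{M>y\}$ and $\{M\ge y\}$ differ by a $\P_x$-null set, since with no positive jumps a path attaining $y$ strictly overshoots it).

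Two ingredients go into the convergence. The first is a uniform control of the exponent: since $h_y\le v_y$ and $\varphi$ is nondecreasing, while $v_y(\xi_s)\le c\,e^{-\psi(\alpha)(\xi_{\tau_y^{+}}-\xi_s)}$ on $\{s<\tau_y^{+}\}$ (using $\xi_{\tau_y^{+}}=y$ by creeping and the exponential two-sided bounds on $W^{(\alpha)}$), one bounds $\int_0^{\tau_y^{+}}\varphi(h_y(\xi_s))\,\d s$ by a level integral which, because $\xi$ drifts linearly to $+\infty$ under $\widetilde{\mathbf{P}}^{(y)}_x$ and because of \eqref{property-varphi}, is dominated by a finite constant $K$ independent of $y$; hence $h_y(x)/v_y(x)\ge e^{-K}>0$, so $C_2(\alpha)>0$ once the limit is shown to exist. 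The second ingredient is the conditioned limit theorem: read on events up to the first-passage time, $\widetilde{\mathbf{P}}^{(y)}_x$ coincides with the Doob $h$-transform of $\xi$ (killed at $0$) by the $\alpha$-invariant function $W^{(\alpha)}$, which carries no $y$-dependence, and Theorem \ref{lemma-tau0>t-rho<0} (together with the fluctuation theory of spectrally negative L\'evy processes feeding into it) provides that the path of $\xi$ reversed from its first passage time $\tau_y^{+}$ converges in law as $y\to\infty$ to a limiting process that does not depend on $x$; consequently $\int_0^{\tau_y^{+}}\varphi(h_y(\xi_s))\,\d s$ converges in law to a finite variable $\Xi$ whose distribution is the same for every $x$. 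Dominated convergence, legitimated by the uniform bound $e^{-K}$, then gives $h_y(x)/v_y(x)\to\mathbb{E}[e^{-\Xi}]=:C_2(\alpha)\in[e^{-K},1]$, as required.

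The delicate point — and the main obstacle — is precisely this last convergence: controlling the accumulated effect of the nonlinearity $\varphi$ along a trajectory that must climb all the way to a far-away level $y$. It is here that the $L\log L$ hypothesis \eqref{LLogL-moment-condition} is essential, entering twice: through \eqref{property-varphi}, which makes the exponent integrable and hence yields the uniform lower bound $e^{-K}>0$ (the analogue of the finiteness of $\int_0^\infty\varphi(g(s))\,\d s$ behind \eqref{Constant-C-sub}), and through the conditioned limit theorem, which supplies the convergence in law of the time-reversed excursion up to $y$ and the $x$-independence of the limit. Apart from these inputs, no regularity of $\xi$ beyond being spectrally negative is used, which is why the statement holds for all spectrally negative L\'evy processes; and, as a consistency check, when $f$ is affine one has $\varphi\equiv 0$, $\Xi\equiv 0$, and $C_2(\alpha)=1$, in agreement with $h_y=v_y$.
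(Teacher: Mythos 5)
Your skeleton matches the paper's up to the crucial step: the Feynman--Kac identity $h_y(x)=\mathbf{E}_x[\exp(-\alpha\tau_y^{+}-\int_0^{\tau_y^{+}}\varphi(h_y(\xi_s))\,\mathrm{d}s);\tau_y^{+}<\tau_0^{-}]$ is exactly the paper's Lemma \ref{lemma-expression-v}, the bound $h_y\le W^{(\alpha)}(x)/W^{(\alpha)}(y)$ is Lemma \ref{lemma-upper-bound-v}, the tilting by $e^{-\alpha\tau_y^+}$ is Lemma \ref{lemma-change-measure}, and your uniform lower bound $e^{-K}$ via \eqref{property-varphi} mirrors the paper's proof that $C_*(\alpha)>0$. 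But the heart of the theorem --- existence of the limit of the ratio $h_y(x)/v_y(x)$ as $y\to\infty$ --- is not proved in your proposal. You assert that Theorem \ref{lemma-tau0>t-rho<0} "provides that the path of $\xi$ reversed from its first passage time $\tau_y^{+}$ converges in law" and hence that $\int_0^{\tau_y^{+}}\varphi(h_y(\xi_s))\,\mathrm{d}s$ converges in law to some $\Xi$. This is a non sequitur: Theorem \ref{lemma-tau0>t-rho<0} is a large-$t$ conditioned limit theorem for $\mathbf{E}_x(f(\xi_t),\tau_0^->t)$ in the case $\mathbf{E}_0[\xi_1]<0$, proved under {\bf (H1)} and {\bf (H2)}; it says nothing about the law of the pre-$\tau_y^+$ path (reversed or not) under your tilted measure $\widetilde{\mathbf{P}}^{(y)}_x$. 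Worse, Theorem \ref{thm-tail-M} is stated without {\bf (H1)}, {\bf (H2)} or any sign condition on $\mathbf{E}_0[\xi_1]$, so no argument resting on that theorem can cover the claimed generality. Finally, even granting some path convergence, your functional involves the unknown, $y$-dependent function $h_y$ itself; you only control it from above by $e^{-\psi(\alpha)(y-\cdot)}$, which yields bounds but not convergence of $\varphi(h_y(\xi_s))$ along the path, so the existence (and $x$-independence) of $\Xi$ remains exactly the open point.

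The paper closes this gap by an argument you do not have: it splits the passage at the intermediate level $y-y^\gamma$, writes $v(x,y)=e^{(x-y)\psi(\alpha)}A_1(x,y)A_2(y)$, shows via \eqref{property-varphi} that the nonlinear exponent is negligible on the stretch up to $y-y^\gamma$ (so $A_1(x,y)\to e^{-\psi(\alpha)x}\Psi'(\psi(\alpha))W^{(\alpha)}(x)$ by the two-sided exit identity and \eqref{limit-behvior-W}), and then proves that the top-piece quantity $G(y)=\mathbf{E}_{y-y^{\gamma}}^{\psi(\alpha)}[e^{-\int_0^{\tau_y^+}\varphi(v(\xi_s,y))\,\mathrm{d}s}]$ is \emph{monotone} in $y$ --- using spatial homogeneity of $\xi$ together with the comparison $v(x+w,y+w)\ge v(x,y)$ --- so its limit $C_*(\alpha)$ exists without any weak-convergence statement about paths; positivity then follows from the ladder-time estimate $G(y)\ge\exp\{-\mathbf{E}_0^{\psi(\alpha)}(\tau_1^+)\sum_n\varphi(e^{-n\psi(\alpha)})\}$. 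In this scheme the limit is manifestly independent of $x$ because $A_2(y)$ never involves $x$. To repair your proof you would need either to reproduce such a monotonicity/stabilization argument for $h_y$ near the level $y$, or to prove a genuine first-passage path limit theorem under the $W^{(\alpha)}$-transform together with a stabilization result for $h_y(y-\cdot)$; as written, the decisive step is missing.
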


\begin{remark}
	The reason we consider spectrally negative L\'{e}vy processes here, rather than general L\'{e}vy processes, is that the proof of Theorem \ref{thm-tail-M} is closely related to the two-sided exit problem. For general L\'{e}vy processes, there are no tractable expressions for quantities of interest related to the two-sided exit problem.
	Combining the result above with \eqref{e:scaleW}, we immediately recover \cite[Theorem 1.2]{Hou2024} as a corollary.
	Profeta \cite[Theorem 1.1]{Profeta24}  proved the following asymptotic behavior of the all-time maximum $\widetilde{M}$ for spectrally negative branching L\'{e}vy processes without killing
	\begin{align}\label{tail-behavior-M-without-killing}
		\mathbb{P}(\widetilde{M}\ge x) \sim \kappa e^{-\psi(\alpha) x},
\quad \mbox{ as } x\to\infty,
	\end{align}
under the third-moment condition on the offspring distribution $\{p_k\}_{k\ge 0}$, 	
	where $\kappa$ is a positive constant. Comparing  Theorem \ref{thm-tail-M} with \eqref{tail-behavior-M-without-killing},
	we observe that the killing barrier does not affect the exponential
    decay rate of the tail probability of the all-time maximum, it only affects  
    the limits after the same exponential scaling.
\end{remark}

	\subsection{Proof strategies and organization of the paper}
	The rest of the paper is organized as follows. In Section \ref{Preliminary}, we give some results on	L\'{e}vy processes which will be used in the proofs of our main results. We establish the conditioned limit theorem for	L\'{e}vy processes
	in Section \ref{section-CLT-LP}. The proofs of Theorems \ref{thm-survival-probability} and \ref{thm-tail probability-Mt} are given in Section \ref{section-proof-thm1-thm2}, and the proof of Theorem \ref{thm-tail-M} is given in Section \ref{section-proof-thm-tail-M}.

	Now we sketch the main idea of the proof of Theorem \ref{thm-survival-probability}. The main idea for the proof of Theorem \ref{thm-tail probability-Mt} is similar, and Corollary \ref{thm-yaglom-limit-theorem} follows from
	Theorems  \ref{thm-survival-probability} and  \ref{thm-tail probability-Mt}.
	For any $x,t>0$, let
	\begin{align}
		u(x,t):=\mathbb{P}_x(\zeta>t).
	\end{align}
	In Lemma \ref{lemma-expression-Q}, we derive a representation for $u(x,t)$. Lemma \ref{lemma-lower-bound-liminf} then establishes a lower bound for $u(x,t)$, while Lemmas \ref{lemma-upper-bound-limsup-rho=0} and \ref{lemma-upper-bound-limsup-rho>0} provide upper bounds for $u(x,t)$ in the cases $\mathbf{E}_0(\xi_1)=0$ and $\mathbf{E}_0(\xi_1)>0$, respectively. Theorem \ref{thm-survival-probability} (1) and (2) follow immediately from the above lemmas. In the case $\mathbf{E}_0(\xi_1)<0$, a quasi-stationary distribution exists, and the proof technique differs from those used in the previous two cases. The analysis of its asymptotic behavior relies on Theorem \ref{lemma-tau0>t-rho<0}, which establishes a conditioned limit theorem for L\'{e}vy processes.

	In this paper, we use $\phi(\cdot)$ to denote the standard normal density, i.e., $\phi(t)=\frac1{\sqrt{2\pi}}e^{-t^2/2}$, use
	$\rho(\cdot)$ to denote the Rayleigh density, i.e.,
	 $\rho(x)=xe^{-x^2/2}1_{\{x>0\}}$, and use $\mathcal{R}(x)$ to denote the Rayleigh distribution function, i.e., $\mathcal{R}(x)=(1-e^{-x^2/2})1_{\{x\ge 0\}}$.
	For $v>0$, we define
	$\phi_v(x)=\frac1{\sqrt{2\pi v}}e^{-x^2/(2v)}$ and
	$\rho_{v}(x)=(x/v)e^{-x^2/(2v)}1_{\{x>0\}}$.
	We use $F(x)\sim G(x)$
as $x\to\infty$ to denote $\lim_{x\to\infty}F(x)/G(x)=1$.
In this paper, capital letters $C_i$ and $T_i$, $i=1, 2, \dots$, are used to denote constants in the statements of results and  their value remain the same throughout the paper. Lower case letters
$c_i$, $i=1, 2, \dots$, are  used for constants used in the proofs and their labeling starts anew
in each proof. $c_i(\epsilon)$ and $C_i(\epsilon)$ mean that the constants $c_i$ and $C_i$
depend on $\epsilon$.

	\section{Preliminaries}\label{Preliminary}
	In this section, we first present
		some preliminary results
	for spectrally negative L\'{e}vy processes,
		followed by a result for general L\'{e}vy processes.
	Assume for now that	$\xi$
	is a spectrally negative L\'{e}vy process with Laplace exponent $\Psi$.
Then for any $x>0$, 
	\begin{align}
		\mathbf{E}_0(\xi_{\tau_x^+}=x|\tau_x^+<\infty)=1.
	\end{align}
	Moreover, it is well known, see \cite[Section 8]{Kyprianou14}, that for 
	any $x>0$ and $q\ge 0$,
	\begin{align}
		\mathbf{E}_0\left(e^{-q \tau_x^+} 1_{\{\tau_x^+<\infty\}}\right)=e^{-\psi(q)x},
	\end{align}
	where $\psi$ is the right inverse of $\Psi$.
	The following result on exit probabilities is contained in \cite[Theorem 8.1]{Kyprianou14}.
	\begin{theorem}\label{thm-exit-problems}
			Assume that $\xi$
			is a spectrally negative L\'{e}vy process with Laplace exponent $\Psi$.
			For any
			$0<x\le y$ and $q\ge 0$,
			\begin{align}
				\mathbf{E}_x\left(e^{-q \tau_y^+} 1_{\{\tau_0^->\tau_y^+\}}\right)=\frac{W^{(q)}(x)}{W^{(q)}(y)},
			\end{align}
			where $W^{(q)}$ is the scale function of $\xi$.
	\end{theorem}

	The following result, which can be found in \cite[Lemma 8.4]{Kyprianou14} and \cite[Proposition 1]{Surya08}, gives
	the relationship between $W_c^{(q)}$ for different values of $q$, $c$,
	and the asymptotic  behavior of $W^{(q)}(x)$ as $x\to \infty$.

	\begin{lemma}\label{lemma-scale-property}
		Assume that $\xi$ 
		is a spectrally negative L\'{e}vy process with Laplace exponent $\Psi$.
		For any $x\ge 0$, the function $q\mapsto W^{(q)}(x)$ may be analytically extended to $q\in\mathbb{C}$.
Furthermore, for any $q\in \mathbb{C}$ and $c\in \R$ with $\Psi(c)<\infty$, we have
		\begin{align}\label{relation-W}
			W^{(q)}(x)=e^{cx} W_c^{(q-\Psi(c))}(x),\quad x\ge 0,
		\end{align}
		where $W_c^{(q-\Psi(c))}$ is the scale function of $\xi^{(c)}$.
		Furthermore,
		\begin{align}\label{limit-behvior-W}
			W^{(q)}(x)
			\sim \frac{e^{\psi(q)x}}{\Psi'(\psi(q))},\quad \textit{as}~x\to\infty.
		\end{align}
	\end{lemma}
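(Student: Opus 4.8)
\textbf{Proof plan for Lemma \ref{lemma-scale-property}.}

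The plan is to prove the three assertions in turn, in each case passing through Laplace transforms and invoking uniqueness of the Laplace transform on $(\psi(q)\vee\psi(q-\Psi(c)),\infty)$ to identify functions. First, for the analytic extension in $q$: fix $x\ge 0$ and recall from \eqref{Laplace-W} that for $r$ large, $\int_0^\infty e^{-rx}W^{(q)}(x)\,\d x=(\Psi(r)-q)^{-1}$. Expanding $(\Psi(r)-q)^{-1}=\sum_{k\ge 0}q^k\Psi(r)^{-k-1}$ for $r$ with $\Psi(r)>|q|$, and recognizing $\Psi(r)^{-k-1}$ as the Laplace transform of the $(k+1)$-fold convolution $W^{(0)*(k+1)}$, one inverts term by term to get the representation $W^{(q)}(x)=\sum_{k\ge 0}q^k W^{(0)*(k+1)}(x)$. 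A crude bound $W^{(0)*(k+1)}(x)\le W^{(0)}(x)^{k+1}x^k/k!$ (or any exponential-in-$k$ bound obtained from monotonicity of $W^{(0)}$ on $[0,x]$) shows this series converges locally uniformly in $q\in\mathbb{C}$, hence defines an entire function of $q$; this is the desired extension. (Alternatively one may simply cite \cite[Lemma 8.4]{Kyprianou14} here, as the statement does.)

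Next, for \eqref{relation-W}: fix $c\in\R$ with $\Psi(c)<\infty$ and $q\in\mathbb{C}$. The process $((\xi_t)_{t\ge 0},(\mathbf P^c_x)_{x\in\R})$ is spectrally negative with Laplace exponent $\Psi_c(\lambda)=\Psi(\lambda+c)-\Psi(c)$, so its scale function $W^{(p)}_c$ satisfies $\int_0^\infty e^{-rx}W^{(p)}_c(x)\,\d x=(\Psi_c(r)-p)^{-1}=(\Psi(r+c)-\Psi(c)-p)^{-1}$ for $r$ large. Taking $p=q-\Psi(c)$ gives $\int_0^\infty e^{-rx}W^{(q-\Psi(c))}_c(x)\,\d x=(\Psi(r+c)-q)^{-1}$. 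On the other hand, $\int_0^\infty e^{-rx}\bigl(e^{cx}W^{(q)}(x)\bigr)\,\d x=\int_0^\infty e^{-(r-c)x}W^{(q)}(x)\,\d x=(\Psi(r-c+c)-q)^{-1}$... more carefully: substituting $s=r-c$ in \eqref{Laplace-W}, $\int_0^\infty e^{-rx}e^{cx}W^{(q)}(x)\,\d x=\int_0^\infty e^{-(r-c)x}W^{(q)}(x)\,\d x=(\Psi(r-c)-q)^{-1}$ — so I must instead write the left side of \eqref{relation-W} as $e^{cx}W^{(q-\Psi(c))}_c(x)$ and transform \emph{that}: its transform is $\int_0^\infty e^{-rx}e^{cx}W^{(q-\Psi(c))}_c(x)\,\d x$, which by the above equals $(\Psi_{-?})$. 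The clean way: both $W^{(q)}(x)$ and $e^{cx}W^{(q-\Psi(c))}_c(x)$ have, for all sufficiently large real $r$, Laplace transform equal to $(\Psi(r)-q)^{-1}$ — for the first by \eqref{Laplace-W}, for the second because $\int_0^\infty e^{-rx}e^{cx}W^{(q-\Psi(c))}_c(x)\,\d x=\int_0^\infty e^{-(r-c)x}W^{(q-\Psi(c))}_c(x)\,\d x=\bigl(\Psi_c(r-c)-(q-\Psi(c))\bigr)^{-1}=(\Psi(r)-\Psi(c)-q+\Psi(c))^{-1}=(\Psi(r)-q)^{-1}$, valid for $r$ large. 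By injectivity of the Laplace transform on functions of at most exponential growth, the two functions agree for a.e.\ $x\ge0$; since both sides of \eqref{relation-W} are continuous on $[0,\infty)$ (each scale function is), they agree everywhere. First I would carry this out for real $q$ where all quantities are nonnegative, and then extend to complex $q$ using the analyticity established in the first part together with the identity theorem.

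Finally, for the asymptotic \eqref{limit-behvior-W}: apply \eqref{relation-W} with $c=\psi(q)$ (assuming $q\ge0$ real, so that $\psi(q)\ge0$ and $\Psi(\psi(q))=q$), which gives $W^{(q)}(x)=e^{\psi(q)x}W^{(0)}_{\psi(q)}(x)$, where $W^{(0)}_{\psi(q)}$ is the $0$-scale function of the process $((\xi_t),(\mathbf P^{\psi(q)}_x))$. That process has Laplace exponent $\Psi_{\psi(q)}$ with $\Psi_{\psi(q)}'(0+)=\Psi'(\psi(q))>0$ (strict positivity because $\psi(q)$ lies to the right of the minimizer $\lambda_*$ when it exists, or because $\Psi$ is increasing past its minimum), so by the renewal-theoretic asymptotics for scale functions of spectrally negative Lévy processes drifting to $+\infty$ — namely $W^{(0)}(y)\to 1/\Psi'(0+)$ as $y\to\infty$ when $\Psi'(0+)>0$, which follows from the identity $\Psi'(0+)W^{(0)}(y)=\mathbf P_y(\tau_0^-=\infty)$ in Theorem \ref{thm-exit-problems}(1) together with $\mathbf P_y(\tau_0^-=\infty)\to1$ — we get $W^{(0)}_{\psi(q)}(x)\to 1/\Psi'(\psi(q))$ as $x\to\infty$. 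Combining, $W^{(q)}(x)\sim e^{\psi(q)x}/\Psi'(\psi(q))$, which is \eqref{limit-behvior-W}. One caveat to handle: if $\psi(q)=0$ (i.e.\ $q=0$ and $\Psi'(0+)\le0$) the change of measure is trivial and the claim is exactly $W^{(0)}(x)\to1/\Psi'(0+)$, valid only when $\Psi'(0+)>0$; when $\Psi'(0+)\le0$ the asymptotic \eqref{limit-behvior-W} should be read at the appropriate $q>0$ where $\psi(q)>0$, and this is the regime in which the lemma is actually applied in the paper.

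The main obstacle is the asymptotic statement \eqref{limit-behvior-W}: reducing it via the change-of-measure identity \eqref{relation-W} to the case $\Psi'(0+)>0$ is routine, but the limit $W^{(0)}(y)\to 1/\Psi'(0+)$ itself rests on the exit identity of Theorem \ref{thm-exit-problems}(1) and the transience of the process, so one must be careful about the edge cases $\psi(q)=0$ and about whether $\Psi'(\psi(q))$ is finite and strictly positive; the first two assertions are essentially bookkeeping with Laplace transforms and the identity theorem. Since the statement cites \cite[Lemma 8.4]{Kyprianou14} and \cite[Proposition 1]{Surya08}, it is legitimate to present the above as a proof sketch and defer the finer renewal-theoretic estimate to those references.
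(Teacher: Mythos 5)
The paper offers no proof of this lemma at all: it is quoted as a known result, with the analyticity and the identity \eqref{relation-W} attributed to \cite[Lemma 8.4]{Kyprianou14} and the asymptotic \eqref{limit-behvior-W} to \cite[Proposition 1]{Surya08}. Your sketch is essentially the standard argument behind those references and is correct in outline: the entire extension via $W^{(q)}(x)=\sum_{k\ge0}q^kW^{(0)*(k+1)}(x)$ with the bound $W^{(0)*(k+1)}(x)\le x^kW^{(0)}(x)^{k+1}/k!$; the identity \eqref{relation-W} by checking that $e^{cx}W_c^{(q-\Psi(c))}(x)$ has Laplace transform $(\Psi_c(r-c)-(q-\Psi(c)))^{-1}=(\Psi(r)-q)^{-1}$ and invoking uniqueness plus continuity, then the identity theorem in $q$; and \eqref{limit-behvior-W} by taking $c=\psi(q)$ and using $\Psi'_{\psi(q)}(0+)W_{\psi(q)}^{(0)}(x)=\mathbf{P}^{\psi(q)}_x(\tau_0^-=\infty)\to1$. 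One small slip in your closing caveat: $\psi(0)=0$ corresponds to $\Psi'(0+)\ge 0$, not $\Psi'(0+)\le 0$; the genuinely degenerate case for \eqref{limit-behvior-W} is only $q=0$ with $\Psi'(0+)=0$ (for every $q>0$ one has $\Psi'(\psi(q))>0$ automatically by convexity and $\Psi(0)=0$), and since the paper applies the lemma with $q=\alpha>0$ this does not affect anything.
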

	
	The following lemma is an important tool for proving Theorem \ref{thm-tail-M}.

	\begin{lemma}\label{lemma-change-measure}
		Assume that $\xi$
		is a spectrally negative L\'{e}vy process with Laplace exponent $\Psi$.
		For any $a>0$, $0<x\le y$ and nonnegative Borel function $h$, we have
		\begin{align}
			\mathbf{E}_x\left(1_{\{\tau_y^{+}<\tau_0^{-}\}} e^{-a \tau_y^{+}-\int_{0}^{\tau_y^{+}}h(\xi_s)\mathrm{d}s}\right)
			=e^{\psi(a)(x-y)}\mathbf{E}_x^{\psi(a)}\left(1_{\{\tau_y^{+}<\tau_0^{-}\}} e^{-\int_{0}^{\tau_y^{+}}h(\xi_s)\mathrm{d}s}\right).
		\end{align}
	\end{lemma}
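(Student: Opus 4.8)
The plan is to prove this by the exponential change of measure \eqref{change-measure} with $c=\psi(a)$, exactly the martingale that defines $\mathbf{P}_x^{\psi(a)}$. The key point is that $\Psi(\psi(a))=a$, so the Radon--Nikodym density simplifies in a way that produces precisely the factor $e^{-a\tau_y^+}$ we want to absorb. Concretely, I would first fix a deterministic time $t>0$ and work on the event $\{\tau_y^+\le t\}\cap\{\tau_y^+<\tau_0^-\}$, which is $\mathcal F_{\tau_y^+}$-measurable, then pass to the limit $t\to\infty$ using monotone convergence.

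First I would record that, since $\xi$ is spectrally negative, it creeps upward, so $\xi_{\tau_y^+}=y$ on $\{\tau_y^+<\infty\}$. Next, applying the optional stopping theorem to the $\mathbf{P}_x$-martingale $e^{\psi(a)(\xi_t-x)-\Psi(\psi(a))t}=e^{\psi(a)(\xi_t-x)-at}$ at the bounded stopping time $\tau_y^+\wedge t$, and using the change-of-measure identity \eqref{change-measure} restricted to $\mathcal F_{\tau_y^+\wedge t}$ together with the fact that $\{\tau_y^+<\tau_0^-\}\cap\{\tau_y^+\le t\}$ and the functional $\int_0^{\tau_y^+}h(\xi_s)\,\mathrm ds$ are measurable with respect to $\mathcal F_{\tau_y^+}$, I would write
\begin{align}
\mathbf{E}_x\!\left[1_{\{\tau_y^+<\tau_0^-,\,\tau_y^+\le t\}}\,e^{-a\tau_y^+-\int_0^{\tau_y^+}h(\xi_s)\,\mathrm ds}\right]
&=\mathbf{E}_x\!\left[1_{\{\tau_y^+<\tau_0^-,\,\tau_y^+\le t\}}\,e^{\psi(a)(\xi_{\tau_y^+}-x)-a\tau_y^+}\,e^{-\psi(a)(y-x)-\int_0^{\tau_y^+}h(\xi_s)\,\mathrm ds}\right]\nonumber\\
&=e^{\psi(a)(x-y)}\,\mathbf{E}_x^{\psi(a)}\!\left[1_{\{\tau_y^+<\tau_0^-,\,\tau_y^+\le t\}}\,e^{-\int_0^{\tau_y^+}h(\xi_s)\,\mathrm ds}\right],
\end{align}
where in the first equality I used $\xi_{\tau_y^+}=y$ to replace $e^{\psi(a)(\xi_{\tau_y^+}-x)}$ by $e^{\psi(a)(y-x)}$ and pull it out of the expectation, and in the second equality I used \eqref{change-measure} to identify the $\mathbf{P}_x$-expectation of the density against an $\mathcal F_{\tau_y^+\wedge t}$-measurable functional with the corresponding $\mathbf{P}_x^{\psi(a)}$-expectation. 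Finally, letting $t\to\infty$, both sides increase to the claimed quantities by monotone convergence (the integrands are nonnegative, and $h\ge0$ so the exponential is bounded by $1$), which gives the identity.

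I expect the main obstacle to be the careful bookkeeping of measurability at the random time $\tau_y^+$: one has to justify that the change of measure \eqref{change-measure}, which is stated for fixed $t$ on $\mathcal F_t$, may be applied at the stopped $\sigma$-algebra $\mathcal F_{\tau_y^+\wedge t}$. This is where the optional stopping theorem for the density martingale is essential, and it is the reason for introducing the auxiliary truncation at the deterministic time $t$ before taking the limit. Everything else — the creeping property $\xi_{\tau_y^+}=y$, the simplification $\Psi(\psi(a))=a$, and the monotone passage to the limit — is routine once this point is in place. A minor point to note is that when $\tau_y^+=\infty$ the integrand vanishes on the left (because of the factor $e^{-a\tau_y^+}$) and the event $\{\tau_y^+<\tau_0^-\}$ forces $\tau_y^+<\infty$ on the right as well, so there is no contribution from $\{\tau_y^+=\infty\}$ and no issue in the limit.
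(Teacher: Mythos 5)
Your proof is correct and follows essentially the same route as the paper: the change of measure \eqref{change-measure} with $c=\psi(a)$ (so $\Psi(\psi(a))=a$), truncation at a deterministic time $t$, optional stopping of the density martingale at $\tau_y^+\wedge t$ together with the absence of positive jumps giving $\xi_{\tau_y^+}=y$, and monotone convergence as $t\to\infty$. The only cosmetic difference is that you apply the density directly on the stopped $\sigma$-algebra under $\mathbf{P}_x$ (and note that $\{\tau_y^+<\tau_0^-\}$ already forces $\tau_y^+<\infty$), while the paper conditions the reciprocal density under $\mathbf{P}_x^{\psi(a)}$ and invokes $\mathbf{P}_x^{\psi(a)}(\tau_y^+<\infty)=1$; these are equivalent formulations of the same argument.
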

	\begin{proof}
	By Theorem 6 on p16 of \cite{Chung05},
	$\{\tau_y^{+}< \tau_0^{-}\} \cap \{\tau_y^{+}< t\}=\{\tau_y^{+}\land t< \tau_0^{-}\} \cap \{\tau_y^{+}\land t < t\}$ is $\mathcal{F}_{\tau_y^{+}\land t}$-measurable.
For $a>0$, since $e^{-a \tau_y^{+}}1_{\{\tau_y^+=\infty\}}$=0,
		using \eqref{change-measure} with $c=\psi(a)$, we have
		\begin{align}\label{change-measue-stopping-time}
			&\mathbf{E}_x\left(
		1_{\{\tau_y^{+}<\tau_0^{-}\}}
		e^{-a \tau_y^{+}-\int_{0}^{\tau_y^{+}}h(\xi_s)\mathrm{d}s}\right)
			=\lim_{t\to\infty}\mathbf{E}_x\left(
		1_{\{\tau_y^{+}<\tau_0^{-},\tau_y^+<t\}}
		e^{-a \tau_y^{+}-\int_{0}^{\tau_y^{+}}h(\xi_s)\mathrm{d}s}
		 \right)\\
			&=\lim_{t\to\infty}\mathbf{E}_x^{\psi(a)}\left( e^{-\psi(a)(\xi_t-x)+a t}e^{-a \tau_y^{+}-\int_{0}^{\tau_y^{+}}h(\xi_s)\mathrm{d}s}
		1_{\{\tau_y^{+}<\tau_0^{-},\tau_y^+<t\}}
			\right)\nonumber\\
			&=\lim_{t\to\infty}\mathbf{E}_x^{\psi(a)}\left( e^{-a \tau_y^{+}-\int_{0}^{\tau_y^{+}}h(\xi_s)\mathrm{d}s}
		1_{\{\tau_y^{+}<\tau_0^{-},\tau_y^+<t\}}
			\mathbf{E}_x^{\psi(a)}\left(e^{-\psi(a)(\xi_t-x)+a t}\Big|{\mathcal{F}_{\tau_y^+\land t}}\right)
						\right).\nonumber
		\end{align}
		Note that $(e^{-\psi(a)(\xi_t-x)+a t})_{t\ge 0}$ is a $\mathbf{P}_x^{\psi(a)}$ is a martingale with respect to $\mathcal{F}_t$. Using the optional stopping theorem and the absence of positive jumps, we get that, on $\{\tau_y^+<t\}$,
		\begin{align}
			\mathbf{E}_x^{\psi(a)}\left(e^{-\psi(a)(\xi_t-x)+a t}\Big|{\mathcal{F}_{\tau_y^+\land t}}\right)
			=e^{-\psi(a)(\xi_{\tau_y^+\land t}-x)+a (\tau_y^+\land t)}
			=e^{-\psi(a)(y-x)+a \tau_y^+}.
		\end{align}
		Combining this with \eqref{change-measue-stopping-time} and using the fact that $\mathbf{P}_x^{\psi(a)}(\tau_y^+<\infty)=1$, we get
		\begin{align}\label{change-measure-1}
			\mathbf{E}_x\left(
		1_{\{\tau_y^{+}<\tau_0^{-}\}}
		e^{-a \tau_y^{+}-\int_{0}^{\tau_y^{+}}h(\xi_s)\mathrm{d}s}\right)
			&=e^{\psi(a)(x-y)}\mathbf{E}_x^{\psi(a)}\left(
		1_{\{\tau_y^{+}<\tau_0^{-}\}}
		e^{-\int_{0}^{\tau_y^{+}}h(\xi_s)\mathrm{d}s}\right).
		\end{align}
		This gives the desired result.
	\end{proof}

	The following lemma gives the joint asymptotic behavior of the tail of $\tau_0^-$ and the L\'{e}vy process $\xi$ when $\mathbf{E}_0\left(\xi_1\right)>0$,
	and this result holds for general L\'{e}vy processes rather than being restricted to the spectrally negative case.
	
	\begin{lemma}\label{lemma-tau0>t-rho>0}
		Assume that $\xi$
		is a L\'{e}vy process such that
		$\mathbf{E}_0(\xi_1)>0$ and $\sigma^2:=\mathbf{E}_0(\xi_1^2)<\infty$, then for any $x>0$,
		\begin{align}\label{lim-tua_0>t}
			\lim_{t\to\infty}\mathbf{P}_x(\tau_0^->t)
			=\mathbf{P}_x(\tau_0^-=\infty)
			=:q_x>0.
		\end{align}
		Moreover, for any $y\in \R$, we have
		\begin{align}
			\lim_{t\to\infty}\mathbf{P}_x\left(\tau_0^->t,\xi_t-\mathbf{E}_0\left(\xi_1\right) t>\sqrt{t}y\right)=
			\mathbf{P}_x(\tau_0^-=\infty)\int_{\frac{y}{\sigma}}^{\infty}\phi(z)\mathrm{d} z.
		\end{align}
	\end{lemma}
	\begin{proof}
		Note that \eqref{lim-tua_0>t} follows immediately from
		\cite[Proposition 17, p172]{Bertoin96}.
		Fix $t>0$, for $m\in (0, t)$,
		by the Markov property,
		\begin{align}
			\mathbf{P}_x\left(\tau_0^->t,\xi_t-
						    \mathbf{E}_0\left(\xi_1\right) t
			>\sqrt{t}y\right)
			&\le \mathbf{P}_x\left(\tau_0^->m,\xi_t-
						\mathbf{E}_0\left(\xi_1\right) t
			>\sqrt{t}y\right)\\
			&=\mathbf{E}_x
					\left(
				1_{\{\tau_0^->m\}}\mathbf{P}_{\xi_m}\left(\xi_{t-m}
							-\mathbf{E}_0\left(\xi_1\right)  t
				>\sqrt{t}y\right)
					\right).
		\end{align}
		By the  central limit theorem, for any $z$, as $t\to \infty$, we get
		\begin{align}\label{central-limit-theorem-xi}
			\mathbf{P}_{z}\left(\xi_{t-m}
					    -\mathbf{E}_0\left(\xi_1\right) t
			>\sqrt{t}y\right) \to
			 \int_{y}^{\infty}
			 \phi_{\sigma^2}(u)
			 \mathrm{d}u.
		\end{align}
		Letting $t\to \infty$ first, then  $m\to \infty$, we get that
		\begin{align}\label{upper-bound-limsup-joint-proba}
			\limsup_{t\to \infty}\mathbf{P}_x\left(\tau_0^->t,\xi_t-
					\mathbf{E}_0\left(\xi_1\right) t
			>\sqrt{t}y\right)
			\le \mathbf{P}_x\left(\tau_0^-=\infty\right) \int_{y}^{\infty}
			\phi_{\sigma^2}(z)
			\mathrm{d}z.
		\end{align}
		On the other hand, we have
		\begin{align}
			\mathbf{P}_x\left(\tau_0^->m,\xi_t-\mathbf{E}_0\left(\xi_1\right)  t>\sqrt{t}y\right)
			\le \mathbf{P}_x\left(\tau_0^->t,\xi_t-\mathbf{E}_0\left(\xi_1\right) t>\sqrt{t}y\right)
			+\mathbf{P}_x\left(\tau_0^-\in(m,t]\right).
		\end{align}
		It follows from \eqref{central-limit-theorem-xi} that
		\begin{align}
			&\lim_{m\to \infty}\lim_{t\to\infty}\mathbf{P}_x\left(\tau_0^->m,\xi_t-\mathbf{E}_0\left(\xi_1\right)  t>\sqrt{t}y\right)\\
			=&\lim_{m\to \infty}\lim_{t\to\infty}\mathbf{E}_x\left(
				1_{\{\tau_0^->m\}}\mathbf{P}_{\xi_m}\left(\xi_{t-m}
				-\mathbf{E}_0\left(\xi_1\right)  t
				>\sqrt{t}y\right)\right)\\
			=&\lim_{m\to \infty}\mathbf{P}_x\left(\tau_0^->m\right) \int_{y}^{\infty}
			\phi_{\sigma^2}(u)
			\mathrm{d}u
			=\mathbf{P}_x\left(\tau_0^-=\infty\right) \int_{y}^{\infty}
			\phi_{\sigma^2}(u)
			\mathrm{d}u ,
		\end{align}
		this combined with
		\begin{align}
			\lim_{m\to \infty}\lim_{t\to\infty}\mathbf{P}_x\left(\tau_0^-\in(m,t]\right)
			=\lim_{m\to \infty}\mathbf{P}_x\left(\tau_0^-\in(m,\infty)\right)
			=0
		\end{align}
		yields that
		\begin{align}
			\liminf_{t\to \infty}\mathbf{P}_x\left(\tau_0^->t,\xi_t-\mathbf{E}_0\left(\xi_1\right)  t>\sqrt{t}y\right)
			\ge \mathbf{P}_x\left(\tau_0^-=\infty\right) \int_{y}^{\infty}
			\phi_{\sigma^2}(z)
			\mathrm{d}z.
		\end{align}
		Combining this with \eqref{upper-bound-limsup-joint-proba}, we get the the desired result.
	\end{proof}

 \section{Conditioned limit theorems for L\'{e}vy processes}\label{section-CLT-LP}
	
	The purpose of this section is to prove Theorem \ref{lemma-tau0>t-rho<0}, a conditioned limit theorem for L\'{e}vy processes. Theorem \ref{lemma-tau0>t-rho<0} will play an important role in this paper.
	We make some preparations first. The following result follows from \cite[Lemmas 2.12 and 4.1]{Hou24}.

	\begin{lemma}\label{lemma-tau0>t-rho=0}
		Assume that $\xi$
		is a L\'{e}vy process satisfying $\mathbf{E}_0(\xi_1)=0$ and {\bf (H1)}.
		 Then for any $x>0$ and $a\in (0,\infty]$, it holds that
		 \begin{align}
			\lim_{t\to\infty}\sqrt{t}\mathbf{P}_x\left(\xi_t\le a\sqrt{t},\tau_0^->t\right)
			=\frac{2R(x)}{\sqrt{2\pi \sigma^2}}\int_{0}^{\frac{a}{\sigma}}
			\rho(z)
			\mathrm{d}z,
		 \end{align}
		 where $\sigma^2:=\mathbf{E}_0(\xi_1^2)$
		 and $\rho(z)$ denotes the Rayleigh density.
		 Furthermore, for any $x>0$ and any bounded continuous function $h$ on $(0,\infty)$, it holds that
		\begin{align}
			\lim_{t\to\infty}\sqrt{t}\mathbf{E}_x\left(h\left(\frac{\xi_t}{\sigma \sqrt{t}}\right)1_{\{\tau_0^->t\}}\right)
			=\frac{2R(x)}{\sqrt{2\pi \sigma^2}}\int_{0}^{\infty}
			\rho(z)
			h(z) \mathrm{d}z.
		\end{align}
	\end{lemma}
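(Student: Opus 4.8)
The plan is to remove the constraint $\{\tau_0^->t\}$ by a Doob $h$‑transform built from the harmonic function $R$, and then to read off the asymptotics from an invariance principle for the resulting \emph{process conditioned to stay positive}.

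First I would use that $\big(R(\xi_s)1_{\{\tau_0^->s\}}\big)_{s\ge0}$ is a $\mathbf{P}_x$‑martingale with initial value $R(x)>0$ (property (3) of $R$ recalled above) to define a probability measure $\mathbf{P}_x^{\uparrow}$ on $\mathcal{F}_t$ by
\[
\frac{\mathrm{d}\mathbf{P}_x^{\uparrow}}{\mathrm{d}\mathbf{P}_x}\Big|_{\mathcal{F}_t}=\frac{R(\xi_t)}{R(x)}\,1_{\{\tau_0^->t\}}.
\]
Under $\mathbf{P}_x^{\uparrow}$ the process $\xi$ is the spectrally negative L\'evy process conditioned to stay positive; in particular $\xi_t\to\infty$ $\mathbf{P}_x^{\uparrow}$‑a.s. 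For any bounded Borel $g$ on $(0,\infty)$, undoing the change of measure gives
\[
\sqrt t\,\mathbf{E}_x\!\left[g\!\left(\frac{\xi_t}{\sigma\sqrt t}\right)1_{\{\tau_0^->t\}}\right]
=\frac{R(x)}{\sigma}\,\mathbf{E}_x^{\uparrow}\!\left[\frac{\sigma\sqrt t}{\xi_t}\cdot\frac{\xi_t}{R(\xi_t)}\cdot g\!\left(\frac{\xi_t}{\sigma\sqrt t}\right)\right],
\]
and choosing $g=1_{(0,a/\sigma]}$ (resp.\ a bounded continuous $g=h$) produces the left‑hand side of the first (resp.\ second) assertion, so it remains to analyse the right‑hand side.

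Next I would invoke the invariance principle for the process conditioned to stay positive (an invariance principle of Bolthausen--Iglehart/Caravenna--Chaumont type; this is one place where {\bf(H1)} is used): as $t\to\infty$, $Z_t:=\xi_t/(\sigma\sqrt t)$ converges in $\mathbf{P}_x^{\uparrow}$‑distribution to $\chi$, the value at time $1$ of a three–dimensional Bessel process started at $0$, which has the $\chi_3$ density $\sqrt{2/\pi}\,z^2e^{-z^2/2}1_{\{z>0\}}$; the limit law does not depend on $x$. Because $R(y)/y\to1$ and $\xi_t\to\infty$, we have $\xi_t/R(\xi_t)\to1$ in $\mathbf{P}_x^{\uparrow}$‑probability, while $\sigma\sqrt t/\xi_t=Z_t^{-1}\Rightarrow\chi^{-1}$ and, since $\chi$ has no atom at $a/\sigma$, also $1_{\{Z_t\le a/\sigma\}}\Rightarrow1_{\{\chi\le a/\sigma\}}$. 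Hence, once the uniform integrability discussed below is granted, the right‑hand side of the last display converges to
\[
\frac{R(x)}{\sigma}\,\mathbf{E}\!\left[\frac{g(\chi)}{\chi}\right]
=\frac{R(x)}{\sigma}\sqrt{\frac2\pi}\int_0^\infty g(z)\,z\,e^{-z^2/2}\,\mathrm{d}z
=\frac{2R(x)}{\sqrt{2\pi\sigma^2}}\int_0^\infty g(z)\,\rho(z)\,\mathrm{d}z,
\]
which for $g=1_{(0,a/\sigma]}$ and for $g=h$ gives exactly the two claimed limits (for $a=\infty$ one uses $\int_0^\infty\rho=1$).

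The main obstacle, and the real content of the proof, is the uniform integrability needed to pass to the limit inside $\mathbf{E}_x^{\uparrow}$: the weight $Z_t^{-1}$ is singular as $Z_t\downarrow0$, and in the indicator case with $a=\infty$ one must also exclude escaping mass at the top. Since $0\le\xi_t/R(\xi_t)\le1$, it is enough to prove $\sup_{t\ge1}\mathbf{E}_x^{\uparrow}\big[(\sigma\sqrt t/\xi_t)^{1+\varepsilon}\big]<\infty$ for some $\varepsilon\in(0,1)$; undoing the change of measure and using $x\le R(x)\le c(1+x)$, this reduces to the near‑boundary estimate
\[
\mathbf{P}_x\!\left(\xi_t\le u\sqrt t,\ \tau_0^->t\right)\le\frac{C(x)\,u^{2}}{\sqrt t}\qquad\text{uniformly for }t\ge1\text{ and }u\in(0,1],
\]
together with $\mathbf{P}_x(\tau_0^->t)\le C(x)/\sqrt t$; a routine integration of these two bounds then yields $\mathbf{E}_x[\xi_t^{-q}1_{\{\tau_0^->t\}}]\le C'(x)t^{-(1+q)/2}$ for $q\in(0,2)$, which gives the desired $L^{1+\varepsilon}$‑bound, hence the uniform integrability of $\big\{\tfrac{\sigma\sqrt t}{\xi_t}\tfrac{\xi_t}{R(\xi_t)}g(Z_t)\big\}_{t\ge1}$ for every bounded $g$. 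The boxed display is a conditioned local limit theorem near the absorbing boundary for $\xi$ killed at $\tau_0^-$ — the L\'evy‑process counterpart of Vatutin--Wachtel's local estimates for random walks — and it is precisely here that the $(2+\delta)$‑th moment in {\bf(H1)} is needed; I expect this near‑boundary control (together with the conditioned invariance principle) to be the entire difficulty, the remainder being $R$‑transform bookkeeping. In the present paper these facts are imported from \cite[Lemmas 2.12 and 4.1]{Hou24}.
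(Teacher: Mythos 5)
Your argument is correct in outline and takes a genuinely different route from the paper. The paper gives no self-contained proof of this lemma: it imports it from \cite[Lemmas 2.12 and 4.1]{Hou24}, where it is established by discretizing to the random-walk skeleton $S_n=\xi_n$, decomposing at the first passage above $t^{1/2-\epsilon}$, coupling with Brownian motion (the analogue of Lemma \ref{lemma-relation-with-BM}), and exploiting the harmonicity of $R$ together with conditioned limit theorems for random walks --- the same machinery that reappears in the proof of Lemma \ref{lemma-effective-conditioned-integral-limit}. You instead perform the Doob $R$-transform to pass to the process conditioned to stay positive, invoke a diffusive-scaling invariance principle for that process (limit law $\chi_3$), and undo the transform. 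Your bookkeeping checks out: the identity $\sqrt t\,\mathbf{E}_x\bigl[g(\xi_t/(\sigma\sqrt t))1_{\{\tau_0^->t\}}\bigr]=\tfrac{R(x)}{\sigma}\mathbf{E}_x^{\uparrow}\bigl[\tfrac{\sigma\sqrt t}{R(\xi_t)}\,g(\xi_t/(\sigma\sqrt t))\bigr]$, the reduction of uniform integrability to $\mathbf{E}_x\bigl[\xi_t^{-q}1_{\{\tau_0^->t\}}\bigr]\le C'(x)\,t^{-(1+q)/2}$ for $q\in(0,2)$ via your two boundary estimates, and the computation of the constant from the $\chi_3$ density all give exactly the factor $2R(x)/\sqrt{2\pi\sigma^2}$ and the Rayleigh integrals in the statement (the case $a=\infty$ and the a.e.-continuity at $z=a/\sigma$ are handled as you say). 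What your route buys is conceptual transparency --- the Rayleigh law appears as the size-biasing of $\chi_3$ by $1/\chi$ --- at the price of needing the conditioned process as an object; note also that no spectral negativity is needed here, consistent with the lemma's hypotheses.

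The one caveat concerns your two external inputs: the invariance principle for the $R$-transformed L\'evy process and the uniform near-boundary bound $\mathbf{P}_x(\xi_t\le u\sqrt t,\ \tau_0^->t)\le C(x)u^2/\sqrt t$ for $t\ge1$, $u\in(0,1]$. Both are available in the literature in random-walk form (Bolthausen/Caravenna--Chaumont, Vatutin--Wachtel), not directly for L\'evy processes under {\bf(H1)}, so they must be transferred to continuous time via the integer skeleton plus control of the oscillation over unit time intervals --- which is precisely the technical content of \cite{Hou24} and of Section 3 of this paper. Moreover, the near-boundary bound is a uniform-in-$u$ version of the very asymptotic being proved, so it has to be sourced independently (for instance by summing a local estimate of the type $\mathbf{P}_x(\tau_0^->t,\ \xi_t\in[y,y+1])\le C(1+x)(1+y)t^{-3/2}$) to avoid circularity; you flag this yourself, and granting these two L\'evy-level facts, the rest of your argument is complete.
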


	Recall that $\delta$ is the constant in {\bf(H1)}
	and $\sigma^2=\mathbb{E}_0(\xi_1^2)$.
	The following result is \cite[Lemma 2.11]{Hou24}.

	\begin{lemma}\label{lemma-relation-with-BM}
		Assume that $\xi$ 
		is a L\'evy process satisfying $\mathbf{E}_0(\xi_1)=0$ and {\bf (H1)}.
		Then there exists a Brownian motion
		$W$ with diffusion coefficient $\sigma^2$
		starting from the origin
		such that
		for any $\kappa  \in(0,\frac{\delta}{2(2+\delta)})$,  there exists a constant $C_3(\kappa)>1$ such that for all $t\ge 1$,
		\begin{align}
			\mathbf{P}_x
			\left(\sup_{s\in[0,1]}|\xi_{ts}-x-W_{ts}|>t^{\frac{1}{2}-\kappa}\right)
			\le \frac{C_3(\kappa)
			}{t^{(\frac{1}{2}-\kappa)(\delta+2)-1}}.
		\end{align}
	\end{lemma}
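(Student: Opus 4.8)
By translation invariance it suffices to treat $x=0$: under $\mathbf{P}_x$ the process $\widetilde\xi_u:=\xi_u-x$, $u\ge 0$, is a mean-zero Lévy process with $\mathbf{E}(\widetilde\xi_1^2)=\sigma^2$ and, by \textbf{(H1)}, $\mathbf{E}(|\widetilde\xi_1|^{p})<\infty$ with $p:=2+\delta\in(2,3)$ (here we write $\mathbf{E},\mathbf{P}$ for the relevant law). Since $\widetilde\xi$ is centered with independent stationary increments it is a martingale, so Doob's $L^p$ maximal inequality gives
\[
\mathbf{E}\Big(\sup_{u\le 1}|\widetilde\xi_u|^{p}\Big)\le\Big(\tfrac{p}{p-1}\Big)^{p}\mathbf{E}\big(|\widetilde\xi_1|^{p}\big)<\infty .
\]
Let $S_k:=\widetilde\xi_k$, $k\ge 0$, be the embedded random walk; its increments are i.i.d., centered, with variance $\sigma^2$ and finite $p$-th moment.

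The heart of the argument is the $L^p$-moment version of the Komlós--Major--Tusnády strong approximation (Sakhanenko's theorem; see also Csörgő--Révész): on a possibly enlarged probability space one can build $(S_k)_{k\ge 0}$ together with a Brownian motion $(W_t)_{t\ge 0}$ of diffusion coefficient $\sigma^2$, started at $0$, such that
\[
\mathbf{P}\Big(\max_{0\le k\le m}|S_k-W_k|>y\Big)\le \frac{c_1\,m}{y^{p}},\qquad m\ge 1,\ y>0,
\]
which follows from Sakhanenko's moment bound $\mathbf{E}\max_{k\le m}|S_k-W_k|^{p}\le c\,m\,\mathbf{E}(|S_1|^{p})$ and Markov's inequality. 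One then realizes the full Lévy process $\widetilde\xi$ on this space by filling the unit intervals $[k-1,k]$ with independent Lévy bridges having the prescribed endpoints, so $\widetilde\xi_k=S_k$ for all $k$. Taking $m=\lceil t\rceil\le 2t$ and $y=\tfrac13 t^{1/2-\kappa}$ gives
\[
\mathbf{P}\Big(\max_{k\le\lceil t\rceil}|S_k-W_k|>\tfrac13 t^{1/2-\kappa}\Big)\le c_2\,t^{\,1-(1/2-\kappa)p}=c_2\,t^{-[(1/2-\kappa)(\delta+2)-1]},
\]
and the hypothesis $\kappa<\delta/(2(2+\delta))$ is precisely what makes this exponent positive.

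Next one controls the oscillations within unit intervals. Set $A:=\max_{1\le k\le\lceil t\rceil}\sup_{u\in[0,1]}|\widetilde\xi_{(k-1)+u}-\widetilde\xi_{k-1}|$ and $B:=\max_{1\le k\le\lceil t\rceil}\sup_{u\in[0,1]}|W_{(k-1)+u}-W_{k-1}|$. By stationarity of increments, a union bound over the $\lceil t\rceil$ blocks, and Markov's inequality using $\sup_{u\le1}|\widetilde\xi_u|\in L^{p}$ from the first step,
\[
\mathbf{P}\big(A>\tfrac13 t^{1/2-\kappa}\big)\le \lceil t\rceil\,\mathbf{E}\big(\sup_{u\le1}|\widetilde\xi_u|^{p}\big)\big(\tfrac13 t^{1/2-\kappa}\big)^{-p}\le c_3\,t^{-[(1/2-\kappa)(\delta+2)-1]},
\]
and similarly $\mathbf{P}(B>\tfrac13 t^{1/2-\kappa})\le c_4\,t^{-[(1/2-\kappa)(\delta+2)-1]}$ (in fact with a Gaussian, via the reflection principle). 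Finally, for $s\in[0,1]$ write $ts=(k-1)+u$ with $k=\lceil ts\rceil\le\lceil t\rceil$ and $u\in[0,1]$; since $\widetilde\xi_{k-1}=S_{k-1}$,
\[
|\xi_{ts}-x-W_{ts}|=|\widetilde\xi_{ts}-W_{ts}|\le|\widetilde\xi_{ts}-S_{k-1}|+|S_{k-1}-W_{k-1}|+|W_{k-1}-W_{ts}|\le A+\max_{j\le\lceil t\rceil}|S_j-W_j|+B ,
\]
so $\{\sup_{s\le1}|\xi_{ts}-x-W_{ts}|>t^{1/2-\kappa}\}$ is contained in the union of the three events estimated above, and summing the bounds yields the lemma with $C_3(\kappa):=c_2+c_3+c_4$ (and the conditions on $\kappa$ guarantee $C_3(\kappa)>1$ after enlarging it if necessary).

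The main obstacle is the quantitative coupling $\mathbf{P}(\max_{k\le m}|S_k-W_k|>y)\le c_1 m/y^{p}$ for $p=2+\delta<3$: this is the delicate $L^p$ analogue of KMT and is \emph{not} obtainable from a soft Skorokhod embedding, whose time change fluctuates on scale $\asymp m^{2/p}$, i.e.\ over a window on which $W$ itself oscillates on exactly the scale $t^{1/2-\kappa}$ one is trying to undercut; the sharper construction is genuinely needed. In the present paper this lemma is quoted from \cite[Lemma 2.11]{Hou24}, where this strong-approximation input is supplied.
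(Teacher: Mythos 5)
Your argument is sound and the exponent bookkeeping checks out: with $p=2+\delta$, the Sakhanenko-type coupling gives $t\cdot t^{-(1/2-\kappa)p}=t^{-[(1/2-\kappa)(\delta+2)-1]}$, the block-oscillation terms for $\xi$ (via Doob's $L^{p}$ maximal inequality) and for $W$ (Gaussian tails) are of the same or smaller order, and $\kappa<\delta/(2(2+\delta))$ is exactly what makes the exponent positive. The comparison with the paper is lopsided, however: the paper does not prove this lemma at all, it imports it verbatim as \cite[Lemma 2.11]{Hou24}, so what you have written is a reconstruction of the proof of the cited result rather than an alternative to an argument in the present paper. Your route --- strong approximation of the embedded walk $S_k=\xi_k$ in the Sakhanenko/Major $L^{p}$ form, extension to the L\'evy process by conditionally independent bridges given the integer skeleton, then a three-term triangle inequality with union bounds over unit blocks --- is the standard one, and you correctly identify that the only genuinely delicate input is the quantitative coupling $\mathbf{P}\left(\max_{k\le m}|S_k-W_k|>y\right)\le c\,m/y^{p}$, which must be quoted from the strong-approximation literature (a soft Skorokhod embedding is indeed too weak at this scale). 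Two points to keep straight if you write this out in full: the coupling must be constructed once and for all (infinite walk together with a Brownian motion on $[0,\infty)$) so that the maximal bound holds simultaneously for every $m=\lceil t\rceil$, $t\ge 1$, since the lemma asserts a single $W$ working for all $t$; and the bridge-filling step should invoke the Markov property to justify that the conditional law of $\xi$ on $[k-1,k]$ given the skeleton depends only on the two endpoints, so that the reconstructed process has the law of the original L\'evy process under $\mathbf{P}_x$.
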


The following lemma is a conditional limit theorem for L\'{e}vy processes. Its proof is similar to that of \cite[Lemma 4.1]{Hou24}, but it provides a more precise bound.
See  \cite[Theorem 2.7]{GX-AIHP} for an  analogous result for random walks.

	\begin{lemma}\label{lemma-effective-conditioned-integral-limit}
		Assume that $\xi$
		is a L\'evy process satisfying
         $\mathbf{E}_0(\xi_1)=0$, $\mathbf{E}_0(\xi_1^2)=\sigma^2$ and {\bf (H1)}.
	Then one can find a constant  $\varepsilon_0\in (0, \frac{\delta}{4(2+\delta)})$
	with the property that for any $\varepsilon\in(0,\varepsilon_0)$ there exist positive constants
	$T_0(\varepsilon)$ and $C_4(\varepsilon)$
	such that for any $x,y>0$ and $t>T_0(\varepsilon)$,
\begin{align}
			\Big| \mathbf{P}_x\left(\frac{\xi_t}{\sigma \sqrt{t}}\le y,\tau_0^->t\right)-\frac{2R(x)}{\sigma\sqrt{2\pi t} }
			\mathcal{R}
			(y)\Big|\le \frac{
			C_4(\varepsilon)
			(1+x)}
			{t^{1/2+\varepsilon}}.
		\end{align}
		\end{lemma}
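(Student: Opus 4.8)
The plan is to run a two-scale comparison with Brownian motion: on a long time interval one replaces $\xi$ by a Brownian motion, using the strong coupling of Lemma~\ref{lemma-relation-with-BM} together with the reflection formula, while the behaviour near the barrier $0$ is kept \emph{exactly} through the harmonic function $R$ and its martingale property. First I would fix parameters $\gamma\in(0,1)$ close to $1$ and $\kappa\in\big(0,\tfrac{\delta}{2(\delta+2)}\big)$ close to the right endpoint (their precise values, depending only on $\delta$ and $\varepsilon$, are chosen at the end), put $m=m(t)=\lfloor t^{\gamma}\rfloor$ and $t'=t-m$, and write, by the Markov property at time $m$,
\begin{align}
\mathbf{P}_x\Big(\tfrac{\xi_t}{\sigma\sqrt t}\le y,\ \tau_0^->t\Big)=\mathbf{E}_x\big[1_{\{\tau_0^->m\}}\,F_t(\xi_m)\big],\qquad F_t(z):=\mathbf{P}_z\big(\xi_{t'}\le y\sigma\sqrt t,\ \tau_0^->t'\big).
\end{align}
(We may assume $t$ is large, the assertion being trivial for bounded $t$ after enlarging $C_4(\varepsilon)$.)

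Next I would analyse $F_t(z)$ via the Gaussian coupling. By Lemma~\ref{lemma-relation-with-BM}, off a $\mathbf{P}_z$-event of probability at most $c_1(t')^{1-(1/2-\kappa)(\delta+2)}$ one has $\sup_{s\le t'}|\xi_s-z-W_s|\le(t')^{1/2-\kappa}$, where $W$ is a Brownian motion of variance $\sigma^2$ from the origin; on that event $\{\tau_0^->t'\}$ and $\{\xi_{t'}\le y\sigma\sqrt t\}$ are trapped between the corresponding events for $z+W_{\cdot}$ with the barrier and the endpoint shifted by $\pm(t')^{1/2-\kappa}$. For Brownian motion the reflection principle gives the exact identity
\begin{align}
\mathbf{P}\Big(a+W_{t'}\le b,\ \inf_{s\le t'}(a+W_s)>0\Big)=\int_0^{b}\frac{1}{\sigma\sqrt{2\pi t'}}\Big(e^{-\frac{(w-a)^2}{2\sigma^2t'}}-e^{-\frac{(w+a)^2}{2\sigma^2t'}}\Big)\,\mathrm{d}w,\qquad a,b>0,
\end{align}
and, writing $e^{-(w-a)^2/2v}-e^{-(w+a)^2/2v}=2e^{-(w^2+a^2)/2v}\sinh(wa/v)$ and Taylor-expanding $\sinh$ — legitimate once $z$ is restricted to $z\le L:=c_2\sqrt{m\log t}=o(\sqrt{t'})$ — I obtain, uniformly for $\ell\le z\le L$ with a slowly growing $\ell=\ell(t)\to\infty$,
\begin{align}
F_t(z)=\frac{2z}{\sigma\sqrt{2\pi t'}}\,\mathcal{R}(\widetilde y_t)+\mathrm{err}(z),\qquad \widetilde y_t=y\sqrt{t/t'}+O\big((t')^{-\kappa}\big),
\end{align}
where $|\mathrm{err}(z)|$ is dominated by the coupling failure probability $(t')^{1-(1/2-\kappa)(\delta+2)}$, by the $\pm(t')^{1/2-\kappa}$-shift of the barrier (which changes $z$ by at most $(t')^{1/2-\kappa}$ in the prefactor), and by the cubic remainder $O(z^3/(t')^{3/2})$.

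Then I would plug this back and split $\mathbf{E}_x[\,\cdot\,1_{\{\tau_0^->m\}}]$ over $\{\xi_m<\ell\}$, $\{\ell\le\xi_m\le L\}$ and $\{\xi_m>L\}$. The two extreme ranges are bounded crudely via $F_t\le1$ and the known killed-process estimates $\mathbf{P}_x(\tau_0^->m,\xi_m<\ell)\le c_3(1+x)\ell/m$ and $\mathbf{P}_x(\tau_0^->m,\xi_m>L)\le c_3(1+x)m^{-1/2}e^{-c_4L^2/m}$, which the choices of $\ell$ and $c_2$ render negligible; on the middle range $\mathrm{err}(\xi_m)$ is handled with a killed moment bound $\mathbf{E}_x[\xi_m^{3}1_{\{\tau_0^->m,\xi_m\le L\}}]\le c_3(1+x)L\sqrt m$, and the main term reduces to $\tfrac{2\mathcal{R}(\widetilde y_t)}{\sigma\sqrt{2\pi t'}}\,\mathbf{E}_x[\xi_m1_{\{\tau_0^->m\}}]$ up to the same truncation errors. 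Because $\big(R(\xi_s)1_{\{\tau_0^->s\}}\big)_{s\ge0}$ is a $\mathbf{P}_x$-martingale (property~(3) following \eqref{def-R}), $\mathbf{E}_x[R(\xi_m)1_{\{\tau_0^->m\}}]=R(x)$, whence
\begin{align}
\mathbf{E}_x\big[\xi_m1_{\{\tau_0^->m\}}\big]=R(x)-\mathbf{E}_x\big[(R(\xi_m)-\xi_m)1_{\{\tau_0^->m\}}\big]=R(x)+O\big((1+x)m^{-c_5}\big),
\end{align}
the last equality using a quantitative form of $R(w)-w=o(w)$ — namely $0\le R(w)-w\le c_3(1+w)^{1-\theta}$ for some $\theta=\theta(\delta)>0$, extracted from the $(2+\delta)$-moment assumption via the overshoot representation $R(w)-w=\mathbf{E}_0[-w-\xi_{\tau_{-w}^-}]\ge0$ — together with $\mathbf{E}_x[\xi_m^{1-\theta}1_{\{\tau_0^->m\}}]\le c_3(1+x)m^{-\theta/2}$.

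Finally I would collect the errors. Passing from $\tfrac{2R(x)}{\sigma\sqrt{2\pi t'}}\mathcal{R}(\widetilde y_t)$ to $\tfrac{2R(x)}{\sigma\sqrt{2\pi t}}\mathcal{R}(y)$ costs $O((1+x)t^{-1/2}\cdot m/t)$, and the remaining contributions are $(1+x)$ times, respectively, $t^{-\kappa-\gamma/2}$ (coupling shift), $t^{-\gamma/2-[(1/2-\kappa)(\delta+2)-1]}$ (coupling failure), $t^{\gamma-3/2}$ (cubic remainder and loss of time in the Gaussian density) and $t^{-1/2-\gamma\theta/2}$ (the $R$-correction), plus terms made smaller than any power of $t$ by the choice of $\ell$ and $c_2$. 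Taking $\gamma$ close enough to $1$ and $\kappa$ close enough to $\tfrac{\delta}{2(\delta+2)}$ pushes every one of these exponents strictly above $\tfrac12+\varepsilon$ provided $\varepsilon<\varepsilon_0$ for a suitable $\varepsilon_0\in\big(0,\tfrac{\delta}{4(\delta+2)}\big)$, which gives the claim. The genuinely delicate point is precisely this calibration: the coupling is useless near the barrier, its error $(t')^{1/2-\kappa}$ swamping the $O(1)$ scale on which $\{\tau_0^->\cdot\}$ is actually decided, so the initial block $[0,m]$ must be treated exactly through the martingale $R(\xi)1_{\{\tau_0^->\cdot\}}$; this forces $m=t^{\gamma}$ close to $t$ (to make the coupling error over $[m,t]$ negligible after integration against $1_{\{\tau_0^->m\}}$) yet strictly below $t$ (to keep the Gaussian time-loss $o(t^{-\varepsilon})$), and the two requirements are compatible only because the $(2+\delta)$-moment assumption provides the slack $\tfrac{\delta}{2(\delta+2)}$ in the exponent $\kappa$.
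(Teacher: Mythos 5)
Your architecture (Gaussian coupling plus reflection away from the barrier, with the barrier handled exactly through the harmonic function $R$) is the right flavor, but the specific decomposition at the deterministic time $m=\lfloor t^\gamma\rfloor$ forces you to control the killed process at time $m$ on the scale $L\asymp\sqrt{m\log t}$, and the estimate you invoke there is false under the standing hypotheses. The lemma assumes only {\bf (H1)} (and is later applied to the dual $\widehat{\xi}$, which for spectrally negative $\xi$ has heavy \emph{positive} jumps), so a single jump of size $>L$ shows that $\mathbf{P}_x(\tau_0^->m,\ \xi_m>L)$ is in general only polynomially small in $L$, not $O\big(m^{-1/2}e^{-c_4L^2/m}\big)$; the bound also fails for $x$ of order $L$ even for Brownian motion, since your $L$ does not depend on $x$. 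More importantly, even a correct tail bound does not save the step: on $\{\xi_m>L\}$ you bound $F_t\le 1$, and any bound of the form $\mathbf{P}_x(\tau_0^->m,\xi_m>L)\lesssim (1+x)m^{-1/2}\cdot(\text{small})$ with "small" only logarithmic (which is what $L=c_2\sqrt{m\log t}$ and a $(1+\delta)$-moment truncation actually give) produces an error of order $(1+x)t^{-\gamma/2}$ up to logarithms, which is far larger than the target $(1+x)t^{-1/2-\varepsilon}$. What is really needed there is a quantitative uniform-integrability statement for the killed walk, e.g.\ $\mathbf{E}_x\big[\xi_m 1_{\{\tau_0^->m,\,\xi_m>L\}}\big]\le C(1+x)t^{-\varepsilon'}$ with $L$ a small power of $t$ above $\sqrt m$, or a killed moment bound $\mathbf{E}_x[\xi_m^{1+\delta}1_{\{\tau_0^->m\}}]\le C(1+x)m^{\delta/2}$; neither is stated or proved, and they are of the same order of difficulty as the estimates the paper imports from \cite{GX} and \cite{Hou24}. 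A similar unproven input is the rate $R(w)-w\le c_3(1+w)^{1-\theta}$: it is plausible under {\bf (H1)} (via moment bounds for descending ladder heights), but it is a genuine extra ingredient that your deterministic-time split makes indispensable, since $\xi_m$ is of order $\sqrt m$ when the main term is extracted. Finally, your closing calibration is internally inconsistent as written: increasing $\kappa$ helps the barrier-shift error $t^{-\kappa-\gamma/2}$ but hurts the coupling-failure error $t^{-\gamma/2-[(1/2-\kappa)(\delta+2)-1]}$, so $\kappa$ must sit in an $\varepsilon$-sized window strictly below $\tfrac{\delta}{2(\delta+2)}$, not simply "close enough" to it (the window is nonempty, so this part is repairable, but it was not verified).

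For contrast, the paper avoids all of these issues by decomposing not at a fixed time but at the first time the embedded walk exceeds the spatial level $t^{1/2-\epsilon}$, restricted to occur before $t^{1-\epsilon}$ (terms $I_1$--$I_4$ in its proof): the post-stopping starting point is then automatically high enough for the coupling, the event of stopping too high or of the coupling failing is controlled by estimates taken from \cite{Hou24} and \cite{GX} (this is where the uniformity in $x$ and the $(2+\delta)$-moment slack are actually used), and $R(x)$ enters exactly through optional stopping, $R(x)=\mathbf{E}_x\big(R(\xi_{\tau^{S,+}_{t^{1/2-\epsilon}}});\tau_0^->\tau^{S,+}_{t^{1/2-\epsilon}}\big)$, so that no rate for $R(w)-w$ and no killed tail/moment bounds at time $m$ are ever needed. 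As it stands, your proposal has a genuine gap in the treatment of the range $\{\xi_m>L\}$ (a false intermediate bound, and an insufficient crude bound even after correction), plus unestablished auxiliary estimates that carry the real technical weight.
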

	\begin{proof}
		Let $W$ be the Brownian motion in Lemma \ref{lemma-relation-with-BM}.
		For any $r>0$ and
				    $\epsilon \in(0,\delta/(4(5+2\delta)))$,
		define
		\begin{align}\label{def-Ar}
			A_r:
			=\Big\{\sup_{s\in[0,1]}|\xi_{sr}-\xi_0-W_{sr}|> r^{\frac{1}{2}-2\epsilon}\Big\}.
		\end{align}
		Let $(S_n)_{n\ge 0}$ be the random walk defined by $S_n:=\xi_n$, $n\in \mathbb{N}$.
		For any $b\in \R$, define
		\begin{align}
			\tau_b^{S,+}:
			=\inf\{j\in \mathbb{N}, |S_j|>b\}.
		\end{align}
		By the Markov property, we have the following decomposition:
		\begin{align}
			\mathbf{P}_x\left(\frac{\xi_t}{\sigma \sqrt{t}}\le y,\tau_0^->t\right)=\sum_{k=1}^{4}I_k,
		\end{align}
		where
		\begin{align*}
			I_1:=\mathbf{P}_x\left(\frac{\xi_t}{\sigma \sqrt{t}}\le y,\tau_0^->t,\tau_{t^{1/2-\epsilon}}^{S,+}>[t^{1-\epsilon}] \right),
		\end{align*}
		\begin{align*}
			I_2:=\sum_{k=1}^{[t^{1-\epsilon}]}
			\mathbf{E}_x
			\left(\mathbf{P}_{\xi_k}\left(\frac{\xi_{t-k}}{\sigma \sqrt{t}}\le y,\tau_0^->t-k,A_{t-k} \right);\tau_0^->k, \tau_{t^{1/2-\epsilon}}^{S,+}=k
				\right),
		\end{align*}
		\begin{align*}
			I_3:=\sum_{k=1}^{[t^{1-\epsilon}]}
			\mathbf{E}_x
			\left(\mathbf{P}_{\xi_k}\left(\frac{\xi_{t-k}}{\sigma \sqrt{t}}\le y,\tau_0^->t-k,A_{t-k}^c \right);\tau_0^->k, \xi_k>t^{(1-\epsilon)/2}, \tau_{t^{1/2-\epsilon}}^{S,+}=k\right),
		\end{align*}
		\begin{align*}
			I_4:=\sum_{k=1}^{[t^{1-\epsilon}]}
			\mathbf{E}_x
			\left(\mathbf{P}_{\xi_k}\left(\frac{\xi_{t-k}}{\sigma \sqrt{t}}\le y,\tau_0^->t-k,A_{t-k}^c \right);\tau_0^->k, \xi_k\le t^{(1-\epsilon)/2}, \tau_{t^{1/2-\epsilon}}^{S,+}=k\right).
		\end{align*}
		We now deal with $I_i$, $i=1, 2, 3, 4$, separately.
		
			(i) Upper bound of $I_1$.
			Set $K:=[t^{\epsilon}-1]$ and $l:=[t^{1-2\epsilon}]$. Since $Kl\le [t^{1-\epsilon}]$, we have
			\begin{align}\label{upper-bound-I1}
				I_1\le &\mathbf{P}_x\left(\tau_{t^{1/2-\epsilon}}^{S,+}>[t^{1-\epsilon}] \right)
				\le \mathbf{P}_0\left(\max_{1\le j\le Kl}|x+S_j|\le t^{1/2-\epsilon}\right)\\
				\le &\mathbf{P}_0\left(\max_{1\le j\le K}|x+S_{lj}|\le t^{1/2-\epsilon}\right)
				,\quad x>0.
			\end{align}
			By the Markov property, we have
			\begin{align}\label{upper-bound-I1-1}
				\mathbf{P}_0\left(\max_{1\le j\le K}|x+S_{lj}|\le t^{1/2-\epsilon}\right)
				\le \left(\sup_{x\in \mathbb{R}_+}\mathbf{P}_0\left(|x+S_{l}|\le t^{1/2-\epsilon}\right)\right)^K.
			\end{align}
			According to
			the display below \cite[(4.6)]{Hou24},
			there exist positive constants $c_1\in(0,1)$
			and
			$t_1(\epsilon)$
			such that for $t>t_1(\epsilon)$,
			\begin{align}
				\mathbf{P}_0\left(|x+S_{l}|\le t^{1/2-\epsilon}\right)<
				c_1
				\quad x\in \mathbb{R}_+.
			\end{align}
			Plugging this into \eqref{upper-bound-I1-1},
			taking $c_2=-\ln c_1$,
			and combining with \eqref{upper-bound-I1},
		we get that for $t>t_1(\epsilon)$,
			\begin{align}\label{upper-bound-I1-lemma3.2}
				I_1
				\le c_1^K=e^{-c_2[t^{\epsilon}-1]}
				\le
							\frac{c_2}{t^{1/2+\epsilon/8}}.
			\end{align}

			(ii) Upper bound of $I_2$.
			By part (ii) of the proof
			of \cite[Lemma 4.1]{Hou24},
			for any
					$\epsilon \in(0,\delta/(4(5+2\delta)))$, we have
			\begin{align}\label{upper-bound-I2-lemma3.2}
				I_2 \le
				\frac{
				C_3(2\epsilon)
				x}{t^{
									1/2+\delta/2
					-(5+2\delta)\epsilon}}
						\le \frac{
					C_3(2\epsilon)
					x}
										{t^{\frac{1}{2} +\epsilon/8}},
			\end{align}
where $C_3(2\epsilon)$
is the constant in Lemma \ref{lemma-relation-with-BM}.

			(iii)
			Upper bound of $I_3$.
			Repeating the argument in part (iii) of
			the  proof of
		\cite[Lemma 4.1]{Hou24}
			leading to \cite[(4.8)]{Hou24} and using \cite[Lemma 7.7]{GX},
	we can find
	$\epsilon_1>0$ with the property that for any
		$\epsilon\in (0,\epsilon_1\land \delta/(4(5+2\delta)))$
	there exists a positive constant
	$c_3(\epsilon)$ such that
\begin{align}\label{upper-bound-I3-lemma3.2}
				I_3&
				\le
				\frac{1}{\sqrt{t}}
				\sum_{k=1}^{[t^{1-\epsilon}]}
				\mathbf{E}_x\left(S_k;\tau_0^{S,-}>k, S_k>
				t^{(1-\epsilon)/2}, \tau_{t^{1/2-\epsilon}}^{S,+}=k\right)\\
				&\le \frac{
				c_3(\epsilon)
				(1+x)}{t^{1+\delta/2-\epsilon(1+\epsilon+\delta/2)}}
							\le  \frac{
					c_3(\epsilon)
					(1+x)}
									{t^{\frac{1}{2}+\epsilon/8}}.
		\end{align}

			(iv)
			Upper bound of $I_4$.
			For $k\le [t^{1-\epsilon}]$ and $x'>0$, define
		\begin{align}
			K(k,x'):=\mathbf{P}_{x'}\left(\frac{\xi_{t-k}}{\sigma \sqrt{t}}\le y,\tau_0^->t-k,A_{t-k}^c \right).
		\end{align}
		Set
		\begin{align}
			x^*:=\frac{x'+(t-k)^{\frac{1}{2}-2\epsilon}}{\sigma}\quad \text{and} \quad
			y^*:=\frac{y\sqrt{t}}{\sqrt{t-k}}+\frac{2}{\sigma (t-k)^{2\epsilon}}.
		\end{align}
		It follows from \cite[(4.13)]{Hou24} that
		\begin{align}\label{upper-bound-K}
			K(k,x')
			\le \frac{2}{\sqrt{2\pi (t-k)}} \left(\frac{x'}{\sigma}+\frac{t^{\frac{1}{2}-2\epsilon}}{\sigma}\right) \int_{0}^{y^*}
			\rho(z)e^{\frac{zx^*}{\sqrt{t-k}}}
			\mathrm{d}z.
		\end{align}
We claim that there exist positive constants $t_2(\epsilon)$, $c_4(\epsilon)$
such that for $t>t_2(\epsilon)$ and $p\ge 2$ sufficiently large,
\begin{align}\label{claim}
			K(k,x') \le \frac{2}{\sigma \sqrt{2\pi t}}  \left(1+\frac{
	                 c_4(\epsilon)
				}{t^{\epsilon/2-\epsilon^p}}\right)
			\left(\mathcal{R}(y)+\frac{
			c_4(\epsilon)
							}{t^{\epsilon/2}}
			\right)\left(x'+t^{\frac{1}{2}-2\epsilon}\right).
		\end{align}
To prove this claim, note
that for any $k\le [t^{1-\epsilon}]$ and $x'\le t^{(1-\epsilon)/2}$,
there exist positive constants $t_3(\epsilon)$,
$c_5(\epsilon)$ and $c_6(\epsilon)$ such that for $t>t_3(\epsilon)$,
the following holds:
		\begin{align}
			\frac{x^*}{\sqrt{t-k}}
			\le c_5(\epsilon)\frac{t^{(1-\epsilon)/2}+t^{\frac{1}{2}-2\epsilon}}{\sqrt{t}}
			\le c_6(\epsilon)t^{-\epsilon/2}.
		\end{align}
For $y\in [0,t^{\epsilon^p}]$ with $p\ge 2$ being a positive constant,  and $z\leq y^*$,
there exist
positive constants $t_4(\epsilon)$
and $c_7(\epsilon)$ such that for $t>t_4(\epsilon)$,
		\begin{align}
			z \le \frac{y\sqrt{t}}{\sqrt{t-k}}+\frac{2}{\sigma (t-k)^{2\epsilon}}
	\leq
	c_7(\epsilon)
			t^{\epsilon^p},
		\end{align}
		and thus there exists a positive constant
	 $c_8(\epsilon)$ such that for $t>t_3(\epsilon)\vee t_4(\epsilon)$,
	\begin{align}
			e^{\frac{zx^*}{\sqrt{t-k}}}
			\le e^{
		c_6(\epsilon)c_7(\epsilon)
				t^{-\epsilon/2}t^{\epsilon^p}}
			\le 1+\frac{
				c_8(\epsilon)
				}{t^{\epsilon/2-\epsilon^p}}, \quad \mbox{ for }z\leq y^*.
		\end{align}
		This implies that when $y\in [0,t^{\epsilon^p}]$,
	for $t>t_3 (\epsilon)\vee t_4(\epsilon)$,
		it holds that
		\begin{align}\label{upper-bound-[0,t^{varepsilon^p}]}
			\int_{0}^{y^*}
			\rho(z)e^{\frac{zx^*}{\sqrt{t-k}}}
			\mathrm{d}z
			&\le \left(1+\frac{
			c_8(\epsilon)
				}{t^{\epsilon/2-\epsilon^p}}\right)\int_{0}^{y^*}
			\rho(z)
			\mathrm{d}z.
		\end{align}
	Moreover, by the definition of $y^*$, there exist positive constants
	$t_5(\epsilon)$  and  $c_9(\epsilon)$ such that for $t>t_5(\epsilon)$,
		\begin{align}
			y^*-y\le \frac{
			c_9(\epsilon)
			}{t^{\varepsilon/2}}
		\end{align}
	Thus using the fact that $\rho(z)\le 1$ for all $z\ge 0$, we get
		that for any
				$\epsilon\in (0, \epsilon_1\land \delta/(4(5+2\delta)))$
		and $t>\max\{t_i(\epsilon): 3\le i\le 5\}$,
		\begin{align}\label{upper-bound-y-small}
			\int_{0}^{y^*}
			\rho(z)e^{\frac{zx^*}{\sqrt{t-k}}}
			 \mathrm{d}z
			&\le\left(1+\frac{
		c_8(\epsilon)
		}{t^{\epsilon/2-\epsilon^p}}
			\right)\left(\mathcal{R}(y) +y^*-y\right)\\
		&\le \left(1+\frac{
		c_8(\epsilon)
		}{t^{\epsilon/2-\epsilon^p}}\right)
			\left(\mathcal{R}(y)+\frac{
			c_9(\epsilon)
			}{t^{\epsilon/2}}\right).
		\end{align}
		For $y>t^{\epsilon^p}$,
		using \cite[(7.31)]{GX}
		we get that
	there exist positive constants
	$t_6(\epsilon)$ and $c_{10}(\epsilon)$
	such that for $t>t_6(\epsilon)$,
	\begin{align}\label{upper-bound-y-large}
			\int_{0}^{y^*}
			\rho(z)e^{\frac{zx^*}{\sqrt{t-k}}}
			\mathrm{d}z
			&\le \left(1+\frac{
			c_{10}(\epsilon)
			}{t^{\epsilon/2-\epsilon^p}}\right) \int_{0}^{y}\rho(z)\mathrm{d}z+
	                 c_{10}(\epsilon)
	                 e^{-
	                 c_{10}(\epsilon)
	                 t^{\epsilon^{p}}}\\
                         &\le  \left(1+\frac{
                         c_{10}(\epsilon)
                         }{t^{\epsilon/2-\epsilon^p}}\right)\left(
			\mathcal{R}(y)+\frac{
			c_{10}(\epsilon)
			}{t^{\epsilon}}
			\right).
		\end{align}
		Combining \eqref{upper-bound-K}, \eqref{upper-bound-y-small} and \eqref{upper-bound-y-large},
		we get that there exists a positive constant $c_{11}(\epsilon)$ such that  for any
		$y>0$ and  $t>\max\{t_i(\epsilon): 3\le i\le 6\}$, we have
		\begin{align}\label{upper-bound-K-1}
			K(k,x') \le \frac{2x^*}{\sqrt{2\pi (t-k)}}  \left(1+\frac{
			c_{11}(\epsilon)
			}{t^{\epsilon/2-\epsilon^p}}\right)\left(\mathcal{R}(y)+\frac{
			c_{11}(\epsilon)
			}{t^{\epsilon/2}}\right).
		\end{align}
		Since $k\le [t^{1-\epsilon}]$,
	there exists a constant
	 $t_7(\epsilon)>0$ such that when $t>t_7(\epsilon)$,
		$$\frac{1}{\sqrt{t-k}}\le \frac{1}{\sqrt{t}}\left(1+\frac{
			c_{12}(\epsilon)
			}{t^{\epsilon}}\right),$$
		 and
		\begin{align}\label{upper-bound-x*}
			\frac{x^*}{\sqrt{ t-k}}
			\le  \frac{1}{\sigma \sqrt{t}}\left(1+\frac{
			c_{13}(\epsilon)
			}{t^{\epsilon}}\right)\left(x'+t^{\frac{1}{2}-2\epsilon}\right),
		\end{align}
		for some positive constants $c_{12}(\epsilon)$ and $c_{13}(\epsilon)$.
            Taking $t_2(\epsilon):=\max\{t_i(\epsilon):3 \le i \le  7\}$,
		then the claim \eqref{claim} follows from \eqref{upper-bound-K-1}
		and \eqref{upper-bound-x*}.
		
		Note that on $\{\tau_{t^{1/2-\epsilon}}^{S,+}=k\}$, we have
		$\xi_k=S_k\ge t^{1/2-\epsilon}$.
        Thus, $t^{1/2-2\epsilon}\le t^{-\epsilon} \xi_k$
        on $\{\tau_{t^{1/2-\epsilon}}^{S,+}=k\}$.
        Also note that, by using that $\big(R(\xi_s)1_{\{\tau_0^->s\}}\big)_{s\ge 0}$ is a $\mathbf{P}_x$-martingale for any $x>0$ and the optional stopping theorem,
        \begin{align}\label{harmonic-R}
        R(x)=\mathbf{E}_x\left(R\big(\xi_{\tau_{t^{1/2-\epsilon}}^{S,+}}\big);\tau_0^->\tau_{t^{1/2-\epsilon}}^{S,+}\right), \quad x>0,\ t>0.
        \end{align}
	Hence, by \eqref{claim}, for $t>t_2(\epsilon)$,
		\begin{align}
			I_4 &\le
			\frac2{\sigma \sqrt{2\pi t}}\left(1+\frac{c_4(\epsilon)}{t^{\epsilon/2-\epsilon^p}}\right)
			\left(\mathcal{R}(y)+\frac{c_4(\epsilon)}{t^{\epsilon/2}}\right)\\
			&\quad \times \sum_{k=1}^{[t^{1-\epsilon}]}
			\mathbf{E}_x\left(\xi_k+t^{\frac{1}{2}-2\epsilon};\tau_0^->k,
			\xi_k \le t^{(1-\varepsilon)/2},
			\tau_{t^{1/2-\epsilon}}^{S,+}=k\right)
			\nonumber\\
			&\le
			\frac{2(1+t^{-\epsilon})}{\sigma \sqrt{2\pi t}}\left(1+\frac{c_4(\epsilon)}{t^{\epsilon/2-\epsilon^p}}\right)\left(\mathcal{R}(y)+\frac{c_4(\epsilon)}{t^{\epsilon/2}}\right)
			\sum_{k=1}^{[t^{1-\epsilon}]}\nonumber\\
			&\quad \times \mathbf{E}_x\left(\xi_k;\tau_0^->k,
			 \xi_k \le t^{(1-\epsilon)/2},
			\tau_{t^{1/2-\epsilon}}^{S,+}=k\right)\nonumber\\
			&\le
			\frac{2(1+t^{-\epsilon})}{\sigma \sqrt{2\pi t}}
			\left(1+\frac{c_4(\epsilon)}{t^{\epsilon/2-\epsilon^p}}\right)\left(\mathcal{R}(y)+\frac{c_4(\epsilon)}{t^{\epsilon/2}}\right)
			\mathbf{E}_x\left(\xi_{\tau_{t^{1/2-\epsilon}}^{S,+}};\tau_0^->\tau_{t^{1/2-\epsilon}}^{S,+},
			\tau_{t^{1/2-\epsilon}}^{S,+}\le [t^{1-\varepsilon}]\right)\nonumber\\
			&\le
			\frac{2(1+t^{-\epsilon})}{\sigma \sqrt{2\pi t}}
			\left(1+\frac{c_4(\epsilon)}{t^{\epsilon/2-\epsilon^p}}\right)\left(\mathcal{R}(y)+\frac{c_4(\epsilon)}{t^{\epsilon/2}}\right)
			\mathbf{E}_x\left(\xi_{\tau_{t^{1/2-\epsilon}}^{S,+}};\tau_0^->\tau_{t^{1/2-\epsilon}}^{S,+}\right)\nonumber\\
			&=
			 \frac{2R(x)(1+t^{-\epsilon})
			 }{\sigma \sqrt{2\pi t}}\left(1+\frac{
			c_4(\epsilon)
				}{t^{\epsilon/2-\epsilon^p}}\right)\left(\mathcal{R}(y)+\frac{c_4(\epsilon)}{t^{\epsilon/2}}\right),
		\end{align}
		where in the last equality we used \eqref{harmonic-R}.
	Thus, there exist positive constants
	$t_8(\epsilon)$, $c_{14}(\epsilon)$ and $c_{15}(\epsilon)$
	such that for $t>t_8(\epsilon)$,
        		\begin{align}\label{upper-bound-I4-lemma3.2}
			I_4
			\le \frac{2 R(x)
			\left(1+\frac{c_{14}(\epsilon)}{t^{\epsilon/2-\epsilon^p}}\right)\left(\mathcal{R}(y) +\frac{c_{14}(\epsilon)}{t^{\epsilon/2}}\right)
			}{\sigma \sqrt{2\pi t}}
				\le \frac{2 R(x)\mathcal{R}
				(y)}{\sigma \sqrt{2\pi t}}
				+\frac{
					c_{15}(\epsilon)
					(1+x)}
					{t^{\frac{1}{2}+\epsilon/8}},
						\end{align}
	where in the last inequality we use the fact that $R(x)\le c(1+x)$ for some constant $c>0$.

	(v) Lower bound of $I_4$.
		Repeating the proof of \cite[(7.40)]{GX},
we get that there exist positive constants
$t_{9}(\epsilon)$,  $c_{16}(\epsilon)$ and $c_{17}(\epsilon)$
such that for $t>t_{9}(\epsilon)$,
		\begin{align}\label{lower-bound-I4-lemma3.2}
			I_4
			&\ge \frac{2 R(x)}{\sigma \sqrt{2\pi t}}
			\left(1-\frac{
				c_{16}(\epsilon)
				}{t^{\epsilon/2-\epsilon^p}}\right)
					\left(\mathcal{R}(y)-\frac{1}{t^{2\epsilon}}\right)
			-\frac{
				c_{16}(\epsilon)
				(1+x)}{t^{\delta/2-\epsilon(1+\epsilon+\delta/2)}}\\
			&\ge \frac{2 R(x)\mathcal{R}(y)}{\sigma \sqrt{2\pi t}}
			-\frac{
				c_{17}(\epsilon)
				(1+x)}
				{t^{\frac{1}{2}+\epsilon/8}}.
		\end{align}
Set $\epsilon_0:=\min \{\delta/(4(5+2\delta)),\epsilon_1\}$,
$\varepsilon:=\epsilon/8$, $\varepsilon_0:=\epsilon_0/8$
and $T_0(\epsilon):=\max\{t_i(\epsilon):i=1,2, 8, 9\}$.
Using the fact that there exists $c_{18}>0$ such that $R(x)\le c_{18}(1+x)$,
		 and combining \eqref{upper-bound-I1-lemma3.2}, \eqref{upper-bound-I2-lemma3.2}, \eqref{upper-bound-I3-lemma3.2}, \eqref{upper-bound-I4-lemma3.2} and \eqref{lower-bound-I4-lemma3.2}, we arrive at the conclusion of the lemma.
	\end{proof}

The duality relations in the following lemma, especially \eqref{duality-formula-1}, are well known in  probabilistic potential theory. We give an elementary proof here for the reader's convenience.
	
	\begin{lemma}\label{lemma-duality-formula}
		For any $t>0$ and any bounded
		Borel functions
		$g,h:\mathbb{R}\to \mathbb{R}_+$,
		 we have
		\begin{align}\label{duality-formula-1}
			\int_{\mathbb{R}_+}
			h(x)\mathbf{E}_x\left(g(\xi_t)1_{\{\tau_0^->t\}}\right)\mathrm{d}x
			=
			 \int_{\mathbb{R}_+}
			g(y)\mathbf{E}_y\left(
			h(\widehat{\xi}_t)1_{\{
			\widehat{ \tau}_0^->t
			\}}\right)\mathrm{d}y
		\end{align}
		and
		\begin{align}\label{duality-formula-2}
					\int_{\mathbb{R}}
			h(x)\mathbf{E}_x\left(g(\xi_t)1_{\{\tau_0^-\le t\}}\right)\mathrm{d}x
					 =\int_{\mathbb{R}}g(y)\mathbf{E}_y\left(h(\widehat{\xi}_t)1_{\{\widehat{ \tau}_0^-\le t\}}\right)\mathrm{d}y.
		\end{align}
	\end{lemma}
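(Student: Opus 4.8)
The plan is to reduce both identities to the single basic duality fact that, for any bounded Borel function $F:\mathbb{R}^2\to\mathbb{R}_+$,
\begin{align}\label{e:basic-duality}
	\int_{\mathbb{R}}h(x)\,\mathbf{E}_x\!\left[F(x,\xi_t)\right]\mathrm{d}x
	=\int_{\mathbb{R}}\int_{\mathbb{R}}h(x)F(x,x+y)\,\mathbb{P}(\xi_t\in x+\mathrm{d}y)\,\mathrm{d}x,
\end{align}
together with the spatial homogeneity of the L\'evy process and the fact that the increment $\xi_t-\xi_0$ under $\mathbf{P}_0$ has the same law as $-\widehat{\xi}_t$ under the corresponding law for the dual process. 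Concretely, writing $\mu_t(\mathrm{d}y)=\mathbf{P}_0(\xi_t\in\mathrm{d}y)$, the law of $\widehat{\xi}_t-\widehat{\xi}_0$ is $\mu_t(-\mathrm{d}y)$, so a change of variables $y=x+z$ followed by $z\mapsto -z$ interchanges the roles of starting point and endpoint. The only subtlety is to encode the event $\{\tau_0^->t\}$ (respectively $\{\tau_0^-\le t\}$) as a functional of the whole path so that this reflection argument applies; for that I would use time reversal of the L\'evy path on $[0,t]$.

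The key steps, in order, are as follows. First, for \eqref{duality-formula-2}, I would note that for a L\'evy process the time-reversed path $(\xi_{(t-s)-}-\xi_t)_{0\le s\le t}$ under $\mathbf{P}_0$ has the same law as $(\widehat{\xi}_s-\widehat{\xi}_0)_{0\le s\le t}$ (with $\widehat{\xi}_0=0$), and under this reversal the event that the original path started at $x$ and dropped below $0$ at some time in $[0,t]$ becomes, after translating by the endpoint, the event that a path started at $\xi_t$ goes below $0$ in $[0,t]$ in the dual process. Combining this path identity with \eqref{e:basic-duality} applied to $F(x,w)=g(w)\mathbf{1}_{\{\text{a path from }x\text{ to }w\text{ hits }(-\infty,0)\}}$ and Fubini's theorem gives \eqref{duality-formula-2} directly, with the integrals over all of $\mathbb{R}$.

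Second, for \eqref{duality-formula-1}, I would observe that $\{\tau_0^->t\}$ requires in particular $\xi_0\ge 0$ and $\xi_t\ge 0$, which is why the integrals are restricted to $\mathbb{R}_+$; the same time-reversal identity as above shows that the event $\{\xi_0=x,\ \inf_{s\le t}\xi_s\ge 0,\ \xi_t=w\}$ is mapped, under reversal and translation, to $\{\widehat{\xi}_0=w,\ \inf_{s\le t}\widehat{\xi}_s\ge 0,\ \widehat{\xi}_t=x\}$ — here one must be slightly careful that the infimum over the closed interval is preserved under reversal, which holds because $\{\tau_0^->t\}=\{\inf_{s\le t}\xi_s\ge 0\}$ up to a $\mathbf{P}_x$-null set for $x>0$ (spectral negativity is irrelevant here, but the almost-sure equality of $\{\tau_0^->t\}$ and $\{\inf_{[0,t]}\xi\ge 0\}$ needs the standard fact that the infimum is not attained at an isolated negative excursion of length zero). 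Applying \eqref{e:basic-duality} with this indicator inside $F$ and Fubini then yields \eqref{duality-formula-1}.

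The main obstacle I anticipate is the careful justification of the time-reversal identity at the level of hitting events rather than at the level of one-dimensional marginals: one must argue that reversing the path on $[0,t]$ does not change whether the running infimum over $[0,t]$ stays $\ge 0$, and handle the left-limit versions $\xi_{(t-s)-}$ correctly so that the reversed process is again (a version of) the dual L\'evy process. A clean way to avoid path-space subtleties is to prove the identities first for the approximating Markov chains (sampling $\xi$ at times $kt/n$), where time reversal is an elementary permutation-of-increments computation and Fubini is immediate, obtaining the analogue of \eqref{duality-formula-1}--\eqref{duality-formula-2} with $\tau_0^-$ replaced by the first time the embedded chain goes below $0$; then pass to the limit $n\to\infty$ using right-continuity of paths together with dominated convergence, noting that $\mathbf{1}_{\{\tau_0^->t\}}$ (resp. $\mathbf{1}_{\{\tau_0^-\le t\}}$) is the increasing (resp. decreasing) limit of the corresponding chain events up to a null set. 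This discretization keeps every step elementary, which is in the spirit of the "elementary proof" the statement promises.
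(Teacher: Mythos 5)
Your proposal follows essentially the same route as the paper's proof: a translation/change of variables under Fubini combined with the time-reversal (duality) identity $(\xi_{(t-s)-}-\xi_t)_{0\le s\le t}\stackrel{d}{=}(\widehat{\xi}_s)_{0\le s\le t}$, with the events $\{\tau_0^->t\}$ and $\{\tau_0^-\le t\}$ rewritten through the running infimum. The only cosmetic differences are that the paper obtains \eqref{duality-formula-2} by subtracting \eqref{duality-formula-1} from the unrestricted marginal duality rather than by reversing directly, and your proposed discretization is an optional safeguard the paper does not need, since the infimum over $[0,t]$ is unchanged by the reindexing $s\mapsto t-s$ and the remaining left-limit/boundary discrepancies are null after integrating in $x$.
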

	\begin{proof}
For $x>0$, by the change of variables $x+\xi_t=y$, we get
		\begin{align}
			&
			 \int_{\mathbb{R}_+}
				h(x)\mathbf{E}_x\left(g(\xi_t)1_{\{\tau_0^->t\}}\right)\mathrm{d}x
			= \int_{\mathbb{R}_+}
						h(x)\mathbf{E}\left(g(x+\xi_t)1_{\{\tau_{-x}^->t\}}\right)\mathrm{d}x\\
			&=	 \int_{\mathbb{R}_+}
					h(x)\mathbf{E}\left(g(x+\xi_t),
					 \inf_{s\le t}\xi_s>-x\right)\mathrm{d}x
			= \int_{\mathbb{R}_+}
						h(x)\mathbf{E}\left(g(x+\xi_t),\inf_{s\le t}\xi_{t-s}>-x\right)\mathrm{d}x\\
			&=	\int_{\mathbb{R}_+}
				g(y)\mathbf{E}\left(h(y-\xi_t),\inf_{s\le t}(\xi_{t-s}-\xi_t)>-y\right)\mathrm{d}y\\
			&=\int_{\mathbb{R}_+}
						g(y)\mathbf{E}
			\left(h(y+\widehat{\xi}_t),\inf_{s\le t}\widehat{\xi}_s>-y\right)\mathrm{d}y
			=\int_{\mathbb{R}_+}
						g(y)\mathbf{E}_y\left(
			h(\widehat{\xi}_t)1_{\{
			\widehat{ \tau}_0^->t
			\}}\right)\mathrm{d}y,
		\end{align}
		which completes the proof of \eqref{duality-formula-1}.
		Using the same argument, we can also get
		\begin{align}\label{dual-on-R}
			&\int_{\mathbb{R}}
			h(x)\mathbf{E}_x(g(\xi_t))\mathrm{d}x
			=\int_{\mathbb{R}}
			h(x)\mathbf{E}(g(x+\xi_t))\mathrm{d}x
			=\int_{\mathbb{R}}g(y)
			\mathbf{E}(h(y-\xi_t))\mathrm{d}y\\
			&=\int_{\mathbb{R}}g(y)
			\mathbf{E}(h(y+\widehat{\xi}_t))\mathrm{d}y
			=\int_{\mathbb{R}}g(y)
			\mathbf{E}_y(h(\widehat{\xi}_t))\mathrm{d}y.
		\end{align}
			Note that for $x<0$, $\mathbf{P}_x(\tau_0^->t)=\mathbf{P}_x(\widehat{\tau}_0^->t)=0$.
		Therefore,
				 \eqref{duality-formula-1} is equivalent to
\begin{align}\label{duality-formula-1'}
			 \int_{\mathbb{R}}
			h(x)\mathbf{E}_x\left(g(\xi_t)1_{\{\tau_0^->t\}}\right)\mathrm{d}x
			=	 \int_{\mathbb{R}}
			g(y)\mathbf{E}_y\left(
			h(\widehat{\xi}_t)1_{\{
			\widehat{ \tau}_0^->t
			\}}\right)\mathrm{d}y.
		\end{align}
		Combining this with
		\eqref{dual-on-R},
		we get \eqref{duality-formula-2}.
	\end{proof}

	Before stating Theorem \ref{lemma-tau0>t-rho<0},
	we first introduce some necessary notation and definitions. Let
	$h_1,h_2:\R \to \R_+$
	be Borel functions and $\varepsilon>0$.
	We say that $h_1$ $\varepsilon$-dominates $h_2$ and
	 write $h_2 \le_{\varepsilon} h_1$ if
	\begin{align}
		h_2(u)\le h_1(u+v),\quad
		\forall
		u\in \R,~\forall~v\in [-\varepsilon, \varepsilon].
	\end{align}
	For any $a>0$ and Borel function
 $h:\R\to\R_+$,
we define $I_{k,a}=[ka,(k+1)a]$ for
	$k\in\mathbb{Z}$ and
	\begin{align}
		\bar{h}_{a}(u)
		:=\sum_{k\in\mathbb{Z}}
		1_{I_{k,a}}(u)\sup_{u'\in I_{k,a}}f(u'),\quad
		\underline{h}_{a}(u)
		:=
		\sum_{k\in\mathbb{Z}}
		1_{I_{k,a}}(u)\inf_{u'\in I_{k,a}}f(u'),\quad
u\in \R.
	\end{align}
	The function $h$ is called directly Riemann integrable if $\int_{\R}\bar{h}_{a}(u) \mathrm{d}u<\infty$ for any $a>0$ small enough and
	\begin{align}
			\lim_{a\to 0}
		\int_{\R} \left(\bar{h}_{a}(u)-\underline{h}_{a}(u)\right)\mathrm{d}u=0.
	\end{align}
		Define
	\begin{align}\label{def-varepsilon-domain}
		\bar{h}_{a,\varepsilon}(u):=
\sup_{[u-\varepsilon, u+\varepsilon]}
\bar{h}_{a}(v),
		\quad
		\underline{h}_{a,-\varepsilon}(u):=
\inf_{v\in [u-\varepsilon, u+\varepsilon]}
\bar{h}_{a}(v),
		\quad
u\in \R,
	\end{align}
	then it holds that
	\begin{align}
		\underline{h}_{a,-\varepsilon}\le_{\varepsilon}\underline{h}_{a}\le h\le
		\bar{h}_{a}
		\le_{\varepsilon}\bar{h}_{a,\varepsilon}
\quad \mbox{on } \R.
	\end{align}
		For more details about  directly Riemann integrability, see \cite[Section XI.1]{Feller}.

	The following theorem will play an important role in this paper.
	We refer the reader to \cite[Theorem 1.9]{GX-AIHP} for an analogous result for random walks.

	\begin{theorem}\label{lemma-tau0>t-rho<0}
		Assume that $\xi$ is a L\'{e}vy process satisfying
		 {\bf (H1)}, {\bf (H2)}, {\bf (H3)}
		and $\mathbf{E}_0\left(\xi_1\right)<0$.
		Let $f:\mathbb{R}\to \mathbb{R}_+$
		be a Borel function, which is not 0 almost everywhere on
		$\mathbb{R}_+$,
		such that $f(x)e^{-\lambda_* x}(1+|x|)$ is directly Riemann integrable.
		Then for any $x>0$,
		it holds that
		\begin{align}
			\lim_{t\to\infty} t^{3/2}e^{-\Psi(\lambda_*)t}\mathbf{E}_x\left(f(\xi_t), \tau_0^->t\right)
			=\frac{2R^*(x)
			e^{\lambda_*x}}{\sqrt{2\pi\Psi''(\lambda_*)^3}}\int_{\mathbb{R}_+}f(z)e^{-\lambda_* z}
			\widehat{R}^*(z) \mathrm{d}z,
		\end{align}
		where $\Psi$ is the Laplace exponent of $\xi$.
	\end{theorem}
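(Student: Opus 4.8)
The plan is to reduce the statement to the case $\mathbf{E}_0[\xi_1] = 0$ covered by Lemma~\ref{lemma-effective-conditioned-integral-limit} via the exponential change of measure \eqref{change-measure-levy} at $c = \lambda_*$, and then to combine this with the duality relation \eqref{duality-formula-1} to identify the limiting constant. First I would use \eqref{change-measure-levy} to write
\begin{align}
\mathbf{E}_x\left(f(\xi_t),\tau_0^->t\right)
= e^{-\lambda_* x}e^{\Psi(\lambda_*)t}\,\mathbf{E}_x^{\lambda_*}\left(f(\xi_t)e^{\lambda_*\xi_t},\tau_0^->t\right),
\end{align}
so that, writing $g(z):=f(z)e^{\lambda_* z}$, the claim becomes
\begin{align}
\lim_{t\to\infty} t^{3/2}\,\mathbf{E}_x^{\lambda_*}\left(g(\xi_t),\tau_0^->t\right)
=\frac{2R^*(x)}{\sqrt{2\pi\Psi''(\lambda_*)^3}}\int_{\mathbb{R}_+} g(z)e^{-2\lambda_* z}\widehat{R}^*(z)\,\mathrm{d}z.
\end{align}
Under $\mathbf{P}^{\lambda_*}$ the process is spectrally negative with $\mathbf{E}_0^{\lambda_*}(\xi_1)=\Psi'_{\lambda_*}(0+)=0$ and $\mathbf{E}_0^{\lambda_*}(\xi_1^2)=\Psi''_{\lambda_*}(0+)=\Psi''(\lambda_*)=:\sigma_*^2>0$, and it inherits \textbf{(H1)} and \textbf{(H2)}; the relevant harmonic function is $R^*$ from \eqref{def-widetilde-R}. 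Note that $g(z)e^{-2\lambda_* z}(1+|z|)=f(z)e^{-\lambda_* z}(1+|z|)$, which is directly Riemann integrable by hypothesis.

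Next I would establish the key tail estimate: for the driftless spectrally negative process $(\xi,\mathbf{P}^{\lambda_*})$ and a directly Riemann integrable function, there is a constant $C_*$ with
\begin{align}
\lim_{t\to\infty} t^{3/2}\,\mathbf{E}_x^{\lambda_*}\left(g(\xi_t),\tau_0^->t\right)
= C_*\,R^*(x)\int_{\mathbb{R}_+} g(z)\,\nu(z)\,\mathrm{d}z
\end{align}
for a suitable density $\nu$. To get this I would first prove it for $g=1_{[b,b+a]}$ an indicator of a small interval: apply the Markov property at time $t/2$, use Lemma~\ref{lemma-effective-conditioned-integral-limit} to control $\mathbf{P}_x^{\lambda_*}(\xi_{t/2}\in dw,\tau_0^->t/2)$ (which gives the $R^*(x)$ and a $t^{-1/2}$ factor together with the Rayleigh shape), and then control $\mathbf{P}_w^{\lambda_*}(\xi_{t/2}\in[b,b+a],\tau_0^->t/2)$ uniformly in $w$ by a Gaussian local-CLT type estimate that, after integrating the density $w\mapsto$ Rayleigh-like profile against a narrow Gaussian of width $\sqrt{t}$, produces the remaining $t^{-1/2}$ and the weight $\widehat{R}^*(b)$ in the $z$-variable via duality \eqref{duality-formula-1}. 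Summing over a grid of such intervals and using direct Riemann integrability (via the $\varepsilon$-domination bounds $\underline{h}_{a,-\varepsilon}\le_\varepsilon h\le_\varepsilon \bar h_{a,\varepsilon}$ introduced before the theorem) upgrades this to general $g$; \textbf{(H2)} is exactly what licenses the local CLT / avoids lattice obstructions.

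To pin down that $C_*=2/\sqrt{2\pi\sigma_*^6}$ and that the density in the $z$-variable is $e^{-2\lambda_* z}\widehat{R}^*(z)$ up to the constant, I would invoke duality: by \eqref{duality-formula-1},
\begin{align}
\int_{\mathbb{R}_+} h(x)\,\mathbf{E}_x^{\lambda_*}\!\left(g(\xi_t)1_{\{\tau_0^->t\}}\right)\mathrm{d}x
= \int_{\mathbb{R}_+} g(y)\,\mathbf{E}_y^{\lambda_*}\!\left(h(\widehat{\xi}_t)1_{\{\widehat{\tau}_0^->t\}}\right)\mathrm{d}y,
\end{align}
where the dual process $\widehat{\xi}$ under $\mathbf{P}^{\lambda_*}$ is spectrally positive and driftless with the same variance $\sigma_*^2$, and $\widehat{R}^*$ is its renewal/harmonic function. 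Multiplying the first-order asymptotics of $\mathbf{E}_x^{\lambda_*}(g(\xi_t),\tau_0^->t)$ by $t^{3/2}$ and integrating against a test function $h$, then doing the same on the dual side (where the analogous asymptotic holds with $R^*$ and $\widehat{R}^*$ swapped), forces the symmetric identification of constant and densities; together with the known normalization from Lemma~\ref{lemma-tau0>t-rho=0} (the $\tfrac{2R(x)}{\sqrt{2\pi\sigma^2}}\int_0^\infty\rho(z)h(z)\,dz$ formula with a single killing) this yields $C_*$ explicitly. Unwinding the change of measure then gives the stated formula with $\Psi''(\lambda_*)$ and the $R^*(x)e^{\lambda_* x}$, $e^{-\lambda_* z}\widehat{R}^*(z)$ weights. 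The main obstacle I expect is the uniform-in-starting-point local limit estimate for $\mathbf{P}_w^{\lambda_*}(\xi_{t/2}\in[b,b+a],\tau_0^->t/2)$ — i.e., getting a clean two-sided bound with the correct $\widehat{R}^*$ weight uniformly over the range of $w$ that carries the mass under the first half of the path — and patching the two halves with matching error terms of order $o(t^{-3/2})$; this is where \textbf{(H1)} (for the rate in Lemma~\ref{lemma-effective-conditioned-integral-limit}) and the strong-approximation Lemma~\ref{lemma-relation-with-BM} do the real work.
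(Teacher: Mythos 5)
Your overall strategy is the same as the paper's: tilt by $\lambda_*$ to reduce to a driftless spectrally negative process, then combine a quantitative conditioned limit theorem (Lemma \ref{lemma-effective-conditioned-integral-limit}), a Markov splitting of the time interval, the duality relation \eqref{duality-formula-1} to produce the weight $\widehat{R}^*$, and a directly-Riemann-integrable approximation. However, there is a concrete error at the very first step. With the definition \eqref{change-measure-levy} the correct identity is
\begin{align}
\mathbf{E}_x\left(f(\xi_t),\tau_0^->t\right)
=e^{\Psi(\lambda_*)t+\lambda_* x}\,\mathbf{E}_x^{\lambda_*}\left(f(\xi_t)e^{-\lambda_*\xi_t},\tau_0^->t\right),
\end{align}
so the reduced test function is $h(z)=f(z)e^{-\lambda_* z}$, not $g(z)=f(z)e^{+\lambda_* z}$ as you wrote (your tilt corresponds to the reciprocal density). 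This is not a cosmetic slip: (i) with your identity the reduced claim you display is not equivalent to the theorem (a factor $e^{2\lambda_* x}$ is missing on the right-hand side), and (ii) more seriously, $g=fe^{\lambda_*\cdot}$ is in general exponentially growing under the hypothesis, so it is not integrable, the "key tail estimate for directly Riemann integrable functions" you plan to prove cannot be applied to it, and the displayed limit for $\mathbf{E}_x^{\lambda_*}(g(\xi_t),\tau_0^->t)$ is false in general (the fixed-endpoint contribution carries the weight $g(z)\widehat{R}^*(z)$, not $g(z)e^{-2\lambda_* z}\widehat{R}^*(z)$, and the resulting integral may diverge). The hypothesis that $f(z)e^{-\lambda_* z}(1+|z|)$ is directly Riemann integrable is exactly what makes the correctly reduced function $h$ admissible; once the sign is fixed, your reduction is precisely the paper's \eqref{use-change-measure} together with Theorem \ref{thm-tau-rho<0}.

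Granting that fix, the rest of your outline parallels the paper, but the part you flag as "the main obstacle" is in fact the bulk of the proof: one needs two-sided bounds (upper and lower) for $\mathbf{E}_x^{\lambda_*}(h(\xi_t)1_{\{\tau_0^->t\}})$ with error terms controlled by $\int_{\R_+}H(z\mp\varepsilon)(1+z)\,\mathrm{d}z$ and $\|H1_{[-\varepsilon,\infty)}\|_1$, uniformly enough to sum the grid of small intervals against the dRi envelopes $\underline{h}_{a,-\varepsilon},\bar{h}_{a,\varepsilon}$ — this is the content of Lemmas \ref{lemma-rough-upper-bound-limsup}--\ref{lemma-lower-bound-liminf1}, where duality is applied \emph{inside} the Markov decomposition so that Lemma \ref{lemma-effective-conditioned-integral-limit} is used for the dual process started at the fixed endpoint. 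Also, the paper identifies the constant directly through the Rayleigh convolution computation (see \eqref{two-side-bound-1} and \eqref{upper-bound-second-part-expectation}), whereas your proposed "integrate both sides against test functions and let duality force the constant" is only heuristic as stated (it requires uniform-in-$x$ control to integrate the pointwise asymptotics, and it does not by itself produce the value $2/\sqrt{2\pi\Psi''(\lambda_*)^3}$). So: same route as the paper modulo the sign error, but the technical core is asserted rather than supplied.
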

	\begin{remark}
			Recall that when $\mathbf{E}_0\left(\xi_1\right)<0$ and {\bf (H3)} holds,
		$\xi^{(\lambda_*)}$ 
		is a L\'evy process with Laplace exponent  $\Psi_{\lambda_*}(\lambda)=\Psi(\lambda+\lambda_*)-\Psi(\lambda_*)$ and that $\Psi_{\lambda_*}'(0+)=\Psi'(\lambda_*)=0$.
		Using \eqref{change-measure-levy}, we get that
		\begin{align}\label{use-change-measure}
			\mathbf{E}_x\left(f(\xi_t), \tau_0^->t\right)
			=e^{\Psi(\lambda_*)t+\lambda_* x}\mathbf{E}_x^{\lambda_*}\left(f(\xi_t)e^{-\lambda_* \xi_t},\tau_0^->t\right).
		\end{align}
		Therefore, to get the assertion of Theorem \ref{lemma-tau0>t-rho<0}, we only need to consider the asymptotic behavior of
		$$\mathbf{E}_x^{\lambda_*}\left(f(\xi_t)e^{-\lambda_* \xi_t},\tau_0^->t\right),\quad t\to \infty.$$
	\end{remark}

	\begin{theorem}\label{thm-tau-rho<0}
		Assume that $\xi$ is a L\'{e}vy process satisfying
		{\bf (H1)}, {\bf (H2)}, {\bf (H3)}
         	and $\mathbf{E}_0\left(\xi_1\right)<0$. Let
		$h:\mathbb{R}\to \mathbb{R}_+$
		 be a Borel function, which is not 0 almost everywhere on
		$\mathbb{R}_+$,
		such that $h(x)(1+|x|)$ is directly Riemann integrable.
		Then for any $x>0$, it holds that
		\begin{align}\label{limit-tau-rho<0}
			\lim_{t\to\infty}t^{3/2}\mathbf{E}_x^{\lambda_*}\left(h(\xi_t)1_{\{ \tau_{0}^->t\}}\right)
			=\frac{2R^*(x)}{\sqrt{2\pi\Psi''(\lambda_*)^3} }\int_{\mathbb{R}_+}h(z) \widehat{R}^*(z)\mathrm{d}z,
		\end{align}
		where $\Psi$ is the Laplace exponent of $\xi$.
	\end{theorem}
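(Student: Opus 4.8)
The plan is to adapt the random‑walk argument of \cite[Theorem 1.9]{GX-AIHP} to the spectrally negative L\'evy setting, using the effective conditioned integral limit theorem (Lemma \ref{lemma-effective-conditioned-integral-limit}), the duality identity \eqref{duality-formula-1}, and the harmonicity of the renewal functions. Write $\sigma^2:=\Psi''(\lambda_*)$. Under $\mathbf P_x^{\lambda_*}$ the process $\xi$ is a mean‑zero spectrally negative L\'evy process with variance $\sigma^2$ still satisfying {\bf (H1)}, {\bf (H2)}; applying properties (1)--(3) of \eqref{def-R} to this process and to its dual $\widehat\xi$, one has $z\le R^*(z)\le c(1+z)$, $z\le\widehat R^*(z)\le c(1+z)$, $R^*(z)/z\to1$, $\widehat R^*(z)/z\to1$, and that $\big(R^*(\xi_s)\mathbf 1_{\{\tau_0^->s\}}\big)_{s\ge0}$ and $\big(\widehat R^*(\widehat\xi_s)\mathbf 1_{\{\widehat\tau_0^->s\}}\big)_{s\ge0}$ are $\mathbf P_x^{\lambda_*}$‑martingales.

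\emph{Step 1: reduction to indicators.} By the sandwich characterisation of direct Riemann integrability it suffices to prove the statement for $h=\mathbf 1_{[a,b]}$, $0<a<b<\infty$, together with a uniform estimate $\sup_{t\ge T}t^{3/2}\mathbf E_x^{\lambda_*}\big(\mathbf 1_{[N,N+1]}(\xi_t)\mathbf 1_{\{\tau_0^->t\}}\big)\le C(x)(1+N)$ for all $N\ge0$. Combining the limit for indicators, this uniform bound, and $\int_{\R_+}(\bar h_\ell-\underline h_\ell)(z)(1+z)\,\d z\to0$ as $\ell\to0$ (here the hypothesis that $h(z)(1+|z|)$, resp.\ $f(z)e^{-\lambda_* z}(1+|z|)$, be directly Riemann integrable is used, via $\widehat R^*(z)\le c(1+z)$) then yields the general case. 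The uniform bound itself will drop out of the indicator argument with $\int_N^{N+1}\widehat R^*\le c(2+N)$ in place of $\int_a^b\widehat R^*$.

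\emph{Step 2: three‑block decomposition.} Fix $k=k(t)$ with $k\to\infty$ and $k=o(t)$ (in fact $k=t^{1-\varepsilon/2}$ works, the admissible exponents being dictated by the rate in Lemma \ref{lemma-effective-conditioned-integral-limit}). With $q_s^+(u,B):=\mathbf P_u^{\lambda_*}(\xi_s\in B,\tau_0^->s)$ and $\Phi_k(w):=\mathbf P_w^{\lambda_*}(\xi_k\in[a,b],\tau_0^->k)$, Chapman--Kolmogorov at times $k$ and $t-k$ gives
\[
\mathbf E_x^{\lambda_*}\big(\mathbf 1_{[a,b]}(\xi_t)\mathbf 1_{\{\tau_0^->t\}}\big)=\int_{\R_+}q_k^+(x,\d z)\int_{\R_+}q_{t-2k}^+(z,\d w)\,\Phi_k(w).
\]
Granting the \emph{bridge estimate} of Step 3, namely that, uniformly for $z$ in the bulk $z\lesssim\sqrt k$ of $q_k^+(x,\cdot)$,
\[
\int_{\R_+}q_{t-2k}^+(z,\d w)\,\Phi_k(w)=\frac{2R^*(z)}{\sigma^3\sqrt{2\pi}\,t^{3/2}}\,A_k\,(1+o(1)),\qquad A_k:=\int_{\R_+}\widehat R^*(w)\,\Phi_k(w)\,\d w,
\]
the outer $z$‑integration is resolved \emph{exactly} by the $R^*$‑martingale: $\int_{\R_+}q_k^+(x,\d z)R^*(z)=\mathbf E_x^{\lambda_*}\big(R^*(\xi_k)\mathbf 1_{\{\tau_0^->k\}}\big)=R^*(x)$, the resulting error terms being absorbed via $\mathbf E_x^{\lambda_*}(\xi_k\mathbf 1_{\{\tau_0^->k\}})\le R^*(x)$. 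Moreover $A_k$ is computed \emph{exactly} from the duality identity \eqref{duality-formula-1} (applied, after truncation, with $\widehat R^*$ and $\mathbf 1_{[a,b]}$) and the $\widehat R^*$‑martingale: $A_k=\int_a^b\mathbf E_y^{\lambda_*}\big(\widehat R^*(\widehat\xi_k)\mathbf 1_{\{\widehat\tau_0^->k\}}\big)\,\d y=\int_a^b\widehat R^*(y)\,\d y$. Hence $t^{3/2}\mathbf E_x^{\lambda_*}\big(\mathbf 1_{[a,b]}(\xi_t)\mathbf 1_{\{\tau_0^->t\}}\big)\to\frac{2R^*(x)}{\sigma^3\sqrt{2\pi}}\int_a^b\widehat R^*(y)\,\d y$, which is the asserted limit since $\sigma^3\sqrt{2\pi}=\sqrt{2\pi\Psi''(\lambda_*)^3}$; running the same estimates with $[N,N+1]$ and tracking $N$‑dependence (using the $(2+\delta)$‑moment of {\bf (H1)} to control the tail of $\Phi_k$ uniformly) gives the uniform bound of Step 1.

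\emph{Step 3: the bridge estimate --- the crux.} The whole difficulty sits here. Since $\Phi_k$ lives on the mesoscopic scale $w=O(\sqrt k)=o(\sqrt t)$ --- its tail bounded, via {\bf (H1)}, by $\Phi_k(w)\le\mathbf P_0^{\lambda_*}(\xi_k\le b-w)\le c_k\,w^{-2-\delta}$ --- one needs the \emph{boundary‑layer} behaviour of $q_{t-2k}^+(z,\cdot)$ near the killing barrier, with starting point $z$ and endpoint $w$ both $o(\sqrt t)$. There the target quantity is of order $t^{-3/2}$, which is \emph{smaller} than the additive error $O((1+z)t^{-1/2-\varepsilon})$ of Lemma \ref{lemma-effective-conditioned-integral-limit}, so that lemma cannot be used at face value at the barrier. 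The remedy, following \cite[Lemma 7.7]{GX} and the proof of \cite[Theorem 1.9]{GX-AIHP}, is to establish the sharp two‑sided estimate
\[
q_{t-2k}^+(z,[w,w+\Delta])=\frac{2R^*(z)\,\widehat R^*(w)\,\Delta}{\sigma^3\sqrt{2\pi}\,t^{3/2}}\,(1+o(1)),\qquad z,w=o(\sqrt t),\ \Delta\in(0,1],
\]
with the $o(1)$ uniform in this range, by combining the \emph{unconditioned} local central limit theorem for $\xi$ --- this is exactly where {\bf (H2)} is used --- with an inclusion--exclusion over the first descent below, and (via time‑reversal through \eqref{duality-formula-1}) the last ascent above, the barrier, which brings in $R^*$ at the entrance and $\widehat R^*$ at the exit, while Lemma \ref{lemma-effective-conditioned-integral-limit} handles the bulk part of the path. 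The absence of positive jumps of $\xi$ (so that $\xi$ creeps upward and $\widehat\xi$ creeps downward) is what makes only these one‑sided renewal corrections appear. I expect this to be the main obstacle: matching the $\sqrt t$‑scale bulk with the $O(1)$‑scale boundary layer and controlling the jump overshoots, uniformly in all parameters, is the delicate point. Once it is in hand, inserting it into $\int q_{t-2k}^+(z,\d w)\Phi_k(w)$, splitting the $w$‑integral at a cutoff $W$ with $1\ll W\ll\sqrt k$, bounding $w>W$ by the moment tail of $\Phi_k$ and $w\le W$ by the displayed estimate, and letting $t\to\infty$ then $W\to\infty$, yields the bridge estimate with constant $A_k=\int_{\R_+}\widehat R^*(w)\Phi_k(w)\,\d w$; Steps 1--2 then complete the proof.
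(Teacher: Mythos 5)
Your outline follows the classical three-block route (boundary block at the start, bulk, boundary block at the end), but its load-bearing ingredient, the ``bridge estimate''
$q_{t-2k}^+(z,[w,w+\Delta])=\frac{2R^*(z)\widehat R^*(w)\Delta}{\sigma^3\sqrt{2\pi}\,t^{3/2}}(1+o(1))$ uniformly for $z,w=o(\sqrt t)$, is nowhere established -- you explicitly defer it to an adaptation of \cite[Lemma 7.7]{GX} and \cite[Theorem 1.9]{GX-AIHP}. That estimate is a conditioned \emph{local} limit theorem for the killed L\'evy process, uniform near the barrier; taking $z=x$ fixed and integrating in $w$ it already contains the theorem you are asked to prove, so as written the proposal assumes something at least as strong as the conclusion. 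This is a genuine gap, not a detail: transferring the random-walk boundary local theorem to continuous time (controlling the overshoot at $\tau_0^-$ caused by the negative jumps, the continuous-time first descent/last ascent decomposition, and uniformity in both endpoints and in $\Delta$) is precisely the hard analytic work. There is also a secondary uniformity issue in your Step 2: the $(1+o(1))$ is only claimed uniformly for $z$ in the bulk of $q_k^+(x,\cdot)$, so the exact martingale identity $\int q_k^+(x,\mathrm{d}z)R^*(z)=R^*(x)$ does not by itself absorb the region $z\gg\sqrt k$; you would need a uniform-integrability statement for $R^*(\xi_k)1_{\{\tau_0^->k\}}$ there.

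For comparison, the paper's proof is organized so that no conditioned local limit theorem at the boundary is ever needed. It splits at $m=[t/2]$ (after a preliminary split at $[\varepsilon t]$ in Lemma \ref{lemma-rough-upper-bound-limsup}, where only the \emph{unconditioned} random-walk local CLT with rate of \cite[Theorem 2.7]{GX-AIHP} is used to smooth $h$ into a Gaussian average), applies the integral-level conditioned limit theorem with explicit rate (Lemma \ref{lemma-effective-conditioned-integral-limit}) to the forward process on the first half, and then uses the duality formula \eqref{duality-formula-1} to rewrite the second half in terms of the dual process $\widehat\xi$, to which the same integral-level theorem is applied; the $t^{-3/2}$ rate and the constant $2/\sqrt{2\pi\Psi''(\lambda_*)^3}$ come out of the explicit Rayleigh--Rayleigh convolution in \eqref{upper-bound-second-part-expectation}, $R^*(x)$ and $\widehat R^*(z)$ entering through the two applications of Lemma \ref{lemma-effective-conditioned-integral-limit} rather than through martingale identities. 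Matching upper and lower bounds (Lemmas \ref{lemma-upper-bound-limsup} and \ref{lemma-lower-bound-liminf1}, the latter requiring extra work to control paths that dip below the barrier in the middle block) are then combined through the $\varepsilon$-domination/directly-Riemann-integrable sandwich. If you want to salvage your plan, you must actually prove the bridge estimate; otherwise the argument should be restructured along the integral-plus-duality lines above.
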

	\begin{remark}
		The difference between the asymptotic behavior in Theorem \ref{thm-tau-rho<0} (with $t^{-3/2}$ decay) and that in Lemma \ref{lemma-tau0>t-rho=0} (with $t^{-1/2}$ decay) arises from the fact that Theorem \ref{thm-tau-rho<0}
				is a conditioned limit theorem
		for the process $\left(\xi_t\right)_{t\ge 0}$ itself, whereas
				Lemma \ref{lemma-tau0>t-rho=0}
				is a  conditioned limit result
		for the normalized process $\left(\frac{\xi_t}{\sigma \sqrt{t}}\right)_{t\ge 0}$.
	\end{remark}
	
	\textbf{Proof of Theorem \ref{lemma-tau0>t-rho<0}:}
	Taking $h(x)=f(x)e^{-\lambda_* x}$
		in Theorem \ref{thm-tau-rho<0} and using \eqref{use-change-measure},
		we immediately get the conclusion of Theorem \ref{lemma-tau0>t-rho<0}.
	\qed

	In the next four lemmas, we provide some upper and lower bounds for $\mathbf{E}_x^{\lambda_*}\left(h(\xi_t)1_{\{ \tau_{0}^->t\}}\right)$.
	In the remainder of this section,  $\varepsilon_0$  will be the constant in Lemma \ref{lemma-effective-conditioned-integral-limit}. Recall that $\rho(\cdot)$ stands for the Rayleigh density.
	
	\begin{lemma}\label{lemma-rough-upper-bound-limsup}
		Assume that $\xi$
		is a L\'{e}vy process satisfying
	 {\bf (H1)}, {\bf (H2)}, {\bf (H3)}
     		and $\mathbf{E}_0\left(\xi_1\right)<0$.
		Then one can find a constant
		$C_5>0$ with the property that for any $\varepsilon \in(0,\varepsilon_0)$
		there exist positive constants
		$T_1(\varepsilon)$ and $C_6(\varepsilon)$
		such that for any $x>0$, $t>T_1(\varepsilon)$
		and any integrable functions
		$h,H:\R \to \R_+$
		satisfying $h \le_{\varepsilon}H$,
		\begin{align}
			\mathbf{E}_x^{\lambda_*}\left(h(\xi_t)1_{\{ \tau_{0}^->t\}}\right)
     \le&
  \frac{2(1+C_5\varepsilon)R^*(x)
			}{\sqrt{2\pi}\Psi''(\lambda_*) t}\int_{\mathbb{R}_+}H(w)
			\rho
			\left(\frac{w}{ \sqrt{\Psi''(\lambda_*) t}}\right) \mathrm{d}w\\
			&+\frac{2
			C_5
			\sqrt{\varepsilon}R^*(x)}{\Psi''(\lambda_*) t}
			\int_{-\varepsilon}^{\infty}
			H(w)\phi\left( \frac{w}{\sqrt{\Psi''(\lambda_*) t}}\right)
			\mathrm{d}w\\
			&+
			C_6(\varepsilon)
			(1+x)
			\|H1_{[-\varepsilon,\infty)}\|_1
			\left(\frac{1}{t^{1+\varepsilon}}
			+\frac{1}{t^{1+\delta/2}}\right),
		\end{align}
		where $\Psi$ is the Laplace exponent
			of $\xi$.
	\end{lemma}
	\begin{proof}
Fix $\varepsilon \in(0,\varepsilon_0)$ and let
$h,H: \R \to \R_+$
be integrable functions  satisfying $h \le_{\varepsilon}H$.
		Fix $t\ge 1$ and set $m=[ \varepsilon t]$.
		By the Markov property,
		\begin{align}\label{decomposition-expectation}
			\mathbf{E}_x^{\lambda_*}\!\left(h(\xi_t)1_{\{ \tau_{0}^->t\}}\right)
			=\int_{\R_+}\!\!\mathbf{E}_y^{\lambda_*}\!\left(h(\xi_m)1_{\{\tau_{0}^->m\}}\right)\!
			\mathbf{P}_x^{\lambda_*}\!\left(\xi_{t-m}\in \mathrm{d}y, \tau_0^->t-m\right).
		\end{align}
			Define a random walk $(S_n)_{n\ge 0}$ by
		$S_n:=\xi_n$,
		$n\in \mathbb{N}$.
		Since $h1_{[0,\infty)}\le_{\varepsilon}H1_{[-\varepsilon,\infty)}$,
		it follows from \cite[Theorem 2.7]{GX-AIHP} that
		there exist constants
		$c_1$ (independent of $\varepsilon)$ and $c_2(\varepsilon)$
		such that for any $n\ge 1$,
		\begin{align}\label{local-limit-RW}
			&\mathbf{E}_x^{\lambda_*}
			    \left(h(S_n)1_{\{S_n\ge 0\}}\right)
			-\frac{
				1+c_1\varepsilon
				}{ \sqrt{\Psi''(\lambda_*)n}}
			\int_{\R}
			H(z)
			1_{\{z\ge -\varepsilon\}}
			\phi\left(\frac{z-x}{ \sqrt{\Psi''(\lambda_*)n}}\right)\mathrm{d}z\\
			&\le \frac{
				c_2(\varepsilon)
				}{n^{(1+\delta)/2}}
			\|H1_{[-\varepsilon,\infty)}\|_1.
		\end{align}
		Thus, for any $y>0$,
		\begin{align}
			&\mathbf{E}_y^{\lambda_*}\left(h(\xi_m)1_{\{\tau_{0}^->m\}}\right)
			\le \mathbf{E}_y^{\lambda_*}\left(h(S_m)1_{\{S_m \ge 0\}}\right)\\
			&\le \frac{
				1+c_1\varepsilon
				}{ \sqrt{\Psi''(\lambda_*) m}}
			\int_{\mathbb{R}}H(z)1_{\{z\ge -\varepsilon\}}
			\phi\left(\frac{z-y}{ \sqrt{\Psi''(\lambda_*) m}}\right)\mathrm{d} z
			+\frac{
				c_2(\varepsilon)}
				{m^{(1+\delta)/2}}
			\|H1_{[-\varepsilon,\infty)}\|_1.
		\end{align}
		Plugging this into \eqref{decomposition-expectation} yields that
		\begin{align}\label{upper-bound-expectation-decomposition}
			\mathbf{E}_x^{\lambda_*}\left(h(\xi_t)1_{\{ \tau_{0}^->t\}}\right)
			&\le \int_{\R_+}\left(
				\frac{
					1+c_1\varepsilon
					}{ \sqrt{\Psi''(\lambda_*) m}}
			\int_{\mathbb{R}}H(z)1_{\{z\ge -\varepsilon\}}
			\phi\left(\frac{z-y}{ \sqrt{\Psi''(\lambda_*) m}}\right)\mathrm{d} z
			\right)\\
			&\quad \times \mathbf{P}_x^{\lambda_*}\left(\xi_{t-m}\in \mathrm{d}y, \tau_0^->t-m\right)\nonumber\nonumber\\
			&\quad+\int_{\R_+}\frac{
				c_2(\varepsilon)
			\|H1_{[-\varepsilon ,\infty)}\|_1
			}{m^{(1+\delta)/2}}\mathbf{P}_x^{\lambda_*}\left(\xi_{t-m}\in \mathrm{d}y, \tau_0^->t-m\right)\\
&=:A_1(x)+A_2(x).\nonumber
		\end{align}
		By the definition of $\tau_0^-$, we have
		\begin{align}\label{upper-bound-tua}
			\mathbf{P}_x^{\lambda_*}\left(\tau_0^->s\right)
			=\mathbf{P}_x^{\lambda_*}\left(\inf_{l\le s}\xi_l>0\right)
			\le \mathbf{P}_x^{\lambda_*}\left(\inf_{j\le [s]}S_j>0\right)
			\le c_3 \frac{1+x}{\sqrt{s}},
		\end{align}
for some positive constant $c_3$ (independent of $\varepsilon$),
where in the last inequality we used  \cite[(2.7)]{Aidekon13}.
		Therefore,
		by \eqref{upper-bound-tua}
		and the definition of $m$,
		there exists a positive constant
		$c_4(\varepsilon)$ such that
		\begin{align}\label{upper-bound-A2}
			A_2(x)=\frac{
				c_2(\varepsilon)
			\|H1_{[-\varepsilon ,\infty)}\|_1
			}{m^{(1+\delta)/2}}
			\mathbf{P}_x^{\lambda_*}
			\left(\tau_0^->t-m\right)
			\le  \frac{c_4(\varepsilon)
				(1+x)}{t^{1+\delta/2}}
			\|H1_{[-\varepsilon ,\infty)}\|_1.
		\end{align}
		Now, by a change of variables, we get
		\begin{align*}
			A_1(x)
			&=\int_{\R_+}\left(\frac{
				1+c_1\varepsilon
				}{\sqrt{\Psi''(\lambda_*) m}}
			\int_{\R}H(z)1_{\{z\ge -\varepsilon\}}
			\phi\left(\frac{z- \sqrt{\Psi''(\lambda_*)(t-m)}u}{ \sqrt{\Psi''(\lambda_*) m}}\right)\mathrm{d} z\right)\\
			&\quad \times
			\mathbf{P}_x^{\lambda_*}
			\left(\frac{\xi_{t-m}}{ \sqrt{\Psi''(\lambda_*)(t-m)}}\in \mathrm{d}u, \tau_0^->t-m\right)\\
			&=\int_{\R_+}\varphi_t(u)
			\mathbf{P}_x^{\lambda_*}
			\left(\frac{\xi_{t-m}}{ \sqrt{\Psi''(\lambda_*)(t-m)}}\in \mathrm{d}u, \tau_0^->t-m\right),
		\end{align*}
		where the function $\varphi_t$ is defined by
		\begin{align}
			&\varphi_t(u)
			:=\frac{
				1+c_1\varepsilon
				}{ \sqrt{\Psi''(\lambda_*) m}}
			\int_{\R}H(z)1_{\{z\ge -\varepsilon\}}
			\phi\left(\frac{z- \sqrt{\Psi''(\lambda_*)(t-m)}u}{\sqrt{\Psi''(\lambda_*)m}}\right)\mathrm{d} z\\
			&=
			(1+c_1\varepsilon)
			\sqrt{\frac{t-m}{m}}
			\int_{\R}
			H(\sqrt{\Psi''(\lambda_*)(t-m)}w)
			1_{\{\sqrt{\Psi''(\lambda_*)(t-m)}w\ge -\varepsilon\}}
			 \phi\left(\frac{w-u}{\sqrt{\frac{m}{t-m}}}\right) \mathrm{d}w.
		\end{align}
		Using integration by parts, we get that for any $x\in \R_+$,
		\begin{align}\label{expression-A1}
	A_1(x)\le
\int_{\R_+}\varphi_t'(u)
			\mathbf{P}_x^{\lambda_*}
			\left(\frac{\xi_{t-m}}{ \sqrt{\Psi''(\lambda_*)(t-m)}}>u, \tau_0^->t-m\right)\mathrm{d}u.
		\end{align}
		It follows from Lemma \ref{lemma-effective-conditioned-integral-limit} that
		for $t-m>T_0(\varepsilon)$,
		\begin{align}
			&\Big|\mathbf{P}_x^{\lambda_*}
			\left(\frac{\xi_{t-m}}{ \sqrt{\Psi''(\lambda_*)(t-m)}}>u, \tau_0^->t-m\right)
			-\frac{2R^*(x)
			}{\sqrt{2\pi (t-m)\Psi''(\lambda_*)}}
			\int_{u}^{\infty}
			\rho(z)\mathrm{d}z\Big|\\
			&\le \frac{
				C_4(\varepsilon)
				(1+x)}{(t-m)^{\frac{1}{2}+\varepsilon}},
		\end{align}
		which together with \eqref{expression-A1} implies that
		there exists a constant $c_5(\varepsilon)$
		such that
		for $t-m>T_0(\varepsilon)$,
		\begin{align}\label{upper-bound-A1}
			A_1(x)
			-\frac{2R^*(x)}{\sqrt{2\pi (t-m)\Psi''(\lambda_*)}}\int_{\R_+}\!\varphi_t'(u)e^{-\frac{u^2}{2}} \mathrm{d}u
			\le \frac{
				c_5(\varepsilon)
				(1+x)}{(t-m)^{\frac{1}{2}+\varepsilon}}
			\int_{\R_+}\!|\varphi_t'(u)|\mathrm{d}u.
		\end{align}
		By the definition of $\varphi_t$ and a change of variables, we get that
		\begin{align*}
			&\int_{\R_+}|\varphi_t'(u)|\mathrm{d}u\\
			&\le
			(1+c_1\varepsilon)
			\int_{\R_+}\!
			\int_{\R}
			\frac{t-m}{m} H(\sqrt{\Psi''(\lambda_*)(t-m)}w)
			1_{\{\sqrt{\Psi''(\lambda_*)(t-m)}w\ge -\varepsilon\}}
			\Big|\phi'\left(\frac{w-u}{\sqrt{\frac{m}{t-m}}}\right)\Big|\mathrm{d}w\mathrm{d}u\\
			&=
			(1+c_1\varepsilon)
			\int_{\R_+}
			\int_{\R}
			H( \sqrt{\Psi''(\lambda_*)m}u)
			1_{\{\sqrt{\Psi''(\lambda_*)m}u\ge -\varepsilon\}}
			|\phi'(u-y)|\mathrm{d}u \mathrm{d}y\\
			&=
			(1+c_1\varepsilon)
			\int_{\R}H( \sqrt{\Psi''(\lambda_*)m}u)
			1_{\{\sqrt{\Psi''(\lambda_*)m}u\ge -\varepsilon\}}\mathrm{d}u
			\int_{\R_+}|\phi'(u-y)|\mathrm{d}y.
		\end{align*}
		Since there exists a constant
		$c_6>0$ such that $\int_{\R_+}|\phi'(u-y)|\mathrm{d}y\le c_6$,
		a change of variables yields that
		\begin{align}\label{upper-bound-integral-varphi}
			\int_{\R_+}|\varphi_t'(u)|\mathrm{d}u
			\le c_6(1+c_1\varepsilon)
			\frac{\|H1_{[-\varepsilon,\infty)}\|_1 }{\sqrt{\Psi''(\lambda_*) m}}.
		\end{align}
		Using integration by parts, we get
		\begin{align*}
			&\int_{\R_+}\varphi_t'(y)e^{-\frac{y^2}{2}} \mathrm{d}y
			=\int_{\R_+}\varphi_t(y)
			\rho(y)
			\mathrm{d}y\\
			&=
			(1+c_1\varepsilon)
			 \int_{\R_+}\!
						\int_{\R}\!
			\sqrt{\frac{t-m}{m}} H( \sqrt{\Psi''(\lambda_*)(t-m)}w)
			1_{\{\sqrt{\Psi''(\lambda_*)(t-m)}w\ge -\varepsilon\}}
			\phi\left(\frac{w-y}{\sqrt{\frac{m}{t-m}}}\right)\! \mathrm{d}w \rho(y)
			\mathrm{d}y\\
			&=
			(1+c_1\varepsilon)
			\int_{\R_+} \! \int_{\R}\!
			\sqrt{\frac{t}{m}}H( \sqrt{\Psi''(\lambda_*) t} u)
			1_{\{\sqrt{\Psi''(\lambda_*) t} u\ge -\varepsilon\}}
			\phi\left(\!\frac{u-z}{\sqrt{\frac{m}{t}}}\!\right)\mathrm{d}u 	\rho
			\left( \!\sqrt{\frac{t}{t-m}}z\!\right)\sqrt{\frac{t}{t-m}}\mathrm{d}z.
		\end{align*}
		According to \cite[Lemma 3.3]{GX}, for any $v\in (0,\frac{1}{2}]$ and $s\ge 0$, it holds that
		\begin{align}\label{two-side-bound-1}
			\sqrt{1-v}
			\rho(s)
			\le \phi_v*
			\rho_{1-v}(s)
			\le \sqrt{1-v}
			\rho(s)
			+\sqrt{v}e^{-\frac{s^2}{2v}}.
		\end{align}
		Letting $v=\frac{m}{t}$, we get
		\begin{align}
			&\int_{\R_+}\varphi_t'(y)e^{-\frac{y^2}{2}} \mathrm{d}y
			=(1+c_1\varepsilon)
			\int_{\R}
			H( \sqrt{\Psi''(\lambda_*)t}u)
			1_{\{\sqrt{\Psi''(\lambda_*) t} u\ge -\varepsilon\}}
			\phi_{\frac{m}{t}}*
			\rho_{\frac{t-m}{t}}(u)
			\mathrm{d}u\\
			&=
			\frac{
			(1+c_1\varepsilon)
				}{ \sqrt{\Psi''(\lambda_*)t}}
                \int^\infty_{-\varepsilon} H(w)
			 \phi_{\frac{m}{t}}*
			\rho_{\frac{t-m}{t}}
			\left(\frac{w}{\sqrt{\Psi''(\lambda_*)t}}\right)\mathrm{d}w\\
			&\le
			\frac{
			(1+c_1\varepsilon)
				}{\sqrt{\Psi''(\lambda_*)t}}
           \int^\infty_{-\varepsilon} H(w)
\left(\sqrt{\frac{t-m}{t}}
			\rho
			\left(\frac{w}{ \sqrt{\Psi''(\lambda_*)t}}\right)+\sqrt{\frac{m}{t}}e^{-\frac{w^2}{2\Psi''(\lambda_*)t}}\right) \mathrm{d}w.
		\end{align}
		Combining this with
		\eqref{upper-bound-expectation-decomposition}, \eqref{upper-bound-A2}, \eqref{upper-bound-A1}
		 \eqref{upper-bound-integral-varphi},
and using the fact that $\rho(z)=0$ for $z\le 0$ and noticing  that $m=[\varepsilon t]$,
 we get that
there exist positive constants
$c_7$ (independent of $\varepsilon$) and  $t_1(\varepsilon)$, $c_8(\varepsilon)$ such that for $t>t_1(\varepsilon)$,
		\begin{align}\label{upper-bound-expectation}
			&\mathbf{E}_x^{\lambda_*}
			\left(h(\xi_t)1_{\{ \tau_{0}^->t\}}\right)
			\le \frac{2
			(1+c_7 \varepsilon)
			R^*(x)
			}{\sqrt{2\pi}\Psi''(\lambda_*) t}\int_{\mathbb{R}_+}H(w)
			\rho
			\left(\frac{w}{ \sqrt{\Psi''(\lambda_*) t}}\right) \mathrm{d}w\\
			&+c_7\sqrt{\varepsilon}
			\frac{2R^*(x)}{\Psi''(\lambda_*) t}
			\int_{-\varepsilon}^{\infty}\!
			H(w)\phi\left(\!\frac{w}{\sqrt{\Psi''(\lambda_*) t}}\!\right)
			\mathrm{d}w
			+c_8(\varepsilon)(1+x)
			\|H1_{[-\varepsilon, \infty)}\|_1
			\left(\frac{1}{t^{1+\varepsilon}}
			+\frac{1}{t^{1+\delta/2}}\right).
		\end{align}
		The proof is complete.
	\end{proof}
	
	\begin{lemma}\label{lemma-upper-bound-limsup}
		Assume that $\xi$
		is a L\'{e}vy process satisfying
		{\bf (H1)}, {\bf (H2)}, {\bf (H3)}
		and $\mathbf{E}_0\left(\xi_1\right)<0$.
		Then one can find a constant
		$C_7>0$ with the property that for any $\varepsilon \in(0,\varepsilon_0)$
		there exist positive constants $T_2(\varepsilon)$
		and $C_8(\varepsilon)$ such that for any $x>0$, $t>T_2(\varepsilon)$
		and any Borel functions
		$h,H:{\R}\to \R_+$
		satisfying $h \le_{\varepsilon}H$ and
		   $\int_{\mathbb{R}_+}H(z-\varepsilon)(1+z)\mathrm{d}z<\infty$,
		\begin{align}
			\mathbf{E}_x^{\lambda_*}\left(h(\xi_t)1_{\{ \tau_{0}^->t\}}\right)
			&\le \left(1+
			C_7t^{-1/2}+C_7\sqrt{\varepsilon}\right) \frac{2 R^*(x)
			}{\sqrt{2\pi\Psi''(\lambda_*)^3} t^{3/2}}\int_{\mathbb{R}_+}
			H(z-\varepsilon)
			\widehat{R}^*(z)
			\mathrm{d}z\\
			&\quad+\frac{
			C_8(\varepsilon)
			R^*(x)
			}{\sqrt{2\pi\Psi''(\lambda_*)^3} t^{3/2+\varepsilon}}\int_{\mathbb{R}_+}
			H(z-\varepsilon)
			(1+z)\mathrm{d}z\\
			&\quad+\frac{
			C_8(\varepsilon)
			(1+x)}{\sqrt{t}}\left(
			\frac{1}{t^{1+\varepsilon}}
			+\frac{1}{t^{1+\delta/2}}\right)\int_{\mathbb{R}_+}
			H(z-\varepsilon)
			(1+z)
			\mathrm{d}z.
		\end{align}
	\end{lemma}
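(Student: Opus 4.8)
The strategy is a two-scale time decomposition that upgrades the integral (distribution-function level) conditioned limit theorem, Lemma \ref{lemma-effective-conditioned-integral-limit}, and the duality identity \eqref{duality-formula-1} to the sharp $t^{-3/2}$ asymptotics, the $\varepsilon$-domination being used to cope with the fact that the intermediate sub-probability measures need not be absolutely continuous. Since $h\le_\varepsilon H$ gives $h(u)\le H(u-\varepsilon)$ for all $u$, it suffices to bound $\mathbf{E}_x^{\lambda_*}\!\big(H(\xi_t-\varepsilon)1_{\{\tau_0^->t\}}\big)$, and the smearing of $h$ over a window of size $\varepsilon$ that occurs below is what produces the factor $1+O(\sqrt\varepsilon)$. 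I would then apply the Markov property at time $t-m$ for a cutoff $m=m(t)$ with $m\to\infty$ and $m/t\to0$, writing
\begin{align}
\mathbf{E}_x^{\lambda_*}\!\big(h(\xi_t)1_{\{\tau_0^->t\}}\big)=\int_{\R_+}\Phi_m(z)\,\mathbf{P}_x^{\lambda_*}\!\big(\xi_{t-m}\in\mathrm{d}z,\ \tau_0^->t-m\big),\qquad \Phi_m(z):=\mathbf{E}_z^{\lambda_*}\!\big(h(\xi_m)1_{\{\tau_0^->m\}}\big).
\end{align}

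On the long stretch $[0,t-m]$ I would use Lemma \ref{lemma-effective-conditioned-integral-limit}: since $m=o(t)$, over the range of $z$ that matters the sub-distribution function $z\mapsto\mathbf{P}_x^{\lambda_*}(\xi_{t-m}\le z,\tau_0^->t-m)$ is, up to the additive error $C_4(\varepsilon)(1+x)(t-m)^{-1/2-\varepsilon}$, equal to $\frac{2R^*(x)}{\sqrt{2\pi\Psi''(\lambda_*)(t-m)}}\,\mathcal R\!\big(z/\sqrt{\Psi''(\lambda_*)(t-m)}\big)$, whose increment over a window near a point $z$ of order $\sqrt m$ is approximately $\frac{2R^*(x)}{\sqrt{2\pi}\,\Psi''(\lambda_*)^{3/2}(t-m)^{3/2}}\,z$ with relative error $O(z^2/t)=O(m/t)$. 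This is the step that produces the factor $R^*(x)$ — with no loss in that direction, since $R^*$ enters through the exact identity $\mathbf{E}_x^{\lambda_*}(R^*(\xi_s)1_{\{\tau_0^->s\}})=R^*(x)$ coming from the martingale property of $(R^*(\xi_s)1_{\{\tau_0^->s\}})$ under $\mathbf{P}^{\lambda_*}$ — and the rate $t^{-3/2}$.

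Because that measure behaves like $z\,\mathrm{d}z$ near the origin, only $\int_{\R_+}z\,\Phi_m(z)\,\mathrm{d}z$ is needed from the short stretch, and here I would invoke \eqref{duality-formula-1} with the two functions $z\mapsto z$ and $h$:
\begin{align}
\int_{\R_+}z\,\Phi_m(z)\,\mathrm{d}z=\int_{\R_+}z\,\mathbf{E}_z^{\lambda_*}\!\big(h(\xi_m)1_{\{\tau_0^->m\}}\big)\mathrm{d}z=\int_{\R_+}h(y)\,\mathbf{E}_y^{\lambda_*}\!\big(\widehat{\xi}_m1_{\{\widehat{\tau}_0^->m\}}\big)\mathrm{d}y.
\end{align}
Since $\mathbf{E}_0^{\lambda_*}(\xi_1)=\Psi'(\lambda_*)=0$, the dual process $\widehat{\xi}$ is a mean-zero $\mathbf{P}^{\lambda_*}$-Lévy process, so $(\widehat{\xi}_{s\wedge\widehat{\tau}_0^-})_{s\ge0}$ is a bounded-below $\mathbf{P}_y^{\lambda_*}$-martingale, and optional stopping together with $\widehat{\xi}_{\widehat{\tau}_0^-}\le0$ gives $\mathbf{E}_y^{\lambda_*}(\widehat{\xi}_m1_{\{\widehat{\tau}_0^->m\}})=y+\mathbf{E}_y^{\lambda_*}\big[(-\widehat{\xi}_{\widehat{\tau}_0^-})1_{\{\widehat{\tau}_0^-\le m\}}\big]\le \widehat{R}^*(y)$ for every $m$; this produces $\int_{\R_+}H(z-\varepsilon)\widehat{R}^*(z)\,\mathrm{d}z$. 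Multiplying the two pieces yields the claimed main term, after which it remains to bound the contribution of large $z$ — where $\mathbf{P}_x^{\lambda_*}(\xi_{t-m}\in\mathrm{d}z,\tau_0^->t-m)$ is controlled crudely by Lemma \ref{lemma-rough-upper-bound-limsup} and $\Phi_m(z)$ decays, because $h$ is bounded and vanishes at infinity while under {\bf (H1)} the spectrally negative $\xi$ has negative-jump tails of order $z^{-2-\delta}$ — and to collect the errors: the $(t-m)^{-3/2}$ versus $t^{-3/2}$ discrepancy, the $O(m/t)$ from the quadratic approximation of $\mathcal R$, the $\varepsilon$-smearing cost, and the propagated conditioned-CLT error $C_4(\varepsilon)(1+x)(t-m)^{-1/2-\varepsilon}$, choosing $m(t)$ so that all of them are absorbed into the factor $1+C_7t^{-1/2}+C_7\sqrt\varepsilon$ and the two stated remainder terms.

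The main difficulty is that the two tools invoked are both integrated statements — Lemma \ref{lemma-effective-conditioned-integral-limit} controls only a distribution function and \eqref{duality-formula-1} is an integral identity — whereas the argument needs local control: one must handle the intermediate sub-probability measure $\mathbf{P}_x^{\lambda_*}(\xi_{t-m}\in\cdot,\tau_0^->t-m)$, which may fail to be absolutely continuous, by integrating by parts against a suitably regularized version of $\Phi_m$ rather than passing naively to a density, and one must tune $m(t)$ and the truncation level so that the $\varepsilon$-smearing cost, the conditioned-CLT error, the quadratic-approximation error and the tail contribution are simultaneously of the required order; matching these precisely with the remainder terms in the statement is the delicate bookkeeping.
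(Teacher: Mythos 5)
Your plan is a genuinely different route from the paper's (one long stretch followed by a short terminal stretch $m=o(t)$, with the short stretch handled by the duality identity \eqref{duality-formula-1} and the crude optional-stopping bound $\mathbf{E}_y^{\lambda_*}(\widehat{\xi}_m 1_{\{\widehat{\tau}_0^->m\}})\le \widehat{R}^*(y)$), but as written it does not close, and the failure is quantitative, not just bookkeeping. To integrate $\Phi_m$ against the intermediate sub-probability measure you must integrate by parts and replace the conditioned distribution function by its Rayleigh approximation from Lemma \ref{lemma-effective-conditioned-integral-limit}; the pointwise error $C_4(\varepsilon)(1+x)(t-m)^{-1/2-\varepsilon}$ is then multiplied by the total variation of (a regularization of) $\Phi_m$, which is at best of order $\|H\|_1/\sqrt{m}$. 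This gives a remainder of order $(1+x)\|H\|_1\,t^{-1/2-\varepsilon}m^{-1/2}$, which for any $m=o(t)$ is larger than the main term $t^{-3/2}\int H(z-\varepsilon)\widehat{R}^*(z)\,\mathrm{d}z$, let alone the allowed remainders of order $t^{-3/2-\varepsilon}$ and $t^{-3/2}(t^{-\varepsilon}+t^{-\delta/2})$. The only way to suppress this propagated error is to make the terminal stretch macroscopic, and that is exactly what the paper does: it takes $m=[t/2]$, so that by duality and \eqref{equivalent-Hm} the function fed into the intermediate time has $L^1$-norm of order $t^{-1/2}\int H(z-\varepsilon)(1+z)\,\mathrm{d}z$, which is what turns the unavoidable $t^{-1-\varepsilon}$-type error of the rough bound (Lemma \ref{lemma-rough-upper-bound-limsup}) into the stated $t^{-3/2-\varepsilon}$-type remainders. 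But once $m\asymp t$ your linearization of $\mathcal{R}$ (relative error $O(m/t)$) and your bound $\mathbf{E}_y^{\lambda_*}(\widehat{\xi}_m 1_{\{\widehat{\tau}_0^->m\}})\le\widehat{R}^*(y)$ are no longer the relevant objects: one needs the full Rayleigh weight $\mathbf{E}_w^{\lambda_*}\bigl(\rho\bigl(\widehat{\xi}_m/\sqrt{\Psi''(\lambda_*)(t-m)}\bigr)1_{\{\widehat{\tau}_0^->m\}}\bigr)$, and $\widehat{R}^*$ must be produced by applying Lemma \ref{lemma-effective-conditioned-integral-limit} to the dual process over the macroscopic time $m$, which is the paper's argument.

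A second, related gap is the regularity of $\Phi_m$. You acknowledge that the intermediate law need not be absolutely continuous and propose to integrate by parts "against a suitably regularized version of $\Phi_m$", but for a general Borel $h$ (not bounded, not vanishing at infinity, possibly wildly oscillating below $H(\cdot-\varepsilon)$) the function $z\mapsto\mathbf{E}_z^{\lambda_*}(h(\xi_m)1_{\{\tau_0^->m\}})$ has no usable smoothness or bounded-variation structure, and $\varepsilon$-domination by itself does not smooth it in $z$. The paper's mechanism for this is concrete: inside the rough bound it applies the random-walk local limit theorem \eqref{local-limit-RW} on a short stretch $[\varepsilon t]$, replacing the terminal expectation by an explicit Gaussian convolution of $H1_{[-\varepsilon,\infty)}$, which is differentiable in the intermediate variable and can legitimately be integrated by parts against the distribution-function estimate; this is where the $1+O(\varepsilon)$ and $O(\sqrt{\varepsilon})$ losses actually originate (via the sandwich \eqref{two-side-bound-1}), not from the $\varepsilon$-smearing of $h$ as you suggest. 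So the missing idea is the paper's three-scale structure: smooth first on a short stretch via the local limit theorem (Lemma \ref{lemma-rough-upper-bound-limsup}), then feed the half-time conditioned expectation $I_m$, $H_m$ into that rough bound and convert the resulting Rayleigh-weighted integrals by duality into dual conditioned limits over time $[t/2]$.
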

	\begin{proof}
 		Fix $\varepsilon \in(0,\varepsilon_0)$ and let
	$h,H:\R \to \R_+$
satisfying $h \le_{\varepsilon}H$.
	For any $z\in \mathbb{R}$, we define
	\begin{align}\label{def-Hm}
		H_m(z)
		:=\mathbf{E}_0^{\lambda^*}\left(H(\xi_m+z)1_{\{\tau_{-z-\varepsilon}^->m\}}\right)
		=\mathbf{E}_z^{\lambda^*}\left(H(\xi_m)1_{\{\tau_{-\varepsilon}^->m\}}\right).
	\end{align}
		Fix $t\ge 2$
		and set $m=[t/2]$.
			For any $y>0$,
		we have
		\begin{align}\label{def-I-H}
			I_m(y)&:=
			\mathbf{E}_y^{\lambda_*}\left(h(\xi_m)1_{\{\tau_{0}^->m\}}\right)\\
			&\le
			\mathbf{E}_{y}^{\lambda_*}\left(H(\xi_m+v)
			1_{\{\tau_{-v-\varepsilon}^->m\}}
			\right)
			=H_m(y+v),\quad |v|\le \varepsilon.
		\end{align}
		Consequently, $I_m
		\le_{\varepsilon}H_m$.
		By the Markov property,
		\begin{align}
			&\mathbf{E}_x^{\lambda_*}\left(h(\xi_t)1_{\{ \tau_{0}^->t\}}\right)
			=\int_{\R_+}\mathbf{E}_y^{\lambda_*}\left(h(\xi_m) 1_{\{\tau_{0}^->m\}}\right)
			\mathbf{P}_x^{\lambda_*}\left(\xi_{t-m}\in \mathrm{d}y, \tau_0^->t-m\right)\\
			&=\int_{\R_+}I_m(y)
			\mathbf{P}_x^{\lambda_*}\left(\xi_{t-m}\in \mathrm{d}y, \tau_0^->t-m\right)
			=\mathbf{E}_x^{\lambda_*}\left(I_m(\xi_{t-m}),\tau_0^->t-m\right).
		\end{align}
		Now applying Lemma \ref{lemma-rough-upper-bound-limsup}
		with $h=I_m$, we get that for $t-m>T_1(\varepsilon)$,
\begin{equation}\label{decomposition-expression}
\mathbf{E}_x^{\lambda_*}\left(h(\xi_t)1_{\{ \tau_{0}^->t\}}\right)\le J_1+J_2+J_3,
\end{equation}
where
\begin{align*}
J_1&:=\frac{2
(1+C_5\varepsilon)
R^*(x)}{\sqrt{2\pi}\Psi''(\lambda_*)(t-m)}\int_{\mathbb{R}_+}H_m(w)
\rho
\left(\frac{w}{ \sqrt{\Psi''(\lambda_*)(t-m)}}\right) \mathrm{d}w,\\
J_2&:=C_5
	\sqrt{\varepsilon}
\frac{2R^*(x)}{\Psi''(\lambda_*)(t-m)}
\int_{-\varepsilon}^{\infty}
H_m(w)\phi\left(\frac{w}{\Psi''(\lambda_*)(t-m)}\right)
\mathrm{d}w,\\
J_3&:=C_6(\varepsilon)
\left(\frac{1}{(t-m)^{1+\varepsilon}}+\frac{1}{(t-m)^{1+\delta/2}}\right)
(1+x)
\|H_m1_{[-\varepsilon,\infty)}\|_1.
\end{align*}
We will deal with the upper bounds of $J_i$ separately.
We first deal with $J_1$.
Note that
		\begin{align}\label{equality-J1}
			J_1&=\frac{2
				(1+C_5\varepsilon)
				R^*(x)
				}{\sqrt{2\pi}\Psi''(\lambda_*) (t-m)}\int_{\mathbb{R}_+}
				\mathbf{E}_w^{\lambda_*}\left(H(\xi_m)
				1_{\{\tau_{-\varepsilon}^->m\}}
				\right)
				\rho
				\left(\frac{w}{ \sqrt{\Psi''(\lambda_*) (t-m)}}\right) \mathrm{d}w\\
				&=\frac{2
				(1+C_5\varepsilon)
				R^*(x)
				}{\sqrt{2\pi}\Psi''(\lambda_*) (t-m)}\!\int_{\mathbb{R}}\!
				\mathbf{E}_{w+\varepsilon}^{\lambda_*}\!\left(\!H(\xi_m-\varepsilon)
				1_{\{\tau_0^->m\}}
				\!\right)1_{\{w+\varepsilon\ge 0\}}
				\rho\!
				\left(\!\frac{w}{ \sqrt{\Psi''(\lambda_*) (t-m)}}\!\right) \!\mathrm{d}w\\
				&=\frac{2
				(1+C_5\varepsilon)
				R^*(x)
				}{\sqrt{2\pi}\Psi''(\lambda_*) (t-m)}\int_{\mathbb{R_+}}
				\mathbf{E}_{w}^{\lambda_*}\left(H(\xi_m-\varepsilon)
				 1_{\{\tau_0^->m\}}
				\right)
				\rho
				\left(\frac{w-\varepsilon}{ \sqrt{\Psi''(\lambda_*) (t-m)}}\right) \mathrm{d}w\\
				&=\frac{2
				(1+C_5\varepsilon)
				R^*(x)
				}{\sqrt{2\pi}\Psi''(\lambda_*) (t-m)}\int_{\mathbb{R_+}}H(w-\varepsilon)
				\mathbf{E}_{w}^{\lambda_*}\left(\rho
				\left(\frac{\widehat{\xi}_m-\varepsilon}{ \sqrt{\Psi''(\lambda_*) (t-m)}}\right)
				1_{\{\widehat{\tau}_0^->m\}}
				\right)
				 \mathrm{d}w,
		\end{align}
where in the last equality we used \eqref{duality-formula-1}.
		Using integration by parts,
				we get for any $z\in \mathbb{R}_+$,
		\begin{align}\label{equivalent-expression}
			&\mathbf{E}_{z}^{\lambda_*}\left(\rho
			\left(
				\frac{\widehat{\xi}_m-\varepsilon}
				{\sqrt{\Psi(\lambda_*)(t-m)}}\right)
			1_{\{\widehat{\tau}_0^->m\}}
			\right)\\
			=&\int_{\mathbb{R}_+}\rho'
			(u) \mathbf{P}_{z}^{\lambda_*}\left(
			 \frac{\widehat{\xi}_m-\varepsilon}
			 {\sqrt{\Psi(\lambda_*)(t-m)}}>u,
			\widehat{\tau}_0^->m
			\right)\mathrm{d}u\nonumber\\
			=&\int_{\mathbb{R}_+}\rho'
			(u) \mathbf{P}_{z}^{\lambda_*}\left(
				 \frac{\widehat{\xi}_m}
				{\sqrt{\Psi''(\lambda_*)m}}>
			\frac{u \sqrt{\Psi(\lambda_*)(t-m)}+\varepsilon}
			{ \sqrt{\Psi''(\lambda_*)m}},
			\widehat{\tau}_0^->m
			\right)\mathrm{d}u.
		\end{align}
		Set
		$$u_{m,\varepsilon}:=\frac{u \sqrt{\Psi(\lambda_*)(t-m)}+\varepsilon}{ \sqrt{\Psi''(\lambda_*)m}}.$$
		Applying Lemma \ref{lemma-effective-conditioned-integral-limit} to $\widehat{\xi}$,
		 we get that for $m>T_0(\varepsilon)$,
		\begin{align}
			\Big|
			\mathbf{P}_{z}^{\lambda_*}\left(\frac{\widehat{\xi}_m}{\sqrt{\Psi''(\lambda_*)m}}
			>  u_{m,\varepsilon},
			\widehat{\tau}_0^->m
			\right)
			-\frac{2\widehat{R}^*(z)}{\sqrt{2\pi m\Psi''(\lambda_*)}}
			\int_{u_{m,\varepsilon}}^{\infty}
			\rho(y)
			\mathrm{d}y\Big|
			\le \frac{
				C_4(\varepsilon)
				(1+z)}{m^{1/2+\varepsilon}}.
		\end{align}
		Substituting this into \eqref{equivalent-expression} and using the fact that
		$\int_{\mathbb{R}_+}\rho'(u) \mathrm{d}u\le c_1$ for some $c_1>0$,
		we get that
		\begin{align}\label{two-side-bound-expectation}
			&\Big|
			\mathbf{E}_{z}^{\lambda_*}\left(
			\rho
			\left(\frac{\widehat{\xi}_m-\varepsilon}
			{\sqrt{\Psi''(\lambda_*)(t-m)}}\right)1_{\{
			\widehat{\tau}_0^->m
			\}}\right)
			-\frac{2\widehat{R}^*(z)
			}{\sqrt{2\pi  m\Psi''(\lambda_*)}} \int_{\mathbb{R}_+}
			\rho'(u)
			e^{-\frac{u_{m,\varepsilon}^2}{2}}
				\mathrm{d}u\Big|\\
			&
				\le \frac{
					C_4(\varepsilon)
					(1+z)}{m^{1/2+\varepsilon}}\int_{\mathbb{R}_+}\rho'(u)
				\mathrm{d}u
			\le c_1\frac{C_4(\varepsilon)(1+z)}{m^{1/2+\varepsilon}}.
		\end{align}
		Using integration by parts again and the boundedness of $\rho'$, we get that there
		exists a positive constant
$c_2$ (independent of $\varepsilon$) such that
		\begin{align}\label{upper-bound-fisrt-part-expectation}
			&\int_{\mathbb{R}_+}
			\rho'(u)
			e^{-\frac{u_{m,\varepsilon}^2
				}{2}}
				\mathrm{d}u=
			 \sqrt{\frac{t-m}{m}}\int_{\mathbb{R}_+}
			 \rho(u)\rho
			(u_{m,\varepsilon})
				\mathrm{d}u\\
			&\le \sqrt{\frac{t-m}{m}}\int_{\mathbb{R}_+}
			\rho(u)\rho
			\left(\frac{u\sqrt{t-m}}{\sqrt{m}}\right)\mathrm{d}u+
			 \frac{c_2\varepsilon}{\sqrt{t}},
		\end{align}
where in the last inequality we used the mean value theorem.
		By a change of variables, we see that
		\begin{align}\label{upper-bound-second-part-expectation}
			&\sqrt{\frac{t-m}{m}}\int_{\mathbb{R}_+}
			\rho(u)\rho
			\left(\frac{u\sqrt{t-m}}{\sqrt{m}}\right)\mathrm{d}u
			=\frac{1}{\sqrt{m}}\int_{\mathbb{R}_+}
			\rho
			\left(\frac{y}{\sqrt{t-m}}\right)
			 \rho
			 \left(\frac{y}{\sqrt{m}}\right)\mathrm{d}y\\
			&=\frac{1}{\sqrt{m}}\int_{\mathbb{R}_+}\!\frac{y^2}{\sqrt{(t-m)m}}e^{-\frac{ty^2
				}{2m(t-m)}}\mathrm{d}y
			=\frac{\sqrt{m}(t-m)}{t^{3/2}}\int_{\mathbb{R}_+}\!y^2 e^{-\frac{y^2}{2}}\mathrm{d}y\\
			 &=\frac{\sqrt{2\pi m}(t-m)}{2t^{3/2}}.
		\end{align}
		Combining this with \eqref{two-side-bound-expectation},
		\eqref{upper-bound-fisrt-part-expectation} and
		\eqref{upper-bound-second-part-expectation},
		we get that there exist positive constants $c_4$ (independent of $\varepsilon$)
		and $c_5(\varepsilon)$ such that
		\begin{align}\label{upper-bound-J1a}
			J_1
			&\le
			\left(1+c_4 t^{-\frac{1}{2}}\right)\frac{2 \left(1+c_4\varepsilon\right)R^*(x)}{\sqrt{2\pi\Psi''(\lambda_*)^3} t^{3/2}}
			\int_{\mathbb{R}_+}
			H(z-\varepsilon)
			\widehat{R}^*(z)\mathrm{d}z\\
			&\quad+\frac{
			c_5(\varepsilon)
			R^*(x)}{\sqrt{2\pi\Psi''(\lambda_*)^3} t^{3/2+\varepsilon}}
			\int_{\mathbb{R}_+}
			H(z-\varepsilon)(1+z)
			\mathrm{d}z.
		\end{align}
		Next, we deal with $J_2$. Note that
		 \begin{align*}
			J_2
			&=\frac{2C_5\sqrt{\varepsilon} R^*(x)
			}{\Psi''(\lambda_*) (t-m)}
			\int_{-\varepsilon}^{\infty}
			\mathbf{E}_w^{\lambda_*}\left(H(\xi_m)
			1_{\{\tau_{-\varepsilon}^->m\}}
			\right)
			\phi\left(\frac{w}{\sqrt{\Psi''(\lambda_*) (t-m)}}\right)\mathrm{d}w\nonumber\\
			&=
			\frac{2C_5\sqrt{\varepsilon} R^*(x)
			}{\Psi''(\lambda_*) (t-m)}
			\int_{-\varepsilon}^{\infty}
			\mathbf{E}_{w+\varepsilon}^{\lambda_*}\left(H(\xi_m-\varepsilon)
			1_{\{\tau_{0}^->m\}}
			\right)
			\phi\left(\frac{w}{\sqrt{\Psi''(\lambda_*) (t-m)}}\right)\mathrm{d}w\nonumber\\
			&=
			\frac{2C_5\sqrt{\varepsilon} R^*(x)
			}{\Psi''(\lambda_*) (t-m)}
                \int_{\R_+}
			\mathbf{E}_{w}^{\lambda_*}\left(H(\xi_m-\varepsilon)
			1_{\{\tau_{0}^->m\}}
			\right)
			\phi\left(\frac{w-\varepsilon}{\sqrt{\Psi''(\lambda_*) (t-m)}}\right)\mathrm{d}w\nonumber\\
			& =\frac{2C_5\sqrt{\varepsilon} R^*(x)
			}{\Psi''(\lambda_*)(t-m)}\int_{\mathbb{R}_+}
			H(w-\varepsilon)
			\mathbf{E}_w^{\lambda_*}\left(\phi\left(
			\frac{\widehat{\xi}_m-\varepsilon}
			 {\sqrt{\Psi''(\lambda_*) (t-m)}}\right)
			1_{\{\widehat{\tau}_0^->m\}}
			\right)
			\mathrm{d}w,
		\end{align*}
where in the last equality we used \eqref{duality-formula-1}.
Now repeating the argument leading to \eqref{upper-bound-J1a},
we get that there exist positive constants $c_6$ (independent of $\varepsilon$) and $c_7(\varepsilon)$
such that
		\begin{align}\label{upper-bound-J2}
			J_2
			&\le\left(1+c_6\varepsilon t^{-\frac{1}{2}}\right)
			c_6\sqrt{\varepsilon}
			\frac{2R^*(x)
			}{\sqrt{2\pi \Psi''(\lambda_*)^3}  t^{3/2}}\int_{\mathbb{R}_+}
			H(z-\varepsilon)
			\widehat{R}^*(z)
			\mathrm{d}z\\
			&\quad+
			\frac{
			c_7(\varepsilon)
			R^*(x)
			}{\sqrt{2\pi\Psi''(\lambda_*)^3}t^{3/2+\varepsilon}}
			\int_{\mathbb{R}_+}
			H(z-\varepsilon)(1+z)
			\mathrm{d}z.
		\end{align}
		Finally, we deal with $J_3$.
		By the definition of $H_m$ and \eqref{upper-bound-tua}, we have
		\begin{align}\label{equivalent-Hm}
			&\|H_m1_{[-\varepsilon,\infty)}\|_1
			=\int_{\mathbb{R}}
			\mathbf{E}_y^{\lambda_*}\left(H(\xi_m)
			1_{\{\tau_{-\varepsilon}^->m\}}
			\right)
			1_{\{y\ge -\varepsilon\}}
			 \mathrm{d}y\\
			 &=\int_{\R}\mathbf{E}_{y+\varepsilon}^{\lambda_*}\left(H(\xi_m-\varepsilon)
			1_{\{\tau_0^->m\}}
			 \right)1_{\{y\ge -\varepsilon\}}
			  \mathrm{d}y\nonumber\\
& =\int_{\R_+}
\mathbf{E}_{y}^{\lambda_*}\left(H(\xi_m-\varepsilon)
			 1_{\{\tau_0^->m\}}
			 \right)
			  \mathrm{d}y\nonumber\\
			&=\int_{\R_+}H(z-\varepsilon)\mathbf{P}_{z}\left(
				\widehat{\tau}_0^->m
				\right)\mathrm{d}z
			\le c_8\int_{\mathbb{R}_+}H(z-\varepsilon)
			\frac{1+z}{\sqrt{m}}\mathrm{d}z,\nonumber
		\end{align}
		where in the last equality
         we used \eqref{duality-formula-1}
		 and $c_8$ is a positive constant independent of $\varepsilon$.
		Since $m=[t/2]$,
		there exists a positive constant
		$c_9(\varepsilon)$ such that
		\begin{align}\label{upper-bound-J3a}
			\qquad J_3
			\le \frac{
				c_9(\varepsilon)
				(1+x)}{\sqrt{t}}
			\left(\frac{1}{t^{1+\varepsilon}}
			+\frac{1}{t^{1+\delta/2}}\right)\int_{\mathbb{R}_+}(1+z)
			H(z-\varepsilon)
			\mathrm{d}z.
		\end{align}
		Combining \eqref{decomposition-expression}, \eqref{upper-bound-J1a}, \eqref{upper-bound-J2} and \eqref{upper-bound-J3a},
		we complete the proof.
	\end{proof}

	\begin{lemma}\label{lemma-rough-lower-bound-liminf}
		Assume that
		$\xi$ is a L\'{e}vy process satisfying
		 {\bf (H1)}, {\bf (H2)}, {\bf (H3)}
		and $\mathbf{E}_0(\xi_1)<0$.
Then one can find positive constants $C_9$  and $q$
with the property that for any $\varepsilon\in(0,\varepsilon_0)$ there exist
		positive constants
$T_3(\varepsilon)$ and $C_{10}(\varepsilon)$
		such that for any $x>0$, $t>T_3(\varepsilon)$
		and any integrable functions
		$h,H,g:\R\to\R_+$
		satisfying $g\le_{\varepsilon}h\le_{\varepsilon}H$,
		\begin{align*}
			\mathbf{E}_x^{\lambda_*}\left(h(\xi_t) 1_{\{ \tau_{0}^->t\}}\right)
			\ge &\frac{2R^*(x)
			}{\sqrt{2\pi}\Psi''(\lambda_*) t} \int_{\mathbb{R}_+}\left(
				g(w)1_{\{w\ge \varepsilon\}}
				-C_9\varepsilon h(w)\right)
			\rho\left(\frac{w}{ \sqrt{\Psi''(\lambda_*) t}} \right)
			\mathrm{d} w\\
			&-C_9
			 \varepsilon^{1/12}
			\frac{2R^*(x)
			}{\sqrt{2\pi}\Psi''(\lambda_*)t}
			\int_{-\varepsilon}^{\infty}
			H(u)  \phi\left(\frac{w}{ \sqrt{\Psi''(\lambda_*) t}}\right)\mathrm{d}w\\
			&-C_{10}(\varepsilon)(1+x)
			\|H1_{[-\varepsilon,\infty)}\|_1
			\left(\frac{1}{t^{1+\varepsilon}}
			+\frac{1}{t^{1+\delta/2}}
			+\frac{1}{t^{1+q}}
			\right),
		\end{align*}
		where $\Psi$ is the Laplace exponent of $\xi$.
	\end{lemma}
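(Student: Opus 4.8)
The plan is to establish this lower bound by running the argument of Lemma~\ref{lemma-rough-upper-bound-limsup} in the reverse direction. Fix $\varepsilon\in(0,\varepsilon_0)$ and $t$ large, set $m:=[\varepsilon t]$, and use the Markov property at time $t-m$ to write
\begin{align*}
\mathbf{E}_x^{\lambda_*}\!\left(h(\xi_t)1_{\{\tau_0^->t\}}\right)
=\int_{\R_+}\mathbf{E}_y^{\lambda_*}\!\left(h(\xi_m)1_{\{\tau_0^->m\}}\right)\mathbf{P}_x^{\lambda_*}\!\left(\xi_{t-m}\in\mathrm{d}y,\tau_0^->t-m\right).
\end{align*}
For the outer factor I will invoke Lemma~\ref{lemma-effective-conditioned-integral-limit} applied to $((\xi_s)_{s\ge0},(\mathbf{P}_x^{\lambda_*})_{x\in\R})$, which is centered with variance $\Psi''(\lambda_*)$; this produces the Rayleigh profile $\rho$, the prefactor $2R^*(x)/\sqrt{2\pi(t-m)\Psi''(\lambda_*)}$, and an error of order $(1+x)(t-m)^{-1/2-\varepsilon}$. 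The whole difficulty is then to produce a matching \emph{lower} bound for the inner factor $\mathbf{E}_y^{\lambda_*}(h(\xi_m)1_{\{\tau_0^->m\}})$ which is good enough uniformly in $y>0$, the dominant contribution coming from the regime $y\asymp\sqrt m$ (for $y\gg\sqrt m$ the inner factor is negligible because $\xi_m$ is then far from the bulk of $g$, while for $y\lesssim\sqrt m$ the killing at $0$ is genuinely active).

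For the inner factor, first use $g\le_\varepsilon h$ to get $h(\xi_m)1_{\{\tau_0^->m\}}\ge g(\xi_m+v)1_{\{\tau_0^->m\}}$ for every $|v|\le\varepsilon$; since $\tau_0^->m$ forces $\xi_m\ge0$, after the change of variables used in \eqref{equality-J1} this is what introduces the indicator $1_{\{w\ge\varepsilon\}}$ in the main term. Unlike the upper bound, the running-infimum constraint cannot simply be dropped; instead introduce the skeleton walk $S_n:=\xi_n$ and a barrier level $\ell=\ell(\varepsilon,m)$ of order $\varepsilon^{a}\sqrt m$, and use
\begin{align*}
1_{\{\tau_0^->m\}}\ \ge\ 1_{\{\min_{0\le j\le\lceil m\rceil}S_j>\ell\}}\ -\ 1_{\{\min_{0\le j\le\lceil m\rceil}S_j>\ell,\ \tau_0^-\le m\}}.
\end{align*}
The first term is handled by the lower‑bound counterpart of the killed local limit theorem for random walks (the estimates used for random walks in \cite[Theorem~1.9]{GX-AIHP}, \cite{GX}); combining its main term with Lemma~\ref{lemma-effective-conditioned-integral-limit} for the time‑$(t-m)$ piece, passing to the $\sqrt{\Psi''(\lambda_*)t}$‑scale, and using the lower half of the convolution inequality \eqref{two-side-bound-1}, i.e. $\phi_{m/t}*\rho_{(t-m)/t}(s)\ge\sqrt{(t-m)/t}\,\rho(s)$, yields the main term $\tfrac{2R^*(x)}{\sqrt{2\pi}\,\Psi''(\lambda_*)t}\int_{\R_+}g(w)1_{\{w\ge\varepsilon\}}\rho\bigl(w/\sqrt{\Psi''(\lambda_*)t}\bigr)\mathrm{d}w$. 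The corrections $-C_9\varepsilon h(w)$ and $-C_9\varepsilon^{1/12}\!\int_{-\varepsilon}^\infty H(w)\phi(\cdots)\mathrm{d}w$ absorb, respectively, the loss incurred by imposing the barrier $\ell$ and the mismatch between the two Gaussian scales (the analogue of the remainder $\sqrt v\,e^{-s^2/(2v)}$ in \eqref{two-side-bound-1} and of the term $J_2$ in \eqref{upper-bound-J2}); the factors $R^*$ and $\widehat R^*$ enter, exactly as in the passage through \eqref{equality-J1}, via the duality identity \eqref{duality-formula-1} together with the harmonicity/optional‑stopping identity for $\bigl(R^*(\xi_s)1_{\{\tau_0^->s\}}\bigr)_{s\ge0}$ of the type \eqref{harmonic-R}.

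The main obstacle is the error term $\mathbf{E}_y^{\lambda_*}\bigl(g(\xi_m+v);\,\min_j S_j>\ell,\ \tau_0^-\le m\bigr)$, in which the skeleton stays above $\ell$ but the continuous path dips below $0$ inside some unit interval: I need this to be at most $C_{10}(\varepsilon)(1+y)\,m^{-1/2-q}\,\|H1_{[-\varepsilon,\infty)}\|_1$ for a fixed $q>0$, so that integrating against the outer measure — whose mass is $O((1+x)t^{-1/2})$ by \eqref{upper-bound-tua} — produces precisely the $t^{-1-q}$ term in the statement. I would bound it by summing over unit intervals the probability that the interior infimum of $\xi$ falls below $0$ while the skeleton stays above $\ell$, and control this via the strong‑approximation coupling of Lemma~\ref{lemma-relation-with-BM} applied to $\xi$ under $\mathbf{P}_x^{\lambda_*}$ together with explicit Brownian barrier probabilities. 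Obtaining a genuinely positive $q$ and the precise power $\varepsilon^{1/12}$ forces one to optimize three competing quantities simultaneously — the barrier level $\ell$, the coupling error rate in Lemma~\ref{lemma-relation-with-BM}, and the Gaussian‑scale mismatch $m/t\asymp\varepsilon$ — and this optimization, along with the book‑keeping of which constants depend on $\varepsilon$, is the delicate part of the argument. Once it is in place, collecting the main term, the $\varepsilon$‑ and $\varepsilon^{1/12}$‑corrections, and the three error contributions ($t^{-1-\varepsilon}$ from Lemma~\ref{lemma-effective-conditioned-integral-limit}, $t^{-1-\delta/2}$ from the random‑walk local limit theorem, and $t^{-1-q}$ from the first‑passage discrepancy) gives the claimed inequality, with $C_9$ and $q$ absolute and $T_3(\varepsilon),C_{10}(\varepsilon)$ depending only on $\varepsilon$.
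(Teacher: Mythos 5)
Your overall frame (Markov decomposition at time $t-m$ with $m=[\varepsilon t]$, Lemma \ref{lemma-effective-conditioned-integral-limit} for the outer factor, the effective local limit theorem of \cite{GX-AIHP} for the inner one, and the lower half of \eqref{two-side-bound-1}) is the same as the paper's, but the proposal has a genuine gap exactly where the content of the lemma lies. You replace the killing constraint on $[t-m,t]$ by a skeleton barrier at level $\ell\asymp\varepsilon^{a}\sqrt m$ and then explicitly defer to an unspecified ``optimization'' the estimate of the discrepancy event $\{\min_{j}S_j>\ell,\ \tau_0^-\le m\}$; but producing the corrections $-C_9\varepsilon h$, $-C_9\varepsilon^{1/12}\int H\phi$ and a genuinely positive exponent $q$ \emph{is} that estimate --- everything else is routine --- so the proof is not actually given. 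Worse, the one quantitative reduction you do state is inconsistent: a bound $C_{10}(\varepsilon)(1+y)m^{-1/2-q}\|H1_{[-\varepsilon,\infty)}\|_1$ for the inner error, integrated against $\mathbf{P}_x^{\lambda_*}(\xi_{t-m}\in\mathrm{d}y,\tau_0^->t-m)$, does not give a $t^{-1-q}$ term. You cannot multiply a $y$-growing bound by the total mass $O((1+x)t^{-1/2})$: the typical $y$ under this kernel is of order $\sqrt t$, and in fact $\int(1+y)\,\mathbf{P}_x^{\lambda_*}(\xi_{t-m}\in\mathrm{d}y,\tau_0^->t-m)\le \mathbf{P}_x^{\lambda_*}(\tau_0^->t-m)+\mathbf{E}_x^{\lambda_*}\bigl[R^*(\xi_{t-m})1_{\{\tau_0^->t-m\}}\bigr]\le c(1+x)$ by the martingale property of $R^*(\xi_s)1_{\{\tau_0^->s\}}$. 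So even if your target bound were proved, it would yield an error of order $(1+x)t^{-1/2-q}\|H1_{[-\varepsilon,\infty)}\|_1$, larger by a factor $\sqrt t$ than the $(1+x)t^{-1-q}\|H1_{[-\varepsilon,\infty)}\|_1$ required; you would need an inner bound that is uniform in $y$ of order $m^{-1/2-q}\|H1_{[-\varepsilon,\infty)}\|_1$, and the sketch does not indicate how to get it.

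A related problem: handling the retained term $\mathbf{E}_y^{\lambda_*}\bigl(g(\xi_m+v)1_{\{\min_j S_j>\ell\}}\bigr)$ by ``the lower-bound counterpart of the killed local limit theorem'' is not available off the shelf in the form you need --- for starting points $y\asymp\sqrt t=\varepsilon^{-1/2}\sqrt m$ you must show that the barrier changes the \emph{free} Gaussian main term only by an admissible $O(\varepsilon^{\text{power}})$ amount, uniformly in $y$, which is work of the same nature as the estimate you deferred. (Also, $\widehat R^*$ and \eqref{duality-formula-1} play no role in this lemma's main term; they first enter in Lemma \ref{lemma-upper-bound-limsup}, while here $R^*(x)$ comes only from Lemma \ref{lemma-effective-conditioned-integral-limit}.) For comparison, the paper avoids any barrier: it writes $1_{\{\tau_0^->m\}}=1_{\{\xi_m\ge0\}}-1_{\{\xi_m\ge0,\,\tau_0^-\le m\}}$, obtains the main term and the $-C_9\varepsilon h$ loss directly from the effective unkilled local limit theorem \cite[Theorem 2.7]{GX-AIHP}, and controls the subtracted piece by splitting the outer variable at $y=\varepsilon^{1/6}\sqrt{[t]}$: the small-$y$ part produces the $\varepsilon^{1/12}$ Gaussian term via Lemma \ref{lemma-effective-conditioned-integral-limit} and integration by parts, and the large-$y$ part is transferred by duality \eqref{duality-formula-1} and bounded by Cauchy--Schwarz, Chebyshev and the coupling of Lemma \ref{lemma-relation-with-BM}; that is where $\varepsilon^{1/12}$ and $q$ actually come from. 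As written, your proposal does not establish the lemma.
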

	\begin{proof}
		Fix $\varepsilon \in(0,\varepsilon_0)$ and let
		$h,H,g:\R\to\R_+$
		be integrable functions satisfying $g\le_{\varepsilon}h\le_{\varepsilon}H$.
		Then $g1_{[\varepsilon,\infty)}\le_{\varepsilon}h1_{[0,\infty)}\le_{\varepsilon}H1_{[-\varepsilon,\infty)}$.
		Fix  $t\ge 1$ and set $m=[\varepsilon t]$.
		By the Markov property,
		\begin{align*}
			\mathbf{E}_x^{\lambda_*}\left(h(\xi_t)1_{\{ \tau_{0}^->t\}}\right)
			&=\int_{\R_+}\mathbf{E}_y^{\lambda_*}\left(h(\xi_m)1_{\{ \tau_{0}^->m\}}\right)
			\mathbf{P}_x^{\lambda_*}\left(\xi_{t-m}\in \mathrm{d}y, \tau_0^->t-m\right)\\
			&=\int_{\R_+}\mathbf{E}_y^{\lambda_*}
			\left(h(\xi_m)1_{\{\xi_m\ge 0\}}\right)
			\mathbf{P}_x^{\lambda_*}\left(\xi_{t-m}\in \mathrm{d}y, \tau_0^->t-m\right)\\
			&\quad-\int_{\R_+}\mathbf{E}_y^{\lambda_*}
			\left(h(\xi_m)1_{\{\xi_m\ge 0\}}1_{\{\tau_{0}^-\le m\}}\right)
			\mathbf{P}_x^{\lambda_*}\left(\xi_{t-m}\in \mathrm{d}y, \tau_0^->t-m\right)\\
			&=:I_1(t)-I_2(t)
			=:I_1(t)-I_2^1(t)-I_2^2(t),
		\end{align*}
		where
		\begin{align*}
			I_1(t):=\int_{\R_+}\mathbf{E}_y^{\lambda_*}
			\left(h(\xi_m)1_{\{\xi_m\ge 0\}}\right)
			\mathbf{P}_x^{\lambda_*}\left(\xi_{t-m}\in \mathrm{d}y, \tau_0^->t-m\right),
		\end{align*}
		\begin{align*}
			I_2^1(t)
			:=\int_{0}^{\varepsilon^{1/6}\sqrt{[t]}}
			\mathbf{E}_y^{\lambda_*}\left(h(\xi_m) 1_{\{\xi_m\ge 0\}}1_{\{\tau_{0}^-\le m\}}\right)
			\mathbf{P}_x^{\lambda_*}\left(\xi_{t-m}\in \mathrm{d}y, \tau_0^->t-m\right),
		\end{align*}
		\begin{align*}
			I_2^2(t)
			:=\int_{\varepsilon^{1/6}\sqrt{[t]}}^{\infty}
				\mathbf{E}_y^{\lambda_*}\left(h(\xi_m) 1_{\{\xi_m\ge 0\}}1_{\{\tau_{0}^-\le m\}}\right)
			\mathbf{P}_x^{\lambda_*}\left(\xi_{t-m}\in \mathrm{d}y, \tau_0^->t-m\right).
		\end{align*}
The proof of the lemma is divided into the following three steps.

{\bf Step 1.}
In this step, we give a lower bound for $I_1(t)$.
		By \cite[Theorem 2.7]{GX-AIHP},
		there exist positive constants
		 $c_1$  (independent of $\varepsilon$)
		and $c_2(\varepsilon)$
		such that for any $m\ge 1$,
		\begin{align}
			\mathbf{E}_y^{\lambda_*}
			\left(h(\xi_m)1_{\{\xi_m\ge 0\}}\right)
			&\ge \frac{1}{ \sqrt{\Psi''(\lambda_*)m}}
			\int_{\mathbb{R}}
			\left(g(z)
			1_{\{z\ge \varepsilon\}}-c_1
			\varepsilon h(z)
			1_{\{z\ge 0\}}
			\right)\phi\left(\frac{z-y}{\sqrt{\Psi''(\lambda_*) m}}\right)\mathrm{d} z\\
			&\quad-\frac{c_2(\varepsilon)}{m^{(1+\delta)/2}}
			\|h1_{[0,\infty)}\|_1.
		\end{align}
		Note that $\|h1_{[0,\infty)}\|_1\le \|H1_{[0,\infty)}\|_1$.
		Following the analysis of $A_1(x)$ in Lemma \ref{lemma-rough-upper-bound-limsup}
		and
		using the lower bound in \eqref{two-side-bound-1},
		we see that
		there exist positive constants $t_1(\varepsilon)$ and $c_3(\varepsilon)$
		such that for $t>t_1(\varepsilon)$,
		\begin{align*}
			&\frac{1}{ \sqrt{\Psi''(\lambda_*)m}}\int_{\R_+}\left(\int_{\mathbb{R}}
			g(z)1_{\{z\ge \varepsilon\}}\phi\left(\frac{z-y}{\sqrt{\Psi''(\lambda_*) m}}\right)\mathrm{d} z\right)
			\mathbf{P}_x^{\lambda_*}\left(\xi_{t-m}\in \mathrm{d}y, \tau_0^->t-m\right)\\
			&\ge \frac{2
			R^*(x)}{\sqrt{2\pi}\Psi''(\lambda_*) t}
			\int_{\mathbb{R}_+}g(w)1_{\{w\ge \varepsilon\}}
			\rho\left(\frac{w}{ \sqrt{\Psi''(\lambda_*) t}} \right)
			\mathrm{d} w
			    -\frac{c_3(\varepsilon)
					(1+x)
					\|H1_{[0,\infty)}\|_1
					}{t^{1+\varepsilon}}.
		\end{align*}
		and
		using the upper bound in \eqref{two-side-bound-1}, we have
		\begin{align*}
			&\frac{c_1
				\varepsilon}{ \sqrt{\Psi''(\lambda_*)m}}\int_{\R_+}\left(\int_{\mathbb{R}}
			h(z)1_{\{z\ge 0\}}\phi\left(\frac{z-y}{\sqrt{\Psi''(\lambda_*) m}}\right)\mathrm{d} z\right)
			\mathbf{P}_x^{\lambda_*}\left(\xi_{t-m}\in \mathrm{d}y, \tau_0^->t-m\right)\\
			&\le \frac{2c_4
			\varepsilon R^*(x)
			}{\sqrt{2\pi}\Psi''(\lambda_*) t}\int_{\mathbb{R}_+}\!h(w)
			\rho\!
			\left(\!\frac{w}{ \sqrt{\Psi''(\lambda_*) t}}\!\right) \!\mathrm{d}w
			    +\!\frac{2c_4
				\sqrt{\varepsilon}R^*(x)}{\Psi''(\lambda_*) t}
			\int_{\R_+}
			h(w)\phi\!\left(\!\frac{w}{\sqrt{\Psi''(\lambda_*) t}}\!\right)\!
			\mathrm{d}w\\
			&\quad+ \frac{
				c_3(\varepsilon)
				(1+x)
				\|H1_{[0,\infty)}\|_1
				}{t^{1+\varepsilon}},
		\end{align*}
		where $c_4$ is a positive constant independent of $\varepsilon$.
		Thus
there exists a positive constant $c_5(\varepsilon)$ such that
for $t>t_1(\varepsilon)$,
		\begin{align}\label{lower-bound-I1}
			I_1(t)
			\ge& \frac{2
			R^*(x)}{\sqrt{2\pi}\Psi''(\lambda_*) t}
			\int_{\mathbb{R}_+}
			\left(g(w)1_{\{w\ge \varepsilon\}}
				-c_4\varepsilon h(w)\right)
			\rho\left(\frac{w}{ \sqrt{\Psi''(\lambda_*) t}} \right)
			\mathrm{d} w\\
			&-c_4\sqrt{\varepsilon}
			\frac{2R^*(x)}{\Psi''(\lambda_*) t}\!
			\int_{\R_+}\!
			h(w)\phi\!\left(\!\frac{w}{\sqrt{\Psi''(\lambda_*) t}}\!\right)\!
			\mathrm{d}w\\
			&-c_5(\varepsilon)
			(1+x)
			\|H1_{[0,\infty)}\|_1
			\left(\frac{1}{t^{1+\varepsilon}}
			+\frac{1}{t^{1+\delta/2}}\right).
		\end{align}

		{\bf Step 2.}
		Next, we give an upper bound for $I_2^1(t)$.
		Combining \eqref{local-limit-RW} and \eqref{upper-bound-tua}, we get that
		there exist positive constants $c_6$  (independent of $\varepsilon$)
		and $c_7(\varepsilon)$
		such that
		\begin{align*}
			I_2^1(t)
			&\le \int_{0}^{\varepsilon^{1/6}\sqrt{[t]}}
			\mathbf{E}_y^{\lambda_*}\left(
				h(\xi_m)1_{\{\xi_m\ge 0\}}
				\right)
			\mathbf{P}_x^{\lambda_*}\left(\xi_{t-m}\in \mathrm{d}y, \tau_0^->t-m\right)
			\\
			&\le \int_{0}^{\varepsilon^{1/6}\sqrt{[t]}}
					\!\frac{1+c_6\varepsilon
						}{ \sqrt{\Psi''(\lambda_*)m}}\!
			\int_{\mathbb{R}}\!H(z) 1_{\{z\ge -\varepsilon\}}
			\phi\!\left(\!\frac{z-y}{ \sqrt{\Psi''(\lambda_*) m}}\!\right)\mathrm{d}z
			 \mathbf{P}_x^{\lambda_*}\!\left(\xi_{t-m}\in \mathrm{d}y, \tau_0^->t-m\right)\\
			&\quad+\frac{c_7(\varepsilon)(1+x)
			\|H1_{[-\varepsilon,\infty)}\|_1
			}{m^{(1+\delta)/2}\sqrt{t-m}}.
		\end{align*}
		For any
		$u\in \mathbb{R}$,
		define
		\begin{align*}
			J(u):=\int_{0}^{\varepsilon^{1/6}\sqrt{[t]}}\left( \frac{1}{ \sqrt{\Psi''(\lambda_*) m}} \phi\left(\frac{u-y}{ \sqrt{\Psi''(\lambda_*) m}}\right)\right)
			\mathbf{P}_x^{\lambda_*}\left(\xi_{t-m}\in \mathrm{d}y, \tau_0^->t-m\right).
		\end{align*}
		Then by Fubini's theorem, we have
		\begin{align*}
			&\int_{0}^{\varepsilon^{1/6}\sqrt{[t]}}
			\frac{1}{ \sqrt{\Psi''(\lambda_*) m}}
			\int_{\mathbb{R}}H(z)1_{\{z\ge -\varepsilon\}}
			 \phi\left(\frac{z-y}{ \sqrt{\Psi''(\lambda_*) m}}\right)\mathrm{d}z
			\mathbf{P}_x^{\lambda_*}\left(\xi_{t-m}\in \mathrm{d}y, \tau_0^->t-m\right)\\
			&= \int_{\mathbb{R}}H(u)1_{\{u\ge -\varepsilon\}}
			J(u)\mathrm{d}u.
		\end{align*}
		For any $u\in\mathbb{R}_+$, define
		$$F_u(y):=\frac{1}{  \sqrt{\Psi''(\lambda_*) m}} \phi\left(\frac{u-y}{  \sqrt{\Psi''(\lambda_*) m}}\right).$$
		Using the definition of $J(u)$ and integration by parts, we get
		\begin{align*}
			J(u)
			&=\int_{0}^{\infty}F_u(y)\mathbf{P}_x^{\lambda_*}\left(\xi_{t-m}\in \mathrm{d}y, \xi_{t-m}\le \varepsilon^{1/6}\sqrt{[t]},\tau_0^->t-m\right)\\
			&
			\le
			\int_{0}^{\infty}F_u'(y)\mathbf{P}_x^{\lambda_*}\left(\xi_{t-m}>y, \xi_{t-m}\in(0,\varepsilon^{1/6}\sqrt{[t]}],\tau_0^->t-m\right)\mathrm{d}y\\
			&=\int_{0}^{\varepsilon^{1/6}\sqrt{[t]}}F_u'(y)\mathbf{P}_x^{\lambda_*}\left(
			\xi_{t-m}\in (y, \varepsilon^{1/6}\sqrt{[t]}],
			\tau_0^->t-m\right)\mathrm{d}y.
		\end{align*}
		Since $m=[\varepsilon t]$, using Lemma \ref{lemma-effective-conditioned-integral-limit},
		it holds that for $t-m>T_0(\varepsilon)$,
		\begin{align*}
			&\Big|
			\mathbf{P}_x^{\lambda_*}
			\left(
			\xi_{t-m}\in (y, \varepsilon^{1/6}\sqrt{[t]}],
			\tau_0^->t-m\right)\\
			&\quad-\frac{2R^*(x)
			}{\sqrt{2\pi  (t-m) \Psi''(\lambda_*)} }\int_{\frac{y}{ \sqrt{\Psi''(\lambda_*)(t-m)}}}^{\frac{\varepsilon^{1/6}\sqrt{[t]} }{ \sqrt{\Psi''(\lambda_*)(t-m)}}}
			\rho(z)\mathrm{d}z\Big|
			\le \frac{C_4(\varepsilon)
				(1+x)}{(t-m)^{1/2+\varepsilon}}.
		\end{align*}
		Now, using the fact
			\begin{align}
				F_u(\varepsilon^{1/6}\sqrt{[t]})-F_u(0)\le \frac{c_8(\varepsilon)}{\sqrt{t}},
			\end{align}
			for some $c_8(\varepsilon)>0$,
		we get that there exists a positive constant
		$c_9(\varepsilon)$ such that for $t-m>T_0(\varepsilon)$,
		\begin{align*}
			J(u)
			&\le \frac{
				c_9(\varepsilon)
				(1+x)}{(t-m)^{1/2+\varepsilon}}
			\frac{1}{\sqrt{t}}
			+ \frac{2R^*(x)}{\sqrt{2\pi (t-m)\Psi''(\lambda_*)}}\int_{0}^{\varepsilon^{1/6}\sqrt{[t]}}F_u'(y)\\
			&\quad \times \left(\mathcal{R}\left(\frac{\varepsilon^{1/6}\sqrt{[t]}}{ \sqrt{\Psi''(\lambda_*)(t-m)}}\right)-\mathcal{R}\left(\frac{y}{ \sqrt{\Psi''(\lambda_*)(t-m)}}\right)
			\right)\mathrm{d}y.
		\end{align*}
		Using integration by parts and the fact $F_u(0)\ge 0$ (see \cite[(3.33)]{GX-AIHP}),
		 we get that there exists a constant
		 $c_{10}$ (independent of $\varepsilon$) such that
		\begin{align*}
			&\int_{0}^{\varepsilon^{1/6}\sqrt{[t]}}F_u'(y) \left(
			\mathcal{R}
			\left(\frac{\varepsilon^{1/6}\sqrt{[t]}}{\sqrt{\Psi''(\lambda_*)(t-m)}}\right)-
			\mathcal{R}
			\left(\frac{y}{ \sqrt{\Psi''(\lambda_*)(t-m)}}\right)
			\right)
			\mathrm{d}y\\
			&\le
 \frac{
	c_{10}
	\varepsilon^{1/12}}
{\sqrt{t-m}}\phi\left(\frac{u}{ \sqrt{\Psi''(\lambda_*) t}}\right).
		\end{align*}
		It follows that
		  there exist positive constants $c_{11}$ (independent of $\varepsilon$),
		and $t_2(\varepsilon)$, $c_{12}(\varepsilon)$ such that for $t>t_2(\varepsilon)$,
		 \begin{align}\label{upper-bound-I_2^1}
			I_2^1(t)
		\le &c_{11}\varepsilon^{1/12} \frac{2R^*(x)
			}{\sqrt{2\pi\Psi''(\lambda_*)}t}
			\int_{-\varepsilon}^{\infty}
			H(u)  \phi\left(\frac{u}{ \sqrt{\Psi''(\lambda_*) t}}\right)\mathrm{d}u\\
			&+c_{12}(\varepsilon)(1+x)
			\|H1_{[-\varepsilon,\infty)}\|_1
			\left(\frac{1}{t^{1+\varepsilon}}+\frac{1}{t^{1+\delta/2}}\right).
		\end{align}

		{\bf Step 3.} Finally, we study the upper bound for $I_2^2(t)$.
		By the definition of $I_2^2(t)$, we have
		\begin{align}
			I_2^2(t)
			=\int_{\R}
			J_m(y)
			\mathbf{P}_x^{\lambda_*}\left(\xi_{t-m}\in \mathrm{d}y, \tau_0^->t-m\right),
		\end{align}
		where $J_m(y):=\mathbf{E}_y^{\lambda_*}\left(h(\xi_m) 1_{\{\xi_m\ge 0\}}1_{\{\tau_{0}^-\le m\}}\right)1_{\{y >\varepsilon^{1/6}\sqrt{[t]}\}}$.
		For any $z\in\mathbb{R}$, define
		\begin{align}
			M_m(z):=\mathbf{E}_z^{\lambda_*}\left(H(\xi_m) 1_{\{\xi_m\ge -\varepsilon\}}
			1_{\{\tau_{\varepsilon}^-\le m\}}
\right)1_{\{z+\varepsilon >\varepsilon^{1/6}\sqrt{[t]}\}}.
		\end{align}
		Consequently, $J_m\le_{\varepsilon}M_m$.
	         Applying Lemma \ref{lemma-rough-upper-bound-limsup} with $h$ and $H$ instead of $J_m$ and $M_m$,
				we get that for $t-m>T_1(\varepsilon)$,
		\begin{align}\label{upper-bound-I2}
			I_2^2(t)
			&\le \frac{2
			(1+C_5\varepsilon)
			R^*(x)
			}{\sqrt{2\pi} \Psi''(\lambda_*)(t-m)}\int_{\mathbb{R}_+}M_m(w)
			\rho
			\left(\frac{w}{\sqrt{\Psi''(\lambda_*) (t-m)}}\right) \mathrm{d}w\\
			&\quad+
			 \frac{2C_5
			 \sqrt{\varepsilon} R^*(x)
			}{\sqrt{2\pi}\Psi''(\lambda_*) (t-m)}
			\int_{-\varepsilon}^{\infty}
			M_m(w)
			e^{-\frac{w^2}{2\Psi''(\lambda_*)t}}
			\mathrm{d}w\\
			&\quad+C_6(\varepsilon)
			(1+x)
			\|M_m1_{[-\varepsilon,\infty)}\|_1
			\left(\frac{1}{(t-m)^{1+\varepsilon}}
			+\frac{1}{(t-m)^{1+\delta/2}}\right).
		\end{align}
       Now
we bound the three terms on the right-hand side of \eqref{upper-bound-I2} from above.
Using \eqref{dual-on-R},
		\begin{align}\label{upper-bound-Mm}
			\|M_m1_{[-\varepsilon,\infty)}\|_1
			&=\int_{\R} \mathbf{E}_z^{\lambda_*}\left(H(\xi_m) 1_{\{\xi_m\ge -\varepsilon\}}1_{\{\tau_{\varepsilon}^-\le m\}}
\right)1_{\{z+\varepsilon >\varepsilon^{1/6}\sqrt{[t]}\}}1_{\{z\ge -\varepsilon\}} \mathrm{d}z\\
&\le \int_{\R} \mathbf{E}_z^{\lambda_*}\left(H(\xi_m) 1_{\{\xi_m\ge -\varepsilon\}}
\right)1_{\{z+\varepsilon >\varepsilon^{1/6}\sqrt{[t]}\}}1_{\{z\ge -\varepsilon\}} \mathrm{d}z\nonumber\\
&= \int_{\R} H(z) 1_{\{z\ge -\varepsilon\}}\mathbf{P}_z^{\lambda_*}\left(\widehat{\xi}_m+\varepsilon >\varepsilon^{1/6}\sqrt{[t]}, \widehat{\xi}_m\ge -\varepsilon
\right) \mathrm{d}z\nonumber\\
			&\le
			\int_{-\varepsilon}^{\infty} H(z) \mathrm{d}z
			\le \|H1_{[-\varepsilon,\infty)}\|_1.\nonumber
		\end{align}
		Moreover, using the definition of $M_m$ and
 \eqref{duality-formula-2}, we get that
		\begin{align*}
			&\int_{\mathbb{R}_+}M_m(w)
			\rho
			\left(\frac{w}{\sqrt{\Psi''(\lambda_*)(t-m)}}\right) \mathrm{d}w\\
			&= \int_{\mathbb{R}_+}
			\mathbf{E}_w^{\lambda_*}\left(H(\xi_m) 1_{\{\xi_m\ge -\varepsilon\}}1_{\{\tau_{\varepsilon}^-\le m\}}
\right)1_{\{w+\varepsilon>\varepsilon^{1/6}\sqrt{[t]}\}}
			\rho \left(\frac{w}{ \sqrt{\Psi''(\lambda_*) (t-m)}}\right) \mathrm{d}w\\
			&=\int_{\mathbb{R}}
			\mathbf{E}_{w+\varepsilon}^{\lambda_*}\left(H(\xi_m) 1_{\{\xi_m\ge -\varepsilon\}}1_{\{\tau_{\varepsilon}^-\le m\}}
\right)1_{\{w+2\varepsilon>\varepsilon^{1/6}\sqrt{[t]}\}}
			\rho \left(\frac{w+\varepsilon}{ \sqrt{\Psi''(\lambda_*) (t-m)}}\right) \mathrm{d}w
			\\
			&=
			\int_{\mathbb{R}}
			\mathbf{E}_{w}^{\lambda_*}\left(H(\xi_m+\varepsilon) 1_{\{\xi_m+\varepsilon\ge -\varepsilon\}}1_{\{\tau_{0}^-\le m\}}\right)
			1_{\{w+2\varepsilon>\varepsilon^{1/6}\sqrt{[t]}\}}
			\rho \left(\frac{w+\varepsilon}{ \sqrt{\Psi''(\lambda_*) (t-m)}}\right) \mathrm{d}w
			\\
			 &=
			\int_{\mathbb{R}}
			H(w+\varepsilon)1_{\{w\ge -2\varepsilon\}}
			\mathbf{E}_w^{\lambda_*}
			\left(\rho \left(
				\frac{\widehat{\xi}_m+\varepsilon}
				{ \sqrt{\Psi''(\lambda_*) (t-m)}}\right)1_{\{
					\widehat{\xi}_m+2\varepsilon>\varepsilon^{1/6}\sqrt{[t]},
					\widehat{\tau}_0^-\le m\}}
			\right)\mathrm{d}w\\
			&=:J_1(t)+J_2(t),
		\end{align*}
		where
		\begin{align*}
			J_1(t):=\int_{-2\varepsilon}^{\varepsilon^{1/4}\sqrt{[t]}}
			H(w+\varepsilon)
			\mathbf{E}_w^{\lambda_*}
			\left(\rho \left(
				\frac{\widehat{\xi}_m+\varepsilon}
				{ \sqrt{\Psi''(\lambda_*) (t-m)}}\right)1_{\{
					\widehat{\xi}_m+2\varepsilon>\varepsilon^{1/6}\sqrt{[t]},
					\widehat{\tau}_0^-\le m\}}
			\right)\mathrm{d}w,
		\end{align*}
		\begin{align*}
			J_2(t):=\int_{\varepsilon^{1/4}\sqrt{[t]}}^{\infty}
			H(w+\varepsilon)
			\mathbf{E}_w^{\lambda_*}
			\left(\rho \left(
				\frac{\widehat{\xi}_m+\varepsilon}
				{ \sqrt{\Psi''(\lambda_*) (t-m)}}\right)1_{\{
					\widehat{\xi}_m+2\varepsilon>\varepsilon^{1/6}\sqrt{[t]},
					\widehat{\tau}_0^-\le m\}}
			\right)\mathrm{d}w.
		\end{align*}
		Next, we consider the upper bounds of $J_1(t)$ and $J_2(t)$ separately.
	We claim that there exist positive
	 constants $c_{13}$ and $q$ (both independent of $\varepsilon$),
	and $c_{14}(\varepsilon)$ such that
		\begin{align}\label{upper-bound-J1(t)}
			J_1(t)\le
			c_{13}
			 \varepsilon^{1/6} \int_{-2\varepsilon}^{\varepsilon^{1/4}\sqrt{[t]}}
			H(w+\varepsilon)
			\mathrm{d}w,
		\end{align}
and
\begin{align}\label{upper-bound-J_2(t)}
			J_2(t)
			\le c_{13}
			\varepsilon^{1/12}
			\int_{\varepsilon^{1/4}\sqrt{[t]}}^{\infty}
			H(w+\varepsilon)\phi\left(\frac{w}{\sqrt{\Psi''(\lambda_*) t}}\right)\mathrm{d}w
			+\frac{c_{14}(\varepsilon)
				}{t^q}\|H1_{[-\varepsilon,\infty)}\|_1.
		\end{align}
Using \eqref{upper-bound-J1(t)},  \eqref{upper-bound-J_2(t)} and the fact that $\phi$ is bounded,
		we immediately get
		there exists a positive constant
		$c_{15}$  (independent of $\varepsilon$)
		such that
		\begin{align}\label{upper-bound-M-rho}
			&\int_{\mathbb{R}_+}M_m(w)
			\rho
			\left(\frac{w}{\sqrt{\Psi''(\lambda_*)(t-m)}}\right) \mathrm{d}w\\
			\le &c_{15}
			\varepsilon^{1/12}
		\int_{-2\varepsilon}^{\infty}
			H(w+\varepsilon)\phi\left(\frac{w}{ \sqrt{\Psi''(\lambda_*) t}}\right)\mathrm{d}w+\frac{
				c_{14}(\varepsilon)
				}{t^q}\|H1_{[-\varepsilon,\infty)}\|_1.
		\end{align}
Similarly,
there exist constants
 $c_{16}$ (independent of $\varepsilon$) and $c_{17}(\varepsilon)$
such that
		\begin{align}
			\int_{-\varepsilon}^{\infty}
			M_m(w)e^{-\frac{w^2}{2\Psi''(\lambda_*)t}}
			\mathrm{d}w
			\le c_{16}
			\varepsilon^{1/12}
			\int_{-2\varepsilon}^{\infty}H(w+\varepsilon)
			 \phi\left(\frac{w}{\sqrt{\Psi''(\lambda_*)t}}\right)
			\mathrm{d}w
			+\frac{c_{17}(\varepsilon)}
			{t^{q}}
			\|H 1_{[-\varepsilon,\infty)}\|_1.
		\end{align}
		Combining the last two displays with \eqref{upper-bound-I2}
		and \eqref{upper-bound-Mm},
		we get that there exist positive constants
		 $c_{18}$ (independent of $\varepsilon$), $t_3(\varepsilon)$ and $c_{19}(\varepsilon)$
		such that for $t>t_3(\varepsilon)$,
		\begin{align}\label{upper-bd-I_2^2}
			I_2^2(t)
			\le&
			\frac{c_{18}
			R^*(x)}{\sqrt{2\pi}\Psi''(\lambda_*) t}
				\varepsilon^{1/12}\!\!
			\int_{-2\varepsilon}^{\infty}H(w+\varepsilon)
			\phi\Big(\frac{w}{\sqrt{\Psi''(\lambda_*)t}}\Big)
			\mathrm{d}w
			\\&
			+ c_{19}(\varepsilon)
			(1+x)\|H 1_{[-\varepsilon,\infty)}\|_1
			\left(\frac{1}{t^{1+\varepsilon}}
			+\frac{1}{t^{1+\delta/2}}
			+\frac{1}{t^{1+q}}
		\right).\nonumber
		\end{align}

		Combining \eqref{upper-bound-I_2^1} and \eqref{upper-bd-I_2^2},
		and using the fact that there exists $c_{20}>0$ such that $R^*(x)\le c_{20}(1+x)$,
		 we get that
		there exist positive constants
		$c_{21}$ (independent of $\varepsilon$),
		$c_{22}(\varepsilon)$
		and $t_4(\varepsilon)$,
		such that for $t>t_4(\varepsilon)$,
\begin{align}
			I_2(t)
			&\le
			c_{20}
			 \varepsilon^{1/12} \frac{2R^*(x)
			}{\sqrt{2\pi}\Psi''(\lambda_*)t}
			\int_{-\varepsilon}^{\infty}
			H(u)  \phi\left(\frac{u}{ \sqrt{\Psi''(\lambda_*) t}}\right)\mathrm{d}u\\
			&\quad+
			c_{22}(\varepsilon)
			(1+x)\|H 1_{[-\varepsilon,\infty)}\|_1
			\left(\frac{1}{t^{1+\varepsilon}}
			+\frac{1}{t^{1+\delta/2}}+\frac{1}{t^{1+q}}\right).
		\end{align}
		Combining this with \eqref{lower-bound-I1} gives the desired result.

Now we prove the claims  \eqref{upper-bound-J1(t)} and \eqref{upper-bound-J_2(t)}.
		Using the boundedness of $\rho$ and the fact that
there exists a positive constant $t_5$ independent of $\varepsilon$ such that
		\begin{align}\label{e:elemineq}
			\varepsilon^{1/6}\sqrt{[t]}-\varepsilon^{1/4}\sqrt{[t]}-2\varepsilon>\frac{1}{2}\varepsilon^{1/6}\sqrt{[t]},
			\quad
			t>t_5.
		\end{align}
		Therefore,  we get that
	there exists a positive constant
	$c_{23}$ such that
	for $t>t_5$,
		\begin{align}\label{upper-bound-J1-1}
			J_1(t)
			&\le  c_{23}
			\int_{-2\varepsilon}^{\varepsilon^{1/4}\sqrt{[t]}}
			H(w+\varepsilon)
			\mathbf{P}_w^{\lambda_*}
			\left(\widehat{\xi}_m+2\varepsilon>\varepsilon^{1/6}\sqrt{[t]}
			\right)\mathrm{d}w\\
			&
			\le  c_{23}
			\mathbf{P}_0^{\lambda_*}
			\left(\widehat{\xi}_m>\frac{1}{2}\varepsilon^{1/6}\sqrt{[t]}\right)
			\int_{-2\varepsilon}^{\varepsilon^{1/4}\sqrt{[t]}}
			H(w+\varepsilon)
			\mathrm{d}w.
		\end{align}
		Using \cite[Lemma 3.4]{GX-AIHP}
		with  $u=v=\frac{1}{2}\varepsilon^{1/6}\sqrt{[t]}$,
		since $m=[\varepsilon t]$, we get that
		there exist  positive constants
		$c_{24}$ and $c_{25}$ both independent of $\varepsilon $
		such that
		\begin{align}\label{upper-bound-hat{xi}}
			\mathbf{P}_0^{\lambda_*}
			\left(\widehat{\xi}_m>\frac{1}{2}\varepsilon^{1/6}\sqrt{[t]}\right)
			&\le 2 \exp\Big\{\left(1+\frac{4m}{\varepsilon^{1/3}[t]}\right)\Big\}
			+m\mathbf{P}_0^{\lambda_*}\left(|\widehat{\xi}_1|>\frac{1}{2}\varepsilon^{1/6}\sqrt{[t]}\right)\nonumber\\
			&\le  c_{24}
			\varepsilon^{2/3}+[\varepsilon t] \frac{4 \mathbf{E}_0^{\lambda_*}(\widehat{\xi}_1^2)}{\varepsilon^{1/3}[t]}
			\le  c_{25}
			\varepsilon^{1/6},
		\end{align}
		where in the second inequality we used Chebyshev's inequality and {\bf(H1)}.
		Combining this with \eqref{upper-bound-J1-1}, we complete the proof of \eqref{upper-bound-J1(t)}.
		
Next we prove \eqref{upper-bound-J_2(t)}.
		Using \eqref{e:elemineq} and H\"older's inequality, we get that for all $t>t_5$ and $w>0$, we have
		\begin{align}\label{upper-bound-joint-proba}
			&\mathbf{P}_w^{\lambda_*}
			\left(\widehat{\xi}_m+2\varepsilon>\varepsilon^{1/6}\sqrt{[t]},
			\widehat{\tau}_0^-\le m
			\right)\\
			&\le \mathbf{P}_w^{\lambda_*}
			\left(\max_{s\in[0,m]}|\widehat{\xi}_s|>\frac{1}{2}\varepsilon^{1/6}\sqrt{[t]}
			\right)^{1/2}
			\mathbf{P}_w^{\lambda_*}
			\left(\widehat{\tau}_{0}^-\le m\right)^{1/2}\nonumber\\
			&=\mathbf{P}_w^{\lambda_*}
			\left(\max_{s\in[0,m]}|\widehat{\xi}_s|>\frac{1}{2}\varepsilon^{1/6}\sqrt{[t]}
			\right)^{1/2}
			\mathbf{P}_0^{\lambda_*}
			\left(\widehat{\tau}_{-w}^-\le m\right)^{1/2}.\nonumber
		\end{align}
		By Lemma \ref{lemma-relation-with-BM}, there exists a Brownian motion
		$W$ with diffusion coefficient $\Psi''(\lambda_*)$,
		starting from the origin, such that for any $t\ge 1$ and $x>0$,
		\begin{align}
			\mathbf{P}_0^{\lambda_*}(\widehat{A}_t)
			\le
			\frac{C_3(2\varepsilon)}
			{t^{(\frac{1}{2}-2\varepsilon)(\delta+2)-1}},
		\end{align}
		where $\widehat{A}_t$ is defined by
		\begin{align}
			\widehat{A}_t:=\Big\{\sup_{s\in[0,1]}|\widehat{\xi}_{ts}-\widehat{W}_{ts}|>t^{\frac{1}{2}-2\varepsilon}\Big\}.
		\end{align}
		Therefore,
		 there exists positive constant $q$ (independent of $\varepsilon$) and $c_{26}(\varepsilon)$
		such that
		\begin{align}\label{proba-A_m}
			\mathbf{P}_0^{\lambda_*}
			\left(\widehat{\tau}_{-w}^-\le m,\widehat{A}_m\right)
			\le
			\frac{C_3(2\varepsilon)
				}{m^{(\frac{1}{2}-2\varepsilon)(\delta+2)-1}}
			\le \frac{c_{26}(\varepsilon)}{t^{2q}}.
		\end{align}
		Moreover, for $w>\varepsilon^{1/4}\sqrt{[t]}$, we have
		there exists a positive constant
		 $c_{27}$
		such that
		\begin{align}\label{proba-A_m^C}
			&\mathbf{P}_0^{\lambda_*}
			\left(\widehat{\tau}_{-w}^-\le m,A_m^c\right)
			=\mathbf{P}_0^{\lambda_*}
			\left(
				\inf_{s\in[0,m]}\widehat{\xi}_s<-w,\widehat{A}_m^c
				\right)\\
			&\le \mathbf{P}_0^{\lambda_*}\left(
				\inf_{s\in[0,m]}\widehat{W}_s
				<m^{\frac{1}{2}-2\varepsilon}-w\right)
			=\frac{2}{\sqrt{2\pi\Psi''(\lambda_*) m}}\int_{w-m^{\frac{1}{2}-2\varepsilon}}^{\infty}e^{-\frac{s^2}{2\Psi''(\lambda_*) m}}\mathrm{d}s\nonumber\\
			&\le  c_{27}
			\int_{\frac{w}{2 \sqrt{\Psi''(\lambda_*) m}}}^{\infty}e^{-\frac{s^2}{2}}\mathrm{d}s
			 \le  \frac{2 c_{27}
			\sqrt{\Psi''(\lambda_*) m}}{w} e^{-\frac{w^2}{8\Psi''(\lambda_*) m}},\nonumber
		\end{align}
		where in the last inequality we used the fact that $\int_{a}^{\infty}e^{-\frac{s^2}{2}}\mathrm{d}s\le \frac{1}{a}e^{-\frac{a^2}{2}}$ for any $a>0$. 		
		Combining \eqref{proba-A_m^C} and \eqref{proba-A_m},
		for $w>\varepsilon^{1/4}\sqrt{[t]}$,
		since $\frac{\sqrt{m}}{w}\le 1$,
		it holds that
		\begin{align}\label{upper-bound-tau<m}
			\mathbf{P}_0^{\lambda_*}
			\left(\widehat{\tau}_{-w}^-\le m\right)^{1/2}
			\le c_{28}
			\phi\left(\frac{w}{ \sqrt{\Psi''(\lambda_*) t}}\right)+\frac{
				c_{29}(\varepsilon)
				}{t^q},
		\end{align}
		for some positive constants
		$c_{28}$  and $c_{29}(\varepsilon)$.
		Similarly, we can get that
		\begin{align}\label{upper-bound-max-xi}
			\mathbf{P}_w^{\lambda_*}
			\left(\max_{s\in[0,m]}|\widehat{\xi}_s|>\frac{1}{2}\varepsilon^{1/6}\sqrt{[t]}
			\right) \le
			c_{30}\varepsilon^{1/6}+\frac{c_{31}(\varepsilon)}{t^{2q}},
		\end{align}
			for some positive constants $c_{30}$ and $c_{31}(\varepsilon)$.
		Combining this with \eqref{upper-bound-joint-proba}, we get
		there exist positive constants
		$c_{32}$  and $c_{33}(\varepsilon)$ such that
		\begin{align}
			\mathbf{P}_w^{\lambda_*}
			\left(\widehat{\xi}_m+2\varepsilon>\varepsilon^{1/6}\sqrt{[t]},
			\widehat{\tau}_0^-\le m
			\right)
			\le
			c_{32}
			\varepsilon^{1/12}\phi\left(\frac{w}{ \sqrt{\Psi''(\lambda_*) t}}\right)+\frac{
			c_{33}(\varepsilon)
				}{t^q},\quad w>\varepsilon^{1/4}\sqrt{[t]}.
		\end{align}
		This completes the proof of \eqref{upper-bound-J_2(t)}.
	\end{proof}
	
	\begin{lemma}\label{lemma-lower-bound-liminf1}
		Assume that $\xi$ is a  L\'{e}vy process satisfying {\bf(H1)}, {\bf(H2)} {\bf(H3)} and $\mathbf{E}_0[\xi_1]<0$.
Then one can find  positive constants  $C_{11}$  and $q$
		 with the property that for any $\varepsilon\in(0,\varepsilon_0)$
		there exist positive constants $T_4(\varepsilon)$ and $C_{12}(\varepsilon)$
		 such that for any $x>0$,
		$t>T_4(\varepsilon)$ and any  Borel functions
		$h,H,g:\R \to\R_+$
		satisfying $g\le_{\varepsilon}h\le_{\varepsilon}H$
		and $\int_{\R_+}H(z-\varepsilon)(1+z)\mathrm{d}z<\infty$,
		\begin{align*}
			\mathbf{E}_x^{\lambda^*}
			\left(h(\xi_t) 1_{\{ \tau_{0}^->t\}}\right)
			&\ge
			 \left(1-C_{11}t^{-1/2}-C_{12}(\varepsilon)t^{-\varepsilon}\right)
			\frac{2R^*(x)}{\sqrt{2\pi \Psi''(\lambda_*)^3} t^{3/2}}\int_{\mathbb{R}_+}
			g(z+\varepsilon)
			\widehat{R}^*(z)\mathrm{d}z\\
			&\quad
			-C_{12}(\varepsilon)
			   \left(1+C_{11}\varepsilon t^{-1/2}+t^{-\varepsilon}\right)
			\frac{2R^*(x)}{\sqrt{2\pi \Psi''(\lambda_*)^3} t^{3/2}}\int_{\mathbb{R}_+}
			H(z-\varepsilon)
			\widehat{R}^*(z)\mathrm{d}z\\
			&\quad
			-\frac{C_{12}(\varepsilon)(1+x)}{\sqrt{t}}
			\left(\frac{1}{t^{1+\varepsilon}}
			+\frac{1}{t^{1+\delta/2}}+\frac{1}{t^{1+q}}
			\right)\int_{\R_+}
			H(z-\varepsilon)(1+z)
			\mathrm{d}z.
		\end{align*}
	\end{lemma}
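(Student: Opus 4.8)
The plan is to mirror the proof of Lemma~\ref{lemma-upper-bound-limsup}, replacing the application of Lemma~\ref{lemma-rough-upper-bound-limsup} by that of the rough lower bound, Lemma~\ref{lemma-rough-lower-bound-liminf}. Fix $\varepsilon\in(0,\varepsilon_0)$, let $t\ge 2$ be large, and put $m:=[t/2]$. For $z\in\R$ introduce the three ``half-time'' functions
\[
G_m(z):=\mathbf{E}_z^{\lambda_*}\!\left(g(\xi_m)1_{\{\tau_\varepsilon^->m\}}\right),\qquad
I_m(z):=\mathbf{E}_z^{\lambda_*}\!\left(h(\xi_m)1_{\{\tau_0^->m\}}\right),\qquad
H_m(z):=\mathbf{E}_z^{\lambda_*}\!\left(H(\xi_m)1_{\{\tau_{-\varepsilon}^->m\}}\right),
\]
the last being exactly the function $H_m$ from the proof of Lemma~\ref{lemma-upper-bound-limsup}. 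Shifting the starting point and using the monotonicity of $z\mapsto 1_{\{\tau_z^->m\}}$ together with $g\le_\varepsilon h\le_\varepsilon H$, one checks that $G_m\le_\varepsilon I_m\le_\varepsilon H_m$, and, taking $v=0$, also $I_m\le H_m$ pointwise. By the Markov property at time $t-m$,
\[
\mathbf{E}_x^{\lambda_*}\!\left(h(\xi_t)1_{\{\tau_0^->t\}}\right)=\mathbf{E}_x^{\lambda_*}\!\left(I_m(\xi_{t-m})1_{\{\tau_0^->t-m\}}\right),
\]
and I would apply Lemma~\ref{lemma-rough-lower-bound-liminf} with $(G_m,I_m,H_m)$ in place of $(g,h,H)$ and $t-m$ in place of $t$. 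This yields a lower bound whose leading piece is proportional to $\frac{1}{t-m}\int_{\R_+}\bigl[G_m(w)1_{\{w\ge\varepsilon\}}-C_9\varepsilon I_m(w)\bigr]\rho\bigl(w/\sqrt{\Psi''(\lambda_*)(t-m)}\bigr)\,\mathrm{d}w$, with a correction $-C_9\varepsilon^{1/12}\frac{1}{t-m}\int_{-\varepsilon}^{\infty}H_m(w)\phi(\cdots)\,\mathrm{d}w$ and the usual $\|H_m1_{[-\varepsilon,\infty)}\|_1$ error.

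The bulk of the work is evaluating these three pieces, which parallels the treatment of $J_1$, $J_2$, $J_3$ in Lemma~\ref{lemma-upper-bound-limsup}. For the $G_m$ part I would write $G_m(w)1_{\{w\ge\varepsilon\}}=\mathbf{E}_{w-\varepsilon}^{\lambda_*}\!\bigl(g(\xi_m+\varepsilon)1_{\{\tau_0^->m\}}\bigr)1_{\{w\ge\varepsilon\}}$, shift $w\mapsto w+\varepsilon$, use the duality identity~\eqref{duality-formula-1} to convert the $\xi$-expectation into a $\widehat{\xi}$-expectation, and invoke the effective conditioned limit theorem for $\widehat{\xi}$ (Lemma~\ref{lemma-effective-conditioned-integral-limit}), exactly along the lines of~\eqref{equality-J1}--\eqref{upper-bound-J1a}, now using the asymptotics as a lower bound and the explicit Gaussian identity~\eqref{upper-bound-second-part-expectation} verbatim; this produces $(1-C_{11}t^{-1/2}-C_{12}(\varepsilon)t^{-\varepsilon})\frac{2R^*(x)}{\sqrt{2\pi\Psi''(\lambda_*)^3}\,t^{3/2}}\int_{\R_+}g(z+\varepsilon)\widehat{R}^*(z)\,\mathrm{d}z$ up to an error of the form $c(\varepsilon)R^*(x)t^{-3/2-\varepsilon}\int_{\R_+}H(z-\varepsilon)(1+z)\,\mathrm{d}z$. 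For the $-C_9\varepsilon I_m$ term I would use $I_m\le H_m$ and re-run the \emph{upper} bound computation for $J_1$ from Lemma~\ref{lemma-upper-bound-limsup}, which dominates it by a universal constant times $\varepsilon\,R^*(x)t^{-3/2}\int_{\R_+}H(z-\varepsilon)\widehat{R}^*(z)\,\mathrm{d}z$ plus a similar error; the $\varepsilon^{1/12}$-correction involving $\int_{-\varepsilon}^{\infty}H_m\phi$ is handled the same way via the $J_2$-computation~\eqref{upper-bound-J2}; and $\|H_m1_{[-\varepsilon,\infty)}\|_1$ is controlled by $c\,m^{-1/2}\int_{\R_+}H(z-\varepsilon)(1+z)\,\mathrm{d}z$ exactly as in~\eqref{equivalent-Hm}, so that (after the $m=[t/2]$ split) the residual errors from Lemma~\ref{lemma-rough-lower-bound-liminf} take the form $\frac{c(\varepsilon)(1+x)}{\sqrt{t}}\bigl(t^{-1-\varepsilon}+t^{-1-\delta/2}+t^{-1-q}\bigr)\int_{\R_+}H(z-\varepsilon)(1+z)\,\mathrm{d}z$. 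Collecting the positive main term from the $G_m$ part with the negative contributions of the other two pieces and of all error terms, absorbing the $t^{-3/2-\varepsilon}$ error into the last displayed error via $R^*(x)\le c(1+x)$, and noting that $C_9\varepsilon+c\varepsilon^{1/12}\le C\varepsilon^{1/12}$, gives the assertion, with $C_{11},q$ the universal constants inherited from Lemmas~\ref{lemma-rough-lower-bound-liminf}, \ref{lemma-relation-with-BM} and~\ref{lemma-effective-conditioned-integral-limit} and $C_{12}(\varepsilon)$ the $\varepsilon$-dependent one.

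The main obstacle is bookkeeping rather than a new idea. One must ensure that at each application of Lemma~\ref{lemma-effective-conditioned-integral-limit} the argument of $\rho$ (resp.\ $\phi$) is rescaled by $\sqrt{\Psi''(\lambda_*)m}$ inside the expectation and by $\sqrt{\Psi''(\lambda_*)t}$ after integration, that the replacements $t-m\to t$ and $m=[t/2]\to t$ cost only factors $1+O(t^{-1/2})$ and relative errors $O(t^{-\varepsilon})$, and --- the genuinely delicate point, absent from the upper bound --- that the extra term $-C_9\varepsilon\int_{\R_+}h(w)\rho(\cdots)\,\mathrm{d}w$ present in Lemma~\ref{lemma-rough-lower-bound-liminf} is re-dominated through $I_m\le H_m$ so that it contributes only to the coefficient of $\int_{\R_+}H(z-\varepsilon)\widehat{R}^*(z)\,\mathrm{d}z$ and never degrades the leading constant in front of $\int_{\R_+}g(z+\varepsilon)\widehat{R}^*(z)\,\mathrm{d}z$. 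Hypotheses {\bf(H1)} and {\bf(H2)} enter only indirectly, through the strong approximation of Lemma~\ref{lemma-relation-with-BM} and the non-lattice random-walk local limit theorem used inside Lemmas~\ref{lemma-rough-lower-bound-liminf} and~\ref{lemma-effective-conditioned-integral-limit}; no new analytic input is needed.
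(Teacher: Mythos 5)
Your proposal follows essentially the same route as the paper's proof: define the half-time functions at $m=[t/2]$, verify the $\le_\varepsilon$ chain, apply the rough lower bound Lemma \ref{lemma-rough-lower-bound-liminf} at time $t-m$, and evaluate the resulting pieces via the duality formula \eqref{duality-formula-1} together with Lemma \ref{lemma-effective-conditioned-integral-limit} for the dual process, exactly as in the treatment of $J_1$, $J_2$, $J_3$ in Lemma \ref{lemma-upper-bound-limsup}. The only cosmetic difference is that the paper's auxiliary function $N_m$ carries an extra indicator $1_{\{\xi_m\ge\varepsilon\}}$ which your $G_m$ omits; this is immaterial since $G_m\le_\varepsilon I_m$ still holds and the indicator is recovered on $\{\tau_0^->m\}$ after the $\varepsilon$-shift, so your argument is correct.
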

	\begin{proof}
		Recall that the functions $H_m$ and $I_m$ are defined in \eqref{def-Hm} and \eqref{def-I-H}.
		Fix $\varepsilon\in(0,\varepsilon_0)$ and let
		$h,H,g:\R\to\R_+$
		be Borel functions
		satisfying
		$g\le_{\varepsilon}h\le_{\varepsilon}H$
            and $\int_{\R_+}H(z-\varepsilon)(1+z)\mathrm{d}z<\infty$.
		For any $y\in\mathbb{R}$, define
		\begin{align}
			N_m(y):=\mathbf{E}_y^{\lambda_*}\left(g(\xi_m) 1_{\{\xi_m \ge \varepsilon\}}1_{\{\tau_{\varepsilon}^-> m\}}\right).
		\end{align}
		Then for any $y>0$ and $|v|\le \varepsilon$,
		\begin{align}
			N_m(y)
			\le \mathbf{E}_y^{\lambda_*}\left(h(\xi_m+v) 1_{\{\xi_m \ge \varepsilon\}}1_{\{\tau_{\varepsilon}^-> m\}}\right)
			\le \mathbf{E}_{y}^{\lambda_*}\left(h(\xi_m+v)1_{\{\tau_{-v}^-> m\}}\right)
			= I_m(y+v).
		\end{align}
		Therefore,
		$N_m\le_{\varepsilon}I_m \le_{\varepsilon}H_m$.
		Applying Lemma \ref{lemma-rough-lower-bound-liminf}
		with $h=I_m$, we get that
for $t-m>T_3(\varepsilon)$,
		\begin{align*}
			&\mathbf{E}_x^{\lambda_*}\left(h(\xi_t) 1_{\{ \tau_{0}^->t\}}\right)
			=\mathbf{E}_x^{\lambda_*}\left(I_m(\xi_{t-m}),\tau_0^->t-m\right)\\
			&\ge \frac{2R^*(x)}{\sqrt{2\pi}\Psi''(\lambda_*)(t-m)}
			 \int_{\mathbb{R}_+}N_m(z)
			 1_{\{z\ge \varepsilon\}}
			 \rho \left(\frac{z}{ \sqrt{\Psi''(\lambda_*)(t-m)}} \right)\mathrm{d} z\\
			 &-\frac{2C_9
			 \varepsilon R^*(x)}{\sqrt{2\pi}\Psi''(\lambda_*)(t-m)}
			 \int_{\mathbb{R}_+}I_m(z)\rho \left(\frac{z}{ \sqrt{\Psi''(\lambda_*)(t-m)}} \right)\mathrm{d} z\\
			&-\frac{
			C_9
			\varepsilon^{1/12} R^*(x)
			}{\sqrt{2\pi}\Psi''(\lambda_*)(t-m)}
			\int_{-\varepsilon}^{\infty}
			H_m(z)\phi\left(\frac{z}{ \sqrt{\Psi''(\lambda_*)(t-m)}}\right)\mathrm{d}z\\
			&-C_{10}(\varepsilon)
			(1+x)
			\|H_m1_{[-\varepsilon,\infty)}\|_1
			\left( \frac{1}{(t-m)^{1+\delta/2}}+\frac{1}{(t-m)^{1+\varepsilon}}
			+\frac{1}{(t-m)^{1+q}}
			\right)=:\sum_{i=1}^{4}K_i,
		\end{align*}
			where $q$ is the constant in Lemma \ref{lemma-rough-lower-bound-liminf}.
	 By \eqref{duality-formula-1}, we have
		\begin{align*}
			K_1
			=&\frac{2R^*(x)
			}{\sqrt{2\pi}\Psi''(\lambda_*)(t-m)}\! \int_{\mathbb{R}_+}\!
			\mathbf{E}_z^{\lambda_*}\!
			\left(g(\xi_m)1_{\{\xi_m\ge \varepsilon\}} 1_{\{\tau_{\varepsilon}^-> m\}}\right)1_{\{z\ge \varepsilon\}}
			\rho\!\left(\!\frac{z}{ \sqrt{\Psi''(\lambda_*)(t-m)}}\! \right)\!\mathrm{d} z\\
			=&\frac{2R^*(x)
			}{\sqrt{2\pi}\Psi''(\lambda_*)(t-m)} \int_{\mathbb{R}_+}
			g(z+\varepsilon) \mathbf{E}_z^{\lambda_*}\left(\rho\left(\frac{\widehat{\xi}_m+\varepsilon}{ \sqrt{\Psi''(\lambda_*)(t-m)}} \right)1_{\{\widehat{\tau}_0^-> m\}}\right)
			\mathrm{d} z.
		\end{align*}
		Repeating the argument leading to \eqref{upper-bound-J1a}, we get
		that there exist positive constants $c_1$ (independent of $\varepsilon$) and $c_2(\varepsilon)$
		such that
		\begin{align}\label{lower-bound-K1}
			K_1
			&\ge \left(1-c_1t^{-1/2}\right)
			\frac{2R^*(x)
			}{\sqrt{2\pi \Psi''(\lambda_*)^3} t^{3/2}}\int_{\mathbb{R}_+}
			g(z+\varepsilon)\widehat{R}^*(z)
			\mathrm{d}z\\
			&\quad-\frac{2c_2(\varepsilon)
			R^*(x)
			}{\sqrt{2\pi \Psi''(\lambda_*)^3} t^{3/2+\varepsilon}}
			\int_{\mathbb{R}_+}
			g(z+\varepsilon)(1+z)
			\mathrm{d}z.
		\end{align}
		Using an argument similar to that leading to
		\eqref{upper-bound-J1a},
		we get
		that there exist positive constants $c_3$ independent of $\varepsilon$ and $c_4(\varepsilon)$ such that
		\begin{align}\label{lower-bound-K2}
		    K_2
			&\ge
			-c_3\varepsilon(1+c_3\varepsilon t^{-1/2})
			\frac{2R^*(x)
			}{\sqrt{2\pi \Psi''(\lambda_*)^3}t^{3/2}} \int_{\mathbb{R}_+}
			H(z-\varepsilon)
			\widehat{R}^*(z)
			\mathrm{d} z\\
			&\quad
			-c_4(\varepsilon)
			\frac{2R^*(x)
			}{\sqrt{2\pi \Psi''(\lambda_*)^3}t^{3/2+\varepsilon}} \int_{\mathbb{R}_+}
			H(z-\varepsilon)
			(1+z)\mathrm{d} z,
		\end{align}
and
		\begin{align}\label{lower-bound-K3}
            K_3
			&\ge
			-
			c_3
			\varepsilon^{1/12}(1+c_3\varepsilon t^{-1/2})
			 \frac{2R^*(x)
			}{\sqrt{2\pi \Psi''(\lambda_*)^3}t^{3/2}} \int_{\mathbb{R}_+}
			H(z-\varepsilon)
			\widehat{R}^*(z)
			\mathrm{d} z\\
			&\quad
			-c_4(\varepsilon)
			\frac{2R^*(x)
			}{\sqrt{2\pi \Psi''(\lambda_*)^3}t^{3/2+\varepsilon}} \int_{\mathbb{R}_+}
			H(z-\varepsilon)
			(1+z)\mathrm{d} z.
		\end{align}
		Moreover, by \eqref{equivalent-Hm}, we get that
		\begin{align}\label{lower-bound-K4}
			K_4
			&\ge -\frac{
				c_5(\varepsilon)
				(1+x)}{\sqrt{t}}
			\left(\frac{1}{t^{1+\varepsilon}}
			+\frac{1}{t^{1+\delta/2}}
			+\frac{1}{t^{1+q}}
			\right)\int_{\R_+}(1+z)
			H(z-\varepsilon)
			\mathrm{d}z,
		\end{align}
		for some positive constant $c_5(\varepsilon)$.
Combining \eqref{lower-bound-K1}, \eqref{lower-bound-K2}, \eqref{lower-bound-K3} and \eqref{lower-bound-K4},
		we get the desired result.
	\end{proof}

	{\bf Proof of Theorem \ref{thm-tau-rho<0}:}
	Since $h:\mathbb{R} \to \mathbb{R}_+$
	is a Borel function and
	$z\mapsto h(z)(1+|z|)$ is directly Riemann integrable,
	by \cite[Lemma 2.3]{GX-AIHP}, there exists $a \in(0,1)$ such that
	$\int_{\mathbb{R}}\bar{h}_{a,\varepsilon}(1+|z|)\mathrm{d}z<\infty$,
	for any $\varepsilon \in(0,a)$, where $\bar{h}_{a,\varepsilon}$ is defined in \eqref{def-varepsilon-domain}.
	Applying Lemma \ref{lemma-upper-bound-limsup} to $h$, we have
for $t>T_2(\varepsilon)$,
	\begin{align*}
		t^{3/2}\mathbf{E}_x^{\lambda_*}\left(h(\xi_t)1_{\{ \tau_{0}^->t\}}\right)
		&\le
	\left(1+C_7t^{-1/2}+C_7\sqrt{\varepsilon}\right)
		\frac{2 R^*(x)}{\sqrt{2\pi\Psi''(\lambda_*)^3}}\int_{\mathbb{R}_+}
		\bar{h}_{a_m,\varepsilon}(z-\varepsilon)
		\widehat{R}^*(z)
		\mathrm{d}z\\
		&\quad+\frac{
			2C_7R^*(x)
		}{\sqrt{2\pi\Psi''(\lambda_*)^3} t^{\varepsilon}}\int_{\mathbb{R}_+}
		\bar{h}_{a_m,\varepsilon}(z-\varepsilon)
		(1+z)\mathrm{d}z\\
		&\quad+
		C_8(\varepsilon)
		(1+x)\left(
		\frac{1}{t^{\varepsilon}}
		+\frac{1}{t^{\delta/2}}\right)\int_{\mathbb{R}_+}
		\bar{h}_{a_m,\varepsilon}(z-\varepsilon)
		(1+z)\mathrm{d}z,
	\end{align*}
	where $a_m=2^{-m}a$, $m\ge 0$.
	On the other hand, by Lemma \ref{lemma-lower-bound-liminf1},
	we have
	for $t>T_4(\varepsilon)$,
	\begin{align*}
		t^{3/2}\mathbf{E}_x^{\lambda_*}\left(h(\xi_t) 1_{\{ \tau_{0}^->t\}}\right)
		&\ge
		\left(1-C_{11}t^{-1/2}-C_{12}(\varepsilon)t^{-\varepsilon}\right)
		\frac{2R^*(x)
		}{\sqrt{2\pi \Psi''(\lambda_*)^3} }\int_{\mathbb{R}_+}
		\underline{h}_{a_m,\varepsilon}(z+\varepsilon)
		\widehat{R}^*(z)
		\mathrm{d}z\\
		&\quad
		-C_{11}\varepsilon\left(1+C_{11}\varepsilon t^{-1/2}+t^{-\varepsilon}\right)
		\frac{2R^*(x)
		}{\sqrt{2\pi \Psi''(\lambda_*)^3} }\int_{\mathbb{R}_+}
		\bar{h}_{a_m,\varepsilon}(z-\varepsilon)
		\widehat{R}^*(z)
		\mathrm{d}z\\
		&\quad
		-C_{12}(\varepsilon)
		(1+x)\left(\frac{1}{t^{\varepsilon}}
		+\frac{1}{t^{\delta/2}}
		+\frac{1}{t^{q}}
		\right)\int_{\R_+}(1+z)
		\bar{h}_{a_m,\varepsilon}(z-\varepsilon)
		\mathrm{d}z.
	\end{align*}
	Since $h$ is not almost
	everywhere $0$ on $(0,\infty)$,
	we have
	$$\int_{\R_+}h(z)\widehat{R}^*(z)\mathrm{d}z\ge \widehat{R}^*(0)\int_{\R_+}h(z)\mathrm{d}z>0.$$
	Thus,
	\begin{align}\label{upper-bound-loacl-limit}
		\limsup_{t\to\infty}
		\frac{t^{3/2}\mathbf{E}_x^{\lambda_*}\left(h(\xi_t)1_{\{ \tau_{0}^->t\}}\right)}{\frac{
			2 R^*(x)
			}{\sqrt{2\pi\Psi''(\lambda_*)^3}}\int_{\mathbb{R}_+}h(z)
		\widehat{R}^*(z)\mathrm{d}z}
		&\le
		\left(1+C_7\sqrt{\varepsilon}\right)
		\limsup_{t\to\infty}I(\varepsilon,m),
	\end{align}
and
	\begin{align}
		\limsup_{t\to\infty}
		\frac{t^{3/2}\mathbf{E}_x^{\lambda_*}\left(h(\xi_t)1_{\{ \tau_{0}^->t\}}\right)}{\frac{
			2 R^*(x)
			}{\sqrt{2\pi\Psi''(\lambda_*)^3}}\int_{\mathbb{R}_+}h(z)
		\widehat{R}^*(z)\mathrm{d}z}
		&\ge
		\limsup_{t\to\infty}
		\left(J(\varepsilon,m)
		-C_{11}\varepsilon
		I(\varepsilon,m)\right),
	\end{align}
	where
	\begin{align}
		I(\varepsilon,m):=\frac{\int_{\mathbb{R}_+}
		\bar{h}_{a_m,\varepsilon}(z-\varepsilon)
			\widehat{R}^*(z)\mathrm{d}z}{\int_{\mathbb{R}_+}h(z)
			\widehat{R}^*(z)\mathrm{d}z}, \quad
		J(\varepsilon,m):=\frac{\int_{\mathbb{R}_+}
		\underline{h}_{a_m,\varepsilon}(z+\varepsilon)
			(1+z)\mathrm{d}z}{\int_{\mathbb{R}_+}h(z)
			\widehat{R}^*(z)\mathrm{d}z}.
	\end{align}
	Repeating the argument
	for  $I(y, \varepsilon, m)$ on \cite[pp. 40--41]{GX-AIHP}, we get
	\begin{align}
		\lim_{\varepsilon\to \infty}\limsup_{t\to\infty}I(\varepsilon,m)
		=1.
	\end{align}
	 This combined with \eqref{upper-bound-loacl-limit} yields that
	\begin{align}
		\limsup_{t\to\infty}
		t^{3/2}\mathbf{E}_x^{\lambda_*}\left(h(\xi_t)1_{\{ \tau_{0}^->t\}}\right)
		\le \frac{2 R^*(x)}{\sqrt{2\pi\Psi''(\lambda_*)^3}}\int_{\mathbb{R}_+}h(z)
		\widehat{R}^*(z)\mathrm{d}z.
	\end{align}
	The lower bound can be obtained in a similar way and this gives the desired result.
	\qed

	\section{Proof of Theorem \ref{thm-survival-probability} and Theorem \ref{thm-tail probability-Mt}}\label{section-proof-thm1-thm2}
	
	In this section, we give the proofs of  Theorems \ref{thm-survival-probability} and  \ref{thm-tail probability-Mt}. For any $x,t>0$ and $y\ge 0$, define
	\begin{align}\label{def-u}
		u(x,t):=\P_x(\zeta>t),
	\end{align}
	and
	\begin{align}\label{def-Q}
		Q_y(x,t):=\P_x(M_t>y).
	\end{align}
	It is easy to see that
	\begin{align}
		Q_0(x,t):=\P_x(M_t>0)
		=\P_x(\zeta>t)=u(x,t).
	\end{align}

Let $B_b^+(\mathbb{R}_+)$ be the space of non-negative bounded Borel functions on $\mathbb{R}_+$.
The following result is \cite[Lemma 2.1]{Hou24} which is true for any  branching killed L\'{e}vy process.

	\begin{lemma}\label{lemma-Integral-equation}
		For any $h\in B_b^+(\mathbb{R}_+)$, the function
		\begin{align}
			u_h(x,t):=\E_x\left(e^{-\int_{\mathbb{R}_+}h(y)
			Z_t^0(\mathrm{d}y)
			}\right),\quad t>0,~x>0,
		\end{align}
		solves the equation
		\begin{align}
			u_h(x,t)=\mathbf{E}_x\left(e^{-h(\xi_{t\land \tau_0^-})}\right)+\beta \mathbf{E}_x\left( \int_{0}^{t}\left(\sum_{k=0}^{\infty}p_ku_h(\xi_{s\land \tau_0^-},t-s)^k-u_h(\xi_{s\land \tau_0^-},t-s)\right)\mathrm{d}s\right).
		\end{align}
		Consequently, $v_h(x,t)=1-u_h(x,t)$ satisfies
		\begin{align}
			v_h(x,t)=\mathbf{E}_x\left(1-e^{-h(\xi_{t\land \tau_0^-})}\right)-\mathbf{E}_x\left(\int_{0}^{t}\Phi(v_h(\xi_{s\land \tau_0^-},t-s))\mathrm{d}s\right).
		\end{align}
	\end{lemma}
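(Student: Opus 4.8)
The plan is to condition on the first branching time of the root particle, use the branching property to obtain a renewal-type identity carrying an exponential weight, and then rewrite that identity in the stated form. Concretely, I would condition on the first branching time $\sigma$ of the particle started at $x$, which is $\mathrm{Exp}(\beta)$-distributed and independent of its spatial motion $(\xi_t,\mathbf{P}_x)$, with killing time $\tau_0^-=\inf\{s>0:\xi_s<0\}$. Adopt the convention that $h$ is extended by $0$ to $(-\infty,0)$ and that a killed particle contributes the factor $1$, so that $\mathbf{E}_x[e^{-h(\xi_{t\wedge\tau_0^-})}]=\mathbf{E}_x[e^{-h(\xi_t)};\tau_0^->t]+\mathbf{P}_x(\tau_0^-\le t)$ and $u_h(y,r)=1$ for $y\le 0$. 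On $\{\sigma>t\}$ the root is the only particle (at $\xi_t$ if $\tau_0^->t$, killed otherwise), contributing $e^{-\beta t}\mathbf{E}_x[e^{-h(\xi_{t\wedge\tau_0^-})}]$; on $\{\sigma=s\le t\}$ either $\tau_0^-\le s$ (root killed before branching, $X_t=0$) or $\tau_0^->s$, in which case the root at $\xi_s>0$ is replaced by $k$ offspring with probability $p_k$, each independently generating a branching killed L\'evy process from $\xi_s$, so by the branching property $\E[e^{-\int_{\R_+}h\,\d X_t}\mid\mathcal F_\sigma]=\sum_k p_k u_h(\xi_s,t-s)^k$ on this event. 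Collecting the two cases and using the convention gives
\begin{equation}\label{eq:fb}
u_h(x,t)=e^{-\beta t}\mathbf{E}_x\!\left[e^{-h(\xi_{t\wedge\tau_0^-})}\right]+\beta\,\mathbf{E}_x\!\left[\int_0^t e^{-\beta s}\sum_{k=0}^{\infty}p_k\,u_h(\xi_{s\wedge\tau_0^-},t-s)^k\,\d s\right].
\end{equation}

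Next I would remove the exponential weight. Let $\widehat{P}_s g(x):=\mathbf{E}_x[g(\xi_{s\wedge\tau_0^-})]$ denote the conservative transition semigroup of $\xi$ stopped at $\tau_0^-$, and set $g=e^{-h}$ and $F(y,r)=\sum_k p_k u_h(y,r)^k$, so \eqref{eq:fb} reads $u_h(x,t)=e^{-\beta t}\widehat{P}_t g(x)+\beta\int_0^t e^{-\beta s}\widehat{P}_s[F(\cdot,t-s)](x)\,\d s$. Substituting \eqref{eq:fb} for $u_h(\cdot,t-s)$ inside $\beta\int_0^t\widehat{P}_s[u_h(\cdot,t-s)](x)\,\d s$, using $\widehat{P}_s\widehat{P}_\sigma=\widehat{P}_{s+\sigma}$, Fubini (legitimate since $0\le u_h\le1$ and $(x,t)\mapsto u_h(x,t)$ is jointly measurable), and $\int_0^r e^{\beta u}\,\d u=(e^{\beta r}-1)/\beta$, a routine computation gives
\[\beta\int_0^t\widehat{P}_s[u_h(\cdot,t-s)](x)\,\d s=(1-e^{-\beta t})\widehat{P}_t g(x)+\beta\int_0^t(1-e^{-\beta s})\widehat{P}_s[F(\cdot,t-s)](x)\,\d s,\]
whence $\widehat{P}_t g(x)+\beta\int_0^t\widehat{P}_s[F(\cdot,t-s)-u_h(\cdot,t-s)](x)\,\d s$ equals the right-hand side of \eqref{eq:fb}, which is $u_h(x,t)$; unwinding the notation this is exactly the asserted equation for $u_h$. (Alternatively one may differentiate \eqref{eq:fb} in $t$ to obtain $\partial_t u_h=\widehat{\mathcal L}u_h+\beta(f(u_h)-u_h)$ and re-integrate against $\widehat{P}$, but the substitution argument sidesteps regularity issues for general $\xi$ and $h$.)

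The "consequently" part is then immediate: putting $v_h=1-u_h$ in the equation just obtained and using $\sum_k p_k(1-v)^k-(1-v)=f(1-v)-(1-v)=\Phi(v)/\beta$ from \eqref{def_Phi} yields $v_h(x,t)=\mathbf{E}_x[1-e^{-h(\xi_{t\wedge\tau_0^-})}]-\mathbf{E}_x[\int_0^t\Phi(v_h(\xi_{s\wedge\tau_0^-},t-s))\,\d s]$. The branching-property identity in the first step, the joint measurability of $u_h$, and the killing conventions are routine; I expect the main obstacle to be the bookkeeping in the second step, namely justifying the substitution of \eqref{eq:fb} under the time integral and correctly evaluating the resulting iterated integrals.
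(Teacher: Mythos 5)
Your argument is correct. Note that the paper itself does not prove this lemma at all: it simply quotes it as \cite[Lemma 2.1]{Hou24}, remarking that it holds for any branching killed L\'{e}vy process, so there is no in-paper proof to compare with line by line. Your route is the standard one and is essentially what the paper does for the analogous exit-type equation in Lemma \ref{lemma-expression-v}: condition on the first branching time to get the renewal identity with the $e^{-\beta s}$ weight, then pass to the unweighted (Dynkin-type) form. The only genuine difference is in that second step: where the paper (in Lemma \ref{lemma-expression-v}) invokes \cite[Lemma 4.1]{Dynkin01} to trade the exponentially weighted equation for the stated one, you verify the equivalence by hand, substituting the weighted identity into $\beta\int_0^t \widehat{P}_s[u_h(\cdot,t-s)](x)\,\mathrm{d}s$ and using the semigroup property of the stopped process together with Fubini; I checked the resulting computation ($\beta\int_0^t e^{-\beta(t-s)}\mathrm{d}s\,\widehat{P}_tg$ plus the $w=s+r$ change of variables giving the factor $(1-e^{-\beta w})$) and it is exact, so your proof is self-contained where the paper's is by citation. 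The passage to $v_h$ via $\beta\bigl(f(1-v)-(1-v)\bigr)=\Phi(v)$ is also correct. The only loose end is the boundary convention: for a spectrally negative process with a Gaussian component one can have $\xi_{\tau_0^-}=0$ (creeping), in which case ``$h$ extended by $0$ on $(-\infty,0)$'' does not quite make the killed particle contribute the factor $1$ as you assert; this is a bookkeeping matter about how $h(\xi_{t\wedge\tau_0^-})$ is to be read at the killing position, inherited from \cite{Hou24}, and does not affect the structure of your argument.
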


	The next result is also valid for any branching killed L\'{e}vy process.
	
	\begin{lemma}\label{lemma-expression-Q}
		For any $x,t>0$ and $y\ge 0$,
		it holds that
		\begin{align}\label{expectation-expression-Q}
			Q_y(x,t)
			=e^{-\alpha t}\mathbf{E}_x\left(1_{\{\tau_0^->t,\xi_t>y\}}e^{-\int_{0}^{t} \varphi(Q_y(\xi_s,t-s))\mathrm{d}s}\right).
		\end{align}
	\end{lemma}
	\begin{proof}
		For any $x,t>0$ and $y\ge 0$, by the dominated convergence we have
		\begin{align}
			1-Q_y(x,t)
			&=\P_x(M_t\le y)
	        =\P_x(Z^0_t(y,\infty)=0)
			=\lim_{\theta \to \infty}
        \E_x\left(e^{-\theta Z^0_t(y,\infty)}\right)\\
			&=\lim_{\theta \to \infty} \E_x\left(e^{-\int_{\mathbb{R}_+}\theta 1_{(y,\infty)}(z)
    Z^0_t(\mathrm{d}z)}\right).
		\end{align}
		Now applying  Lemma \ref{lemma-Integral-equation}
		with $h(z)=1_{(y,\infty)}(z)$,
		we get
		\begin{align}\label{expression-Q}
			Q_y(x,t)
			&=\lim_{\theta \to \infty}\mathbf{E}_x\left(1-e^{-\theta 1_{(y,\infty)}(\xi_{t\land \tau_0^-})}\right)-\mathbf{E}_x\left(\int_{0}^{t}\Phi(Q_y(\xi_{s\land \tau_0^-},t-s))\mathrm{d}s\right)\\
			&=\mathbf{P}_x\left(\xi_{t\land \tau_0^-}>y\right)-\mathbf{E}_x\left(\int_{0}^{t}\Phi(Q_y(\xi_{s\land \tau_0^-},t-s))\mathrm{d}s\right).
		\end{align}
Thus $Q_y(x,t)$ is a bounded solution of the following equation
\begin{align}\label{int-Q}
			u(x,t)=\mathbf{P}_x\left(\xi_{t\land \tau_0^-}>y\right)-\mathbf{E}_x\left(\int_{0}^{t}\Phi(u(\xi_{s\land \tau_0^-},t-s))\mathrm{d}s\right).
		\end{align}
It follows from \cite[(4.8), p.102]{Li} that there is a unique positive locally bounded solution to  \eqref{int-Q}.
Thus we only need to prove that the right side of \eqref{expectation-expression-Q} is also a solution \eqref{int-Q}.
For $s\in [0,t]$, define
		\begin{align}
			A_{s,t}=-\int_{s}^{t}\frac{\Phi(Q_y(\xi_{r\land \tau_0^{-}},t-r))}{Q_y(\xi_{r\land \tau_0^{-}},t-r)}\mathrm{d}r.
		\end{align}
Note that $\frac{\Phi(u)}{u}=\varphi(u)+\alpha$ for $u\in(0,1]$. The right side of \eqref{expectation-expression-Q} can be written as $\mathbf{E}_x\left(e^{A_{0,t}}1_{\{\tau_0^->t,\xi_t>y\}}\right)$.
		It is elementary to check that
		\begin{align}
			e^{A_{0,t}}=1-\int_{0}^{t}e^{A_{s,t}}\frac{\Phi(Q_y(\xi_{s\land \tau_0^{-}},t-s))}{Q_y(\xi_{s\land \tau_0^{-}},t-s)}\mathrm{d}s.
		\end{align}
		Hence we have
		\begin{align}\label{equation-A}
			\mathbf{E}_x\left(e^{A_{0,t}}1_{\{\tau_0^->t,\xi_t>y\}}\right)
			=&\mathbf{P}_x\left(\xi_{t\land \tau_0^-}>y\right)\\
			-&\mathbf{E}_x\left(1_{\{\tau_0^->t,\xi_t>y\}}\int_{0}^{t}e^{A_{s,t}}\frac{\Phi(Q_y(\xi_s,t-s))}{Q_y(\xi_s,t-s)}\mathrm{d}s\right).
		\end{align}
		Now using the Markov property and the fact that
		\begin{align}
			A_{s,t}=\int_{0}^{t-s}\frac{\Phi(Q_y(\xi_{(r+s)\land \tau_0^{-}},t-r-s))}{Q_y(\xi_{(r+s)\land \tau_0^{-}},t-r-s)}\mathrm{d}r,
		\end{align}
we see that  \eqref{equation-A}
implies that $\mathbf{E}_x\left(e^{A_{0,t}}1_{\{\tau_0^->t,\xi_t>y\}}\right)$
solves \eqref{int-Q}.
Thus, we have
		\begin{align}
			Q_y(x,t)
&=\mathbf{E}_x\left(e^{A_{0,t}}1_{\{\tau_0^->t,\xi_t>y\}}\right)\\
			&=e^{-\alpha t}\mathbf{E}_x\left(1_{\{\tau_0^->t,\xi_t>y\}}e^{-\int_{0}^{t}\varphi(Q_y(\xi_r,t-s))\mathrm{d}s}\right).
		\end{align}
		This gives the desired result.
	\end{proof}

	The next lemma will be used
	to prove the lower bounds in Theorems \ref{thm-survival-probability} and  \ref{thm-tail probability-Mt}.

	\begin{lemma}\label{lemma-lower-bound-liminf}
		Assume that \eqref{LLogL-moment-condition} holds and
		$\xi$ is a L\'{e}vy process satisfying {\bf (H1)}.
		Let $x>0$.
		\begin{enumerate}
			\item If $\mathbf{E}_0\left(\xi_1\right)=0$, then for any $y\ge 0$, we have
			\begin{align}
				\liminf_{t\to\infty}\sqrt{t}e^{\alpha t}Q_{\sqrt{t}y}(x,t)\ge
				2C_{sub}R(x)\phi_{\sigma^2}(y).
			\end{align}
			\item If $\mathbf{E}_0\left(\xi_1\right)>0$, we have
			\begin{align}
				\liminf_{t\to\infty}e^{\alpha t}u(x,t)
				\ge q_x C_{sub}.
			\end{align}
			Moreover, for any $y\in\mathbb{R}$, we have
			\begin{align}
				\liminf_{t\to\infty}e^{\alpha t}Q_{\sqrt{t}y+\mathbf{E}_0\left(\xi_1\right) t}(x,t)
				\ge
				q_x C_{sub}
				\int_{\frac{y}{\sigma}}^{\infty}\phi(z)\mathrm{d}z.
			\end{align}
			
			\item
			 			If $\mathbf{E}_0\left(\xi_1\right)<0$ and {\bf (H2)} and {\bf (H3)} hold,
			then for any $y\ge 0$, we have
			\begin{align}
				\liminf_{t\to\infty}t^{3/2}e^{\left(\alpha-\Psi(\lambda_*)\right)t} Q_y(x,t)
				\ge \frac{2C_{sub}R^*(x)
				e^{\lambda_*x}}{\sqrt{2\pi \Psi''(\lambda_*)^3}}\int_{y}^{\infty}e^{-\lambda_* z}
				\widehat{R}^*(z) \mathrm{d}z.
			\end{align}
		\end{enumerate}
	\end{lemma}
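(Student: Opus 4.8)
The plan is to deduce all three lower bounds from the single Feynman--Kac identity in Lemma~\ref{lemma-expression-Q}, first replacing the exponential weight there by a universal constant and then invoking the appropriate conditioned limit theorem for $\xi$.

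\emph{Step 1: a crude weight bound that is nevertheless sharp.} For every $\ell\ge 0$ and $z>0$ one has $Q_\ell(z,r)\le Q_0(z,r)=u(z,r)=\P_z(\zeta>r)\le\P_z(\widetilde\zeta>r)=g(r)$, using $\{M_r>\ell\}\subseteq\{M_r>0\}=\{\zeta>r\}$, the inequality $\zeta\le\widetilde\zeta$, and the fact that the law of $\widetilde\zeta$ does not depend on the starting point. Since $\varphi$ is nonnegative and nondecreasing on $[0,1]$, on $\{\tau_0^->t\}$ this gives
\[
\int_0^t\varphi\big(Q_\ell(\xi_s,t-s)\big)\,\mathrm{d}s\le\int_0^t\varphi\big(g(t-s)\big)\,\mathrm{d}s=\int_0^t\varphi\big(g(r)\big)\,\mathrm{d}r\le\int_0^\infty\varphi\big(g(r)\big)\,\mathrm{d}r=-\ln C_{sub},
\]
where finiteness and the last equality come from \eqref{Integral-of-phi-is-finite} and \eqref{Constant-C-sub}. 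Inserting this into Lemma~\ref{lemma-expression-Q} yields, for all $x,t>0$ and $\ell\ge 0$,
\[
Q_\ell(x,t)\ \ge\ C_{sub}\,e^{-\alpha t}\,\mathbf{E}_x\!\left[1_{\{\tau_0^->t,\ \xi_t>\ell\}}\right].
\]
It is worth stressing that, on the lower-bound side, this rough estimate already carries the \emph{exact} constant $C_{sub}$; this is exactly because $\int_0^\infty\varphi(g(s))\,\mathrm{d}s<\infty$, so no refined control of the Feynman--Kac weight is needed here (in contrast with the upper bounds proved elsewhere).

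\emph{Step 2: passing to the limit.} It remains to determine the normalized asymptotics of $\mathbf{E}_x[1_{\{\tau_0^->t,\xi_t>\ell\}}]$. If $\mathbf{E}_0[\xi_1]=0$, take $\ell=\sqrt t\,y$, write $\mathbf{P}_x(\tau_0^->t,\xi_t>\sqrt t y)=\mathbf{P}_x(\tau_0^->t)-\mathbf{P}_x(\tau_0^->t,\xi_t\le\sqrt t y)$, and apply Lemma~\ref{lemma-tau0>t-rho=0} with $a=\infty$ and with $a=y$; since $\int_{y/\sigma}^{\infty}\rho(z)\,\mathrm{d}z=e^{-y^2/(2\sigma^2)}$, the limit of $\sqrt t\,\mathbf{P}_x(\tau_0^->t,\xi_t>\sqrt t y)$ equals $\frac{2R(x)}{\sqrt{2\pi\sigma^2}}e^{-y^2/(2\sigma^2)}=2R(x)\phi_{\sigma^2}(y)$. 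If $\mathbf{E}_0[\xi_1]>0$, the displacement statement follows by taking $\ell=\sqrt t y+\mathbf{E}_0[\xi_1]t$ and applying the second assertion of Lemma~\ref{lemma-tau0>t-rho>0}; for $u(x,t)=Q_0(x,t)$ take $\ell=0$ and note that $\{\tau_0^->t,\xi_t>0\}\supseteq\{\tau_0^->t,\xi_t-\mathbf{E}_0[\xi_1]t>\sqrt t y'\}$ once $t$ is large (for any fixed $y'$), so $\liminf_t\mathbf{P}_x(\tau_0^->t,\xi_t>0)\ge\mathbf{P}_x(\tau_0^-=\infty)\int_{y'/\sigma}^\infty\phi(z)\,\mathrm{d}z$, and then let $y'\to-\infty$ and use $\mathbf{P}_x(\tau_0^-=\infty)=\Psi'(0+)W^{(0)}(x)=\mathbf{E}_0[\xi_1]W^{(0)}(x)$. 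Finally, if $\mathbf{E}_0[\xi_1]<0$ and {\bf (H2)} holds, apply Theorem~\ref{lemma-tau0>t-rho<0} with the test function $f=1_{(y,\infty)}$; its tilted version $1_{(y,\infty)}(z)e^{-\lambda_* z}(1+|z|)$ is bounded, continuous off $z=y$ and exponentially decaying, hence directly Riemann integrable, and Theorem~\ref{lemma-tau0>t-rho<0} gives $t^{3/2}e^{-\Psi(\lambda_*)t}\mathbf{E}_x[1_{(y,\infty)}(\xi_t),\tau_0^->t]\to\frac{2R^*(x)e^{\lambda_* x}}{\sqrt{2\pi\Psi''(\lambda_*)^3}}\int_y^\infty e^{-\lambda_* z}\widehat R^*(z)\,\mathrm{d}z$. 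Multiplying by $C_{sub}$ in each case produces exactly the claimed lower bounds.

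\emph{Expected main difficulty.} Because the conditioned limit theorems---notably Theorem~\ref{lemma-tau0>t-rho<0}, whose proof is the technical heart of the paper---are already in hand, this lemma presents essentially no analytic obstacle. The only points that need a little attention are: verifying the direct Riemann integrability of $1_{(y,\infty)}(z)e^{-\lambda_* z}(1+|z|)$; and dealing with the boundary level $\ell=0$ when $\mathbf{E}_0[\xi_1]>0$, where Lemma~\ref{lemma-tau0>t-rho>0} is phrased for $\xi_t$ exceeding a level that moves linearly in $t$, which forces the short comparison argument via a moving cutoff $\sqrt t y'$ with $y'\to-\infty$.
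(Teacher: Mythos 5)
Your proposal is correct and follows essentially the same route as the paper: bound $Q_y(x,t)\le g(t)$ so that the Feynman--Kac weight in Lemma \ref{lemma-expression-Q} is at least $C_{sub}$ (via \eqref{Constant-C-sub}), yielding $Q_y(x,t)\ge C_{sub}e^{-\alpha t}\mathbf{P}_x(\tau_0^->t,\xi_t>y)$, and then apply Lemma \ref{lemma-tau0>t-rho=0}, Lemma \ref{lemma-tau0>t-rho>0} together with Theorem \ref{thm-exit-problems}, and Theorem \ref{lemma-tau0>t-rho<0} with $f=1_{(y,\infty)}$. The only cosmetic difference is in the first claim of part (2), where the paper invokes Theorem \ref{thm-exit-problems} directly while you recover the same limit through a moving cutoff $\sqrt{t}\,y'$ with $y'\to-\infty$, which is equally valid.
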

	\begin{proof}
		For any $y\ge 0$, by the definition of $Q$, we have
		\begin{align}
			Q_y(x,t)
			\le \P_x(\zeta>t)
			\le \P_x(\widetilde{\zeta}>t)=g(t).
		\end{align}
		It follows from Lemma \ref{lemma-expression-Q} that
		\begin{align}\label{lower-bound-Q}
			&Q_{y}(x,t)
			=e^{-\alpha t}\mathbf{E}_x\left(1_{\{\tau_0^->t,\xi_t> y\}}e^{-\int_{0}^{t} \varphi(Q_{y}(\xi_s,t-s))\mathrm{d}s}\right)\\
			&\ge e^{-\alpha t}e^{-\int_{0}^{t} \varphi(g(t-s))\mathrm{d}s}\mathbf{P}_x\left(\tau_0^->t,\xi_t> y\right)
			\ge C_{sub}e^{-\alpha t}\mathbf{P}_x\left(\tau_0^->t,\xi_t> y\right),
		\end{align}
where the last inequality follows from \eqref{Constant-C-sub}.
Applying Lemma \ref{lemma-tau0>t-rho=0}, we immediately get the assertion of (1).
Using the fact that $u(x, t)=Q_0(x, t)$, $\Psi'(0+)=\mathbb{E}_0(\xi_1)$ and applying \eqref{lim-tua_0>t} and \eqref{lower-bound-Q}, we get
\begin{align}
	\liminf_{t\to\infty}e^{\alpha t} u(x,t) \ge \liminf_{t\to\infty}C_{sub} \mathbf{P}_x\left(\tau_0^->t\right)
	=q_xC_{sub}.
\end{align}
This gives
the first result of (2). Applying Lemma \ref{lemma-tau0>t-rho>0} with $y$  replaced by $\sqrt{t}y+\mathbf{E}_0\left(\xi_1\right)t$,
we get the second result of (2).
Applying Theorem \ref{lemma-tau0>t-rho<0} with $f(x)=1_{(y,\infty)}(x)$, we get the assertion of (3).
	\end{proof}

	In the following
	three lemmas, we prove the upper bounds in Theorems \ref{thm-survival-probability} and \ref{thm-tail probability-Mt}.

	\begin{lemma}\label{lemma-upper-bound-limsup-rho=0}
		Assume that \eqref{LLogL-moment-condition} holds and
		$\xi$ is a L\'{e}vy process satisfying {\bf (H1)}.
		If $\mathbf{E}_0\left(\xi_1\right)=0$, then for any $x>0$ and $y\ge 0$, we
		have
		\begin{align}
			\limsup_{t\to\infty}\sqrt{t}e^{\alpha t}Q_{\sqrt{t}y}(x,t)\le
			2C_{sub}R(x)\phi_{\sigma^2}(y).
		\end{align}
	\end{lemma}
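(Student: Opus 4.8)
The plan is to work from the Feynman--Kac representation of Lemma \ref{lemma-expression-Q},
\[
\sqrt{t}\,e^{\alpha t}Q_{\sqrt{t}y}(x,t)=\sqrt{t}\,\mathbf{E}_x\Big[1_{\{\tau_0^->t,\ \xi_t>\sqrt{t}y\}}\exp\Big(-\int_0^t\varphi\big(Q_{\sqrt{t}y}(\xi_s,t-s)\big)\,\d s\Big)\Big],
\]
and to show that, in the limit, the exponential weight on the right is no larger than $C_{sub}=\exp\{-\int_0^\infty\varphi(g(r))\,\d r\}$ on a set carrying essentially all of the mass; merely bounding the weight by $1$ and invoking Lemma \ref{lemma-tau0>t-rho=0} yields only the weaker bound $2R(x)\phi_{\sigma^2}(y)$, so the point is precisely to recover the factor $C_{sub}$. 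Fix $\theta>0$ and split $\{\xi_t>\sqrt{t}y\}$ into $\{\sqrt{t}y<\xi_t\le\sqrt{t}(y+\theta)\}$ and $\{\xi_t>\sqrt{t}(y+\theta)\}$. On the first piece we bound the weight by $1$, and Lemma \ref{lemma-tau0>t-rho=0} (used also with $a=\infty$ to evaluate $\sqrt{t}\,\mathbf{P}_x(\tau_0^->t)$) gives
\[
\lim_{t\to\infty}\sqrt{t}\,\mathbf{P}_x\big(\tau_0^->t,\ \sqrt{t}y<\xi_t\le\sqrt{t}(y+\theta)\big)=2R(x)\big(\phi_{\sigma^2}(y)-\phi_{\sigma^2}(y+\theta)\big),
\]
which tends to $0$ as $\theta\to0$.

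The \emph{key estimate} is a lower bound on $Q$ near the terminal time: fixing $L>0$, for all sufficiently large $t$, all $z\ge\sqrt{t}(y+\theta/2)$ and all $r\in[0,L]$,
\[
Q_{\sqrt{t}y}(z,r)\ \ge\ g(r)\big(1-\gamma_t(L)\big),\qquad\gamma_t(L):=\mathbf{P}_0\Big(\inf_{0\le s\le L}\xi_s\le-\tfrac{\theta}{2}\sqrt{t}\Big),
\]
and $\gamma_t(L)\to0$ as $t\to\infty$ for each fixed $L$. Indeed, $Q_{\sqrt{t}y}(z,r)=\P_z(M_r>\sqrt{t}y)=g(r)-\P_z(\widetilde{\zeta}>r,\ M_r\le\sqrt{t}y)$, and on $\{\widetilde{\zeta}>r\}$ the trajectory of a tagged particle alive at time $r$ is, conditionally on the genealogy, a L\'evy path started at $z$ and independent of $\{\widetilde{\zeta}>r\}$; on $\{M_r\le\sqrt{t}y\}$ that trajectory reaches below $\sqrt{t}y\le z-\tfrac{\theta}{2}\sqrt{t}$ during $[0,r]$, so $\P_z(\widetilde{\zeta}>r,\ M_r\le\sqrt{t}y)\le g(r)\,\mathbf{P}_0\big(\inf_{s\le r}\xi_s\le-\tfrac{\theta}{2}\sqrt{t}\big)\le g(r)\gamma_t(L)$.

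Next introduce the path event $G_t:=\{\sup_{t-L\le s\le t}|\xi_s-\xi_t|\le\tfrac{\theta}{2}\sqrt{t}\}$. By reversal and stationarity/independence of the increments, $\mathbf{P}_x(G_t^c)=\mathbf{P}_0\big(\sup_{0\le s\le L}|\xi_s|>\tfrac{\theta}{2}\sqrt{t}\big)$; since $\xi$ is a mean-zero martingale with finite $(2+\delta)$-th moment by {\bf(H1)}, Doob's and Markov's inequalities give $\mathbf{P}_x(G_t^c)=O(t^{-(1+\delta/2)})$, hence $\sqrt{t}\,\mathbf{P}_x(G_t^c)\to0$. On $\{\xi_t>\sqrt{t}(y+\theta)\}\cap G_t$ one has $\xi_{t-r}\ge\sqrt{t}(y+\theta/2)$ for every $r\in[0,L]$, so by $\varphi\ge0$, the monotonicity of $\varphi$ and the key estimate, for $t>L$,
\[
\int_0^t\varphi\big(Q_{\sqrt{t}y}(\xi_s,t-s)\big)\,\d s\ \ge\ \int_0^L\varphi\big(Q_{\sqrt{t}y}(\xi_{t-r},r)\big)\,\d r\ \ge\ \int_0^L\varphi\big(g(r)(1-\gamma_t(L))\big)\,\d r.
\]
Discarding $G_t^c$ (weight $\le1$) and applying Lemma \ref{lemma-tau0>t-rho=0} once more,
\[
\limsup_{t\to\infty}\sqrt{t}\,\mathbf{E}_x\Big[1_{\{\tau_0^->t,\ \xi_t>\sqrt{t}(y+\theta)\}}\exp\Big(-\int_0^t\varphi\big(Q_{\sqrt{t}y}(\xi_s,t-s)\big)\,\d s\Big)\Big]\ \le\ C_L\cdot2R(x)\phi_{\sigma^2}(y+\theta),
\]
with $C_L:=\exp\{-\int_0^L\varphi(g(r))\,\d r\}$, where one uses $\gamma_t(L)\to0$ together with the continuity of $\varphi$ and $g$ and $0\le\varphi(g(\cdot))\le\varphi(1)$ on $[0,L]$ (dominated convergence). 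Since $C_L\downarrow C_{sub}$ as $L\to\infty$ by \eqref{Integral-of-phi-is-finite} and the left-hand side does not involve $L$, the last bound holds with $C_L$ replaced by $C_{sub}$.

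Combining the two pieces, for every $\theta>0$,
\[
\limsup_{t\to\infty}\sqrt{t}\,e^{\alpha t}Q_{\sqrt{t}y}(x,t)\ \le\ 2R(x)\big(\phi_{\sigma^2}(y)-\phi_{\sigma^2}(y+\theta)\big)+2C_{sub}R(x)\phi_{\sigma^2}(y+\theta),
\]
and letting $\theta\to0$, by continuity of $\phi_{\sigma^2}$, gives $\limsup_{t\to\infty}\sqrt{t}\,e^{\alpha t}Q_{\sqrt{t}y}(x,t)\le2C_{sub}R(x)\phi_{\sigma^2}(y)$, which is the assertion. The main obstacle is the key estimate of the second paragraph: it is exactly what pushes the exponential weight down from the trivial $1$ to $C_{sub}$, and proving it cleanly requires handling the genealogy of the branching mechanism together with the fact that, on the meander-like event $\{\tau_0^->t,\ \xi_t>\sqrt{t}(y+\theta)\}$, the driving path stays well above the level $\sqrt{t}y$ over the last $O(1)$ units of time.
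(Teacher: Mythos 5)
Your proof is correct, and while the overall strategy coincides with the paper's (freeze the Feynman--Kac weight over the last $O(1)$ time units, push it down to $C_{sub}$ there, and show that paths ending too close to the level $\sqrt{t}y$ are negligible), the technical implementation is genuinely different. The paper keeps the event $\{\xi_t>\sqrt{t}y\}$ intact, replaces $\xi_s$ on $[t-N,t]$ by $\inf_{r\in[t-N,t]}\xi_r$ via monotonicity of $Q$ in the space variable, splits on whether this infimum exceeds $\sqrt{t}y+t^{\gamma}$, recovers $C_{sub}$ through the Feynman--Kac lower bound \eqref{lower-bound-Q} plus dominated convergence, and kills the bad event by a further $\epsilon$-split of $\xi_t$, the strong approximation of Lemma \ref{lemma-relation-with-BM} and the reflection principle for Brownian motion. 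You instead split on the terminal value at level $\sqrt{t}(y+\theta)$, control the oscillation over the last $L$ units by Doob's maximal inequality under {\bf(H1)} (so $\sqrt{t}\,\mathbf{P}_x(G_t^c)\to0$), and lower-bound $Q_{\sqrt{t}y}(z,r)$ for $z\ge\sqrt{t}(y+\theta/2)$ by a tagged-particle argument. What your route buys is a more elementary treatment of the negligible event: no Brownian coupling and no reflection principle, just a maximal inequality (even a second-moment bound would do, since only $o(t^{-1/2})$ is needed). The price is the genealogical step, which is correct (choosing the tagged individual tree-measurably makes its ancestral path a L\'evy path independent of $\{\widetilde{\zeta}>r\}$) but heavier than necessary: the same key estimate follows directly from \eqref{lower-bound-Q} and \eqref{eq-eg(t)}, since for $z\ge\sqrt{t}(y+\theta/2)$ and $y\ge0$ one has $Q_{\sqrt{t}y}(z,r)\ge g(r)\mathbf{P}_z\left(\tau_0^->r,\ \xi_r>\sqrt{t}y\right)\ge g(r)\mathbf{P}_0\left(\inf_{s\le r}\xi_s>-\tfrac{\theta}{2}\sqrt{t}\right)\ge g(r)(1-\gamma_t(L))$. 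One cosmetic caveat: the exact identity $\mathbf{P}_x(G_t^c)=\mathbf{P}_0\left(\sup_{s\le L}|\xi_s|>\tfrac{\theta}{2}\sqrt{t}\right)$ via time reversal has a left-limit subtlety at jump times, but the bound $\sup_{t-L\le s\le t}|\xi_s-\xi_t|\le 2\sup_{u\le L}|\xi_{t-L+u}-\xi_{t-L}|$, whose right-hand side is an independent copy of $2\sup_{u\le L}|\xi_u|$, gives all that is needed.
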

	\begin{proof}
		Recall that $\varphi$ and $Q_y(\cdot, t)$ are increasing functions.
		Fix an $N>0$. For $t>N$, by Lemma \ref{lemma-expression-Q}, we have
		\begin{align}
			Q_{\sqrt{t}y}(x,t)
			&\le e^{-\alpha t}\mathbf{E}_x\left(1_{\{\tau_0^->t,\xi_t>\sqrt{t}y\}}e^{-\int_{t-N}^{t} \varphi(Q_{\sqrt{t}y}(\xi_s,t-s))\mathrm{d}s}\right)\\
			&\le e^{-\alpha t}\mathbf{E}_x\left(1_{\{\tau_0^->t,\xi_t>\sqrt{t}y\}}e^{-\int_{0}^{N} \varphi(Q_{\sqrt{t}y}(\inf_{r\in[t-N,t]}\xi_r,s))\mathrm{d}s}\right).
		\end{align}
		Take a $\gamma \in(0,\frac{1}{2})$ and define
		\begin{align}
			J_1(t):=\mathbf{E}_x\left(1_{\{\tau_0^->t,\xi_t>\sqrt{t}y,\inf_{r\in[t-N,t]}\xi_r\ge \sqrt{t}y+t^{\gamma}\}}e^{-\int_{0}^{N} \varphi(Q_{\sqrt{t}y}(\inf_{r\in[t-N,t]}\xi_r,s))\mathrm{d}s}\right),
		\end{align}
		\begin{align}
			J_2(t):=\mathbf{E}_x\left(1_{\{\tau_0^->t,\xi_t>\sqrt{t}y,\inf_{r\in[t-N,t]}\xi_r< \sqrt{t}y+t^{\gamma}\}}e^{-\int_{0}^{N} \varphi(Q_{\sqrt{t}y}(\inf_{r\in[t-N,t]}\xi_r,s))\mathrm{d}s}\right).
		\end{align}
		Then $Q_{\sqrt{t}y}(x,t) \le e^{-\alpha t}(J_1(t)+J_2(t))$.
		Since $Q_{\sqrt{t}y}(x,t)$ is increasing in $x$, we have
		\begin{align}\label{upper-bound-J1}
			J_1(t)
			\le e^{-\int_{0}^{N} \varphi(Q_{\sqrt{t}y}(\sqrt{t}y+t^{\gamma},s))\mathrm{d}s}\mathbf{P}_x\left(\tau_0^->t,\xi_t>\sqrt{t}y\right).
		\end{align}
		By \eqref{lower-bound-Q} and  \eqref{eq-eg(t)},
		we have
		\begin{align}
			Q_{\sqrt{t}y}(x,t)
			\ge g(t)\mathbf{P}_x\left(\tau_0^->t,\xi_t>\sqrt{t}y\right).
		\end{align}
		Thus,
		\begin{align}
			e^{-\int_{0}^{N} \varphi(Q_{\sqrt{t}y}(\sqrt{t}y+t^{\gamma},s))\mathrm{d}s}
			\le \exp\Big\{-\int_{0}^{N}\varphi\left(g(s)\mathbf{P}_{\sqrt{t}y+t^{\gamma}}\left(\tau_0^->s,\xi_s>\sqrt{t}y\right)\right)
			\mathrm{d} s
			\Big\}.
		\end{align}
		Plugging this into \eqref{upper-bound-J1} and applying the dominated convergence theorem, we get
		\begin{align}\label{limsup-upper-bound-J1}
			\limsup_{N\to\infty}\limsup_{t\to\infty} \frac{J_1(t)}{\mathbf{P}_x\left(\tau_0^->t,\xi_t>\sqrt{t}y\right)}
			\le \limsup_{N\to\infty}e^{-\int_{0}^{N} \varphi(g(s))\mathrm{d}s}
			=C_{sub}.
		\end{align}
		Therefore, by Lemma \ref{lemma-tau0>t-rho=0}, we have
		\begin{align}\label{upper-bound-limsup-J1}
			\limsup_{N\to\infty}\limsup_{t\to\infty}\sqrt{t}J_1(t)
			\le
			2C_{sub}R(x)\phi_{\sigma^2}(y).
		\end{align}
Now we show that
\begin{equation}\label{toshow-J2}
\lim_{t\to\infty}\sqrt{t}J_2(t)=0.
\end{equation}
		For any $\epsilon>0$
		and $t>N$,
		it holds that
		\begin{align*}
			J_2(t)&\le \mathbf{P}_x\left(\tau_0^->t,\xi_t>\sqrt{t}y,\inf_{r\in[t-N,t]}\xi_r< \sqrt{t}y+t^{\gamma}\right)\\
			&\le \mathbf{P}_x\left(\tau_0^->t,\sqrt{t}y<\xi_t\le \sqrt{t}(y+\epsilon)\right)\\
			&\quad+\mathbf{P}_x\left(\tau_0^->t,\xi_t>\sqrt{t}(y+\epsilon),\inf_{r\in[t-N,t]}\xi_r< \sqrt{t}y+t^{\gamma}\right).
		\end{align*}
		By Lemma \ref{lemma-tau0>t-rho=0}, we have
		\begin{align}
			\lim_{t\to\infty}\sqrt{t}\mathbf{P}_x\left(\tau_0^->t,\sqrt{t}y<\xi_t\le \sqrt{t}(y+\epsilon)\right)
			=\frac{2R(x)}{\sqrt{2\pi \sigma^2}}
			\int_{\frac{y}{\sigma}}^{\frac{y+\epsilon}{\sigma}}
			\rho(z)
			\mathrm{d} z\xrightarrow{\epsilon\to 0}0.
		\end{align}
		For any $t>0$ and $\kappa  \in(0,\frac{\delta}{2(2+\delta)})$, define
		\begin{align}\label{def-event-A}
			A_t:=\left\{\sup_{s\in[0,1]}|\xi_{ts}-\xi_0-W_{ts}|>t^{\frac{1}{2}-\kappa}\right\},
		\end{align}
		where $W$ is the Brownian  motion
		in Lemma \ref{lemma-relation-with-BM}.
		Then by the Markov property of $\xi$, for $k<t-N$,
		\begin{align*}
			&\mathbf{P}_x\left(\tau_0^->t,\xi_t>\sqrt{t}(y+\epsilon),\inf_{r\in[t-N,t]}\xi_r< \sqrt{t}y+t^{\gamma}\right)\\
			=&\mathbf{E}_x\left(1_{\{\tau_0^->k\}}\mathbf{P}_{\xi_k}\left(\tau_0^->t-k,\xi_{t-k}>\sqrt{t}(y+\epsilon),\inf_{r\in[t-k-N,t-k]}\xi_r< \sqrt{t}y+t^{\gamma}\right)\right)\\
			\le & H_1(t)+H_2(t),
		\end{align*}
		where
		\begin{align}
			H_1(t):=\mathbf{E}_x\left(1_{\{\tau_0^->k\}}\mathbf{P}_{\xi_k}\left(\tau_0^->t-k,\xi_{t-k}>\sqrt{t}(y+\epsilon),A_{t-k}\right)\right),
		\end{align}
		\begin{align}
			H_2(t):=\mathbf{E}_x\left(\mathbf{P}_{\xi_k}\left(\xi_{t-k}>\sqrt{t}(y+\epsilon),\inf_{r\in[t-k-N,t-k]}\xi_r< \sqrt{t}y+t^{\gamma}, A_{t-k}^c\right)\right).
		\end{align}
To prove \eqref{toshow-J2}, we only need to prove
\begin{equation}\label{H1-H2-limit0}
\limsup_{t\to\infty}\sqrt{t}H_1(t)=0,\quad\mbox{ and }\quad \limsup_{t\to\infty}\sqrt{t}H_2(t)=0.
\end{equation}
		Using \eqref{upper-bound-tua} and Lemma \ref{lemma-relation-with-BM},
		we get that for any $k<t$,
		\begin{align}
			H_1(t)
			&\le \frac{C C_3(\kappa)}
			{(t-k)^{(\frac{1}{2}-\kappa)(\delta+2)
			-1}
			}\frac{1+x}{\sqrt{k}},
		\end{align}
		where $C>0$ is a constant.
		Taking $k=\frac{t}{2}$,  we get that
		\begin{align}\label{limit-H1}
			\limsup_{t\to\infty}\sqrt{t}H_1(t)
			= 0.
		\end{align}
		For any $z>0$, we have
		\begin{align*}
			&\mathbf{P}_z\left(\xi_{t-k}>\sqrt{t}(y+\epsilon),\inf_{r\in[t-k-N,t-k]}\xi_r< \sqrt{t}y+t^{\gamma},A_{t-k}^c\right)\\
			\le & \mathbf{Q}_z\left(W_{t-k}>\sqrt{t}(y+\epsilon)-t^{\frac{1}{2}-\kappa},\inf_{r\in[t-k-N,t-k]}W_r < \sqrt{t}y+t^{\gamma}+t^{\frac{1}{2}-\kappa}\right),
		\end{align*}
		where $(W_t,\mathbf{Q}_z)$ is a mean 0 Brownian motion with diffusion coefficient  $\sigma^2$, starting from $z$.
		Therefore, for any $z>0$,
		using the reflection principle for Brownian motion, we get
		\begin{align}
			&\lim_{t\to\infty}\sqrt{t}\mathbf{P}_z\left(\xi_{t-k}>\sqrt{t}(y+\epsilon),\inf_{r\in[t-k-N,t-k]}\xi_r< \sqrt{t}y+t^{\gamma},A_{t-k}^c\right)\\
			\le & \lim_{t\to\infty}\sqrt{t}\mathbf{Q}_0\left(\inf_{r\in[0,N]}W_r<-\epsilon\sqrt{t}+t^{\gamma}+2t^{\frac{1}{2}-\kappa}\right)\\
			=&\lim_{t\to\infty}\sqrt{t}\mathbf{Q}_0\left(\max_{r\in[0,N]}W_r>\epsilon \sqrt{t}-t^{\gamma}-2t^{\frac{1}{2}-\kappa}\right)
			=0,
		\end{align}
		which implies that
		\begin{align}\label{limit-H2}
			\lim_{t\to\infty}\sqrt{t} H_2(t)=0.
		\end{align}
Then \eqref{H1-H2-limit0} follows from \eqref{limit-H1} and \eqref{limit-H2}, and we complete the proof.
	\end{proof}
	
	\begin{lemma}\label{lemma-upper-bound-limsup-rho>0}
		Assume that \eqref{LLogL-moment-condition} holds and that
		$\xi$ is a L\'{e}vy process satisfying {\bf (H1)}.
		If $\mathbf{E}_0\left(\xi_1\right)>0$, then for any $x>0$, we have
		\begin{align}
			\limsup_{t\to\infty}e^{\alpha t}u(x,t)
			\le q_x C_{sub}.
		\end{align}
		Moreover, for any $y\in\mathbb{R}$, we have
		\begin{align}
			\limsup_{t\to\infty}e^{\alpha t}Q_{\sqrt{t}y+\mathbf{E}_0\left(\xi_1\right)t}(x,t)
		    \le q_x C_{sub}
			\int_{\frac{y}{\sigma}}^{\infty}\phi(z) \mathrm{d}z.
		\end{align}
	\end{lemma}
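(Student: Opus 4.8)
The plan is to follow the scheme of the proof of Lemma~\ref{lemma-upper-bound-limsup-rho=0}; the present drift-positive case is in fact easier, because there is no $\sqrt{t}$ normalisation to keep track of. Set $\sigma^2:=\mathbf{E}_0[\xi_1^2]$ (finite by \textbf{(H1)}), write $c(t):=\sqrt{t}y+\mathbf{E}_0[\xi_1]t\to+\infty$, fix $N>0$ and fix a sequence $b_t$ with $b_t\to\infty$ and $b_t=o(\sqrt{t})$ (e.g.\ $b_t=t^{1/4}$). Starting from the Feynman--Kac identity of Lemma~\ref{lemma-expression-Q}, I would restrict the time integral to $[t-N,t]$, perform the substitution $s\mapsto t-s$, and use $\xi_{t-s}\ge\inf_{r\in[t-N,t]}\xi_r$ together with the monotonicity of $z\mapsto Q_{c(t)}(z,\cdot)$ (as in Lemma~\ref{lemma-upper-bound-limsup-rho=0}) and of $\varphi$, to obtain for $t>N$
\begin{equation*}
Q_{c(t)}(x,t)\le e^{-\alpha t}\,\mathbf{E}_x\Big[1_{\{\tau_0^->t,\xi_t>c(t)\}}\,e^{-\int_0^N\varphi(Q_{c(t)}(\inf_{r\in[t-N,t]}\xi_r,s))\,\mathrm{d}s}\Big].
\end{equation*}
I would then split this expectation according to whether $\inf_{r\in[t-N,t]}\xi_r\ge c(t)+b_t$ (call this piece $J_1(t)$) or $\inf_{r\in[t-N,t]}\xi_r<c(t)+b_t$ (call it $J_2(t)$), so that $e^{\alpha t}Q_{c(t)}(x,t)\le J_1(t)+J_2(t)$.

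On the event defining $J_1(t)$, monotonicity bounds the exponential by the deterministic quantity $e^{-\int_0^N\varphi(Q_{c(t)}(c(t)+b_t,s))\,\mathrm{d}s}$, so $J_1(t)\le e^{-\int_0^N\varphi(Q_{c(t)}(c(t)+b_t,s))\,\mathrm{d}s}\,\mathbf{P}_x(\tau_0^->t,\xi_t>c(t))$. Here I would combine the lower bound $Q_{y'}(z,s)\ge g(s)\,\mathbf{P}_z(\tau_0^->s,\xi_s>y')$ (from \eqref{lower-bound-Q} and \eqref{eq-eg(t)}) with the elementary fact that $\mathbf{P}_{c(t)+b_t}(\tau_0^->s,\xi_s>c(t))=\mathbf{P}_0(\inf_{r\le s}\xi_r>-c(t)-b_t,\xi_s>-b_t)\to1$ as $t\to\infty$ for each fixed $s$; since $\varphi$ is continuous and increasing, Fatou's lemma then gives $\liminf_{t\to\infty}\int_0^N\varphi(Q_{c(t)}(c(t)+b_t,s))\,\mathrm{d}s\ge\int_0^N\varphi(g(s))\,\mathrm{d}s$. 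Feeding in Lemma~\ref{lemma-tau0>t-rho>0}, which yields $\mathbf{P}_x(\tau_0^->t,\xi_t>c(t))=\mathbf{P}_x(\tau_0^->t,\xi_t-\mathbf{E}_0[\xi_1]t>\sqrt{t}y)\to\mathbf{E}_0[\xi_1]W^{(0)}(x)\int_{y/\sigma}^\infty\phi(z)\,\mathrm{d}z$, I would conclude $\limsup_{t\to\infty}J_1(t)\le e^{-\int_0^N\varphi(g(s))\,\mathrm{d}s}\,\mathbf{E}_0[\xi_1]W^{(0)}(x)\int_{y/\sigma}^\infty\phi(z)\,\mathrm{d}z$, and then let $N\to\infty$, using \eqref{Constant-C-sub}, to get exactly the desired upper bound from this term.

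For $J_2(t)$ it suffices to show $J_2(t)\to0$ for each fixed $N$. Bounding the exponential by $1$, $J_2(t)\le\mathbf{P}_x(\tau_0^->t,\xi_t>c(t),\inf_{r\in[t-N,t]}\xi_r<c(t)+b_t)$, and for any $\epsilon>0$ this is at most $\mathbf{P}_x(\tau_0^->t,c(t)<\xi_t\le c(t)+\epsilon\sqrt{t})+\mathbf{P}_x(\tau_0^->t,\xi_t>c(t)+\epsilon\sqrt{t},\inf_{r\in[t-N,t]}\xi_r<c(t)+b_t)$. The first probability equals $\mathbf{P}_x(\tau_0^->t,\xi_t>c(t))-\mathbf{P}_x(\tau_0^->t,\xi_t>c(t)+\epsilon\sqrt{t})$, which by Lemma~\ref{lemma-tau0>t-rho>0} converges to $\mathbf{E}_0[\xi_1]W^{(0)}(x)\int_{y/\sigma}^{(y+\epsilon)/\sigma}\phi(z)\,\mathrm{d}z$, and this vanishes as $\epsilon\downarrow0$. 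On the event in the second probability one must have $\sup_{r\in[t-N,t]}(\xi_t-\xi_r)>\epsilon\sqrt{t}-b_t\to\infty$, so by the Markov property at time $t-N$ (equivalently, by stationarity of the increments) its probability is at most $\mathbf{P}_x(\tau_0^->t-N)\,\mathbf{P}_0(2\sup_{r\le N}|\xi_r|>\epsilon\sqrt{t}-b_t)\to0$, since $\sup_{r\le N}|\xi_r|<\infty$ a.s. Hence $\limsup_{t\to\infty}J_2(t)\le\mathbf{E}_0[\xi_1]W^{(0)}(x)\int_{y/\sigma}^{(y+\epsilon)/\sigma}\phi(z)\,\mathrm{d}z$ for every $\epsilon>0$, i.e.\ $J_2(t)\to0$. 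Combining the estimates for $J_1$ and $J_2$ and letting $N\to\infty$ gives the second assertion. For the first assertion (about $u(x,t)=Q_0(x,t)$) I would run the same argument with $c(t)$ replaced by $0$ throughout, using $\mathbf{P}_x(\tau_0^->t)\to\mathbf{P}_x(\tau_0^-=\infty)=\mathbf{E}_0[\xi_1]W^{(0)}(x)$ in place of the joint limit and noting that the band term becomes $\mathbf{P}_x(\tau_0^->t,0<\xi_t\le\epsilon\sqrt{t})$, which tends to $0$ already for a single fixed $\epsilon$ because $0\le\xi_t\le\epsilon\sqrt{t}$ forces $(\xi_t-\mathbf{E}_0[\xi_1]t)/\sqrt{t}\to-\infty$.

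\emph{The main obstacle} I anticipate is the control of the Feynman--Kac exponential: one has to replace $Q_{c(t)}(\inf_{r\in[t-N,t]}\xi_r,s)$ by its value at the much larger, deterministic point $c(t)+b_t$ and then identify the limit with $g(s)$ — this is exactly where the split on the infimum over the last window, together with the lower bound \eqref{lower-bound-Q}, enters. A related subtlety, specific to the $\mathbf{E}_0[\xi_1]>0$ regime, is that the ``small infimum'' event is not automatically negligible (in contrast with Lemma~\ref{lemma-upper-bound-limsup-rho=0}, where the extra $\sqrt{t}$ did that job): a split on $\xi_{t-N}$ alone is useless, and one must couple the constraint $\xi_t>c(t)$ with the small infimum to force a large oscillation of $\xi$ over the bounded window $[t-N,t]$.
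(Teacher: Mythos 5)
Your proposal is correct and follows essentially the same scheme as the paper: restrict the Feynman--Kac integral to the window $[t-N,t]$, split on whether $\inf_{r\in[t-N,t]}\xi_r$ exceeds $c(t)$ by an amount growing like $t^{\gamma}$ (your $b_t=t^{1/4}$ plays the paper's role of $t^\gamma$), identify the limit of the exponential factor via the lower bound \eqref{lower-bound-Q} and \eqref{Constant-C-sub} (you use Fatou where the paper uses dominated convergence --- immaterial), and feed Lemma \ref{lemma-tau0>t-rho>0} into the main term before letting $N\to\infty$. The one genuine difference is in showing that the second piece of $J_2$ is negligible: the paper couples $\xi$ with a Brownian motion via Lemma \ref{lemma-relation-with-BM} and then applies the reflection principle, whereas you condition at time $t-N$, observe that the event forces $\sup_{r\in[t-N,t]}(\xi_t-\xi_r)>\epsilon\sqrt{t}-b_t$, which depends only on the increments over a window of fixed length $N$, and bound its probability by $\mathbf{P}_0(2\sup_{r\le N}|\xi_r|>\epsilon\sqrt{t}-b_t)\to 0$. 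Your route is more elementary and avoids the coupling machinery for this step, and it is correct since the supremum of a L\'evy process over a compact time interval is a.s.\ finite and the oscillation event is independent of $\mathcal{F}_{t-N}$. Your explicit treatment of the first assertion (running the argument with $c(t)=0$ and killing the band term by the law of large numbers/Chebyshev) is also fine, and in fact slightly more detailed than the paper, which leaves that case implicit.
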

	\begin{proof}
		Take $\gamma \in(0,\frac{1}{2})$ and fix an $N>0$.
		For $t>N$,
		using Lemma \ref{lemma-expression-Q},
		we have
		\begin{align}
			Q_{\sqrt{t}y+\mathbf{E}_0\left(\xi_1\right) t}(x,t)
			&\le e^{-\alpha t}\mathbf{E}_x\left(1_{\{\tau_0^->t,\xi_t>\sqrt{t}y+\mathbf{E}_0\left(\xi_1\right) t\}}e^{-\int_{t-N}^{t} \varphi(Q_{\sqrt{t}y+\mathbf{E}_0\left(\xi_1\right) t}(\xi_s,t-s))\mathrm{d}s}\right)\\
			&\le e^{-\alpha t}\mathbf{E}_x\left(1_{\{\tau_0^->t,\xi_t>\sqrt{t}y+\mathbf{E}_0\left(\xi_1\right) t\}}e^{-\int_{0}^{N} \varphi(Q_{\sqrt{t}y+\mathbf{E}_0\left(\xi_1\right) t}(\inf_{r\in[t-N,t]}\xi_r,s))\mathrm{d}s}\right)\\
			&=:e^{-\alpha t}
					(K_1(t)+K_2(t)),
		\end{align}
		where
		\begin{align*}
					K_1(t):=
			\mathbf{E}_x\left(1_{\{\tau_0^->t,\xi_t>\sqrt{t}y+\mathbf{E}_0\left(\xi_1\right) t,\inf_{r\in[t-N,t]}\xi_r\ge \sqrt{t}y+\mathbf{E}_0\left(\xi_1\right) t+t^{\gamma}\}}e^{-\int_{0}^{N} \varphi(Q_{\sqrt{t}y+\mathbf{E}_0\left(\xi_1\right) t}(\inf_{r\in[t-N,t]}\xi_r,s))\mathrm{d}s}\right),
		\end{align*}
		\begin{align*}
				K_2(t):=
			\mathbf{E}_x\left(1_{\{\tau_0^->t,\xi_t>\sqrt{t}y+\mathbf{E}_0\left(\xi_1\right) t,\inf_{r\in[t-N,t]}\xi_r< \sqrt{t}y+\mathbf{E}_0\left(\xi_1\right) t+t^{\gamma}\}}e^{-\int_{0}^{N} \varphi(Q_{\sqrt{t}y+\mathbf{E}_0\left(\xi_1\right) t}(\inf_{r\in[t-N,t]}\xi_r,s))\mathrm{d}s}\right).
		\end{align*}
				Repeating the argument leading to \eqref{limsup-upper-bound-J1}, we obtain that
		\begin{align}\label{upper-bound-limsup-J1-rho>0}
			\limsup_{N\to\infty}\limsup_{t\to\infty} \frac{
							K_1(t)
				}{\mathbf{P}_x\left(\tau_0^->t,\xi_t>\sqrt{t}y+\mathbf{E}_0\left(\xi_1\right) t\right)}
			\le C_{sub}.
		\end{align}
		Therefore, by Lemma \ref{lemma-tau0>t-rho>0}, we have
		\begin{align}
			\limsup_{N\to\infty}\limsup_{t\to\infty}
						K_1(t)
			\le q_x C_{sub} \int_{\frac{y}{\sigma}}^{\infty}\phi(z) \mathrm{d}z.
		\end{align}
		Next, we show that
			$\lim_{t\to\infty}K_2(t)=0$.
		For $\epsilon>0$, it holds that
		\begin{align*}
    		K_2(t)
		   \le &
\mathbf{P}_x\left(\tau_0^->t,\xi_t>\sqrt{t}y+\mathbf{E}_0\left(\xi_1\right) t,\inf_{r\in[t-N,t]}\xi_r< \sqrt{t}y+\mathbf{E}_0\left(\xi_1\right) t+t^{\gamma}\right)\\
\le&
\mathbf{P}_x\left(\tau_0^->t,\sqrt{t}y+\mathbf{E}_0\left(\xi_1\right)t<\xi_t\le \sqrt{t}(y+\epsilon)+\mathbf{E}_0\left(\xi_1\right) t\right)\\
			&+\mathbf{P}_x\left(\xi_t>\sqrt{t}(y+\epsilon)+\mathbf{E}_0\left(\xi_1\right) t,\inf_{r\in[t-N,t]}\xi_r< \sqrt{t}y+\mathbf{E}_0\left(\xi_1\right) t+t^{\gamma}\right).
		\end{align*}
		By Lemma \ref{lemma-tau0>t-rho>0}, we have
		\begin{align}\label{first-part-J2}
			\lim_{t\to\infty}\mathbf{P}_x\left(\tau_0^->t,\sqrt{t}y+\mathbf{E}_0\left(\xi_1\right) t<\xi_t\le \sqrt{t}(y+\epsilon)+\mathbf{E}_0\left(\xi_1\right)t\right)
			\xrightarrow{\epsilon\to 0} 0.
		\end{align}
		For $\mathbf{E}_0(\xi_1)>0$, since $\left((\xi_t-\mathbf{E}_0(\xi_1)t)_{t\ge 0}, (\mathbf{P}_x)_{x\in \mathbb{R}}\right)$ is a L\'{e}vy process satisfying $\mathbf{E}_0(\xi_1-\mathbf{E}_0(\xi_1))=0$, it follows from Lemma \ref{lemma-relation-with-BM} that there exists a Brownian motion $W$ with diffusion coefficient $\sigma^2=\mathbf{E}_0(\xi_1^2)$ starting from the origin such that for all $t\ge 1$,
		\begin{align}\label{upper-bound-Dt}
			\mathbf{P}_x
			\left(\sup_{s\in[0,1]}|(\xi_{ts}-\mathbf{E}_0(\xi_1)ts)-x-W_{ts}|>t^{\frac{1}{2}-\kappa}\right)
			\le \frac{C_3(\kappa)
			}{t^{(\frac{1}{2}-\kappa)(\delta+2)-1}},
		\end{align}
		where $\kappa$ and $C_3(\kappa)$ are defined in Lemma \ref{lemma-relation-with-BM}.
		Let
		\begin{align}
			D_t:=\left\{\sup_{s\in[0,1]}|(\xi_{ts}-\mathbf{E}_0(\xi_1)ts)-x-W_{ts}|>t^{\frac{1}{2}-\kappa}\right\},
		\end{align}
		then by \eqref{upper-bound-Dt}, we have
		$\lim_{t\to\infty}\mathbf{P}_x\left(D_t\right)=0$.
		Moreover, we have
		\begin{align}
			&\mathbf{P}_x\left(\xi_t>\sqrt{t}(y+\epsilon)+\mathbf{E}_0\left(\xi_1\right) t,\inf_{r\in[t-N,t]}\xi_r< \sqrt{t}y+\mathbf{E}_0\left(\xi_1\right) t+t^{\gamma}\right)\\
			&\le \mathbf{P}_x\left(D_t\right)
			+\mathbf{P}_x\left(\xi_t>\sqrt{t}(y+\epsilon)+\mathbf{E}_0\left(\xi_1\right) t,\inf_{r\in[t-N,t]}\xi_r< \sqrt{t}y+\mathbf{E}_0\left(\xi_1\right) t+t^{\gamma}, D_t^c
			\right).
		\end{align}
		Furthermore, using the reflection principle for Brownian motion,
		we get that, as $t\to\infty$,
		\begin{align*}
			&\mathbf{P}_x\left(\xi_t>\sqrt{t}(y+\epsilon)+\mathbf{E}_0\left(\xi_1\right) t,\inf_{r\in[t-N,t]}\xi_r< \sqrt{t}y+\mathbf{E}_0\left(\xi_1\right) t+t^{\gamma},
			D_t^c
			\right)\\
			\le &\mathbf{Q}_x \left(W_t >\sqrt{t}(y+\epsilon)
			-t^{\frac{1}{2}-\kappa}, \inf_{r\in[t-N,t]}W_r<\sqrt{t}y
			+t^{\gamma}+t^{\frac{1}{2}-\kappa}\right)\\
			\le & \mathbf{Q}_0 \left( \inf_{r\in[0,N]}W_r<-\varepsilon \sqrt{t} +t^{\gamma}+2 t^{\frac{1}{2}-\kappa}\right)
			=\mathbf{Q}_0 \left( \max_{r\in[0,N]}W_r>\varepsilon \sqrt{t} -t^{\gamma}-2 t^{\frac{1}{2}-\kappa}\right)
			\to 0,
		\end{align*}
		where $(W_t,\mathbf{Q}_x)$ is a mean 0 Brownian motion with diffusion coefficient  $\sigma^2$, starting from $x$.
		This combined with \eqref{upper-bound-limsup-J1-rho>0}
		and \eqref{first-part-J2} gives the desired result.
	\end{proof}

	\begin{lemma}\label{lemma-upper-bound-limsup-rho<0}
		Fix an $N>0$. Assume that \eqref{LLogL-moment-condition} holds and
		$\xi$ is a L\'{e}vy process satisfying
		{\bf (H1)}, {\bf (H2)} and {\bf (H3)}.
		If $\mathbf{E}_0\left(\xi_1\right)<0$, then we have
		\begin{align}
			&\lim_{t\to\infty}
			t^{3/2}e^{-\Psi(\lambda_*) t}
			\mathbf{E}_x\left(1_{\{\tau_0^->t,\xi_t>y\}}e^{-\int_{t-N}^{t} \varphi(Q_y(\xi_s,t-s))\mathrm{d}s}\right)\\
			=&e^{(\alpha-\Psi(\lambda_*))N}\frac{2R^*(x)
			e^{\lambda_*x}}{\sqrt{2\pi \Psi''(\lambda_*)^3}}\int_{\R_+}\P_z(M_N>y)e^{-\lambda_* z}
			\widehat{R}^*(z)
			\mathrm{d}z.
		\end{align}
	\end{lemma}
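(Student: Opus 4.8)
The plan is to peel off the time-window $[t-N,t]$ at the front (reading time backwards), use the Markov property of $\xi$ at time $t-N$ together with the Feynman--Kac identity of Lemma~\ref{lemma-expression-Q} to turn the expression into $e^{\alpha N}\mathbf{E}_x\!\left[f(\xi_{t-N})\,1_{\{\tau_0^->t-N\}}\right]$ for an explicit function $f$, and then invoke the conditioned limit theorem, Theorem~\ref{lemma-tau0>t-rho<0}.

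Carrying this out, I would first substitute $u=s-(t-N)$ in the exponent, which rewrites $\int_{t-N}^{t}\varphi(Q_y(\xi_s,t-s))\,\d s$ as $\int_0^N\varphi(Q_y(\xi_{(t-N)+u},N-u))\,\d u$; on $\{\tau_0^->t-N\}$ this integral, as well as the events $\{\tau_0^->t\}$ and $\{\xi_t>y\}$, depends only on the post-$(t-N)$ path of $\xi$. The Markov property at $t-N$ therefore gives
\[
\mathbf{E}_x\!\left[1_{\{\tau_0^->t,\,\xi_t>y\}}e^{-\int_{t-N}^{t}\varphi(Q_y(\xi_s,t-s))\,\d s}\right]
=\mathbf{E}_x\!\left[1_{\{\tau_0^->t-N\}}\,G_N(\xi_{t-N})\right],\qquad
G_N(z):=\mathbf{E}_z\!\left[1_{\{\tau_0^->N,\,\xi_N>y\}}e^{-\int_0^N\varphi(Q_y(\xi_s,N-s))\,\d s}\right].
\]
Since $\varphi\ge0$ and $Q_y(z,r)\le g(r)$ with $\int_0^N\varphi(g(s))\,\d s<\infty$ by \eqref{Integral-of-phi-is-finite}, every exponential above lies in $[e^{-\int_0^N\varphi(g(s))\d s},1]$, so there is no integrability issue. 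Reading Lemma~\ref{lemma-expression-Q} (with $t$ replaced by $N$) backwards identifies $G_N(z)=e^{\alpha N}Q_y(z,N)=e^{\alpha N}\P_z(M_N>y)$. Setting $f(z):=\P_z(M_N>y)$ for $z>0$ and $f(z):=0$ for $z\le0$, we conclude that the quantity in the lemma equals $e^{\alpha N}\mathbf{E}_x\!\left[f(\xi_{t-N})\,1_{\{\tau_0^->t-N\}}\right]$.

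Next I would verify that $f$ fits the hypotheses of Theorem~\ref{lemma-tau0>t-rho<0}. It is Borel measurable and is not a.e.\ $0$ on $\R_+$ (indeed $\P_z(M_N>y)>0$ for $z>y$). It is non-decreasing in $z$: coupling the branching killed process started from $z'>z$ with the one started from $z$ by using the same genealogy and the same spatial displacements, so that the $z'$-process is the $z$-process shifted upward by $z'-z$, any particle counted in $M_N$ under $\P_z$ is still counted under $\P_{z'}$ and sits $z'-z$ higher, whence $\{M_N>y\}$ under $\P_z$ implies $\{M_N>y\}$ under $\P_{z'}$. Being monotone and bounded by $g(N)$, $f$ is Riemann integrable on compact intervals, so $f(z)e^{-\lambda_*z}(1+|z|)$ is nonnegative, Riemann integrable on compacts, vanishes on $\R_-$, and is dominated by the directly Riemann integrable function $g(N)e^{-\lambda_*z}(1+z)1_{[0,\infty)}(z)$; hence it is directly Riemann integrable. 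Theorem~\ref{lemma-tau0>t-rho<0}, applied with time $t-N$, then yields, as $t\to\infty$,
\[
(t-N)^{3/2}e^{-\Psi(\lambda_*)(t-N)}\,\mathbf{E}_x\!\left[f(\xi_{t-N}),\,\tau_0^->t-N\right]
\;\longrightarrow\;\frac{2R^*(x)e^{\lambda_*x}}{\sqrt{2\pi\Psi''(\lambda_*)^3}}\int_{\R_+}\P_z(M_N>y)e^{-\lambda_*z}\widehat{R}^*(z)\,\d z.
\]

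To finish, I would multiply the identity of the second paragraph by $t^{3/2}e^{-\Psi(\lambda_*)t}$ and factor it as $\bigl(t/(t-N)\bigr)^{3/2}e^{(\alpha-\Psi(\lambda_*))N}$ times $(t-N)^{3/2}e^{-\Psi(\lambda_*)(t-N)}\mathbf{E}_x[f(\xi_{t-N}),\tau_0^->t-N]$; since $\bigl(t/(t-N)\bigr)^{3/2}\to1$ and the second factor converges by the third paragraph, the asserted limit follows. The only genuinely substantive step is checking that $f(z)=\P_z(M_N>y)$ is directly Riemann integrable against $e^{-\lambda_*z}(1+|z|)$ --- this is where the monotonicity coupling and the uniform bound $f\le g(N)$ (which makes the tails summable over unit blocks) are needed --- and I expect this to be the main, though still mild, obstacle; the Markov/Feynman--Kac reduction and the final arithmetic are routine, with the one point to watch being the time-reversal substitution inside $\varphi(Q_y(\cdot,\cdot))$.
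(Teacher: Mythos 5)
Your argument is correct and follows essentially the same route as the paper's own proof: the Markov property at time $t-N$, the identification of the inner expectation as $e^{\alpha N}Q_y(\cdot,N)=e^{\alpha N}\P_{\cdot}(M_N>y)$ via Lemma \ref{lemma-expression-Q}, and then Theorem \ref{lemma-tau0>t-rho<0} applied to this function, with your direct-Riemann-integrability check (monotonicity by coupling, bound by $g(N)$, domination by $g(N)e^{-\lambda_* z}(1+z)1_{[0,\infty)}(z)$) being just a more explicit version of the paper's ``bounded, increasing, hence a.e.\ continuous'' remark. As in your final paragraph, the limit is of $t^{3/2}e^{-\Psi(\lambda_*)t}$ times the expectation (the displayed statement omits this normalizing factor), which is exactly what the paper proves and uses in the proof of Theorem \ref{thm-tail probability-Mt}(3).
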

	\begin{proof}
		By the Markov property,
		\begin{align}
			&\mathbf{E}_x\left(1_{\{\tau_0^->t,\xi_t>y\}}e^{-\int_{t-N}^{t} \varphi(Q_y(\xi_s,t-s))\mathrm{d}s}\right)\\
			&=\mathbf{E}_x\left(1_{\{\tau_0^->t-N\}}\mathbf{E}_{\xi_{t-N}}\left(1_{\{\tau_0^->N,\xi_N>y\}}e^{-\int_{0}^{N} \varphi(Q_y(\xi_s,N-s))\mathrm{d}s}\right)\right)\\
			&=:\mathbf{E}_x\left(1_{\{\tau_0^->t-N\}}f_N^y(\xi_{t-N})\right),
		\end{align}
		where for any $z\ge 0$, $f_N^y$ is defined by
		\begin{align}\label{def-f_N^y}
			f_N^y(z):=\mathbf{E}_z\left(1_{\{\tau_0^->N,\xi_N>y\}}e^{-\int_{0}^{N} \varphi(Q_y(\xi_s,N-s))\mathrm{d}s}\right).
		\end{align}
By Lemma \ref{lemma-expression-Q},
		\begin{align}\label{expression-f_N^z}
			f_N^y(z)=e^{\alpha N}Q_y(z,N),
		\end{align}
which implies that $f_N^y(z)$ is bounded and increasing with respect to $z$.
Then $f_N^y$  is a.e. continuous.
By \cite[Corollary 3.2]{Hin}, $f_N^y(z)e^{-\lambda_* z}(1+|z|)$ is directly Riemann integrable with respect to $z$.
Applying Theorem \ref{lemma-tau0>t-rho<0} with $f$ replaced by $f_N^y$, we get
		\begin{align}
			&\lim_{t\to\infty}t^{3/2}e^{-\Psi(\lambda_*)t}\mathbf{E}_x\left(1_{\{\tau_0^->t,\xi_t>y\}}e^{-\int_{t-N}^{t} \varphi(Q_y(\xi_s,t-s))\mathrm{d}s}\right)\\
			=&e^{-\Psi(\lambda_*)N}\frac{2R^*(x)
			e^{\lambda_*x}}{\sqrt{2\pi\Psi''(\lambda_*)^3}}\int_{\mathbb{R}_+}f_N^y(z)e^{-\lambda_* z}
			\widehat{R}^*(z) \mathrm{d}z,
		\end{align}
		which gives the desired result together with \eqref{expression-f_N^z}.
	\end{proof}

	{\bf Proofs of Theorems \ref{thm-survival-probability} and \ref{thm-tail probability-Mt}:}
	Combining Lemmas \ref{lemma-lower-bound-liminf}, \ref{lemma-upper-bound-limsup-rho=0} and \ref{lemma-upper-bound-limsup-rho>0}, we get
	parts (1) and (2) of both Theorem \ref{thm-survival-probability} and Theorem \ref{thm-tail probability-Mt} immediately.
	Next, we prove part (3) of both theorems. For $\mathbf{E}_0[\xi_1]<0$, fix $N>0$ and $y\ge 0$.
	By Lemma \ref{lemma-expression-Q}, we have for $t\ge N$,
	\begin{align}\label{upper-bound-Q_y}
		Q_y(x,t)\le e^{-\alpha t} \mathbf{E}_x\left(1_{\{\tau_0^->t,\xi_t>y\}}e^{-\int_{t-N}^{t} \varphi(Q_y(\xi_s,t-s))\mathrm{d}s}\right).
	\end{align}
	Combining this with Lemma \ref{lemma-upper-bound-limsup-rho<0}, we get that
	\begin{align}\label{limsup-upper-bd-Q}
		&\limsup_{t\to\infty}t^{3/2}e^{(\alpha-\Psi(\lambda_*))t}Q_y(x,t)\\
		\le &\frac{2R^*(x)
		e^{\lambda_*x}}{\sqrt{2\pi \Psi''(\lambda_*)^3}}\liminf_{N\to\infty}e^{(\alpha-\Psi(\lambda_*))N}\int_{\R_+}\P_z(M_N>y)e^{-\lambda_* z}
		\widehat{R}^*(z)
		\mathrm{d}z.
	\end{align}
	Moreover, using the fact that
	$Q_y(x,t)\le g(t)=\mathbb{P}_0(\widetilde{\zeta}>t)$
	and Lemma \ref{lemma-expression-Q}, we get
	\begin{align}
		Q_y(x,t)\ge e^{-\alpha t} \mathbf{E}_x\left(1_{\{\tau_0^->t,\xi_t>y\}}e^{-\int_{t-N}^{t} \varphi(Q_y(\xi_s,t-s))\mathrm{d}s}\right)e^{-\int_{0}^{t-N} \varphi(g(t-s))\mathrm{d}s}.
	\end{align}
	Using Lemma \ref{lemma-upper-bound-limsup-rho<0} again, we have
	\begin{align}\label{liminf-lower-bd-Q}
		&\liminf_{t\to\infty}t^{3/2}e^{(\alpha-\Psi(\lambda_*))t}Q_y(x,t)\\
		\ge &\frac{2R^*(x)
		e^{\lambda_*x}}{\sqrt{2\pi \Psi''(\lambda_*)^3}}\limsup_{N\to\infty}e^{(\alpha-\Psi(\lambda_*))N}\int_{\R_+}\P_z(M_N>y)e^{-\lambda_* z}
		\widehat{R}^*(z)
		\mathrm{d}z.
	\end{align}
	Combining \eqref{limsup-upper-bd-Q} and \eqref{liminf-lower-bd-Q}, we obtain that
	\begin{align}
		&\lim_{t\to\infty}t^{3/2}e^{(\alpha-\Psi(\lambda_*))t}Q_y(x,t)\\
		=&\frac{2R^*(x)
		e^{\lambda_*x}}{\sqrt{2\pi \Psi''(\lambda_*)^3}}\lim_{N\to\infty}e^{(\alpha-\Psi(\lambda_*))N}\int_{\R_+}\P_z(M_N>y)e^{-\lambda_* z}
		\widehat{R}^*(z)
		\mathrm{d}z:=\frac{2R^*(x)
		e^{\lambda_*x}}{\sqrt{2\pi \Psi''(\lambda_*)^3}}
		C_y,
	\end{align}
	where $C_y:=\lim_{N\to\infty}e^{(\alpha-\Psi(\lambda_*))N}\int_{\R_+}\P_z(M_N>y)e^{-\lambda_* z}
	\widehat{R}^*(z)\mathrm{d}z$. Next, we show that $C_y \in (0,\infty)$. First, applying Lemma \ref{lemma-lower-bound-liminf} (3), we get
	\begin{align}
		C_y\ge C_{sub} \int_{y}^{\infty} e^{-\lambda_* z}\widehat{R}^*(z) \mathrm{d}z>0.
	\end{align}
	Using \eqref{upper-bound-Q_y} and taking $f(x)=1_{(y,\infty)}(x)$ in Theorem \ref{lemma-tau0>t-rho<0}, we get
	\begin{align}
		\limsup_{t\to\infty}t^{3/2}e^{(\alpha-\Psi(\lambda_*))t}Q_y(x,t)
		&\le \lim_{t\to\infty}t^{3/2}e^{-\Psi(\lambda_*)t} \mathbf{P}_x\left(\tau_0^->t,\xi_t>y\right)\\
		&=\frac{2R^*(x)
		e^{\lambda_*x}}{\sqrt{2\pi\Psi''(\lambda_*)^3}}\int_{y}^{\infty}e^{-\lambda_* z}
		\widehat{R}^*(z) \mathrm{d}z.
	\end{align}
	Therefore, $C_y\le \int_{y}^{\infty}e^{-\lambda_* z}
		\widehat{R}^*(z) \mathrm{d}z<\infty$.
	This completes the proof.
	\qed
	
	{\bf Proof of Corollary \ref{thm-yaglom-limit-theorem}:}
	We only prove (3).
	Combining Theorem \ref{thm-survival-probability} and \ref{thm-tail probability-Mt}, for any $0<a<b$, we get that
	\begin{align}
		&\lim_{t\to\infty}\P_x\left(M_t\in (a,b]|\zeta>t\right)
		=\lim_{t\to\infty}\frac{Q_a(x,t)-Q_b(x,t)}{u(x,t)}\\
		=&\frac{\lim_{N\to\infty}\int_{0}^{\infty} \mathbb{P}_z(M_N \in (a,b])e^{-\lambda_* z}\widehat{R}^*(z)\mathrm{d}z}{\lim_{N\to\infty}\int_{0}^{\infty}\mathbb{P}_z(M_N\in (0,\infty) )e^{-\lambda_* z}\widehat{R}^*(z)\mathrm{d}z}.
	\end{align}
	Therefore, there exists a random variable
$(X,\mathbb{P})$
 such that $\mathbb{P}_x(M_t\in \cdot|\zeta>t)$ vaguely converge to
 $\mathbb{P}(X\in \cdot)$.
	Moreover, by \eqref{lower-bound-Q}, we have
	\begin{align*}
		\P_x\left(M_t>y|\zeta>t\right)
		=\frac{Q_y(x,t)}{u(x,t)}
		\le \frac{e^{-\alpha t}\mathbf{P}_x\left(\tau_0^->t,\xi_t> y\right)}{C_{sub}e^{-\alpha t}\mathbf{P}_x\left(\tau_0^->t\right)}
		=\frac{1}{C_{sub}}\mathbf{P}_x\left(\xi_t> y|\tau_0^->t\right).
	\end{align*}
	Thus by
	Theorem \ref{lemma-tau0>t-rho<0},
	the tightness of $M_t$
	under $\mathbb{P}_x(\cdot|\zeta>t)$
	follows from the tightness of $\xi_t$ under $\mathbf{P}_x(\cdot|\tau_0^->t)$.
	This gives the desired result.
	\qed

	\section{Proof of Theorem \ref{thm-tail-M}}\label{section-proof-thm-tail-M}
	Recall that $\alpha=\beta(1-m)$.
	For any $0<x<y$, define
	\begin{align}
		v(x,y):=\P_x(M>y).
	\end{align}
	The following result is valid for any
branching killed L\'{e}vy processes.
	
	\begin{lemma}\label{lemma-expression-v}
		For any $0<x<y$, it holds that
		\begin{align}
			v(x,y)=\mathbf{E}_x\left(1_{\{\tau_y^{+}<\tau_0^{-}\}} e^{-\alpha \tau_y^{+}-\int_{0}^{\tau_y^{+}}\varphi(v(\xi_s,y))\mathrm{d}s
			}\right),
		\end{align}
		where $\varphi$ is defined
by \eqref{def_Phi}.
Consequently, for $0<x<z<y$, by the strong Markov property, we have
		\begin{align}
			v(x,y)=\mathbf{E}_x\left(1_{\{\tau_z^{+}<\tau_0^{-}\}}
			v(\xi_{\tau_z^{+}},y)
			e^{-\alpha \tau_z^{+}-\int_{0}^{\tau_z^{+}}\varphi(v(\xi_s,y))\mathrm{d}s
			}\right).
		\end{align}
	\end{lemma}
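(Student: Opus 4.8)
\emph{Setup.} Fix $y>0$ and put $w(x):=v(x,y)=\P_x(M>y)$ for $x>0$ and $w(x):=0$ for $x\le 0$ (a particle started on the negative half-line is killed at birth). A translation coupling of two initial positions shows $w$ is nondecreasing on $\R$, hence Borel, with $0\le w\le 1$. We may assume $\xi$ is neither a.s.\ nonincreasing nor a.s.\ constant: in those degenerate cases every particle has a nonincreasing path, so $M\le x<y$, $w\equiv0$, and both sides of the claimed identity vanish. In the remaining case, by standard fluctuation theory $0$ is regular for $(0,\infty)$ for the spectrally negative process $\xi$, so $\xi$ started from $y$ a.s.\ exceeds $y$ at arbitrarily small times; together with the branching Markov property this gives $w(y)=\P_y(M>y)=1$, and also $\sigma:=\tau_y^+\wedge\tau_0^-<\infty$ $\mathbf E_x$-a.s.\ because $\xi$ leaves the bounded interval $(0,y)$ a.s. Now let $T\sim\mathrm{Exp}(\beta)$ be the first branching time of the root, independent of its motion $\xi$; a.s.\ $T,\tau_y^+,\tau_0^-$ are distinct. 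On $\{\tau_0^-<T\wedge\tau_y^+\}$ the root is killed before branching or reaching $y$ and no other particle has been born, so $M<y$; on $\{\tau_y^+<T\wedge\tau_0^-\}$ the root is alive, strictly positive on $[0,\tau_y^+]$ and (by spectral negativity) exactly at $y$ at time $\tau_y^+$, so $M>y$; on $\{T<\tau_y^+\wedge\tau_0^-\}$ the root branches at $\xi_T\in(0,y)$ into $k$ i.i.d.\ subtrees with probability $p_k$, each a copy started from $\xi_T$, and since the root never reached $y$, $\{M>y\}$ occurs iff some subtree exceeds $y$, of conditional probability $1-f(1-w(\xi_T))$. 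Summing these and using $T\perp\xi$ yields the renewal equation
\[
w(x)=\mathbf E_x\bigl[e^{-\beta\tau_y^+}1_{\{\tau_y^+<\tau_0^-\}}\bigr]+\mathbf E_x\Bigl[\int_0^{\sigma}\beta e^{-\beta s}\bigl(1-f(1-w(\xi_s))\bigr)\,\d s\Bigr].
\]

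\emph{From the renewal equation to the Feynman--Kac formula.} Since $\beta\bigl(1-f(1-u)\bigr)=\beta u-\Phi(u)=(\beta-\alpha-\varphi(u))u$, and on $\{\tau_y^+<\tau_0^-\}$ we have $e^{-\beta\tau_y^+}=w(\xi_\sigma)e^{-\beta\sigma}$ while $w(\xi_\sigma)=0$ on $\{\tau_0^-<\tau_y^+\}$, the renewal equation rewrites as the Dynkin-type identity $w(x)=\mathbf E_x[e^{-\beta\sigma}w(\xi_\sigma)]+\mathbf E_x\bigl[\int_0^{\sigma}e^{-\beta s}(\beta-\alpha-\varphi(w(\xi_s)))w(\xi_s)\,\d s\bigr]$. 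Applying the strong Markov property of $\xi$ at $\rho\wedge\sigma$, together with this identity restarted from $\xi_{\rho\wedge\sigma}$, shows that the same identity holds with $\sigma$ replaced by $\rho\wedge\sigma$ for every $\mathcal F_t$-stopping time $\rho$; equivalently $A_t:=e^{-\beta(t\wedge\sigma)}w(\xi_{t\wedge\sigma})+\int_0^{t\wedge\sigma}e^{-\beta s}(\beta-\alpha-\varphi(w(\xi_s)))w(\xi_s)\,\d s$ is a bounded $(\mathcal F_t)$-martingale. Writing $\mu(z):=\alpha+\varphi(w(z))$, which lies in $[\alpha,\beta]$ on $[0,1]$ (so $\beta-\alpha-\varphi(w)=\beta-\mu\ge0$), a change of clock gives $\d\bigl(w(\xi_{t\wedge\sigma})e^{-\int_0^{t\wedge\sigma}\mu(\xi_s)\d s}\bigr)=e^{-\int_0^{t\wedge\sigma}\mu(\xi_s)\d s}e^{\beta(t\wedge\sigma)}\,\d A_t$, so $N_t:=w(\xi_{t\wedge\sigma})e^{-\int_0^{t\wedge\sigma}\mu(\xi_s)\d s}$ is a local martingale, and $0\le N_t\le1$ makes it uniformly integrable. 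As $\sigma<\infty$ a.s., $N_\infty=1_{\{\tau_y^+<\tau_0^-\}}e^{-\alpha\tau_y^+-\int_0^{\tau_y^+}\varphi(w(\xi_s))\d s}$, hence $w(x)=N_0=\mathbf E_x[N_\infty]$, which is the asserted formula (recall $w(\xi_s)=v(\xi_s,y)$). Alternatively one may avoid the martingale step: the right-hand side of the asserted identity is a fixed point of the map in the renewal equation, which is a contraction of ratio $m<1$ because $f'\le m$ on $[0,1]$, so uniqueness finishes the argument.

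\emph{The consequence, and the main obstacle.} For $0<x<z<y$, spectral negativity gives $\tau_z^+\le\tau_y^+$ with $\xi_{\tau_z^+}=z$ on $\{\tau_z^+<\infty\}$ and $\{\tau_y^+<\tau_0^-\}\subseteq\{\tau_z^+<\tau_0^-\}$. Applying the strong Markov property of $\xi$ at $\tau_z^+$ to the formula just proved (splitting $\tau_y^+$ and the integral of $\varphi(v(\xi_s,y))$ at $\tau_z^+$) and then the formula started from $z$ gives
\[
v(x,y)=\mathbf E_x\bigl[1_{\{\tau_z^+<\tau_0^-\}}v(\xi_{\tau_z^+},y)e^{-\alpha\tau_z^+-\int_0^{\tau_z^+}\varphi(v(\xi_s,y))\,\d s}\bigr],
\]
since the inner expectation is $v(z,y)=v(\xi_{\tau_z^+},y)$. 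The delicate step is the first-branching decomposition: one must handle the strict-positivity killing rule carefully and, above all, establish $v(y,y)=1$, which rests on the regularity of $0$ for $(0,\infty)$ for spectrally negative Lévy processes (dealt with via the degenerate-case dichotomy in the setup). Everything afterwards is bookkeeping with $\Phi$, $\varphi$, and the strong Markov property.
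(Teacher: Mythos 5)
Your proposal is correct and takes essentially the same route as the paper: conditioning on the first branching time of the root gives exactly the renewal equation that opens the paper's proof, and the exponential (Feynman--Kac) form is then extracted from it, with the second display following from the strong Markov property at $\tau_z^+$ just as in the paper. The only difference is one of execution: where the paper converts the renewal equation by citing Dynkin's Lemma 4.1 and invoking the Feynman--Kac formula, you carry out the conversion by hand (the closed martingale $A_t$ plus the integrating-factor/uniform-integrability step, or the contraction-uniqueness remark), and you are more explicit about the boundary inputs the paper leaves implicit, namely $v(y,y)=1$ via regularity of $0$ for $(0,\infty)$ and $\tau_y^+\wedge\tau_0^-<\infty$.
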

	\begin{proof}
		For $0< x<y$, comparing the first branching time with $\tau_y^+$, we have
		\begin{align*}
			v(x,y)=&\int_{0}^{\infty} \beta e^{-\beta s} \mathbf{P}_x(\tau_y^+< \tau_0^-,\tau_y^+\le s)\mathrm{d}s\\
			&+\int_{0}^{\infty}\beta e^{-\beta s}\mathbf{E}_x\left(\left(1-\sum_{k=0}^{\infty} p_k\left(1-v(\xi_s,y)\right)^k \right)  1_{\{\tau_y^+\land \tau_0^->s\}}\right)
			\mathrm{d}s\\
			=&\mathbf{E}_x	\left(e^{-\beta \tau_y^+}1_{\{\tau_y^+<\tau_0^-\}}\right)
			+\int_{0}^{\infty}\beta e^{-\beta s}\mathbf{E}_x\left( \left(1-\sum_{k=0}^{\infty}p_k \left(1-v(\xi_s,y)\right)^k\right)1_{\{\tau_y^+\land \tau_0^->s\}}\right)\mathrm{d}s.
		\end{align*}
		By \cite[Lemma 4.1]{Dynkin01}, the above equation is equivalent to
		\begin{align*}
			&v(x,y)+\beta \int_{0}^{\infty}\mathbf{E}_x\left(v(\xi_s,y)1_{\{\tau_y^+\land \tau_0^->s\}}\right)\mathrm{d}s\\	&=\mathbf{P}_x\left(\tau_y^+<\tau_0^-\right)
			+\beta\int_{0}^{\infty}\mathbf{E}_x\left( \left(1-\sum_{k=0}^{\infty}p_k\left(1-v(\xi_s,y)\right)^k\right)1_{\{\tau_y^+\land \tau_0^->s\}}\right)\mathrm{d}s,
		\end{align*}
		which is also equivalent to
		\begin{align}
			v(x,y)=\mathbf{P}_x\left(\tau_y^+<\tau_0^-\right)
			- \mathbf{E}_x\left( \int_{0}^{\tau_y^+\land\tau_0^-} \Phi(v(\xi_s,y))\mathrm{d}s\right),
		\end{align}
		where $\Phi$ is defined in \eqref{def_Phi}.
		 By repeating the argument leading to \eqref{expectation-expression-Q}, we get the desired result.
	\end{proof}
	
	In the remainder of this section, we always assume that $((\xi_t)_{t\ge 0}, ( \mathbf{P}_x)_{x\in \R})$ is a spectrally negative L\'{e}vy process.
	\begin{lemma}\label{lemma-upper-bound-v}
	Assume that $\xi$
	is a spectrally negative L\'{e}vy process.
		For any $0<x<y$, we have
		\begin{align}\label{upper-bound-v}
			v(x,y)\le \frac{e^{x \psi(\alpha)}W_{\psi(\alpha)}^{(0)}(x)}{e^{y \psi(\alpha)}W_{\psi(\alpha)}^{(0)}(y)}
			\le e^{(x-y)\psi(\alpha)}.
		\end{align}
	\end{lemma}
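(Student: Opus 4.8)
The plan is to run the argument directly from the integral representation of $v$ furnished by Lemma \ref{lemma-expression-v} together with the exit identity in Theorem \ref{thm-exit-problems}. Recall from Lemma \ref{lemma-expression-v} that
\begin{align}
	v(x,y)=\mathbf{E}_x\left[1_{\{\tau_y^{+}<\tau_0^{-}\}} e^{-\alpha \tau_y^{+}-\int_{0}^{\tau_y^{+}}\varphi(v(\xi_s,y))\mathrm{d}s}\right].
\end{align}
Since $0\le v(\cdot,y)\le 1$ and $\varphi$ is non-decreasing on $[0,1]$ with $\varphi(0)=0$ (this is recorded just after \eqref{def_Phi}), we have $\varphi(v(\xi_s,y))\ge 0$, so the Feynman--Kac factor $e^{-\int_0^{\tau_y^+}\varphi(v(\xi_s,y))\mathrm{d}s}$ is bounded by $1$. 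Dropping it gives $v(x,y)\le \mathbf{E}_x[1_{\{\tau_y^{+}<\tau_0^{-}\}}e^{-\alpha\tau_y^{+}}]$.

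The next step is to identify this last expectation. Applying Theorem \ref{thm-exit-problems}(2) with $q=\alpha$ yields $\mathbf{E}_x[1_{\{\tau_y^{+}<\tau_0^{-}\}}e^{-\alpha\tau_y^{+}}]=W^{(\alpha)}(x)/W^{(\alpha)}(y)$. To rewrite this in the form stated, I would invoke the scaling relation \eqref{relation-W} of Lemma \ref{lemma-scale-property} with $c=\psi(\alpha)$: since $\Psi(\psi(\alpha))=\alpha$ (because $\psi$ is the right inverse of $\Psi$ and $\alpha>0$), we get $W^{(\alpha)}(z)=e^{\psi(\alpha)z}W^{(0)}_{\psi(\alpha)}(z)$ for $z\ge 0$, whence
\begin{align}
	v(x,y)\le \frac{W^{(\alpha)}(x)}{W^{(\alpha)}(y)}=\frac{e^{x\psi(\alpha)}W^{(0)}_{\psi(\alpha)}(x)}{e^{y\psi(\alpha)}W^{(0)}_{\psi(\alpha)}(y)}.
\end{align}
(Equivalently, one could reach the same point by applying Lemma \ref{lemma-change-measure} with $h\equiv 0$ and $a=\alpha$, which rewrites the quantity as $e^{\psi(\alpha)(x-y)}\mathbf{P}_x^{\psi(\alpha)}(\tau_y^{+}<\tau_0^{-})\le e^{\psi(\alpha)(x-y)}$; but going through the scale functions gives the sharper intermediate bound.)

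For the second inequality, I would use that $W^{(0)}_{\psi(\alpha)}$, being the ($q=0$) scale function of the spectrally negative L\'evy process $((\xi_t)_{t\ge 0},(\mathbf{P}_x^{\psi(\alpha)})_{x\in\R})$, is non-decreasing on $[0,\infty)$ (and strictly positive at any point $>0$); hence for $0<x<y$ one has $W^{(0)}_{\psi(\alpha)}(x)\le W^{(0)}_{\psi(\alpha)}(y)$, so the ratio of scale functions is $\le 1$ and may be discarded, leaving $v(x,y)\le e^{(x-y)\psi(\alpha)}$. There is no genuine obstacle here: the only points deserving a word of care are the nonnegativity of $\varphi$ and the fact that $v$ is $[0,1]$-valued (so that the exponential factor is $\le 1$), together with the elementary monotonicity of the scale function; all of these are immediate from the definitions and from Lemmas \ref{lemma-scale-property} and \ref{lemma-expression-v} already established.
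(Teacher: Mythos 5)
Your proposal is correct and follows essentially the same route as the paper: drop the nonnegative Feynman--Kac term in the representation from Lemma \ref{lemma-expression-v}, identify the remaining expectation via Theorem \ref{thm-exit-problems}(2) as $W^{(\alpha)}(x)/W^{(\alpha)}(y)$, rewrite with Lemma \ref{lemma-scale-property} using $\Psi(\psi(\alpha))=\alpha$, and conclude by monotonicity of the scale function. The only difference is that you spell out the monotonicity step that the paper leaves implicit.
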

	\begin{proof}
		Since the function $\varphi$ is non-negative, combining Lemma \ref{lemma-expression-v} and Theorem \ref{thm-exit-problems} (2), we get
		\begin{align}
			v(x,y)
			\le \mathbf{E}_x\left(e^{-\alpha \tau_y^{+}
			}1_{\{\tau_y^{+}<\tau_0^{-}\}} \right)
			=\frac{W^{(\alpha)}(x)}{W^{(\alpha)}(y)},\quad x< y.
		\end{align}
		This combined with Lemma \ref{lemma-scale-property} yields that
		\begin{align}
			v(x,y)\le \frac{e^{x \psi(\alpha)}W_{\psi(\alpha)}^{(0)}(x)}{e^{y \psi(\alpha)}W_{\psi(\alpha)}^{(0)}(y)}
			\le e^{(x-y)\psi(\alpha)}.
		\end{align}
		This gives the desired result.
	\end{proof}
	
	{\bf Proof of Theorem \ref{thm-tail-M}:}
	By Lemmas \ref{lemma-expression-v} and  \ref{lemma-change-measure},
	we have
	\begin{align*}
		v(x,y)
		&=\mathbf{E}_x\left(1_{\{\tau_y^{+}<\tau_0^{-}\}} e^{-\alpha \tau_y^{+}-\int_{0}^{\tau_y^{+}}\varphi(v(\xi_s,y))\mathrm{d}s}\right)\\
		&=e^{(x-y)\psi(\alpha)}\mathbf{E}_x^{\psi(\alpha)}\left(1_{\{\tau_y^{+}<\tau_0^{-}\}} e^{-\int_{0}^{\tau_y^{+}}\varphi(v(\xi_s,y))\mathrm{d}s}\right).
	\end{align*}
	Fix a $\gamma\in (0,1)$, by the Markov property of $(\xi_t, \mathbf{P}_x^{\psi(\alpha)})$, we have
	\begin{align}\label{decomposition-v}
		v(x,y)
		&=e^{(x-y)\psi(\alpha)}\mathbf{E}_x^{\psi(\alpha)}\left(1_{\{\tau_{y-y^{\gamma}}^{+}<\tau_0^{-}\}} e^{-\int_{0}^{\tau_{y-y^{\gamma}}^{+}}\varphi(v(\xi_s,y))\mathrm{d}s}\right)\\
		&\qquad\times \mathbf{E}_{y-y^{\gamma}}^{\psi(\alpha)}\left(1_{\{\tau_y^{+}<\tau_0^{-}\}} e^{-\int_{0}^{\tau_y^{+}}\varphi(v(\xi_s,y))\mathrm{d}s}\right)
		=:e^{(x-y)\psi(\alpha)}A_1(x,y)A_2(y),
	\end{align}
	where
	\begin{align*}
		A_1(x,y):=\mathbf{E}_x^{\psi(\alpha)}\left(1_{\{\tau_{y-y^{\gamma}}^{+}<\tau_0^{-}\}} e^{-\int_{0}^{\tau_{y-y^{\gamma}}^{+}}\varphi(v(\xi_s,y))\mathrm{d}s}\right),
	\end{align*}
	\begin{align*}
		A_2(y):=\mathbf{E}_{y-y^{\gamma}}^{\psi(\alpha)}\left(1_{\{\tau_y^{+}<\tau_0^{-}\}} e^{-\int_{0}^{\tau_y^{+}}\varphi(v(\xi_s,y))\mathrm{d}s}\right).
	\end{align*}
	We first  consider the asymptotic behavior of $A_1(x,y)$ as $y\to\infty$.
	We claim that
	\begin{align}\label{asymptotic-A1}
		\lim_{y\to \infty}\left(\mathbf{P}_x^{\psi(\alpha)}
		(\tau_{y-y^{\gamma}}^{+}<\tau_0^{-})
		-A_1(x,y) \right)=0.
	\end{align}
	Indeed, using the inequality $1-e^{-|x|}\le |x|$, we get
	\begin{align}\label{two-side-bound}
		0&\le \mathbf{P}_x^{\psi(\alpha)}
		(\tau_{y-y^{\gamma}}^{+}<\tau_0^{-})
		-A_1(x,y)\\
		&=\mathbf{E}_x^{\psi(\alpha)}\left(1_{\{\tau_{y-y^{\gamma}}^{+}<\tau_0^{-}\}} \left(1-e^{-\int_{0}^{\tau_{y-y^{\gamma}}^{+}}\varphi(v(\xi_s,y))\mathrm{d}s}\right)\right)\nonumber\\
		&\le \mathbf{E}_x^{\psi(\alpha)}\left(1_{\{\tau_{y-y^{\gamma}}^{+}<\tau_0^{-}\}}\int_{0}^{\tau_{y-y^{\gamma}}^{+}}\varphi(v(\xi_s,y))\mathrm{d}s\right)
		\le \mathbf{E}_x^{\psi(\alpha)}\left(\int_{0}^{\tau_{y-y^{\gamma}}^{+}}\varphi(v(\xi_s,y))\mathrm{d}s\right).\nonumber
	\end{align}
	Set $y_*(x):=\inf\{t\ge y-y^{\gamma},t-x\in \mathbb{N}\}$. By \eqref{upper-bound-v}, we have
	\begin{align*}
		&\mathbf{E}_x^{\psi(\alpha)}\left(\int_{0}^{\tau_{y-y^{\gamma}}^{+}}\varphi(v(\xi_s,y))\mathrm{d}s\right)
		\le \mathbf{E}_x^{\psi(\alpha)}\left(\int_{0}^{\tau_{y_*(x)}^+}\varphi\left(e^{(\xi_s-y)\psi(\alpha)}\right)\mathrm{d}s\right)\\
		& = \sum_{k=0}^{y_*(x)-x-1} \mathbf{E}_x^{\psi(\alpha)} \left(\int_{\tau_{x+k}^+}^{\tau_{x+k+1}^+} \varphi\left(e^{(\xi_s-y)\psi(\alpha)}\right)\mathrm{d}s\right)\\
		&\leq  \sum_{k=0}^{y_*(x)-x-1} \mathbf{E}_x^{\psi(\alpha)} \left(\tau_{x+k+1}^+- \tau_{x+k}^+ \right)\varphi\left(e^{\psi(\alpha)(x+k+1-y)}\right)\\
		&= \mathbf{E}_0^{\psi(\alpha)} \left(\tau_{1}^+\right) \sum_{k=1}^{y_*(x)} \varphi\left(
		e^{-\psi(\alpha)(y-x-1-y_*(x)+k)}\right).
	\end{align*}
	By the definition of $y_*(x)$,
	we have that for $y$ large enough,
	\[
	y-x-1-y_*(x) \geq y-x-1-(y-y^\gamma+1)= y^\gamma-x-2.
	\]
	Therefore, when $y$ is sufficient large so that $y^\gamma -x-2 \geq  y^{\gamma/2}$,
	by \eqref{property-varphi}, we have
	\begin{align}\label{Upper-Difference}
		& \mathbf{E}_x^{\psi(\alpha)}\left(\int_{0}^{\tau_{y-y^{\gamma}}^{+}}\varphi(v(\xi_s,y))\mathrm{d}s\right)\\
		& \leq \mathbf{E}_0^{\psi(\alpha)} \left(\tau_{1}\right) \sum_{k=1}^{\infty } \varphi\left(	e^{-\psi(\alpha)(y^{\gamma/2}+k)}\right)  \leq	\mathbf{E}_0^{\psi(\alpha)} \left(\tau_{1}\right)
		\int_0^\infty \varphi\left(	e^{-\psi(\alpha)(y^{\gamma/2}+z)}\right) \mathrm{d}z\\
		& =	\mathbf{E}_0^{\psi(\alpha)} \left(\tau_{1}\right)\int_{y^{\gamma/2}}^\infty \varphi\left( e^{-\psi(\alpha)z}\right) \mathrm{d}z		 \stackrel{y\to\infty}{\longrightarrow} 0.
	\end{align}
	This combined with \eqref{two-side-bound} yields \eqref{asymptotic-A1}. Using Lemma \ref{lemma-change-measure} and Theorem \ref{thm-exit-problems}(2), we get
	\begin{align}
		&\lim_{y\to\infty}A_1(x,y)
		=\lim_{y\to\infty} \mathbf{P}_x^{\psi(\alpha)}
		(\tau_{y-y^{\gamma}}^{+}<\tau_0^{-})
		\\
		&=\lim_{y\to\infty}
		e^{(y-y^{\gamma}-x)\psi(\alpha)} \mathbf{E}_x\left(e^{-\alpha \tau_{y-y^{\gamma}}^+}1_{\{\tau_{y-y^{\gamma}}^+<\tau_0^-\}}\right)
		=\lim_{y\to\infty}e^{(y-y^{\gamma}-x)\psi(\alpha)} \frac{W^{(\alpha)}(x)}{W^{(\alpha)}(y-y^{\gamma})}.
	\end{align}
Using \eqref{limit-behvior-W}, we get that
	\begin{align}
		W^{(\alpha)}(y-y^{\gamma})
		\sim \frac{e^{\psi(\alpha)(y-y^{\gamma})}}{\Psi'(\psi(\alpha))},
		\quad \text{as}~ y\to\infty.
	\end{align}
	Therefore,
\begin{align}\label{asymptotic-behavior-A1}
		&\lim_{y\to\infty}A_1(x,y)=e^{-\psi(\alpha)x}\Psi'(\psi(\alpha))W^{(\alpha)}(x).
\end{align}
	Next, we consider the asymptotic behavior of $A_2(y)$ as $y\to\infty$. Recall that
	\begin{align}
		A_2(y)=&\mathbf{E}_{y-y^{\gamma}}^{\psi(\alpha)}\left(1_{\{\tau_y^{+}<\tau_0^{-}\}} e^{-\int_{0}^{\tau_y^{+}}\varphi(v(\xi_s,y))\mathrm{d}s}\right)\\
		=&\mathbf{E}_{y-y^{\gamma}}^{\psi(\alpha)}\left( e^{-\int_{0}^{\tau_y^{+}}\varphi(v(\xi_s,y))\mathrm{d}s}\right)
		-\mathbf{E}_{y-y^{\gamma}}^{\psi(\alpha)}\left(1_{\{\tau_y^{+}\ge\tau_0^{-}\}} e^{-\int_{0}^{\tau_y^{+}}\varphi(v(\xi_s,y))\mathrm{d}s}\right).
	\end{align}
	We claim that
	\begin{align}\label{claim1}
		\lim_{y\to\infty}\mathbf{E}_{y-y^{\gamma}}^{\psi(\alpha)}\left( e^{-\int_{0}^{\tau_y^{+}}\varphi(v(\xi_s,y))\mathrm{d}s}\right)=C_*(\alpha)
		\in (0,1],
	\end{align}
	and
	\begin{align}\label{claim2}
		\lim_{y\to\infty}\mathbf{E}_{y-y^{\gamma}}^{\psi(\alpha)}\left(1_{\{\tau_y^{+}\ge\tau_0^{-}\}} e^{-\int_{0}^{\tau_y^{+}}\varphi(v(\xi_s,y))\mathrm{d}s}\right)=0.
	\end{align}
Then we get
\begin{align}\label{expression-A2}
		\lim_{y\to\infty}A_2(y)=C_*(\alpha).
\end{align}
Combining \eqref{decomposition-v}, \eqref{asymptotic-behavior-A1} and \eqref{expression-A2} gives that
\begin{align}
		\lim_{y\to\infty}
		e^{y\psi(\alpha)}
		v(x,y)=C_*(\alpha)\Psi'(\psi(\alpha)) W^{(\alpha)}(x),
	\end{align}
which gives the desired result.
Now we are left to prove \eqref{claim1} and \eqref{claim2}.
By Lemma \ref{lemma-change-measure} and Theorem \ref{thm-exit-problems}, we have
	\begin{align}
		&\mathbf{E}_{y-y^{\gamma}}^{\psi(\alpha)}\left(1_{\{\tau_y^{+}\ge\tau_0^{-}\}} e^{-\int_{0}^{\tau_y^{+}}\varphi(v(\xi_s,y))\mathrm{d}s}\right)
		\le \mathbf{P}_{y-y^{\gamma}}^{\psi(\alpha)}\left(\tau_y^{+}\ge\tau_0^{-}\right)
		=1-\mathbf{P}_{y-y^{\gamma}}^{\psi(\alpha)}\left(\tau_y^{+}<\tau_0^{-}\right)\\
		&=1-e^{y^{\gamma}\psi(\alpha)} \mathbf{E}_{y-y^{\gamma}}\left(e^{-\alpha \tau_y^+}1_{\{\tau_y^+<\tau_0^-\}}\right)
		=1-e^{y^{\gamma}\psi(\alpha)} \frac{W^{(\alpha)}(y-y^{\gamma})}{W^{(\alpha)}(y)}
	\end{align}
	which tends to $0$ as $y \to\infty$ by
	\eqref{limit-behvior-W}. Thus  \eqref{claim2} is valid.
	To prove \eqref{claim1}, for any $y>0$, define
	\begin{align}
		G(y):=\mathbf{E}_{y-y^{\gamma}}^{\psi(\alpha)}\left( e^{-\int_{0}^{\tau_y^{+}}\varphi(v(\xi_s,y))\mathrm{d}s}\right).
	\end{align}
	For any $z>y$, by the
	translation invariance
	and the strong Markov property of
	$\xi$, we have
	\begin{align}
		G(z)
		&=\mathbf{E}_{z-z^{\gamma}}^{\psi(\alpha)}\left( e^{-\int_{0}^{\tau_z^{+}}\varphi(v(\xi_s,z))\mathrm{d}s}\right)
		=\mathbf{E}_{0}^{\psi(\alpha)}\left( e^{-\int_{0}^{\tau_{z^{\gamma}}^{+}}\varphi(v(\xi_s+z-z^{\gamma},z))\mathrm{d}s}\right)\\
		&=\mathbf{E}_{0}^{\psi(\alpha)}\left( e^{-\int_{0}^{\tau_{z^{\gamma}-y^{\gamma}}^{+}}\varphi(v(\xi_s+z-z^{\gamma},z))\mathrm{d}s}\right)
		\mathbf{E}_{z^{\gamma}-y^{\gamma}}^{\psi(\alpha)}\left( e^{-\int_{0}^{\tau_{z^{\gamma}}^{+}}\varphi(v(\xi_s+z-z^{\gamma},z))\mathrm{d}s}\right),
	\end{align}
	where the first term of the above display is dominated by 1 from above and the second term is equal to $\mathbf{E}_{0}^{\psi(\alpha)}\left( e^{-\int_{0}^{\tau_{y^{\gamma}}^{+}}\varphi(v(\xi_s+z-y^{\gamma},z))\mathrm{d}s}\right)$.
	It follows that
	\begin{align}\label{upper-bound-G}
		G(z)\le \mathbf{E}_{0}^{\psi(\alpha)}\left( e^{-\int_{0}^{\tau_{y^{\gamma}}^{+}}\varphi(v(\xi_s+z-y+y-y^{\gamma},z-y+y))\mathrm{d}s}\right).
	\end{align}
	Note that for any $w>0$, it holds that
	\begin{align}
		&v(x+w,y+w)=\P_{x+w}\left(\exists ~t>0, u\in N_t~s.t.~\min_{s\le t}X_u(s)>0, X_u(t)>y+w\right)\\
		&\ge \P_{x+w}\left(\exists~t>0, u\in N_t~s.t.~\min_{s\le t}X_u(s)>w, X_u(t)>y+w\right)=v(x,y).
	\end{align}
	This combined with \eqref{upper-bound-G} gives that for $z>y$,
	\begin{align}
		G(z)\le \mathbf{E}_{0}^{\psi(\alpha)}\left( e^{-\int_{0}^{\tau_{y^{\gamma}}^{+}}\varphi(v(\xi_s+y-y^{\gamma},y))\mathrm{d}s}\right)
		=G(y).
	\end{align}
	Thus, the limit $C_*(\alpha):=\lim_{y\to \infty}G(y)$ exists.
	It is obvious that $C_*(\alpha)\le 1$.
	Next, we only need to show $C_*(\alpha)>0$.
	We assume without loss of generality that $y$ is an integer. By the strong Markov property and Jensen's inequality,
	\begin{align*}
		G(y)
		&=\frac{\mathbf{E}_{0}^{\psi(\alpha)}\left( e^{-\int_{0}^{\tau_y^{+}}\varphi(v(\xi_s,y))\mathrm{d}s}\right)}{\mathbf{E}_{0}^{\psi(\alpha)}\left( e^{-\int_{0}^{\tau_{y-y^{\gamma}}^{+}}\varphi(v(\xi_s,y))\mathrm{d}s}\right)}
		\ge \mathbf{E}_{0}^{\psi(\alpha)}\left( e^{-\int_{0}^{\tau_y^{+}}\varphi(v(\xi_s,y))\mathrm{d}s}\right)\\
		&\ge \exp\left\{ -\sum_{n=1}^{y}\mathbf{E}_{0}^{\psi(\alpha)}\left(\int_{\tau_{n-1}^+}^{\tau_n^+}\varphi(v(\xi_s,y))\mathrm{d}s\right)\right\}.
	\end{align*}
	By  \eqref{upper-bound-v}, we get
	\begin{align}
		\int_{\tau_{n-1}^+}^{\tau_n^+}\varphi(v(\xi_s,y))\mathrm{d}s
		\le (\tau_n^+-\tau_{n-1}^+)\varphi(v(n,y))
		\le (\tau_n^+-\tau_{n-1}^+)\varphi\left(e^{(n-y)\psi(\alpha)}\right).
	\end{align}
	Note that under $\mathbf{P}_{0}^{\psi(\alpha)}$, $\{\tau_n^+-\tau_{n-1}^+\}$ are i.i.d. random variables with finite first moment.
	Therefore,
	\begin{align*}
		G(y)
		&\ge \exp\left\{ -\sum_{n=1}^{y}\varphi\left(e^{(n-y)\psi(\alpha)}\right)\mathbf{E}_{0}^{\psi(\alpha)}\left((\tau_n^+-\tau_{n-1}^+)\right)\right\}\\
		&=\exp\left\{ -\mathbf{E}_{0}^{\psi(\alpha)}(\tau_1^+)\sum_{n=0}^{y-1}\varphi\left(e^{-n\psi(\alpha)}\right)\right\}
		\ge \exp\left\{ -\mathbf{E}_{0}^{\psi(\alpha)}(\tau_1^+)\sum_{n=0}^{\infty}\varphi\left(e^{-n\psi(\alpha)}\right)\right\},
	\end{align*}
	which implies that
	\begin{align}
		C_*(\alpha)
		\ge \exp\left\{ -\mathbf{E}_{0}^{\psi(\alpha)}(\tau_1^+)\sum_{n=0}^{\infty}\varphi\left(e^{-n\psi(\alpha)}\right)\right\}.
	\end{align}
	According to \eqref{property-varphi}, we have
	\begin{align}
		\sum_{n=0}^{\infty}\varphi\left(e^{-n\psi(\alpha)}\right)
		\le \varphi(1)+\int_{0}^{\infty}\varphi\left(e^{-z\psi(\alpha)}\right)\mathrm{d} z<\infty,
	\end{align}
	which implies that $C_*(\alpha)>0$. This gives the desired result.
	\qed

	\vspace{.1in}
	\textbf{Acknowledgment}: We thank the referees for very helpful comments and suggestions.

	\vspace{.1in}

\end{document}